\documentclass[11pt,reqno]{amsart}
\usepackage{enumerate}
\usepackage{mhequ}
\usepackage{soul}
\usepackage[margin=1.2in]{geometry}
\usepackage{amssymb,url,color, booktabs,nccmath}
\usepackage{mathrsfs}
\usepackage{url}
\usepackage{enumitem}
\usepackage{graphicx}
\usepackage{tikz}
\usetikzlibrary{shapes,snakes}
\usetikzlibrary{calc}
\usetikzlibrary{decorations.shapes}
\usepackage[colorinlistoftodos,prependcaption,textsize=tiny]{todonotes}
\usepackage{todonotes}
\usepackage{bbold}
\usepackage{esint}
\usepackage{calc}
\usepackage{amsmath,amsthm,amssymb}
\usepackage{amsfonts} 
\usepackage[all]{xy}

\usepackage[backend=biber,style=alphabetic,maxnames=200,maxalphanames=6,giveninits]{biblatex}
\makeatletter

\usepackage{color}
\usepackage[colorlinks=true]{hyperref}
\hypersetup{
    linkcolor=cyan,          
    citecolor=orange,        
    filecolor=cyan,      
    urlcolor=cyan
}

\definecolor{darkergreen}{rgb}{0.0, 0.5, 0.0}

\numberwithin{equation}{section}

\newcommand{\be}{\begin{eqnarray}}
\newcommand{\ee}{\end{eqnarray}}
\newcommand{\ce}{\begin{eqnarray*}}
\newcommand{\de}{\end{eqnarray*}}
\newtheorem{theorem}{Theorem}[section]
\newtheorem{lemma}[theorem]{Lemma}

\newtheorem{example}[theorem]{Example}
\newtheorem{remark}[theorem]{Remark}
\newtheorem{definition}[theorem]{Definition}
\newtheorem{proposition}[theorem]{Proposition}
\newtheorem{Examples}[theorem]{Example}
\newtheorem{corollary}[theorem]{Corollary}
\newtheorem{assumption}[theorem]{Assumption}

\newenvironment{nouppercase}{%
  \renewcommand{\uppercasenonmath}[1]{}}{}
\newif\ifshowrevisioncolors
\showrevisioncolorsfalse
\ifshowrevisioncolors
  \newcommand{\rmb}[1]{\textcolor{blue}{#1}}
\else
  \newcommand{\rmb}[1]{#1}
\fi

\def\eps{\varepsilon}

\def\d{\hh}

\def\[{{\Big[}}
\def\]{{\Big]}}
\def\<{{\langle}}
\def\>{{\rangle}}
\def\({{\Big(}}
\def\){{\Big)}}

\def\bx{{\mathbf{x}}}

\def\min{{\mathord{{\rm min}}}}

\def\={&\!\!=\!\!&}

\newcommand{\tili}{\tilde{\imath}}

\newcommand{\hE}{\mathbb{E}}

\newcommand{\hg}{\mathbb{g}}

\newcommand{\hS}{\mathbb{S}}

\newcommand{\tn}{{\widetilde\nabla}}

\newcommand{\cD}{\mathcal{D}}
\newcommand{\cH}{\mathcal{H}}
\newcommand{\sd}{\hS^{d-1}}

\def\cS{\mathcal S}

\newcommand{\sym}{\text{sym}}
\newcommand{\anti}{\text{anti}}

\def\cD{{\mathcal D}}

\def\cH{{\mathcal H}}

\def\cS{{\mathcal S}}

\def\1{{\mathbb{1}}}

\def\geq{\geqslant}
\def\leq{\leqslant}
\def\ge{\geqslant}
\def\le{\leqslant}

\def\div{\mathord{{\rm div}}}

\def\eps{\varepsilon}

\def\d{\hh}

\def\[{{\Big[}}
\def\]{{\Big]}}
\def\<{{\langle}}
\def\>{{\rangle}}
\def\({{\Big(}}
\def\){{\Big)}}

\def\bx{{\mathbf{x}}}

\def\min{{\mathord{{\rm min}}}}

\def\={&\!\!=\!\!&}
\def\bt{\begin{theorem}}
\def\et{\end{theorem}}
\def\bl{\begin{lemma}}
\def\el{\end{lemma}}
\def\br{\begin{remark}}
\def\er{\end{remark}}
\def\bx{\begin{Examples}}
\def\ex{\end{Examples}}
\def\bd{\begin{definition}}
\def\ed{\end{definition}}
\def\bp{\begin{proposition}}
\def\ep{\end{proposition}}
\def\bc{\begin{corollary}}
\def\ec{\end{corollary}}

\def\geq{\geqslant}
\def\leq{\leqslant}
\def\ge{\geqslant}
\def\le{\leqslant}

\def\div{\mathord{{\rm div}}}

 \def\R{\mathbb R}
 \def\R{\mathbb R}    
\def\N{\mathbb N}  
\def\dd{\hh}   
\def\<{\langle} \def\>{\rangle}

\def\dd{{a}}

\allowdisplaybreaks

\usepackage[most]{tcolorbox}

\usepackage[most]{tcolorbox}

\newtcolorbox[auto counter, number within=section]{mathblock}[2][]{
  colback=gray!5,
  colframe=black!60,
  title={Approximation~\thetcbcounter},
  label=#1,
  breakable
}

\tikzset{
        dot/.style={circle,fill=black,inner sep=0pt, outer sep=0.7pt, minimum size=1mm},
        Phi/.style={white!40!red,thick,snake=coil,segment amplitude=0.6pt, segment length=2pt},
         Z/.style={black!40!green,thick,snake=coil,segment amplitude=0.6pt, segment length=2pt},
        C/.style={thick,black!20!blue},
          Cr/.style={thick,black!20!red},
            Cg/.style={thick,black!20!green},
       }

       \usepackage[most]{tcolorbox}

\newcommand{\Do}{\R^{d}}
\newcommand{\G}{\R^{2d}}
\newcommand\supp{\operatorname{Supp}}
\newcommand\loc{{\operatorname{loc}}}
\renewcommand{\d}{\partial}

\renewcommand{\dd}{{\,\rm d}}

\begin{document}

\title[Regularised Fluctuating Landau Equations]{\Large The Homogeneous Landau Equation with Regularised Thermal Noise}

\author[H.~Duong]{\large Manh Hong Duong}
\author[Z.~He]{\large Zihui He}
\address[H.~Duong]
{School of Mathematics, University of Birmingham, UK}
\email{h.duong@bham.ac.uk}
\address[Z.~He]
{Fakult\"at f\"ur Mathematik, Universit\"at Bielefeld, Postfach 100131, 33501 Bielefeld, Germany}
\email{zihui.he@uni-bielefeld.de}
\author[Z. Wu]{\large Zhengyan Wu}
\address[Z. Wu]{Department of Mathematics, Technische Universit\"at M\"unchen, Boltzmannstr. 3, 85748 Garching, Germany}
\email{wuzh@cit.tum.de}

\begin{abstract}
We introduce and analyze a fluctuating homogeneous Landau
equation with regularised thermal noise.  The model is motivated by the nonlocal gradient
flow structure of the deterministic Landau equation, the
fluctuation--dissipation principle, and the covariance of the martingale
fluctuations of a Kac-like conservative Landau particle system.  The
noise is written in Landau-divergence form, is antisymmetric in the pair of
velocities, and is interpreted in the Stratonovich sense after introducing a velocity correlation.  To handle the vacuum singularity of
the square-root mobility and the nonlocal Stratonovich-to-It\^o correction, we
replace the mobility by a regular coefficient.

For moderately soft potentials, we prove the existence of probabilistic weak
solutions to the regularised fluctuating homogeneous Landau equation.  The proof is based on a
three-level approximation scheme combining Galerkin approximations, coefficient
regularisations, artificial diffusion, and compactness in both $L^2$ and
$L^1$ frameworks.  The solutions satisfy mass conservation, an energy
inequality, and the entropy dissipation estimate.  Finally, for a
special class of admissible noise bases satisfying a tangential
divergence-free condition, we obtain a refined entropy inequality in which the
expected entropy is non-increasing relative to the initial entropy.
\end{abstract}

\subjclass[2020]{60H15;35Q20;35Q84}
\keywords{}

\date{\today}

\begin{nouppercase}
\maketitle
\end{nouppercase}


\setcounter{tocdepth}{1}
\tableofcontents

%
%


\section{Introduction}\label{sec:intro}

Kinetic theory provides a framework for describing the evolution of large systems of interacting particles through nonlinear kinetic partial differential equations (PDEs). At the deterministic level, the passage from a particle system to its kinetic equation is typically governed by a law of large numbers: as the number of particles tends to infinity, the empirical measure converges to a deterministic density solving a nonlinear kinetic PDE. While such equations successfully capture the average behavior of the underlying particle system, they do not account for its fluctuations. At finer scales, one expects to observe random fluctuations around the deterministic limit. Although these fluctuations are typically small, they may have a significant impact on the dynamics, for instance through the emergence of metastable phenomena. Fluctuating kinetic equations are stochastic partial differential equations (SPDEs) designed to retain this next-order information while preserving the dissipative structure of the underlying kinetic model. They provide an effective macroscopic description of the fluctuations arising from the underlying interacting particle system. 

The Landau equation provides a natural and particularly challenging test
case for fluctuating kinetic theory.  The goal of the present work is to
introduce Dean--Kawasaki-type thermal noise into a regularised homogeneous
Landau equation in a manner compatible both with the fluctuation--dissipation
structure of the Landau operator and with the martingale fluctuations of an
underlying stochastic mean-field particle system.  In contrast with local
Dean--Kawasaki models, Landau collisions couple two different velocities;
hence both the deterministic mobility and the corresponding fluctuation
covariance are intrinsically nonlocal.  Within this framework, we construct
probabilistic weak solutions to the resulting fluctuating Landau equation.

We start from the homogeneous Landau equation
\begin{equation}
    \label{landau}
    \left\{
    \begin{aligned}
    &\d_t f=Q_{\sf L}(f,f)\\
    &Q_{\sf L}(f,f)=\nabla_v \cdot\int_{\R^d}A(|v-v_*|)\Pi_{(v-v_*)^\perp}\big(f_*\nabla_v f-f\nabla_{v_*} f_*\big)\dd v_*.
    \end{aligned}
    \right.
\end{equation}
We write $f=f(v)$ and $f_*=f(v_*)$.
The orthogonal projection
$\Pi_{(v-v_*)^\perp}$ is given by 
\begin{equation*}
\label{projection}
  \Pi_{(v-v_*)^\perp}=\operatorname{Id}-\frac{(v-v_*)\otimes (v-v_*)}{|v-v_*|^2}.
\end{equation*}
In this paper, we consider the moderately soft kinetic kernel of the following form
\begin{equation}
    \label{kernel:A}
A(|v-v_*|)=|v-v_*|^{2+\gamma},\quad \gamma\in(-2,0).
\end{equation}
In \eqref{kernel:A}, $\gamma$ is a physical parameter, where $\gamma>0$ corresponds to the so-called \emph{hard} potentials, $\gamma=0$ to  the \emph{Maxwellian} potential, and $\gamma<0$ to the \emph{soft} potential.
With a further classification, $-2\leq \gamma <0$ is known as the \emph{moderately soft} potentials, and $-d\leq \gamma<-2$ as the \emph{very soft potentials}. In this paper, we consider the moderately soft potential cases and exclude the case $\gamma=-2$ due to the technical difficulties explained in Remark \ref{rmk:-2} below.

To recall the physical origin of the model, the Landau equation describes
collisional dynamics in a spatially homogeneous gas or plasma in the
grazing-collision limit, in which the Boltzmann collision kernel concentrates
on small deflection angles; see, for example, \cite{Villani2002,AV04}.  The
Coulomb kernel ($\gamma=-d$) has the direct plasma interpretation.  By contrast, the
moderately soft potentials considered in this paper exclude the Coulomb case
and correspond to power-law interactions in gases within the
grazing-collision approximation.  The integration over $v_*$ in
\eqref{landau} records collisions between particles of different velocities
and is the source of the difficult nonlocality of the Landau diffusion
matrix.


The Landau equation \eqref{landau} associated with a dissipated Boltzmann
entropy
\begin{equation*}
\cH(f)=\int_{\R^d} f\log f\dd v,    
\end{equation*}
that is 
\begin{align*}
    \frac{\dd }{\dd t}\cH(f_t)=-\cD(f_t)
    \quad\text{where}\quad 
\cD(f)=\frac12\int_{\R^d\times\R^d}Aff_*|\tn \log f|^2\ge0.
\end{align*}
We define the Landau difference gradient
\begin{align}
\label{def:nabla-L}
    \widetilde\nabla f= \Pi_{(v-v_*)^\perp}\big(\nabla_vf-\nabla_{v_*}f_*\big).
\end{align}
Let $G:\G\to\R^d$. We write $G=G(v,v_*)$ and $G_*=G(v_*,v)$. Then the corresponding divergence
$\widetilde\nabla\cdot$ is given by 
\begin{equation}
\label{landau-div}
    \widetilde\nabla\cdot G=\nabla_v\cdot \int_{\R^d}\Pi_{(v-v_*)^\perp} (G-G_*)\dd v_*
\end{equation}
via the integration by parts formula
\begin{align*}
\int_{\R^d\times\R^d}G\cdot \widetilde\nabla f\dd v_*\dd v=- \int_{\R^d}\widetilde\nabla \cdot G  f\dd v.
\end{align*}
With this notation, the Landau equation can be written in the Onsager form
\begin{equation}
\label{Landau-OS}
 \d_t f=\frac12 \tn\cdot\big( Aff_*\tn \log f\big).
\end{equation}
A gradient-flow structure for \eqref{landau} was established in \cite{CDDW24} via a variational characterisation
\begin{align*}
\d_t f=-K_f^L\dd\cH(f),  \quad K_f^L\varphi(v)=-\frac12 \tn\cdot\big(Aff_*\tn\varphi\big).   
\end{align*}
The gradient flow structure is the starting point of the stochastic model considered below.
Formally, it says that the Landau equation is a gradient flow of the
Boltzmann entropy with respect to a nonlocal transport geometry on the space
of velocity distributions.  In this geometry, a tangent vector is not
represented by an ordinary velocity field in the variable $v$, but by a
pairwise flux on the collision space $(v,v_*)$.  The admissible directions are
generated by the Landau difference gradient
$\widetilde\nabla=\Pi_{(v-v_*)^\perp}(\nabla_v-\nabla_{v_*})$: only the
components tangential to the sphere orthogonal to the relative velocity
$v-v_*$ are active.  Thus the metric is degenerate in the radial collision
direction and measures the cost of moving mass through pairwise tangential
collisions.


The fluctuation--dissipation principle then prescribes the covariance of the
thermal noise.  In finite dimensions, for a gradient system
$\dot x=-M(x)\nabla E(x)$, the equilibrium fluctuation has noise covariance
proportional to $M(x)$, i.e. the noise amplitude is formally
$\sqrt{2M(x)}$.  In the present Landau geometry the corresponding mobility is
the pairwise operator $Aff_*$ acting through $\widetilde\nabla\cdot$.
Therefore, the fluctuating correction should be a conservative noise in
Landau-divergence form, with amplitude $\sqrt{Aff_*}$ in the pair variables.
This leads to the following formal fluctuating hydrodynamic ansatz:
\begin{equation}
\label{eq:ideal}
	\partial_tf=\frac12 \tn\cdot\big( Aff_*\tn \log f\big)-\frac12\tn\cdot\left(\sqrt{Aff_*}\dot{W}\right),  
\end{equation}
where $\dot{W}$ is a velocity--velocity--time white noise. Equation~\eqref{eq:ideal} is only formal, since the irregularity of the space--time white noise renders it ill-posed. In the terminology of singular SPDEs, \eqref{eq:ideal} belongs to the supercritical regime; see \cite{Hairer2014}. By the definition of $\tn\cdot$, only the antisymmetric part of $\dot{W}$ contributes. Hence, we replace $\dot{W}$ by a velocity--velocity--time Gaussian noise $\xi$, such that 
\begin{align}
\label{choice:xi}
\xi(t,v,v_*)=-\xi(t,v_*,v). 
\end{align}
Probabilistically, this is the continuum analogue of the
antisymmetric pair Brownian motions in the conservative Landau particle
system, see Subsection \ref{sec-1.1} for more details. 


In the following, we explain more precisely how the above fluctuating Landau equation can be viewed, at a formal level, as describing the fluctuations of an underlying interacting particle system whose mean-field limit is governed by the deterministic Landau equation.

\subsection{From a Kac-like Landau particle system to fluctuations}\label{sec-1.1}

Kac's original programme concerns the use of Kac's binary jump process
and its high-dimensional linear master equation to study the spatially
homogeneous Boltzmann equation.  Its central steps include propagation of
chaos, quantitative comparison with the nonlinear kinetic equation, and the
transfer of relaxation and entropy information from the particle system to
the Boltzmann equation; see \cite{MischlerMouhot2013}.  The diffusion process
used for the Landau equation below is therefore not ``Kac's process'', which
conventionally denotes the binary jump model for Boltzmann's equation.  We
call it a \emph{Kac-like Landau particle system}, following the terminology of
\cite{FournierGuillin2017}.

For the homogeneous Landau equation, the relevant law-of-large-numbers
problem is to prove that \eqref{landau} arises as the mean-field limit of a
conservative system of weakly interacting velocities.  Such particle
approximations and propagation-of-chaos results have been developed, for
different interaction regimes, in
\cite{Fournier2009,FournierHauray2016,FournierGuillin2017}.  The question of
the present paper begins after this deterministic limit has been identified:
we ask which continuum stochastic equation describes the next-order
martingale fluctuations.  Thus our fluctuation problem should be viewed as an
extension beyond the law-of-large-numbers part of Kac's programme, rather than
as part of the original programme itself.  If
\begin{equation*}
    \mu_t^N=\frac1N\sum_{i=1}^N\delta_{V_i(t)}
\end{equation*}
is the empirical measure of the $N$-particle system, then the deterministic
equation describes the formal limit
\begin{equation*}
    \mu_t^N\longrightarrow \bar{f}_t(v)\,\dd v .
\end{equation*}
The fluctuating equation should contain the information which remains at the
next scale, namely in the centred field
$\sqrt N(\mu_t^N-\bar{f}_t(v)\dd v)$.  At this level the decisive object is the
quadratic variation of the empirical-measure martingale.

We recall the formal particle model behind this comparison.  Put
\begin{equation*}
    a(z):=A(|z|)\Pi_{z^\perp},\qquad b(z):=\nabla_z\cdot a(z),
\end{equation*}
so that, for the standard power-law kernel $A(|z|)=|z|^{\gamma+2}$, one has
$b(z)=(1-d)|z|^\gamma z$.  Let
$(B_{ij})_{1\le i<j\le N}$ be independent $\R^d$-valued Brownian motions and
extend them antisymmetrically by setting
\begin{equation*}
    Z_{ij}=B_{ij}\quad (i<j),\qquad
    Z_{ji}=-B_{ij},\qquad Z_{ii}=0 .
\end{equation*}
We consider, at the formal level, the conservative mean-field system
\begin{equation}
\label{intro:particle-landau}
    \dd V_i
    =\frac{2}{N}\sum_{j=1}^N b(V_i-V_j)\,\dd t
    +\sqrt{\frac{2}{N}}\sum_{j=1}^N a(V_i-V_j)^{1/2}\,\dd Z_{ij},
    \qquad 1\le i\le N .
\end{equation}
Here $a(z)^{1/2}$ may be taken as
$\sqrt{A(|z|)}\Pi_{z^\perp}$. We refer to
	\eqref{intro:particle-landau} only as a Kac-like conservative Landau particle
	system.  Unlike Kac's binary jump process for the Boltzmann equation, the
	grazing-collision mechanism is encoded here by a diffusion in the relative
	velocities.  The antisymmetric Brownian family $Z_{ij}=-Z_{ji}$ is
	the key structural feature.  It makes the stochastic force exerted by particle
	$j$ on particle $i$ the negative of the force exerted by particle $i$ on
	particle $j$, and hence preserves the conservative collision structure at the
	particle level.  Early studies of the associated master equation go back to
	\cite{PrigogineBalescu1957,PrigogineBalescu1957b,KiesslingLancellotti2004},
	and mean-field convergence of related Kac-like Landau dynamics to the
	deterministic Landau equation is proved in
	\cite{FournierHauray2016,FournierGuillin2017,feng2025kacsprogramlandauequation}.
	For the present paper, this system
	is important not only because of its law-of-large-numbers limit, but also
	because the quadratic variation of its empirical-measure martingale identifies
	the covariance structure that the fluctuating Landau noise should reproduce.
	The important point is therefore not the particular choice of square
	root, but the antisymmetric pairing of the Brownian motions.  This is the
	particle-level origin of the antisymmetric collision noise in
	\eqref{eq:ideal}.

Testing the empirical measure against
$\phi\in C_c^\infty(\R^d)$ and applying It\^o's formula gives
\begin{equation}
\label{intro:empirical-ito}
    \dd\langle\mu_t^N,\phi\rangle
    =\mathcal L_N\phi(V(t))\,\dd t+\dd M_t^{N,\phi},
\end{equation}
where $\mathcal L_N\phi$ is the drift contribution.  The martingale part is
of order $N^{-1/2}$, and its quadratic variation is given by 
\begin{align}
\label{intro:empirical-bracket}
    \dd [M^{N,\phi}]_t
    &=
    \frac1N\int_{\R^d\times\R^d}
    A(|v-v_*|)\,
    \big|\Pi_{(v-v_*)^\perp}(\nabla\phi(v)-\nabla\phi(v_*))\big|^2
    \mu_t^N(\dd v)\mu_t^N(\dd v_*)\,\dd t .
\end{align}
Thus the fluctuation covariance is concentrated on pair variables
$(v,v_*)$ and sees only the tangential component of the difference gradient.
This is exactly the same geometry as in the deterministic Landau operator.

\rmb{We now pass from the empirical martingale to the fluctuation scale.  The
measure $\bar f_t(v)\dd v$ denotes the deterministic law-of-large-numbers limit
of $\mu_t^N$.  The fluctuation field is
\begin{equation*}
    \eta_t^N:=\sqrt N\big(\mu_t^N-\bar f_t(v)\dd v\big).
\end{equation*}
Therefore the martingale part seen by $\eta_t^N$ is
$\sqrt N M_t^{N,\phi}$, and its quadratic variation is
$N[M^{N,\phi}]_t$.  If, at the formal level, one replaces the empirical measure
in \eqref{intro:empirical-bracket} by its deterministic limit
$\bar f_t(v)\dd v$, then the density in time of the limiting quadratic
variation of the fluctuation martingale is}
\begin{equation}
\label{eq:intro-fluctuation-covariance}
    \rmb{
    \int_{\R^d\times\R^d}
    A(|v-v_*|) \bar{f}_t(v)\bar{f}_t(v_*)
    \big|\Pi_{(v-v_*)^\perp}(\nabla\phi(v)-\nabla\phi(v_*))\big|^2
    \dd v\dd v_* .}
\end{equation}
\rmb{Thus \eqref{eq:intro-fluctuation-covariance} should not be read as the
quadratic variation of the empirical measure itself.  Rather, after integrating
in time, it gives the limiting bracket of the martingale in the Gaussian
fluctuation field around the deterministic Landau solution $\bar f$.  This
distinction explains why the deterministic density $\bar f$ appears in
\eqref{eq:intro-fluctuation-covariance}.}

The guiding principle for the fluctuating Landau equation is that its noise
should reproduce the same quadratic variation when the equation is linearised
around a deterministic profile.  The conservative antisymmetric noise 
\begin{equation*}
    \rmb{
    -\frac12\widetilde\nabla\cdot\big(\sqrt{Aff_*}\,\xi\big)}
\end{equation*}
in \eqref{eq:ideal} has exactly this property: testing it against $\phi$
produces a martingale whose bracket is \eqref{eq:intro-fluctuation-covariance},
up to the normalisation fixed in the definition of
$\widetilde\nabla\cdot$.  In this precise sense, the stochastic term in our
SPDE is chosen as a Landau fluctuation correction: it is designed so that the
continuum SPDE and the Kac-like Landau particle system have the same covariance
structure at the central-limit scale.  The detailed Dean--Kawasaki-type
calculation is given in Appendix~\ref{app:formal-particle-derivation}; here we
record only the covariance-matching mechanism which motivates the model.  After
the regularisations introduced below, this covariance is mild enough for the
Gaussian fluctuation equation associated with the particle system to be
captured by the linearisation of the fluctuating Landau equation.

\subsection{New challenges and regularisation of the model}
The above fluctuation corrections, derived from the fluctuation--dissipation relation, fall within the framework of fluctuating hydrodynamics, a modern theory of nonequilibrium statistical mechanics that connects macroscopic fluctuation theory with large deviation principles. Fluctuating hydrodynamics encompasses a variety of effective SPDE models, and significant progress has been made in recent years in the mathematical analysis of these equations, including the Landau--Lifshitz--Navier--Stokes equations \cite{LL87,GHW23}, Dean--Kawasaki-type equations \cite{FG23,FG24}, fluctuating thin-film equations \cite{DareiotisGessGnannGruen2021,DareiotisGessGnannSauerbrey2026}, and fluctuating geometric PDEs \cite{KawasakiOhta1982,KatsoulakisKho2001}.

Since fluctuating hydrodynamic models are expected to be valid primarily in the mesoscopic regime, the singular space--time white noise is often replaced by a regularised noise that remains white in time while being correlated in space or velocity. The corresponding correlation length typically represents either the grid size of the numerical discretisation or the characteristic interaction distance of the underlying particle system. Moreover, the It\^o formulation of the noise is commonly replaced by either Klimontovich or Stratonovich noise; see \cite{ayala2025reversibilitycovariancecoarsegraininglangevin} for a detailed discussion. This modification is motivated by both physical and mathematical considerations.

From the perspective of nonequilibrium statistical mechanics, \cite{Ottinger} argues that Klimontovich noise formally preserves the Gibbs invariant measure of the dynamics. From the mathematical viewpoint, both Klimontovich and Stratonovich formulations provide additional stochastic coercivity. Since the corresponding correction terms differ only by a constant, we adopt the Stratonovich formulation throughout this paper, as this choice does not affect the final analytical results.

Consequently, following the above considerations, it is natural to consider the modified equation
\begin{equation}
\label{eq:ideal-1}
\partial_tf=\frac12 \tn\cdot\big( Aff_*\tn \log f\big)-\frac{\sqrt{\eps}}{2}\tn\cdot\left(\sqrt{Aff_*}\circ\xi_K\right).
\end{equation}
Here, $\eps\in(0,1)$ denotes the noise intensity, $K\in\mathbb{N}_+$ represents the $K$-dimensional cutoff. The noise is formally defined by
$$
\xi_K(t,v,v_*):=\sum_{k=1}^K\frac{\dd}{\dd t}{B}^k(t)g_k(v,v_*),
$$
where $(B^k)_{k\geq1}$ denotes a sequence of independent one-dimensional standard Brownian motions, and $(g_k)_{k\geq1}$ satisfies
\begin{align*}
g_k(v,v_*)=-g_k(v_*,v).
\end{align*}
This finite-dimensional truncation preserves the antisymmetric pairwise structure of the formal white noise and constitutes the noise model employed throughout the existence theory developed in this paper.

The remaining issue is the square-root mobility in \eqref{eq:ideal-1}.  The
Stratonovich formulation is useful because the Stratonovich-to-It\^o
correction provides stochastic coercivity; however, the same correction also
reveals the singularity of the square root near the vacuum region.  In the classical Dean--Kawasaki equation with correlated noise,
$$
\partial_t\rho=\Delta\rho-\nabla\cdot(\sqrt{\rho}\circ\xi_F),
$$
where $\rho$ depends only on $(x,t)$ and $\xi_F$ is white in time and correlated in space, the corresponding It\^o correction contains a singular term of the form
\begin{equation*}
    \Delta\log\rho.
\end{equation*}
The well-posedness theory developed by Fehrman and Gess \cite{FG24} handles this singularity through the notion of renormalised kinetic solutions, which effectively truncate the solution near the vacuum set. More precisely, for every $S\in C^{\infty}_c(\mathbb{R}_+)$ and $\varphi\in C^{\infty}_c(\mathbb{R}^d)$, the renormalised solution is characterised through an identity for $\dd\langle S(\rho(t)),\varphi\rangle$, in which the singular term is represented by
\begin{equation*}
-\int^t_0\int_{\mathbb{R}^d}\nabla(\varphi S'(\rho))\cdot\frac{1}{\rho}\nabla\rho\,\dd v\,\dd s.
\end{equation*}
Consequently, the truncation of $S'$ together with the available regularity of $\rho$, this singular term is well defined. 

For the Landau equation, the obstruction is substantially more nonlocal. As shown in Appendix~\ref{app:stratonovich-ito}, the correction terms involve not only the visible density $f(v)$ but also factors such as $f(v_*)$ and $f(w)$ that are hidden inside the collision integrals. At the formal square-root level, one should therefore expect singular factors arising simultaneously from $\sqrt{f(v)}$, $\sqrt{f(v_*)}$, and $\sqrt{f(w)}$. More precisely, for every $S\in C^{\infty}_c(\mathbb{R}_+)$ and $\varphi\in C^{\infty}_c(\mathbb{R}^{d})$, the most singular term in the renormalisation formula is given by
\begin{equation*}
\frac{1}{2}\int^t_0\int_{\mathbb{R}^{3d}}\varphi(v_*)S''(f_*)\sqrt{f_*}\sqrt{f_w}\nabla_{v_*} f_*\otimes \nabla_v \log f : F_4\,\dd v\, \dd v_*\,\dd w\, \dd s,
\end{equation*}
where $F_4$ is a regular function depending on the correlation structure of the noise; see Section~\ref{sec:pre-2} for details. Owing to the integration-by-parts formula with respect to the Landau gradient, the renormalisation procedure also generates terms involving the renormalisation of $f$ in the hidden variable $v_*$. Consequently, the classical renormalisation framework is unable to truncate the singular factor $\nabla_v\log f$. This phenomenon constitutes a new challenge within the framework of Dean--Kawasaki-type fluctuating hydrodynamic equations, arising from the nonlocal interaction structure of the noise. 

In the present paper, as the first step, we instead introduce a regularisation. The square
root in the stochastic flux is replaced by a smooth mobility $\sigma$, this leads to the regularised Stratonovich equation
\begin{equation}
\label{eq:sigma-strato-intro}
\partial_t f
=\frac12 \tn\cdot\big(Aff_*\tn \log f\big)
-\frac{\sqrt{\eps}}{2}\tn\cdot\left(A^{1/2}\sigma(f)\sigma(f_*)\circ\xi_K\right).
\end{equation}
We further remark that, when $\sigma$ is chosen as a sequence of approximations of the square-root function, a special choice of the noise basis allows us to establish a uniform entropy dissipation estimate with respect to this approximation. 

A rigorous solution theory for \eqref{eq:sigma-strato-intro} provides a foundation for verifying the coincidence of the fluctuations. More precisely, as a direction for future research, one may investigate the Gaussian fluctuations and large deviations of solutions to
\begin{equation}
\label{eq:sigma-strato-intro-ep}
\partial_t f^N
=\frac12 \tn\cdot\big(Af^Nf^N_*\tn \log f^N\big)
-\frac{1}{2\sqrt{N}}\tn\cdot\left(A^{1/2}\sigma_{n(N)}(f^N)\sigma_{n(N)}(f^N_*)\circ\xi_{K(N)}\right),
\end{equation}
as $N\to\infty$, under a scaling regime satisfying $(N,K(N),n(N))\to(\infty,\infty,\infty)$, where $\sigma_{n(N)}$ denotes a sequence of approximations to the square-root function. Informally, one expects the fluctuations of \eqref{eq:sigma-strato-intro-ep} to coincide with those of the Kac-like Landau particle system \eqref{intro:particle-landau}. In particular, establishing a large deviation principle for \eqref{eq:sigma-strato-intro-ep} would provide a continuum fluctuating hydrodynamic derivation of the rate function conjectured by macroscopic fluctuation theory. Therefore, a rigorous solution theory for \eqref{eq:sigma-strato-intro} constitutes a fundamental first step toward this program.


\subsection{Main results}

We work in the moderately soft potential regime
\begin{align*}
    A(|v-v_*|)=|v-v_*|^{\gamma+2},\quad \gamma\in(-2,0).
\end{align*}
We define
\begin{equation*}
    G_k(v,v_*)=\sqrt{A(|v-v_*|)}\Pi_{(v-v_*)^\perp}g_k(v,v_*),
\end{equation*} 
and, motivated by the discussion in the previous section, consider the following
regularised Stratonovich fluctuating Landau equation
\begin{equation*}
    \dd f
    =Q(f,f)\,\dd t
    -\frac{\sqrt{\eps}}{2}\sum_{k=1}^K
    \widetilde\nabla\cdot\big(G_k(v,v_*)\sigma(f)\sigma(f_*)\circ \dd B_t^k\big).
\end{equation*}
This equation can be written in the It\^o form as follows (see the  Stratonovich-to-It\^o computation  given in
Appendix~\ref{app:stratonovich-ito}) \begin{equation}
\label{eq-app:main}
\begin{aligned}
\partial_t f
&=\frac12\tn\cdot\big(A f f_*\tn \log f\big)-\frac{\sqrt{\eps}}{2}\tn\cdot(A^{1/2}  \sigma(f) \sigma( f_*)\xi_K)\\
&\quad+\frac{\eps}{2}\sum_{k= 1}^K\tn\cdot \Big(G_k(v,v_*) \sigma'(f) \sigma(f_{*}) \tn\cdot\big(G_k(v,w) \sigma(f) \sigma(f_{w})\big) \Big).
\end{aligned}
\end{equation}


We recall the following formula for the Boltzmann entropy \begin{equation}
 \label{def:H-main-boltzmann}
 \cH(f):=\int_{\R^d} f\log f\,\dd v,
 \end{equation}
and the corresponding Landau entropy dissipation
\begin{equation}
\label{def:D-h-sigma-main}
\cD(f):=\frac12\int_{\R^d\times\R^d}A f f_*\big|\tn \log f\big|^2\ge 0.
\end{equation}
The main result of this paper is the following theorem which establishes the existence of probabilistic weak solutions to the fluctuating Landau equation \eqref{eq-app:main}.
\begin{theorem}\label{main:thm}
Let $d\ge 2$. Let $T>0$ be arbitrarily fixed. Assume that the interaction kernel is given by
\eqref{def:cA}, that the stochastic coefficient $\sigma$ satisfies
Assumption~\ref{ass:sigma-R0-0}, and that the noise coefficients satisfy
Assumption~\ref{ass:G-k:app}. Let the initial datum satisfy
Assumption~\ref{ass:initial-main}.  
Then there  exists $\eps_0>0$ such that, for every $\eps\in(0,\eps_0)$,
there exists at least one probabilistic weak solution
$(f,(B^k)_{1\le k\le K})$ to \eqref{eq-app:main} with initial datum $f_0$ in
the sense of Definition~\ref{def:weak-sol:L1}.

Moreover, almost
surely for every $t\in[0,T]$, the mass, momentum and energy bounds hold,
\begin{gather*}
    \int_{\R^d} (1,v)f_t(v)\dd v=\int_{\R^d}(1,v) f_0(v)\dd v,\quad
    \int_v|v|^2  f_t(v)\dd v\le  \int_{\R^d}|v|^2  f_0(v)\dd v.
    \end{gather*}
\end{theorem}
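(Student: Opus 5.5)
The plan follows the three-level scheme announced in the abstract, with the levels removed in reverse order.

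\emph{Approximations.} First I regularise the equation:
\begin{itemize}
\item replace the kernel $A$ by a smooth bounded, uniformly elliptic truncation $A_\delta$ (cut off near $v=v_*$ and at infinity);
\item add an artificial diffusion $\nu\Delta f$;
\item keep the regular mobility $\sigma$ of Assumption~\ref{ass:sigma-R0-0}.
\end{itemize}
For this regularised It\^o equation \eqref{eq-app:main} I project onto a Galerkin space $H_n=\mathrm{span}\{e_1,\dots,e_n\}\subset L^2$, with a weight to control moments. The result is a finite-dimensional SDE with locally Lipschitz coefficients, which has a unique strong solution up to explosion.

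\emph{$L^2$ estimates.} Apply It\^o's formula to $\|f^n\|_{L^2}^2$. The Stratonovich structure is what makes this work: the It\^o correction term in \eqref{eq-app:main} combines with the quadratic variation of the martingale term. The leftover is a commutator involving $\sigma'$ and the Landau divergence of $G_k$, which Assumption~\ref{ass:G-k:app} lets me bound by $C\eps\|f\|_{L^2}^2$ plus a fraction of the dissipation. Here I need $\eps<\eps_0$ to absorb the noise contributions into the $\nu$-diffusion and the ellipticity of $A_\delta$, and this fixes $\eps_0$. Since the bounds are uniform in $n$, there is no explosion. I then get tightness in $L^2_tL^2_v\cap C_tH^{-s}$ from an Aldous/Kolmogorov time-regularity estimate, and pass to the limit $n\to\infty$ by Skorokhod--Jakubowski representation. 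Martingale identification then gives a probabilistic weak solution of the $(\delta,\nu)$-problem.

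\emph{Uniform $L^1$-type bounds.} Next I derive bounds uniform in $(\delta,\nu)$. Testing with $1$ and $v$ gives exact conservation, because the noise is antisymmetric: $\widetilde\nabla\cdot$ of any flux annihilates $1$ and $v$, since $\Pi_{(v-v_*)^\perp}(e_i-e_i)=0$ and $\Pi_{(v-v_*)^\perp}(v-v_*)=0$. Testing with $|v|^2$ gives
\[
\Pi_{(v-v_*)^\perp}(2v-2v_*)=0,
\]
so the Landau and noise parts vanish. Only the $\nu\Delta$ term contributes $2d\nu$ mass, which disappears as $\nu\to0$; this is why only the inequality survives in the statement. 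Then I apply It\^o's formula to $\cH(f)$, using positivity of $f$ from a maximum-principle argument. This gives
\[
\mathbb E\sup_t \cH(f_t)+\mathbb E\int_0^T \cD_\delta(f)\,\dd t\le C,
\]
with the It\^o corrections controlled by the regularity of $\sigma$ and $G_k$, again for small $\eps$. Combined with the second-moment bound, this yields uniform integrability in $L^1$ (Dunford--Pettis). For $\gamma\in(-2,0)$, the entropy dissipation bound gives
\[
\nabla\sqrt f\in L^2\bigl(\langle v\rangle^{\gamma/2}\bigr)
\]
by the known coercivity of $\cD$ for moderately soft potentials. This provides the compactness in $v$.

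\emph{Limit $(\delta,\nu)\to0$ — the main obstacle.} This last step is where I expect the main difficulty. I need to pass to the limit in the nonlinear nonlocal terms $Aff_*\widetilde\nabla\log f$ and in the Stratonovich correction with singular $A$. I would use the weak formulation with test function $\varphi$ moved onto $\widetilde\nabla\varphi$. Since $|\widetilde\nabla\varphi|\lesssim |v-v_*|$, the singularity near the diagonal is reduced to $|v-v_*|^{\gamma+2}$, which is locally integrable. Strong $L^1$ convergence of $f\otimes f_*$, obtained from the gradient bound via Aubin--Lions plus tightness, then gives convergence. The flux term $\sqrt{f f_*}\,\widetilde\nabla\sqrt{f}$ converges weakly in $L^2$. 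Finally, martingale identification via the quadratic-variation characterisation completes the construction of $(f,(B^k))$ in the sense of Definition~\ref{def:weak-sol:L1}. Lower semicontinuity then gives the energy inequality, and the conservation identities pass to the limit almost surely for all $t$ by weak continuity.
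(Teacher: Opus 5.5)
\textbf{Main gap: compactness, and where $\eps_0$ really comes from.} The coercivity estimate you invoke (Lemma~\ref{lem:Des}) reads $\int\langle v\rangle^{\gamma}|\nabla\sqrt f|^2\le C_1e^{C_2\cH_+(f)}(1+\cD(f))$, so its constant depends exponentially on the entropy.

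In the deterministic theory $\cH(f_t)\le\cH(f_0)$ holds pathwise. Here $\cH(f_t)$ is random. Separate bounds on $\mathbb E\sup_t\cH(f_t)$ and $\mathbb E\int_0^T\cD$ do not control $\mathbb E\int_0^Te^{C_2\cH_+(f_t)}(1+\cD(f_t))\,\dd t$.

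The paper closes this with the exponential entropy--dissipation estimate \eqref{H:esp:exp}. It starts from the pathwise inequality $\cH(f_t)+\int_0^t\cD\le\cH(f_0)+\sqrt\eps M_t+C$ and the bracket bound $\langle M\rangle_t\le 4\int_0^t\cD$. With $a=\lambda\sqrt\eps$, the supermartingale $\exp(2aM-2a^2\langle M\rangle)$ and Doob's inequality give $\mathbb E\exp[\sup_s(\lambda\cH(f_s)+\frac\lambda2\int_0^s\cD)]\le C_\lambda e^{\lambda\cH(f_0)+C\lambda}$, provided $a^2\le\lambda/8$, i.e.\ $\eps\le 1/(8\lambda)$. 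H\"older's inequality with $\lambda=pC_2$ then bounds the weighted Fisher information in expectation, and this is what fixes $\eps_0$.

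You could instead prove tightness in probability on the events $\{\operatorname{esssup}_t\cH_+(f_t)\le R,\ \int_0^T\cD\le R\}$, using BDG and $\langle M\rangle\le4\int\cD$ for the maximal bound. Either way, some such argument has to be supplied.

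Your own $\eps_0$ is fixed at the Galerkin stage by absorbing noise into $\nu\Delta$ and $A_\delta$. It would therefore depend on $(\nu,\delta)$ and collapse in the limit. No smallness is needed there anyway: Young's inequality with $\nu$-dependent constants suffices.

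There is a further mismatch. Because you remove $\delta$ and $\nu$ simultaneously, the only dissipation you control is $\cD_\delta$, built on $A_\delta\le A$. This does not bound $\cD$, so the known coercivity does not apply. The paper avoids this by first removing the kernel, mobility and logarithm regularisations at fixed viscosity, in an $L^2$ framework. That step uses the uniform $L^2$ bound of Lemma~\ref{lem:L-2:app-refine}, where $\int f^2f_*|v-v_*|^\gamma$ is controlled by Pitt's inequality; this is where $\gamma>-2$ enters. The genuine $\cD$ is then available, by lower semicontinuity, when the viscosity is sent to zero.

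\textbf{Secondary gap: the approximation scheme does not close as stated.} Galerkin iterates need not be nonnegative. With the genuine quadratic mobility, $\bar a_\delta(f)$ is then not positive semidefinite. The Landau drift contributes $\int\nabla f\cdot\bar a_\delta(f)\nabla f$ and $\int\nabla f\cdot\bar b_\delta(f)f$, which are cubic in the $L^2$ balance and cannot be absorbed by $\nu\|\nabla f\|_{L^2}^2$. The unbounded $\sigma$ of Assumption~\ref{ass:sigma-R0-0} causes the same superlinear growth in the noise and correction terms. The paper replaces $ff_*$ by $\mu^2(f)\mu^2(f_*)$ and uses a regularised $\sigma$, both bounded and vanishing on $(-\infty,0]$. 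This makes the $a$-part nonpositive and everything else at most quadratic.

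Likewise, you cannot simply apply It\^o's formula to $\int f\log f$. No comparison principle gives a positive lower bound; non-negativity comes only from a negative-part argument. Testing the true drift with a regularised logarithm such as $\log(f+\kappa)$ produces $\tn\log f\cdot\tn\log(f+\kappa)$, which has no sign. The paper regularises drift and logarithm consistently: it uses $Q_{\mu,L}=\frac12\tn\cdot(A\mu^2(f)\mu^2(f_*)\tn L(\mu^2(f)))$ together with the entropy density $h_{\mu,L}'=L(\mu^2)$, so the dissipation is an exact square. It then bounds the It\^o correction using $\sigma\le\mu$ and $L'(s)s\le 1$.

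Finally, the $L^2$ framework needs $L^2$ initial data, so $f_0$ must be regularised too. The paper uses $(1-\alpha)f_0*\rho^\alpha$ plus $\alpha m_0$ times a Gaussian, whose entropy and second moment converge to those of $f_0$.
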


{For the deterministic Landau equation, a fundamental property is that the Boltzmann entropy $\cH(f)$ is decreasing in time, with its decay characterised by the time integral of the entropy dissipation $\cD(f)$. For the fluctuating equation, the corresponding entropy estimate holds for almost every $t\in[0,T]$:
\begin{align}
\hE\big[\cH(f_t)\big]
    +\hE\Big[\int_0^t\cD(f_s)\,\dd s\Big]
    \le \hE\big[\cH(f_0)\big]+C(\|\sigma'\|_{L^\infty}).
    \label{eq: entropy estimate}
\end{align}}
The preceding entropy dissipation estimate is robust for general admissible correlated noises, but
it still contains an additive constant (the constant $C$ in the RHS of \eqref{eq: entropy estimate}) coming from the treatment of the
Stratonovich-to-It\^o correction.  A sharper entropy balance can be recovered
if the active noise modes are chosen more carefully.  More precisely, Appendix
\ref{app-sec:ONB-2} constructs a special orthonormal basis (ONB) $\{g_k\}$ of a subspace of $L^2(\G;\Do)$, which satisfies  the tangential divergence-free condition
\eqref{intro:div-free} below.  This additional cancellation removes the residual error
terms in the entropy computation. 

In the second main result, Theorem \ref{main:thm-2}, we establish
a refined entropy dissipation for a structure-preserving $\theta$-regularised equation, see equation \eqref{eq:appendix-1-intro} below. { In particular, we approximate not only the square root in the noise term by a smooth Lipschitz function $\theta$, which plays the role of $\sigma$ in Theorem \ref{main:thm}, but also 
\begin{align*}
    \text{the mobility $ff_*$ in the Landau collision operator by $\theta^2(f)\theta^2(f_*)$}.
\end{align*}}
More precisely, let $\theta\in C^1(\mathbb{R}_+,\mathbb{R}_+)$ be a regularisation coefficient for the square-root function satisfying
\begin{align*}
 \theta(r)\sim r\quad \text{for $r$ small, while $\theta(r)=\sqrt{r}$ away from $0$}.   
\end{align*}

Instead of the Boltzmann entropy and the full Landau dissipation of \eqref{eq-app:main}, we consider the corresponding $\theta$-entropy $\mathcal H_\theta$ and the associated dissipation $\mathcal D_\theta$
\begin{gather*}
\cH_\theta(f)=\int_{\R^d} h_\theta(f)\dd v,\qquad
h_\theta(s)=\int_0^s\log(\theta^2(r))\dd r,\\
\cD_{\theta}(f)
:=\frac12\int_{\R^d\times \R^d}A\theta^2(f)\theta^2(f_*)
\big|\tn \log(\theta^2(f))\big|^2\dd v\dd v_*\ge 0.
\end{gather*}
The $\theta$-entropy defined above is equivalent to the Boltzmann entropy up to a multiple of the $L^2$-energy, see Remark~\ref{rmk:boltzmann-entropy} below. 
Now we consider the following equation 
\begin{equation}
\label{eq:appendix-1-intro}
\begin{aligned}
\dd f
&=\frac12\tn\cdot\big(A\theta^2(f)\theta^2(f_*)\tn\log\theta^2(f)\big)\dd t
-\frac{\sqrt{\eps}}{2}\tn\cdot\big(A^{1/2}\theta(f)\theta(f_*)\dd\xi_K\big)\\
&\quad+\frac{\eps}{2}\sum_{k=1}^K\tn\cdot \Big(G_k(v,v_*)\theta'(f)\theta(f_*)
\tn\cdot\big(G_k(v,w)\theta(f)\theta(f_w)\big)\Big)\dd t .
\end{aligned}
\end{equation}
The second main result of the paper is the following theorem which provides a refined entropy dissipation for \eqref{eq:appendix-1-intro}. 
\begin{theorem}[Refined entropy dissipation]\label{main:thm-2}
Let the assumptions of Theorem~\ref{main:thm} hold. In addition, assume that the coefficient
$\theta$ is chosen as in Assumption~\ref{ass:7} and that the
noise basis is chosen as in Assumption~\ref{ass:G-k-2}. Equivalently, the active modes satisfy
\begin{equation}
\label{intro:div-free}
   (\nabla_v-\nabla_{v_*})\cdot \Pi_{(v-v_*)^\perp}g_k(v,v_*)=0,\qquad 1\le k\le K .
\end{equation}
Then there exists a probabilistic weak solution to \eqref{eq:appendix-1-intro} such that, for almost every $t\in[0,T]$,
\begin{equation}
\label{main:refined-entropy}
    \hE\big[\cH_\theta(f_t)\big]
    +\hE\Big[\int_0^t\cD_\theta(f_s)\,\dd s\Big]
    \le
    \hE\big[\cH_\theta(f_0)\big].
\end{equation}

Similarly to the deterministic Landau equation, for sufficiently regular solutions, the entropy inequality \eqref{main:refined-entropy} formally holds as an equality, with entropy dissipation arising solely from the explicitly identified dissipation term 
\begin{align*}
\hE\big[\cH_\theta(f_t)\big]
+\hE\Big[\int_0^t\cD_\theta(f_s)\,\dd s\Big]
=\hE\big[\cH_\theta(f_0)\big].
\end{align*}

\end{theorem}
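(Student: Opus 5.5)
The plan is to reuse the three-level approximation scheme behind Theorem~\ref{main:thm}, with $\sigma$ replaced by $\theta$ and the mobility $ff_*$ replaced by $\theta^2(f)\theta^2(f_*)$. At the regularised level we derive an exact It\^o formula for $\cH_\theta$, and we then show that the divergence-free condition \eqref{intro:div-free} makes every residual noise-induced term vanish, so that no additive constant appears. The inequality \eqref{main:refined-entropy} then passes to the limit by lower semicontinuity.

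\textbf{Step 1: the exact It\^o balance.} For a smooth approximate solution (Galerkin level with artificial diffusion $\delta\Delta f$ and regularised kernel), we apply It\^o's formula to $\cH_\theta(f)=\int h_\theta(f)$, where $h_\theta'(s)=\log\theta^2(s)$ and $h_\theta''(s)=2\theta'(s)/\theta(s)$. This produces four contributions.

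\emph{Drift term.} Using the integration by parts for $\tn\cdot$, it gives exactly $-\cD_\theta(f)$. This is why the mobility $\theta^2\theta^2_*$ is matched to the entropy density.

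\emph{Martingale term.} It is $-\frac{\sqrt\eps}{2}\sum_k\int G_k\theta(f)\theta(f_*)\cdot\tn\log\theta^2(f)\,\dd B^k$. It has zero expectation once it is shown to be a true martingale.

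\emph{It\^o--Stratonovich correction.} It contributes
\[
-\frac{\eps}{2}\sum_k\int G_k\theta'(f)\theta(f_*)\cdot\tn\log\theta^2(f)\;\tn\cdot\big(G_k(v,w)\theta(f)\theta(f_w)\big).
\]

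\emph{Quadratic variation.} It contributes
\[
\frac{\eps}{8}\sum_k\int h_\theta''(f)\,\big|\tn\cdot\big(G_k\theta(f)\theta(f_*)\big)\big|^2 .
\]

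\textbf{Step 2: algebraic cancellation (the crux).} We write $\tn\log\theta^2(f)=2\theta^{-1}\tn\theta(f)$ in the pair variables and note that $\theta'(f)\nabla\log\theta^2(f)\cdot(\dots)$ recombines with the correction term. After integrating by parts, the correction term equals minus the quadratic-variation term, up to remainder terms in which derivatives fall on $G_k$. Those remainders are precisely $(\nabla_v-\nabla_{v_*})\cdot\Pi_{(v-v_*)^\perp}g_k$, together with the derivative of $\sqrt A$. Since the derivative of $\sqrt A$ in the relative direction is killed by $\Pi_{(v-v_*)^\perp}$, Assumption~\ref{ass:G-k-2} makes all remainders vanish.

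This reflects the formal fact that a Stratonovich noise whose transport vector field is divergence-free preserves every entropy-type functional. For a generic basis, these remainders were exactly what produced the constant $C(\|\sigma'\|_{L^\infty})$ in \eqref{eq: entropy estimate}.

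This step is the main obstacle. The computation is nonlocal in $(v,v_*,w)$, and one must check that the three-variable term factorises into a perfect square only thanks to the antisymmetry $g_k(v,v_*)=-g_k(v_*,v)$ and \eqref{intro:div-free}. I would first verify it formally in the Stratonovich chain rule, $\dd h_\theta(f)=h'_\theta(f)\circ\dd f$, which reduces it to $\int\log\theta^2(f)\,\tn\cdot(G_k\theta\theta_*)$. We then check that the artificial viscosity and the Galerkin projection contribute only terms of sign $\le0$ or terms vanishing as the approximation is removed. The viscosity gives $-\delta\int h''_\theta|\nabla f|^2\le0$, and the Galerkin projection is handled by working in an $L^2$ basis compatible with the noise modes.

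\textbf{Step 3: limit passage.} Taking expectation yields the approximate identity
\[
\hE\cH_\theta(f^n_t)+\hE\int_0^t\cD_\theta(f^n)\le\hE\cH_\theta(f^n_0)+o(1).
\]
By Remark~\ref{rmk:boltzmann-entropy}, $\cH_\theta$ is controlled by the Boltzmann entropy and the $L^2$ energy, so the tightness and compactness arguments of Theorem~\ref{main:thm} apply unchanged. On the Skorokhod space, $\cH_\theta$ is lower semicontinuous under a.e.\ convergence: the positive part converges by Fatou, and the negative part is controlled by second moments. $\cD_\theta$ is lower semicontinuous via its convex representation $\int|\tn\Theta(f)|^2$-type with $\Theta$ built from $\theta$, using weak $L^2$ convergence of $A^{1/2}\theta\theta_*\tn\log\theta^2(f)$. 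Convergence of the initial data $\hE\cH_\theta(f_0^n)\to\hE\cH_\theta(f_0)$ follows by construction. The inequality then holds for a.e.\ $t$, namely at the times where strong convergence holds, which gives \eqref{main:refined-entropy}.
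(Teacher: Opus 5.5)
Your formal mechanism is correct and matches the paper's formal computation. Since $h_\theta''(s)\theta(s)=2\theta'(s)$, the two terms that survive the generic cancellation between the It\^o correction and the quadratic variation combine, after integrating by parts in $v_*$, into $(\nabla_v-\nabla_{v_*})\cdot G_k$. This vanishes by \eqref{intro:div-free} and the radial form of $A$. The cancellation relies on the matching $h'=\log\theta^2$; it is not true that ``every entropy-type functional'' is preserved.

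The gap is Step~1: the exact It\^o balance you assert is not available at any level of your scheme.
\begin{itemize}
\item Galerkin iterates are not nonnegative, so $\log\theta^2(f_m)$ is undefined.
\item The projection $\Pi_m$ does not commute with $\log\theta^2(f_m)$. So the drift no longer yields $-\cD_\theta$, and the correction/quadratic-variation cancellation is lost, because the Galerkin correction is $\Pi_m$ of the continuum correction. No basis ``compatible with the noise modes'' repairs this, since the nonlinearities leave every finite-dimensional space.
\item Even for the nonnegative post-Galerkin solution, $h_\theta'=\log\theta^2\to-\infty$ and $h_\theta''=2\theta'/\theta\sim 2/s$ at vacuum. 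It\^o's formula therefore needs a $C^2$ entropy density with bounded second derivative.
\end{itemize}

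The paper handles this as follows. It first removes the Galerkin level. It then applies the cut-off It\^o formula of Lemma~\ref{lem:cutoff-ito} with $S=h_{\theta_n,L_n}$, where $L_n$ is the logarithm truncated at $s_0=n^{-1}$ and the mobility is regularised by the same $\theta_n$. The drift still gives exactly $-\cD_{\theta_n,L_n}$. The noise terms, however, now cancel only up to factors $L_n'(\theta_n^2(f))\theta_n^2(f)-1$, which are supported in $\{\theta_n^2(f)\le n^{-1}\}$. Keeping $T_1$ in its $G_k$-representation and adding and subtracting so that $(\nabla_v-\nabla_{v_*})\cdot G_k$ appears, two kinds of terms remain:
\begin{itemize}
\item cut-off terms, which vanish as $m\to\infty$;
\item a small-density residual bounded by $C\,\hE\int_0^t\big(\int_{\{\theta_n^2(f)\le n^{-1}\}}\theta_n^2(f)\,\mathrm{d}v\big)^{1/2}\mathrm{d}s$.
\end{itemize}
The residual vanishes as $n\to\infty$, because $\theta_n$ is linear near $0$ (so the set lies in $\{f\le c\,n^{-1/2}\}$) and the second moment is bounded. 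This residual, not viscosity or projection, is the real source of your $o(1)$. It is exactly what must be controlled to remove the additive constant.

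Second, Step~3 does not carry over ``unchanged'' from Theorem~\ref{main:thm}. In the limit $\alpha\downarrow0$, the local $W^{1,1}$ bound there came from Lemma~\ref{lem:Des} applied to the Boltzmann dissipation $\cD(f)$. Here only $\cD_\theta(f)=\cD(\theta^2(f))$ is controlled. You must apply Desvillettes' estimate to $g=\theta^2(f)$, which requires two inputs:
\begin{itemize}
\item a bound of $|\cH(\theta^2(f))|$ by $|\cH_\theta(f)|+C\|f\|_{L^1_2}$;
\item a $\theta$-version of the maximal exponential entropy estimate, to absorb the entropy-dependent constant.
\end{itemize}
Then use the lower bound $\theta'\ge C_0$ on $[0,r_0]$ to turn $\nabla\theta(f)\in L^2_{t,v}$ into $\nabla f\in L^2$ on $\{f\le r_0\}$. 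Together with control of $|\nabla f|^2/f$ on $\{f\ge r_0\}$, this gives $\nabla f\in L^1_tL^1_{\mathrm{loc}}$. Your proposal never uses the constant $C_0$ of Assumption~\ref{ass:7}; in the paper it enters at both of these steps.
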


\subsection{Analytic obstacles and the strategy of the proof}

The proof is organised into four steps.
\begin{itemize}
\item \textbf{Step 1.} We employ a Galerkin approximation to establish the existence of solutions to the fully regularised equation. This step is carried out in Section~\ref{sec:Galerkin}.

\item \textbf{Step 2.} We establish uniform estimates for the mass, energy, and entropy. This step is carried out in Section~\ref{sec:L-1:functional}.  

\item \textbf{Step 3.} We pass to the limit in the regularisation parameters within an $L^2$ framework. This step is carried out in Sections~\ref{sec:limit-L2}.

\item \textbf{Step 4.} We pass to the limit in the initial data approximation and the vanishing diffusion parameter within an $L^1$ framework. This step is carried out in Section~\ref{sec:limit-L1}.
\end{itemize}

An overview of the technical details underlying this approximation strategy is provided in Subsection~\ref{sec:pre-2}. 

Compared with the classical deterministic Landau equation, the principal new
source of difficulty in this work is the conservative thermal noise.  The
Landau operator is already nonlocal and singular, but its deterministic weak
formulation is well adapted to the symmetry of binary collisions and to the
entropy dissipation associated with the Landau difference-gradient geometry.
The stochastic flux has to be placed on the same collision space.  Thus it is
not a pointwise multiplicative perturbation in the single velocity variable
$v$, but a pairwise object depending on $(v,v_*)$.  Even after the square-root
amplitude is replaced by a smoother stochastic coefficient $\sigma$, the noise
remains coupled to the singular Landau kernel and to the difference-gradient
structure.  The corresponding Stratonovich-to-It\^o correction is therefore
nonlocal as well, and it contains an additional hidden velocity integration.
For this reason the regularised equation cannot be treated as a standard
semilinear SPDE with smooth multiplicative noise.

A first difficulty is the entropy estimate.  In the deterministic Landau
equation one formally tests the equation by $\log f$.  A rigorous
implementation often relies on approximation procedures that preserve lower
bounds comparable to Maxwellians; this prevents the negative part of
$\log f$ near vacuum from becoming uncontrollable and allows the Boltzmann
entropy to be bounded from below by mass and energy.  Such a comparison
principle is not available for the present SPDE.  In the approximation used
below, the deterministic Landau mobility and the stochastic coefficient are
kept separate.  In Step $1$, the Landau part is first written with a regularised mobility
$\mu_n$, whereas the stochastic flux is governed by a separate coefficient
$\sigma_n$.  Correspondingly, the entropy test function is regularised as
$L_n(\mu_n(f))$, whose limiting form is $\log f$.  This separation is useful
because the final deterministic Landau drift should recover the genuine
mobility $f$, while the stochastic coefficient remains subject to the
structural assumptions needed to handle the thermal noise.

The order of the estimates is important.  In Step $2$, we prove non-negativity and
mass conservation for the approximate solutions.  Only after non-negativity is
available,  we are able to use the quadratic weight and obtain the energy bound
\begin{equation*}
    \int_{\R^d}|v|^2 f_t(v)\,\dd v
    \lesssim
    \int_{\R^d}|v|^2 f_0(v)\,\dd v+1 .
\end{equation*}
Together with mass conservation, this gives a finite lower bound for the
regularised entropy and makes the entropy inequality meaningful.  The
Stratonovich-to-It\^o correction terms must then be combined with the
quadratic variation in precisely the form dictated by the fluctuation--
dissipation structure.  This is the point at which the particular choices of regularisations enter: the approximation has to be regular
enough for the It\^o formula, but it must still preserve the cancellations
which lead to the Landau entropy dissipation in the limit.

A further difference from the deterministic Landau theory appears when we use the weighted Fisher-information estimate of Desvillettes \cite{Des15}, recalled below as Lemma~\ref{lem:Des}.  In the deterministic setting the constant in this estimate, although exponential in the entropy, causes no additional difficulty: the entropy and the entropy dissipation are controlled by the usual entropy inequality.  In the present stochastic setting the estimates needed for compactness are taken after expectation.  Therefore the exponential dependence in Lemma~\ref{lem:Des} cannot be handled by the ordinary averaged entropy dissipation bound alone.  This is why we prove the exponential entropy-dissipation estimate in Proposition~\ref{lem:l-12:log}, namely
{for some constant $\lambda>0$,
 \begin{align*}
 \hE\exp\left[
 \operatorname{esssup}_{s\in[0,t]}\left(\lambda\cH(f_s)+\frac{\lambda}{2}\int_0^s\cD(f_r)\,\dd r\right)\right]
 \le C_\lambda\exp\left[\lambda\cH(f_0)+C\lambda\big(\|f_0\|_{L^1}^3+1\big)\right].
 \end{align*}}
It provides integrability of the exponential entropy factor, together with sufficiently high moments of the accumulated entropy dissipation, and is one of the genuinely stochastic ingredients of the paper.  Its proof uses the exponential supermartingale structure associated with the martingale part of the entropy balance, a probabilistic mechanism which has no counterpart in the classical deterministic Landau argument.

\subsection{Comments on the literature}
We briefly review some of the existing literature related to the present work. 

\textbf{Kinetic theory}
Kinetic theory describes large particle systems through one-particle
densities in phase space.  In law-of-large-numbers regimes the empirical
measure converges to a deterministic kinetic equation, while the next order is
expected to be a fluctuation field around this limit.  This particle-to-PDE
viewpoint, classical in mean-field kinetic theory since Braun--Hepp
\cite{BH77}, is the perspective behind our fluctuating Landau model.

The deterministic Landau equation is the analytic backbone of the present
work.  It arises
from grazing collisions, where many weak small-angle interactions combine into
a nonlinear velocity diffusion; see \cite{Des92,Vil98,AV04}.  The
Boltzmann-to-Landau grazing-collision limit has also been studied from the
gradient-flow viewpoint in \cite{carrillo2022boltzmann}.  Thus the Landau
operator is the averaged effect of conservative binary interactions, and our
noise is designed to represent the corresponding fluctuation of the empirical
collision current with the same pairwise covariance structure. For the spatially homogeneous equation, weak and
$\mathcal H$-solutions, uniqueness, and propagation of regularity under
additional assumptions have been studied extensively; see
\cite{Lio94c,Vil96,Vil98,ADVW00,Wu14,Des15}.  Recent work on Fisher
information \cite{GS25,GS25b,Vil25} further clarifies the regularising
mechanisms of the collision operator.  These results provide the deterministic
reference for our moment, entropy, and compactness estimates.

The inhomogeneous theory illustrates the intrinsic difficulty of Landau
dynamics: global renormalised solutions with a defect measure are known
\cite{Vil96}, whereas global smooth solutions for large data remain open.
Adding conservative thermal noise does not remove the singularity; it creates
additional Stratonovich--It\^o correction terms and makes compactness a
central issue even in the homogeneous setting.  The geometric side is equally
important.  The homogeneous Landau equation has a variational gradient-flow
structure \cite{CDDW24}, related to the grazing-collision limit
\cite{carrillo2022boltzmann}; inhomogeneous and delocalised Landau equations
also fit naturally into the GENERIC framework \cite{DH25a,DH25b,DGH25}.  This
is why our stochastic perturbation is written in conservative
Landau-divergence form with covariance given by the same pairwise mobility as
the entropy dissipation.

\textbf{Fluctuating hydrodynamics and macroscopic fluctuation theory}
Macroscopic fluctuation theory describes nonequilibrium fluctuations through
large-deviation principles for interacting particle systems and their
hydrodynamic limits; see \cite{BDGJL,Derrida}.  Fluctuating hydrodynamics
gives the complementary mesoscopic rule: after the deterministic limit, one
adds conservative random fluxes whose covariance is dictated by the
fluctuation--dissipation relation; see \cite{LL87,HS}.  The Dean--Kawasaki
equation \cite{D96,K98} is the canonical density-level example.  Our equation
follows the same principle at the kinetic level: the deterministic Landau
collision operator is perturbed by an antisymmetric conservative noise whose
quadratic variation matches the Landau fluctuation covariance.  The new
difficulty is that both mobility and covariance are nonlocal in velocity. A particularly relevant microscopic result is due to Bodineau, Gallagher,
Saint-Raymond, and Simonella~\cite{bodineau2023statistical}.  Starting from
deterministic hard-sphere dynamics in the Boltzmann--Grad limit, they prove,
on a short kinetic time interval, convergence of the centered empirical
measure to a Gaussian process governed by the fluctuating Boltzmann equation,
as well as a dynamical large-deviation principle under suitable regularity
assumptions. Their result provides a rigorous microscopic benchmark for the
fluctuating-kinetic picture described above. The present work is
complementary: it is posed directly at the homogeneous Landau
(grazing-collision) level and develops a weak-solution theory for a
regularised nonlinear fluctuating Landau equation.  In particular, we do not
derive this equation from hard-sphere dynamics or obtain it by taking a
grazing-collision limit in the fluctuation and large-deviation results of
\cite{bodineau2023statistical}(see also \cite{rezakhanlou1998large}); the connection lies in the conservative
binary-collision structure of the noise and its covariance.

\textbf{SPDEs with conservative noise} 
For stochastic conservation laws, the kinetic solution framework of
Debussche--Vovelle~\cite{DV10} and its extensions
\cite{GS17,FG19,DG20} provide robust tools for nonlinear conservative noise.
Our equation is also conservative, but the Landau drift and noise couple two
velocity variables through a nonlocal collision kernel, which is the source of
the compactness difficulties treated here.

The closest analytic predecessors are Dean--Kawasaki-type equations.
Fehrman--Gess proved well-posedness for correlated conservative noise
\cite{FG24} and studied small-noise large deviations in \cite{FG23}; see also
\cite{FG25,fehrman2025stochastic}.  Fluctuation expansions were obtained in
\cite{CF23,GWZ24}, and nonlocal or singular interactions were studied in
\cite{WWZ22,WZ24}.  Recent fluctuating kinetic models
\cite{FMJ25,HWZ25} are also related.  The present paper differs in that both
the drift and the noise come from the nonlocal Landau gradient-flow geometry,
which forces a structure-preserving regularisation and a passage from a
regularised $L^2$ theory to an $L^1$ probabilistic weak solution.

\subsection{Structure of the paper}

Section~\ref{sec:pre-1} collects the notation and the assumptions on the
kernel, mobility, noise basis, and initial datum, and
Section~\ref{sec:app-weak-formulation} gives the definition of probabilistic
weak solutions.  Section~\ref{sec:Galerkin} constructs the regularised
Galerkin solutions and proves the basic $L^2$ estimates.  Section~\ref{sec:L-1:functional}
derives the cut-off It\^o formula and the positivity, mass, energy, and
entropy estimates.  Section~\ref{sec:limit-L2} passes to the limit in the
$L^2$ approximation, and Section~\ref{sec:limit-L1} removes the artificial
diffusion and proves the main theorem. In Section~\ref{sec:refined-entropy}, we derive a refined entropy dissipation estimate for a special class of admissible noises and prove the corresponding entropy decay property. The appendices contain the formal
particle derivation of the noise covariance, the Stratonovich-to-It\^o
conversion, and the construction of admissible antisymmetric noise bases.

\subsection*{Acknowledgements}
M. H. D is funded by an EPSRC Standard Grant EP/Y008561/1. Z.~H. is funded by the Deutsche Forschungsgemeinschaft (DFG, German Research Foundation) - Project-ID 317210226 - SFB 1283. 
Z. W. is funded by the US Army Research Office, grant W911NF2310230.

The authors are grateful to Matthias Erbar and Daniel Heydecker for helpful discussions and comments.

\section{Preliminary}

In this section, we collect the notation, functional spaces, structural
assumptions, and elementary identities used throughout the paper.  The
velocity variable is denoted by $v\in\Do$, and we write
$v_*,w\in\R^d$ for independent copies of $v$.  We use the shorthand
\begin{align*}
f=f(v),\qquad f_*=f(v_*),\qquad f_w=f(w),
\end{align*}
and similarly for $\sigma(f)$, $\varsigma(f)$, and other scalar functions of
$f$.  Unless otherwise specified, all gradients and divergences are taken
with respect to the velocity variable $v$.  The Landau difference gradient is
denoted by
\begin{align*}
\tn \phi(v,v_*):=\Pi_{(v-v_*)^\perp}\big(\nabla_v\phi(v)-\nabla_{v_*}\phi(v_*)\big),
\end{align*}
and the collision space is $\G$.  We use the convention
\begin{align*}
\int_v=\int_{\R^d},\qquad
\int_{v,v_*}=\int_{\R^d\times\R^d},\qquad
\int_{v,v_*,w}=\int_{\R^{3d}} .
\end{align*}

We denote by $\langle v\rangle=(1+|v|^2)^{1/2}$ the Japanese bracket.  For
$p\in[1,\infty]$ and $k\ge0$, the weighted Lebesgue space $L^p_k(\R^d)$ is
defined by
\begin{align*}
L^p_k(\R^d)
:=\{f\in L^p(\R^d):\|f\|_{L^p_k}:=\|\langle v\rangle^k f\|_{L^p}<\infty\}.
\end{align*}
In particular,
\begin{align*}
L^1_2(\R^d)
=\left\{f\in L^1(\R^d):\int_{\R^d}\langle v\rangle^2|f(v)|\,\dd v<\infty\right\}.
\end{align*}
We write $L\log L(\R^d)$ for the set of non-negative functions
$f\in L^1(\R^d)$ such that $f|\log f|\in L^1(\R^d)$.  The Sobolev spaces
$W^{k,p}(\R^d)$ and $H^s(\R^d)=W^{s,2}(\R^d)$ are used with their standard
norms; $H^{-s}(\R^d)$ denotes the dual of $H^s(\R^d)$.  Local spaces are
denoted by $L^p_{\loc}(\R^d)$ and $H^s_{\loc}(\R^d)$.  The Schwartz space is
denoted by $\mathcal S(\R^d)$, and $C_c^\infty(\R^d)$ denotes smooth compactly
supported test functions.  The symbol $\langle\cdot,\cdot\rangle$ denotes
the $L^2$-inner product, or the dual pairing when one of the entries is a
distribution.

Let $X$ be a Banach space.  We use the standard Bochner spaces
$L^p(0,T;X)$ and $C([0,T];X)$.  For $\beta\in(0,1)$,
$C^\beta([0,T];X)$ denotes the H\"older space in time.  When a space is
endowed with its weak topology we write, for example,
$(L^2(0,T;L^2(\R^d)),w)$.  Strong convergence in local spaces always means
strong convergence on every compact velocity set.

The letters $C$ and $C_i$ denote positive constants which may change from
line to line.  Their dependence is indicated when relevant.  We write
$A\lesssim B$ if $A\le CB$ for such a constant $C$, and
$A\lesssim_\lambda B$ when the constant is allowed to depend on the
parameter $\lambda$.

\subsection{Assumptions}\label{sec:pre-1}

We collect here the assumptions in Theorem \ref{main:thm}: the interaction kernel, the
noise, the basis, the stochastic coefficient, the Landau mobility used in the
approximation levels, and the initial datum.  We recall the It\^o equation \eqref{eq-app:main} that the
Landau part is the genuine Landau collision operator, whereas the stochastic
flux keeps the regularised coefficient $\sigma$
\begin{equation}
    \label{SDE-1}
\begin{aligned}
\partial_t f
&=Q(f,f)-\frac{\sqrt{\eps}}{2}\tn\cdot(A^{1/2}  \sigma(f) \sigma( f_*)\xi_K)\\
&\quad+\frac{\eps}{2}\sum_{k= 1}^K\tn\cdot \Big(G_k(v,v_*) \sigma'(f) \sigma(f_{*}) \tn\cdot\big(G_k(v,w) \sigma(f) \sigma(f_{w})\big) \Big).
\end{aligned}
\end{equation}
Here
\begin{equation}
    \label{def:Q-sigma}
    Q(f,f):=\frac12\tn\cdot\big(A f f_*\tn \log f\big).
\end{equation}

\medskip
\noindent\textbf{The interaction kernel.}
Throughout the paper we work in the moderately soft potential regime
\begin{align}
\label{def:cA}
    A(|v-v_*|)=|v-v_*|^{\gamma+2},\quad \gamma\in(-2,0).
\end{align}

\medskip
\noindent\textbf{The noise and the basis.}
Let $K\in\N$ be fixed, and let $(B^k)_{1\le k\le K}$ be independent standard
real Brownian motions.  Formally, the driving noise is the finite-dimensional
Gaussian field
\begin{align*}
\xi_K(t,v,v_*):=\sum_{k=1}^K g_k(v,v_*)\,\dot B^k_t .
\end{align*}
For each basis function $g_k$ we define the corresponding noise vector field by
    \begin{gather}
    \label{def:G-k}
        G_k(v,v_*):=\sqrt{A(v-v_*)}\Pi_{(v-v_*)^\perp}g_k(v,v_*)\in\R^d.
    \end{gather}
    

We shall use the weighted spaces $L^p_k(\R^d)$ and the entropy space
$L\log L(\R^d)$ introduced above.  The assumptions on the basis and on the
resulting noise vector fields are as follows.
\begin{assumption}[Active noise modes]
\label{ass:G-k:app}
Let $K\in\N$ be arbitrarily fixed.  We assume that the active family
$\{g_k\}_{1\le k\le K}\subset\cS(\G;\R^d)$ is orthonormal in
$L^2(\G;\R^d)$ and antisymmetric, $g_k(v,v_*)=-g_k(v_*,v)$.  
If a full
ONB is needed, this finite family may be completed by arbitrary orthonormal
modes in the orthogonal complement; the analysis below uses only the first
$K$ active modes.  We assume that the vector fields $(G_k)_{1\le k\le K}$ defined by \eqref{def:G-k} satisfy
\begin{equation}
\label{G-bdd:app-0}
\begin{aligned}
\sum_{k=1}^K\|G_k\|_{W^{2,\infty}(\G)\cap W^{2,1}(\G)}
\le C_K<+\infty .
\end{aligned}
\end{equation}
Following from the anti-symmetry of $g_k$, we have $G_k(v,v_*)=-G_k(v_*,v)$.

We assume that
\begin{align}
\label{Gk:annual}
\cup_{1\le k\le K} \supp\big(G_k\big)\subset \{r\in \R_+\mid  K^{-1}\le r\le K\}\times \R^d\times \sd.
\end{align} 

\end{assumption}

\begin{remark}
\label{rmk:G-k}
One can choose the active family in Assumption~\ref{ass:G-k:app}, and then complete it to an ONB if desired, such that Assumption~\ref{ass:G-k:app} holds even for
the singular kernel \eqref{def:cA}; that is,
\begin{align*}
|v-v_*|^{-\alpha} g_k(v,v_*)\in L^\infty(\G),
\quad \forall k\in\N,\quad \forall \alpha>0.
\end{align*}
Indeed, by Appendix \ref{app-sec:ONB}, the basis functions $g_k$ can be chosen in the form
\begin{align*}
g_k(v,v_*)
=
g_{k_1}(|v-v_*|)
g_{k_2}\left(\frac{v-v_*}{|v-v_*|}\right)
g_{k_3}\left(\frac{v+v_*}{2}\right),
\end{align*}
where ${g_{k_1}}$, ${g_{k_2}}$, and ${g_{k_3}}$ are ONBs of $L^2(r^{d-1}\dd r)$, $L^2(S^{d-1})$, and $L^2(\R^d)$, respectively. It remains to choose ${g_{k_1}}$ so that
\begin{align}
\label{r:sim-0}
r^{-\alpha} g_{k_1}(r)\in L^\infty(\R_+),
\quad \forall k_1\in\N,\quad \forall \alpha>0.
\end{align}
Since $\cS(\R_+\setminus\{0\})$ is dense in $L^2(\R_+)$, we may choose an ONB $\{g_{k_1}\}$ with
\begin{align*}
g_{k_1}\in \cS(\R_+\setminus\{0\}),
\quad \forall k_1\in\N.
\end{align*}
Consequently, each $g_{k_1}$ vanishes near $r=0$ with sufficient regularity and decay, and therefore \eqref{r:sim-0} holds.

Hence, the compact support assumption \eqref{Gk:annual} is harmless for a fixed finite active family $\{g_k\}_{1\le k\le K}$ that
\begin{align}
\cup_{1\le k\le K} \supp\big(g_k\big)\subset \{r\in \R_+\mid  K^{-1}\le r\le K\}\times \R^d\times \sd.
\end{align}
Furthermore, the basis $(g_k)_{k\in\mathbb{N}}$ constructed in Appendix~\ref{app-sec:ONB} can be chosen so that its components are orthonormal in $L^2(\mathbb{R}^{2d})$. In contrast, this property does not hold for the basis constructed in Appendix~\ref{app-sec:ONB-2}.

\end{remark}

\medskip
\noindent\textbf{The stochastic noise coefficient.}
The coefficient $\sigma$ regularizes the square-root amplitude in the
stochastic conservative flux.  It is no longer used to regularize the
deterministic Landau mobility in the limiting equation.  The properties of
$\sigma$ used throughout the paper are collected below.
\begin{assumption}[Stochastic noise coefficient]
  \label{ass:sigma-R0-0}
Let $r_0\in(0,1)$ be fixed. Let $\sigma\in C^1(\R_+;\R_+)$ satisfy
\begin{equation}
\begin{gathered}
\sigma(r)=\sqrt r \quad r\in(r_0,\infty),\quad \sigma(0)=0,\\
0\le \sigma'(r)\le C_{r_0}\quad\text{and}\quad  0\le \sigma(r)\le\sqrt{r}\quad r\in[0,r_0],   
\end{gathered}
\end{equation}
where $C_{r_0}<+\infty$ is a fixed constant depending only on $r_0$ and chosen large enough so that the interpolation is possible.  In particular, the assumption is not tied to the value $1/(2\sqrt{r_0})$.  The estimates below use only the finiteness of $C_{r_0}$, together with the bounds displayed in the assumption.
\end{assumption}

\begin{proposition}[Elementary bounds for $\sigma$]
Let $\sigma$ satisfy Assumption~\ref{ass:sigma-R0-0}. Then
\begin{align*}
 0\le \sigma(r)\le C_{r_0} r,\qquad r\in[0,r_0].
\end{align*}
Moreover, $\sigma^2\in \operatorname{Lip}(\R_+)$. More precisely,
\begin{align*}
   0\le  (\sigma^2(r))'\le
   2C_{r_0}^2 r\mathbb{1}_{\{0\le r\le r_0\}}
   +\mathbb{1}_{\{ r\ge r_0\}}\quad\forall r\in\R_+.
\end{align*}
\end{proposition}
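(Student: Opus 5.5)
The plan is a direct calculus argument built only on the bounds in Assumption~\ref{ass:sigma-R0-0}. No earlier result of the paper is needed beyond that assumption.

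\textbf{Linear bound.} For the bound $0\le\sigma(r)\le C_{r_0}r$ on $[0,r_0]$, I will use $\sigma(0)=0$ and $\sigma\in C^1$. By the fundamental theorem of calculus (or the mean value theorem),
\begin{align*}
\sigma(r)=\sigma(0)+\int_0^r\sigma'(s)\,\dd s=\int_0^r\sigma'(s)\,\dd s,\qquad r\in[0,r_0].
\end{align*}
The pointwise bound $0\le\sigma'\le C_{r_0}$ on $[0,r_0]$ then gives $0\le\sigma(r)\le C_{r_0}r$. Nonnegativity is also immediate from $\sigma\in C^1(\R_+;\R_+)$.

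\textbf{Derivative of $\sigma^2$.} Since $\sigma\in C^1$, the function $\sigma^2$ is $C^1$ with $(\sigma^2)'=2\sigma\sigma'$. I split into two cases.
\begin{itemize}
\item On $[0,r_0]$, both factors are nonnegative, so $(\sigma^2)'\ge0$. Combining the linear bound with $\sigma'\le C_{r_0}$ gives $(\sigma^2)'(r)\le 2C_{r_0}r\cdot C_{r_0}=2C_{r_0}^2r$.
\item On $(r_0,\infty)$, we have $\sigma^2(r)=r$, so $(\sigma^2)'=1$.
\end{itemize}
At the single point $r=r_0$, continuity of $(\sigma^2)'$ gives $(\sigma^2)'(r_0)=1$. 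Both indicator functions in the claimed bound are active there, so the bound holds trivially at that point.

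\textbf{Lipschitz property.} This follows from the derivative bound:
\begin{align*}
\sup_{r\ge0}|(\sigma^2)'(r)|\le\max\{2C_{r_0}^2r_0,\,1\}<\infty.
\end{align*}
A $C^1$ function on $\R_+$ with bounded derivative is globally Lipschitz by the mean value theorem.

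None of these steps is a genuine obstacle. The only point needing care is the matching at $r=r_0$, and that is handled by the $C^1$ regularity assumed for $\sigma$.
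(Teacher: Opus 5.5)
Your proposal is correct. The paper states this proposition without a written proof, and your fundamental-theorem-of-calculus and product-rule argument is exactly the elementary verification it relies on. As a side remark, evaluating your linear bound at $r=r_0$ shows that the assumption implicitly forces $C_{r_0}\ge r_0^{-1/2}$. This is consistent with the paper's comment that $C_{r_0}$ must be chosen large enough for the interpolation to be possible.
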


An admissible example of such a mobility coefficient is
\begin{equation}
    \label{exp:sigma-0}
\begin{aligned}
    \sigma (r):=\left\{
        \begin{aligned}
        \frac{3r}{2\sqrt{r_0}}-\frac{r^2}{2r_0\sqrt{r_0}},&\quad  r\in[0,r_0),\\
        \sqrt{r},&\quad r\in[r_0,+\infty).
        \end{aligned}
        \right.
\end{aligned}
\end{equation}
For this example one may take $C_{r_0}=3/(2\sqrt{r_0})$.  Thus the derivative bound in Assumption~\ref{ass:sigma-R0-0} is understood as a generic interpolation bound, rather than the derivative of the square-root branch at $r_0$.

\begin{definition}
We define $\varsigma:\R_+\to\R$ by $\varsigma(0)=0$ and for every 
$r\ge 0$,
\begin{equation}
\label{def:varsigma}
\varsigma(r):=\int_0^r [\sigma '(s)]^2\dd s.
\end{equation}   
\end{definition}

\begin{remark}
\label{rmk:sigma-f:L2}
Notice that
for all $r\ge0$, we have
\begin{align*}
 0\le  \sigma(r)\le \min\big(C_{r_0}r,\sqrt{r}\big)
 \quad \text{and}\quad
 0\le \varsigma(r)\le C_{r_0}^2 r.
\end{align*}
Hence, for every non-negative measurable $f$ for which the right-hand side
is finite, we have
\begin{gather*}
    \| \sigma(f)\|_{L^1}\le C_{r_0}\|f\|_{L^1},\quad  \| \sigma(f)\|_{L^2}\le \min\Big(C_{r_0}\|f\|_{L^2},\|f\|_{L^1}^{\frac12}\Big),\\
    \|\varsigma(f)\|_{L^1}\le C_{r_0}^2\|f\|_{L^1},\quad  \|\varsigma(f)\|_{L^2}\le \min\Big(C_{r_0}\|f\|_{L^2},C_{r_0}^2\|f\|_{L^1}^{\frac12}\Big).
\end{gather*}
\end{remark}

\medskip
\noindent\textbf{The initial datum.}
The main theorem is stated for non-negative initial data satisfying
\begin{equation}
\label{ass:initial-main}
f_0\in L^1_2(\R^d;\R_+)\quad\text{and}\quad \cH(f_0)<+\infty.
\end{equation}
We note that the bounded energy ensures that $\cH(f_0)>-\infty$, see \eqref{LlogL:minus} below. Hence, we have the initial value $f_0\in L\log L(\R^d;\R_+)$.

\subsection{Auxiliary coefficients}\label{sec:pre-3}

We record the auxiliary coefficients generated by the It\^o correction.
Recall that
\begin{gather*}
G_k(v,w):=\sqrt{A(v-w)}\Pi_{(v-w)^\perp}g_k(v,w)\in\R^d,
\qquad G_k(v,w)=-G_k(w,v).
\end{gather*}
For every $(v,v_*,w)\in(\R^d)^3$, define
\begin{equation}
    \label{def:F_i}
\begin{aligned}
&F_1:=F_1(v,v_*,w)= \sum_{k=1}^K G_k(v,v_*) \big(\nabla_v\cdot G_k(v,w)\big)\in \R^d,\\
&F_2:=F_2(v,v_*,w)= \sum_{k=1}^K G_k(v,w)\big(\nabla_v\cdot G_k(v,v_*)\big) \in \R^d,\\ 
&F_3:=F_3(v,v_*,w)= \sum_{k=1}^K\big(\nabla_v\cdot G_k(v,v_*)\big)\big(\nabla_v\cdot G_k(v,w) \big)\in \R,\\ 
&F_4:=F_4(v,v_*,w)= \sum_{k=1}^KG_k(v,v_*)\otimes   G_k(v,w) \in \R^{d\times d}.
\end{aligned}
\end{equation}
These coefficients will enter the rigorous cut-off It\^o formula in
Lemma~\ref{lem:cutoff-ito}.

\subsection{Definition of solutions}
\label{sec:app-weak-formulation}
With the notation introduced above, the It\^o equation \eqref{SDE-1} may be
written in divergence form as
     \begin{align*}
        \dd  f=&Q(f,f)\dd t-\frac{\sqrt{\eps}}{2}\sum_{k=1}^K\tn\cdot\big(G_k(v,v_*)\sigma(f)\sigma(f_*)\big)\dd B_k\\
        &+\eps\,\tn\cdot \Big(\int_{w}\sigma(f_*)\sigma(f_w)\Big(F_4 \nabla_v \varsigma(f)+\sigma(f)\sigma'(f)F_1\big)\Big)\dd t.
\end{align*}

We use the standard Landau coefficients
\begin{equation}
\label{relation:abc}
\begin{aligned}
 &a_{ij}(v-v_*):=\Big(\delta_{ij}-\frac{(v-v_*)_i(v-v_*)_j}{|v-v_*|^2}\Big)A(|v-v_*|),\\
 &
         b_i(v-v_*):=\sum_{j=1}^d\d_j a_{ij}(v-v_*),\\
         &c(v-v_*):=\sum_{i,j=1}^d\d^2_{ij} a_{ij}(v-v_*).
     \end{aligned}
        \end{equation}
     Notice that
   \begin{equation}
   \label{abc}
     \begin{aligned}
        &\operatorname{tr}(a_{ij})=(d-1)A(|v-v_*|),\quad b_i=-(d-1)\frac{(v-v_*)_i}{|v-v_*|^2}A(|v-v_*|),\\
        &\text{and}\quad c=-(d-1)\Big((d-2)\frac{A(|v-v_*|)}{|v-v_*|^2}+\frac{A'(|v-v_*|)}{|v-v_*|}\Big).
     \end{aligned}
     \end{equation}

     Notice that in the case of $A(|v-v_*|)=|v-v_*|^{2+\gamma}$, we have  
        \begin{equation}
   \label{abc:bounds:n}
     \begin{aligned}
        &|a|\lesssim |v-v_*|^{\gamma+2},\quad |b|\lesssim|v-v_*|^{\gamma+1},\quad |c|\lesssim |v-v_*|^{\gamma}.
     \end{aligned}
     \end{equation}
     For any $g\in L^1_\loc(\mathbb{R}^d)$, define
     $\bar a(g)=(\bar a_{ij})\in\R^{d\times d}$,
     $\bar b(g)=(\bar b_i)\in\R^{d}$, and $\bar c(g)\in\R$ by
     \begin{equation}
         \label{def:abc-bar}
     \begin{aligned}
         \bar a_{ij}(g)&:=g*_{v}a_{ij}=\int_{\R^d}a_{ij}(v-v_*)g(v_*)\dd v_*,\\
     \bar b_i(g)&:=g*_{v}b_i=\int_{\R^d}b_i(v-v_*)g(v_*)\dd v_*,\\
     \bar c(g)&:=g*_{v}c=\int_{\R^d}c(v-v_*)g (v_*)\dd v_*.
     \end{aligned}
     \end{equation}

The Landau collision operator $Q(f,f)$ has the following $(\bar a,\bar b)$ (divergence form) and $(\bar a,\bar c)$ (non-divergence form) representations
\begin{align}
\label{form:abc}
  Q(f,f)&=  \nabla_v\cdot\big(\bar a(f)\nabla_v f-\bar b(f) f\big)\\
  &=\sum_{i,j=1}^d\bar a_{ij}(f)\d_{v_iv_j}f-\bar c(f) f.
\end{align}

We now formulate the notion of solution used for the limiting equation.
\begin{definition}[Probabilistic weak solution of \eqref{SDE-1}]
    \label{def:weak-sol:L1}
Let $f_0\in L^1_2(\R^d;\R_+)$ such that $\cH(f_0)<+\infty$. A pair
$(f,(B^k)_{1\le k\le K})$ is called a probabilistic weak solution to
\eqref{SDE-1} on $[0,T]$ with initial datum $f_0$ if there exists a
stochastic basis
$(\Omega,\mathcal F,(\mathcal F_t)_{t\in[0,T]},\mathbb P)$ satisfying the
usual conditions such that $(B^k)_{1\le k\le K}$ are independent standard
$(\mathcal F_t)$-Brownian motions, and $f$ is a non-negative
progressively measurable process satisfying, almost surely,
\begin{align*}
f\in L^\infty(0,T;L^1_2(\R^d;\R_+))\quad\text{and}\quad |\cH(f_t)|<+\infty\quad \text{for a.e. }t\in[0,T].
\end{align*}
The weak formulation below is required for the continuous weak representative of $f$; entropy estimates are understood for the a.e. time representative supplied by the entropy-dissipation bound.
Moreover, for every $\phi\in C_c^\infty(\R^d)$, the process
$t\mapsto \int_{\R^d}f_t(v)\phi(v)\,\dd v$ admits a continuous
$(\mathcal F_t)$-adapted version and, for this version, the following
identity holds almost surely for every $t\in[0,T]$:
\begin{equation}
\label{weak:app-0}
     \begin{aligned}
&\int_v f_t(v)\phi(v)\,\dd v-\int_v f_0(v)\phi(v)\,\dd v\\
={}&\sum_{i,j=1}^d\int_0^t\int_{v,v_*}
ff_*a_{ij}(v-v_*)\d_{v_iv_j}\phi(v)
\,\dd v\dd v_*\dd s\\
&+\sum_{i=1}^d\int_0^t\int_{v,v_*}
ff_*b_i(v-v_*)
\Big(\d_{v_i}\phi(v)-\d_{v_i}\phi(v_*)\Big)
\,\dd v\dd v_*\dd s\\
&+\frac{\sqrt{\eps}}{2}\sum_{k=1}^K\int_0^t\int_{v,v_*}
\tn\phi(v)\cdot G_k(v,v_*)\sigma(f)\sigma(f_*)
\,\dd v\dd v_*\,\dd B^k_s\\
&-\eps\int_0^t\int_{v,v_*,w}
\big(\tn\phi(v)\cdot F_1(v,v_*,w)\big)
\sigma(f_*)\sigma(f_w)\sigma(f)\sigma'(f)
\,\dd v\dd v_*\dd w\dd s\\
&+\eps\int_0^t\int_{v,v_*,w}
\nabla_v\cdot\big(\tn\phi(v)F_4(v,v_*,w)\big)
\sigma(f_*)\sigma(f_w)\varsigma(f)
\,\dd v\dd v_*\dd w\dd s .
\end{aligned}
\end{equation}
\end{definition}

Here and below, in the integrals over $[0,t]\times(\R^d)^m$, we use the
shorthand
\begin{align*}
f=f_s(v),\qquad f_*=f_s(v_*),\qquad f_w=f_s(w).
\end{align*}
All deterministic terms in \eqref{weak:app-0} are understood as Lebesgue
integrals.  They are finite under the above assumptions.  Indeed,
the $a$-term has growth at most quadratic, while in the $b$-term the
difference
$\d_{v_i}\phi(v)-\d_{v_i}\phi(v_*)$ cancels the singularity near
$v=v_*$; hence \eqref{abc:bounds:n}, $\gamma\in(-2,0)$, and
$f\in L^\infty_tL^1_2$ give the required integrability.  The two correction
terms are controlled by the bounds on $F_1,F_4$ in
Assumption~\ref{ass:G-k:app} together with Remark~\ref{rmk:sigma-f:L2}.
The stochastic integral is an It\^o integral;
indeed, for each $k$ and each compactly supported $\phi$,
\begin{align*}
\int_0^T\left|
\int_{v,v_*}\tn\phi(v)\cdot G_k(v,v_*)\sigma(f)\sigma(f_*)
\,\dd v\dd v_*
\right|^2\dd s
\le C_{\phi,K,r_0}\,T\,\|f\|_{L^\infty_tL^1_v}^4<+\infty
\end{align*}
almost surely.

\subsection{Strategy for showing Theorem \ref{main:thm}}
\label{sec:pre-2}

We show Theorem \ref{main:thm} in the following four steps:

\noindent\textbf{Step $1$: Existence of a diffusion approximation It\^o equation.} We show Step $1$ in Section \ref{sec:Galerkin}.

We approximate the interaction kernel $A$, the stochastic coefficient $\sigma$ by smooth bounded $A_n$ and $\sigma_n$ satisfying Assumptions \ref{ass:A} and \ref{ass:sigma-R0}, respectively. In particular, we assume 
\begin{align*}
\sup_{n}\|\sigma_n'\|_{L^\infty}\lesssim \|\sigma'\|_{L^\infty}<+\infty. \end{align*}

In particular, we approximate the Landau mobility $ff_*$ by  $\mu_n^2(f)\mu_n^2(f_*)$ in the drift, where $\mu_n\in \operatorname{Lip}(\R)$ approximates $\sqrt \cdot$ and satisfies Assumption \ref{ass:sigma-R0}. We approximate $\log f$ by $L_n(f)$, where $L_n$ is bounded around $0$ and satisfies Assumption \ref{ass:L-s0}. The detailed constructions of $A_n$, $\sigma_n$, $\mu_n$ and $L_n$ can be founded in Section \ref{sec:limit-L2}. 

 Let $n\in\N$ be fixed. For any fixed  $\alpha\in(0,1)$ , we show the existence of probability weak solutions (defined as in Definition \ref{def:weak-1} below)  in the $L^2$-framework for the following initial value problem
\begin{equation}
\left\{
    \begin{aligned}
        &\dd  f=\alpha \Delta f \dd t+Q_{\mu_n,L_n}(f,f)\dd t\\
        &\qquad-\frac{\sqrt{\eps}}{2}\tn\cdot\left(A_n^{1/2} \sigma_n (f)\sigma_n (f_*) \dd \xi_K\right)\\
        &\qquad+\frac{\eps}{2}\sum_{k= 1}^K\tn\cdot \Big(G_{k,n}(v,v_*) \sigma_n'(f) \sigma_n(f_{*}) \tn\cdot\big(G_{k,n}(v,w) \sigma_n(f) \sigma_n(f_{w})\big) \Big)\\
        &f|_{t=0}=f_0^\alpha,
    \end{aligned}
    \right.
\end{equation}
where $G_{k,n}$ is defined as in \eqref{def:G-k} corresponding to $A_n$, and the initial value $f_0$ is approximated by $f_0^\alpha\in L^2\cap L^1_2\cap L\log L(\R^d;\R_+)$ given as in Section \ref{sec:limit-L1}.

In the above, we define the approximated $(\mu,L)$-Landau collision operator 
\begin{equation}
\label{def:Q-mu-L-pre}
Q_{\mu,L}(f,f):=\frac12\tn\cdot\big(A\mu^2(f)\mu^2(f_*)\tn L(\mu^2(f))\big).
\end{equation}

We denote the approximation solution by $(f_n^\alpha,B_n^\alpha)$ such that
\begin{align*}
f_{n}^\alpha\in L^2\big(\Omega_n^\alpha;L^\infty(0,T;L^2(\R^d))\big)
\cap L^2\big(\Omega_n^\alpha;L^2(0,T;H^1(\R^d))\big).
\end{align*}

\noindent\textbf{Step $2$: Properties of approximation solutions} We show Step $2$ in Section \ref{sec:L-1:functional}.

Let $n\in\N$ and $\alpha\in(0,1)$ be fixed. We show that:
\begin{itemize}
    \item The positivity of the initial value is propagated, i.e. $f_{n}^\alpha\ge 0$ almost everywhere.
    \item The mass and momentum are conserved almost surely
    \begin{gather}
       \int_v (1,v)f_{n}^\alpha(t,v)\dd v = \int_v (1,v)f_0^\alpha(v)\dd v\quad\forall\,t\in[0,T]\label{sec2:ineq-1}
    \end{gather}
    \item The following energy inequality holds almost surely 
    \begin{align}
    \label{sec2:ineq-2}
    \int_v |v|^2 f_{n}^\alpha(t,v) \le \int_v |v|^2 f^\alpha_0(v)+ 2 \alpha d\, T\, \|f^\alpha_0\|_{L^1}\quad\forall\,t\in[0,T].     
    \end{align}
    \item We define the $(\mu,L)$-entropy as follows
\begin{equation}
\mathcal{H}_{\mu,L}(f) := \int_{v} h_{\mu,L}(f(v))\,\dd v\quad\text{and}\quad h_{\mu,L}(s) := \int_0^s L(\mu^2(r))\dd r.
\end{equation}
The following $(\mu,L)$-entropy inequality holds
\begin{equation}
\label{sec2:ineq-3}
\begin{aligned}
\hE\big[\cH_{\mu_n,L_n}(f_{n}^\alpha(t))\big]
+ \hE\Big[\int_0^t \cD_{\mu_n,L_n}(f_{n}^\alpha)\Big]
\le \hE\big[\cH_{\mu_n,L_n}(f^\alpha_0)\big]
+ C \hE\big[\| f^\alpha_0 \|_{L^1}^3+1\big]
\end{aligned}
\end{equation}
where $C=C(\|\sigma'\|_{L^\infty},G_k,T)>0$ is independent of $\alpha$ and $n$. 
\end{itemize}

\noindent\textbf{Step $3$: Pass to the limit in $L^2$-framework by letting $n\to\infty$.} 
We show Step $3$ in Section \ref{sec:limit-L2}.

Let $\alpha\in(0,1)$ be fixed. We show the tightness of $f_n^\alpha$ in an appropriated $L^2$-space, and denote the limit by 
\begin{align*}
    f^\alpha\in L^\infty([0,T];L^2(\Do;\R_+))\quad\text{a.s.}.
\end{align*}
We show that $f^\alpha$ is indeed 
 a probabilistic weak solution
to 
\begin{equation}
\begin{aligned}
\partial_t f
&=\alpha\Delta f+\frac12\tn\cdot\big( A f f_*\tn \log f\big)-\frac{\sqrt{\eps}}{2}\tn\cdot( A^{1/2}  \sigma(f) \sigma( f_*)\xi_K)\\
&\qquad+\frac{\eps}{2}\sum_{k= 1}^K\tn\cdot \Big(G_{k}(v,v_*) \sigma'(f) \sigma(f_{*}) \tn\cdot\big(G_{k}(v,w) \sigma(f) \sigma(f_{w})\big) \Big)
\end{aligned}
\end{equation}
with initial value $f^\alpha|_{t=0}=f^\alpha_0$. 

Moreover, we pass to the limit by letting $n\to\infty$ in \eqref{sec2:ineq-1}, \eqref{sec2:ineq-2} and \eqref{sec2:ineq-3} to show that, almost surely, for all $t\in[0,T]$,
\begin{equation}
\label{ineq:sec-2-2}
\begin{gathered}
    \int_v f^\alpha_t(v)=\int_v f^\alpha_0(v),\quad 
    \int_v|v|^2  f^\alpha_t(v)\le  \int_v|v|^2  f^\alpha_0(v) +2\alpha d T \|f^\alpha_0\|_{L^1}\\
    \hE\big[\cH(f^\alpha_t)\big]+\hE\Big[\int_0^t\cD(f^\alpha)\Big]\le \hE\big[\cH(f^\alpha_0)\big]+C\hE\big[\| f^\alpha_0\|_{L^1}^3+1\big],
    \end{gathered}
    \end{equation}
    where the constant $C$ is the same as in \eqref{sec2:ineq-3}.

\noindent\textbf{Step $4$: Pass to the limit in $L^1$-framework by letting $\alpha\to0$.} We show Step $4$ in Section \ref{sec:limit-L1}.

With the help of the uniformly bounded energy, entropy and entropy dissipation, we show the tightness of $f_n^\alpha$ in an appropriate $L^1$-space, and denote the limit by 
\begin{align*}
    f\in L^\infty([0,T];L^1_2(\Do;\R_+))\quad \text{a.s.}.
\end{align*}
One can construct a probabilistic weak solution
to our goal It\^o equation \eqref{SDE-1} based on the above tightness result. 
Moreover, we pass to the limit by $\alpha\to0$ in \eqref{ineq:sec-2-2} to show the mass conservation law, energy and entropy inequalities stated in Theorem \ref{main:thm}.

\section{Existence of a diffusive approximation Landau equation}
\label{sec:Galerkin}

This section is devoted to the first, fully regularised, existence result.
We freeze one approximation level and suppress the approximation index in
the notation.  Thus the interaction kernel is smooth, bounded, and compactly
supported away from the origin, the Landau mobility $\mu$ and the stochastic
coefficient $\sigma$ are both bounded and Lipschitz, and the entropy variable
$L$ is a smooth truncation of the logarithm.  The small linear diffusion
$\alpha\Delta f$, with $\alpha\in(0,1)$ fixed, is kept throughout the section.

More precisely, we prove the existence of a probabilistic weak solution to
the regularised It\^o equation
\begin{equation}
\label{ito-epsi-1}
    \begin{aligned}
        \dd  f=&\alpha \Delta f \dd t+Q_{\mu ,L }(f,f)\dd t\\
        &-\frac{\sqrt{\eps}}{2}\tn\cdot\left(A^{1/2}\sigma (f)\sigma (f_*) \dd \xi_K\right)\\
        &\quad+\frac{\eps}{2}\sum_{k=1}^K\tn\cdot \Big(G_k(v,v_*)\sigma'(f)\sigma(f_*)
\tn\cdot\big(G_k(v,w)\sigma(f)\sigma(f_w)\big)\Big)\dd t .
    \end{aligned}
\end{equation}
The regularised
Landau collision operator is
\begin{equation}
\label{ef:Q-sigma-h}
    Q_{\mu,L}(f,f):=\frac12 \tn\cdot \big(A\mu^2(f)\mu^2(f_*)\tn L(\mu^2(f))\big).
\end{equation}

We first present the assumptions used only at this regularised level.
\begin{assumption}[Kernel $A$]\label{ass:A}
    Let $A\in C^\infty_c(\R_+;\R_+)$ such that for all $z\in\R^d$
\begin{align*}
    0\le A(|z|)\le A_0,\quad \supp(A(|\cdot|))\subset \{z\in \R^d\mid z_*\le |z|\le z^*\}
\end{align*}
for some constants $0<z_*<z^*<+\infty$.

Moreover, up to a constant depending on $z_*$ and $z^*$, we have 
\begin{align*}
    |a|,\,|b|,\,|c|\lesssim_{z_*,z^*} A_0,\quad \supp(a),\,\supp(b),\,\supp(c)\subset B_{z^*}(0),
\end{align*}
since we have 
\begin{align*}
  b_i=-(d-1)\frac{z_i}{|z|^2}A(|z|)\quad \text{and}\quad c=-(d-1)\Big((d-2)\frac{A(|z|)}{|z|^2}+\frac{A'(|z|)}{|z|}\Big).
\end{align*}

\end{assumption}

The noise coefficients $G_k$ are defined as in \eqref{def:G-k}, with the
kernel $A$ satisfying Assumption~\ref{ass:A}.  In particular, the bound
\eqref{G-bdd:app-0} remains available:
\begin{equation}
\label{G-bdd:app-1}
\begin{aligned}
&\sum_{k=1}^K\big\|G_k\big\|_{W^{2,\infty}_{v,v_*}\cap W^{2,1}_{v,v_*}}\le C_K<+\infty.
\end{aligned}
\end{equation}

\begin{assumption}[Regularised Landau mobility and stochastic coefficient]
  \label{ass:sigma-R0}
Let $\sigma\in C^1(\R;\R_+)$ be a bounded globally Lipschitz function such that
\begin{equation}
\begin{gathered}
\sigma(r)=0\quad \forall\, r\le0,\qquad \sigma(1)=1,\qquad \sigma'(1)=\frac12,\\
0\le \sigma(r)\le \min\big\{C_{\sigma^*},\sqrt r\big\},\qquad
0\le \sigma'(r)\le C_{\sigma_*}\quad \forall\,r\ge0
\end{gathered}
\end{equation}
for some constants $C_{\sigma_*},C_{\sigma^*}>0$.

Let $\mu\in C^1(\R;\R_+)$ be a bounded globally Lipschitz function satisfying
\begin{equation}
\begin{gathered}
\mu(r)=0\quad \forall\, r\le0,\qquad \mu(1)=1,\qquad \mu'(1)=\frac12,\\
0\le \mu(r)\le \min\big\{C_{\mu^*},\sqrt r\big\},\qquad
0\le \mu'(r)\le C_{\mu_*}\quad \forall\,r\ge0
\end{gathered}
\end{equation}
for some constants $C_{\mu_*},C_{\mu^*}>0$.  In particular, for some constant $0<C_{\mu_{**}}\le C_{\mu_*}$, we assume
\begin{align*}
0<C_{\mu_{**}}\le \mu'(r)\le C_{\mu_{*}}\quad \forall r\in[0,1].    
\end{align*}

Moreover, we assume
\begin{align*}
    0\le \sigma(r)\le \mu(r)\quad\forall r\in\R.
\end{align*}

The coefficient $\mu$ appears
only in the deterministic Landau drift, while $\sigma$ appears only in the
stochastic flux and the associated It\^o correction.

\end{assumption}

Recall that $\varsigma$ is defined by \eqref{def:varsigma}. Notice that the $L^1$ and $L^2$ bounds of $\sigma$ and $\varsigma$ in Remark \ref{rmk:sigma-f:L2} still hold, i.e. 
\begin{gather*}
\| \mu(f)\|_{L^1}\le C_{\mu_*}\|f\|_{L^1},\quad  \| \mu(f)\|_{L^2}\le \min\Big(C_{\mu_*}\|f\|_{L^2},\|f\|_{L^1}^{\frac12}\Big),\\
    \| \sigma(f)\|_{L^1}\le C_{\sigma_*}\|f\|_{L^1},\quad  \| \sigma(f)\|_{L^2}\le \min\Big(C_{\sigma_*}\|f\|_{L^2},\|f\|_{L^1}^{\frac12}\Big),\\
    \|\varsigma(f)\|_{L^1}\le C_{\sigma_*}^2\|f\|_{L^1},\quad  \|\varsigma(f)\|_{L^2}\le \min\Big(C_{\sigma_*}\|f\|_{L^2},C_{\sigma_*}^2\|f\|_{L^1}^{\frac12}\Big).
\end{gather*}

\begin{assumption}[Regularisation of logarithm]
   \label{ass:L-s0}
 Let $L\in C^1(\R;\R)$ such that $L$ is monotonically non-decreasing,  
 \begin{align*}
     L(s)\ge -C_L,\quad L(1)=0,\quad |L(s)|\le |\log f|\quad\text{and} \quad 0\le  L'(s)s\le1\quad \forall \,s\in[0,\infty)
 \end{align*}
 for some constant $C_L>0$. 

 Moreover, we assume $L'$ is monotonically non-increasing, $L'(1)=1$, and 
 \begin{align*}
     0\le  L'(s)\le C_L\quad\text{for all}\quad s\in[0,\infty). 
 \end{align*}
\end{assumption}

\begin{example}[Admissible regularisations]
\label{expl:sigma-L}
\begin{itemize}
    \item Let $r_0\in (0,1)$ and $R_0\in(2,\infty)$.  Starting from the
    model choice $\sqrt r$, we may take a $C^1$ function $\sigma$ satisfying
    Assumption~\ref{ass:sigma-R0} by setting
 \begin{equation}
 \label{exm:sigma-R0}
       \sigma (r):=\left\{
        \begin{aligned}
       0,&\quad  r\in(-\infty,0)\\
        \frac{3r}{2\sqrt{r_0}}-\frac{r^2}{2r_0\sqrt{r_0}},&\quad  r\in[0,r_0)\\
        \sqrt{r},&\quad r\in[r_0,R_0)
        \end{aligned}
        \right.,\quad 
        \sigma '(r):=\left\{
        \begin{aligned}
       0,&\quad  r\in(-\infty,0)\\
        \frac{3}{2\sqrt{r_0}}-\frac{r}{r_0\sqrt{r_0}},&\quad r\in[0,r_0)\\
         \frac{1}{2\sqrt{r}},&\quad r\in[r_0,R_0)
        \end{aligned}
        \right..
    \end{equation}
    and then completing the definition smoothly on $[R_0,\infty)$ so that
    $\sigma$ is bounded, non-decreasing, and $\sigma'(r)=\exp(-r^2)$ for
    $r\ge R_0+1$.

For this choice, the derivative on the smoothing region is bounded by a constant of order $r_0^{-1/2}$, while the completion on $[R_0,\infty)$ can be chosen non-increasing to zero and bounded by a constant depending on $R_0$. Hence, we can take, for instance, $C_{\sigma_*}=Cr_0^{-1/2}$ and $C_{\sigma^*}=C R_0^{1/2}$, with $C$ independent of $r_0,R_0$.

The same construction can be used for $\mu$, then $\mu$ satisfying Assumption~\ref{ass:sigma-R0} and $0\le \sigma\le\mu$. Indeed, we have 
\begin{align*}
    0<\frac{1}{2}\le \mu'(r)\le \frac{3}{2\sqrt{r_0}}\quad\,r\in[0,1].
\end{align*}

\item Let $s_0\in(0,1)$.  A convenient regularisation of the logarithm is
    \begin{equation}
    \label{exm:L-s0}
        L(s):=\left\{
        \begin{aligned}
        \log s,&\quad  s\in[s_0,\infty)\\
        s_0^{-1} s+\log s_0-1,&\quad s\in(-\infty,s_0), 
        \end{aligned}
        \right.
    \end{equation}
which satisfies Assumption~\ref{ass:L-s0}.  Indeed,
    \begin{equation*}
     L'(s)=\left\{
        \begin{aligned}
        s^{-1},&\quad  s\in[s_0,\infty)\\
        s_0^{-1},&\quad s\in(-\infty,s_0). 
        \end{aligned}
        \right.
    \end{equation*}
One can take $C_L=1-\log s_0+s_0^{-1}$.
\end{itemize}
    
\end{example}

The main result of this section is the following regularised existence
statement.

\begin{proposition}[Existence for the regularised equation]
\label{lem:app-1:ext}
Let $d\ge 2$. Let  $T>0$, $\alpha\in(0,1)$ and $\eps\in(0,1)$ be arbitrarily fixed.  Assume that
$f_0\in L^2(\R^d;\R_+)$, that $A$ satisfies Assumption~\ref{ass:A}, that
$(\mu,\sigma)$ satisfy Assumption~\ref{ass:sigma-R0}, that $L$ satisfies
Assumption~\ref{ass:L-s0}, and that the finite family
$(G_k)_{1\le k\le K}$ satisfies Assumption~\ref{ass:G-k:app}.  Then there
exist a stochastic basis
$(\Omega,\mathcal F,(\mathcal F_t)_{t\in[0,T]},\mathbb P)$ satisfying the
usual conditions, independent real-valued Brownian motions
$(B^k)_{1\le k\le K}$ on this basis, and a process $f$ such
that
\begin{align*}
f\in L^2\big(\Omega;L^\infty(0,T;L^2(\R^d))\big)
\cap L^2\big(\Omega;L^2(0,T;H^1(\R^d))\big),
\end{align*}
and $(f,(B^k)_{1\le k\le K})$ is a probabilistic weak solution to
\eqref{ito-epsi-1} with initial datum $f_0$ in the sense of
Definition~\ref{def:weak-1}.
\end{proposition}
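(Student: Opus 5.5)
We would prove Proposition~\ref{lem:app-1:ext} by a stochastic Galerkin scheme combined with stochastic compactness. Fix an orthonormal basis $(e_j)_{j\ge1}$ of $L^2(\R^d)$ consisting of Schwartz functions; for instance, Hermite functions, which are eigenfunctions of a harmonic-oscillator operator and give convenient control of $H^1$ norms. Let $P_N$ be the orthogonal projection onto $\mathrm{span}\{e_1,\dots,e_N\}$.

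\textbf{Step 1 (Galerkin SDE).} We project \eqref{ito-epsi-1} onto $P_N L^2$. This gives a finite-dimensional It\^o SDE for the coefficient vector of $f^N$. Under Assumptions~\ref{ass:A}, \ref{ass:sigma-R0}, \ref{ass:L-s0} and \ref{ass:G-k:app}, all coefficients are locally Lipschitz in $f^N$ on finite-dimensional spaces. The reasons are as follows. The kernel $A$ is smooth, bounded and supported away from the origin, so $a,b,c$ are bounded and smooth. The functions $\mu,\sigma$ are bounded and $C^1$, and $L$ is $C^1$ with $L'$ bounded. The $G_k$ lie in $W^{2,\infty}\cap W^{2,1}$. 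We also need $\sigma'$ to be Lipschitz for local Lipschitz continuity of the It\^o correction. If only $C^1$ is available, we mollify $\sigma$ once more and remove this extra layer at the end by the same compactness argument. Local existence up to an explosion time then follows. Global existence follows from the a priori bound in Step 2.

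\textbf{Step 2 (uniform $L^2$ and $H^1$ estimates).} We apply It\^o's formula to $\|f^N\|_{L^2}^2$. The term $\alpha\Delta$ gives $-2\alpha\|\nabla f^N\|_{L^2}^2$. For the Landau drift we use the divergence form
\[
Q_{\mu,L}=\tfrac12\nabla_v\cdot\big(\bar a(\mu^2)\,\mu^2(f)L'(\mu^2(f))2\mu\mu'(f)\nabla f-\dots\big).
\]
The diffusive part is nonnegative because $\bar a\ge0$ is bounded. The lower-order part is bounded by $C\|f\|_{L^2}\|\nabla f\|_{L^2}$, since $\mu^2 L'(\mu^2)\le 1$, $\mu'$ is bounded and $\bar b(\mu^2)$ is bounded by $A_0\|\mu^2(f)\|_{L^1}\le A_0 C\|f\|_{L^1}$. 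This creates a difficulty: the $L^1$ norm is not yet controlled for a signed Galerkin approximation. We plan to avoid it by using the bound $\mu\le C_{\mu^*}$. Then $\|\mu^2(f)\|_{L^1}\le C\|f\|_{L^1}$ can be replaced by $\|\mu^2(f)\|_{L^\infty}\le C_{\mu^*}^2$ against the $L^1$ kernel $b$, which has compact support. In this way all nonlocal averages are bounded uniformly.

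The It\^o correction and the quadratic variation are handled in the same way. The $G_k$ terms carry one derivative onto $f$ through $\nabla\varsigma(f)$ or $\sigma'(f)\nabla f$. The factors $\sigma(f_*)\sigma(f_w)$ are bounded by $C_{\sigma^*}^2$, and the $G_k$ are integrable. Young's inequality then absorbs these terms into $\alpha\|\nabla f\|^2$; this is where fixed $\alpha>0$ is essential. Combining this with the Burkholder--Davis--Gundy inequality and Gronwall's lemma gives
\[
\hE\sup_{t\le T}\|f^N_t\|_{L^2}^2+\alpha\,\hE\int_0^T\|\nabla f^N\|_{L^2}^2\le C(\alpha,T,\eps)\big(\|f_0\|_{L^2}^2+1\big)
\]
uniformly in $N$. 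We also obtain higher moments $\hE\sup\|f^N\|^{p}$ in the same way.

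\textbf{Step 3 (tightness and identification).} From the equation we obtain time regularity: $\hE\|f^N\|_{C^\beta([0,T];H^{-s})}$ is bounded for some large $s$ and some $\beta<1/2$, using the bounds of Step 2 and Kolmogorov's criterion for the martingale part. An Aubin--Lions--Simon argument on balls $B_R$ then gives tightness in $L^2(0,T;L^2_{\loc})\cap C([0,T];H^{-s})$, and weak tightness in $L^2(0,T;H^1)$. To pass from local to global strong convergence, which is needed because the coefficients are nonlocal, we would add a uniform weighted $L^2$ moment estimate, $\hE\sup\int\langle v\rangle^2|f^N|^2$. This estimate follows as in Step 2, because every kernel is compactly supported in $v-v_*$.

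By Skorokhod--Jakubowski we obtain a.s.\ convergence on a new probability space. The nonlinear terms are then identified as follows. Products such as $\mu^2(f)L'(\mu^2(f))\mu\mu'(f)\nabla f$ are continuous functions of $f$ multiplied by $\nabla f$, with strong convergence of $f$ and weak convergence of $\nabla f$. The convolutions $\bar a(\mu^2(f))$ converge strongly. The stochastic integral is identified by the standard martingale characterisation, that is, convergence of the quadratic variation and cross variations with $B^k$.

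\textbf{Main obstacle.} We expect the main obstacle to be the uniform $L^2$ estimate in Step 2. There the nonlocal, derivative-carrying It\^o correction must be absorbed by the artificial viscosity $\alpha\Delta$, without positivity or an $L^1$ bound at the Galerkin level. This depends crucially on the boundedness of $\mu,\sigma$ and on the compact support of $A$.
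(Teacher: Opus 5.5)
Your overall route (Galerkin SDE, stopped $L^2$--$H^1$ bounds with higher moments, Kolmogorov regularity in $H^{-l}$, Jakubowski--Skorokhod, identification of the martingale through its brackets) is the paper's. However, Step~2 has a genuine gap.

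\textbf{The gap.} After pairing with $f$, the noise-generated terms are \emph{not} first order in $\nabla f$. Write $\bar G_k(v):=\int G_k(v,v_*)\sigma(f_*)\,\dd v_*$ and use
\begin{equation*}
\tn\cdot\big(G_k\sigma(f)\sigma(f_*)\big)(v)=2\int_{v_*}\Big[(\nabla_v\cdot G_k)\,\sigma(f)\sigma(f_*)+\sigma'(f)\,\nabla_vf\cdot G_k\,\sigma(f_*)\Big]\dd v_* .
\end{equation*}
Then the quadratic variation $\frac{\eps}{4}\sum_k\|P_N\tn\cdot(G_k\sigma(f)\sigma(f_*))\|_{L^2}^2$ contains the term $\eps\sum_k\int\sigma'(f)^2|\nabla_vf\cdot\bar G_k|^2\,\dd v$. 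The It\^o correction paired with $f$ contains a negative multiple of the same quantity; it comes from the piece $\tn f\cdot F_4\nabla_v\varsigma(f)$, since $\varsigma'=(\sigma')^2$.

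Both terms are quadratic in $\nabla_v f$ at the same point, with coefficient $\eps\,C(C_{\sigma_*},C_{\sigma^*},C_K)$. Young's inequality cannot reduce that coefficient. So bounding the two terms separately and absorbing them into $\alpha\|\nabla f\|_{L^2}^2$ works only if $\eps C\lesssim\alpha$. The proposition is claimed for arbitrary $\alpha,\eps\in(0,1)$, and it is later used with $\alpha\downarrow0$ at fixed $\eps$. This is therefore not a removable technicality.

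\textbf{The missing idea.} The two top-order terms must be combined (the Stratonovich cancellation), not bounded separately. Set $\mathcal B_k(f):=-\frac{\sqrt\eps}{2}\tn\cdot(G_k\sigma(f)\sigma(f_*))$, with Stratonovich-to-It\^o correction $\frac12D\mathcal B_k(f)[\mathcal B_k(f)]$ as in Appendix~\ref{app:stratonovich-ito}. Then
\begin{equation*}
\big\langle f,D\mathcal B_k(f)[\mathcal B_k(f)]\big\rangle+\|\mathcal B_k(f)\|_{L^2}^2=D\Phi_k(f)[\mathcal B_k(f)],\qquad \Phi_k(f):=\langle f,\mathcal B_k(f)\rangle=-\sqrt{\eps}\int_{v,v_*}\Sigma(f)\,(\nabla_v\cdot G_k)\,\sigma(f_*),
\end{equation*}
where $\Sigma'=\sigma$. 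Here $\Phi_k$ involves no derivative of $f$ and $\mathcal B_k(f)$ only one. Hence the combined term is bounded by $\eps C(\|f\|_{L^2}^2+\|f\|_{L^2}\|\nabla f\|_{L^2})$, which Young's inequality does absorb for every $\alpha>0$.

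This is exactly the cancellation behind Remark~\ref{rmk:formal-ito-algebra} and the $T_1$ term of Lemma~\ref{lem:cutoff-ito} with $S(r)=r^2$. There the $F_4$ term appears as $\nabla_{v_*}f_*\otimes\nabla_v\varsigma(f):F_4$ and can be integrated by parts in $v$. At the Galerkin level the cancellation survives as an inequality: for the drift $\langle f^N,P_NX\rangle=\langle f^N,X\rangle$, while the bracket is $\|P_N\mathcal B_k\|^2\le\|\mathcal B_k\|^2$.

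\textbf{A minor point.} The weighted bound on $\int\langle v\rangle^2|f^N|^2$ in Step~3 is unnecessary. Every term of \eqref{weak:app-1} only sees $f$ on a bounded set, because $\phi$ and $a,b,G_k,F_1,F_4$ are compactly supported in the relevant variables, so $L^2_tL^2_{\loc}$ convergence suffices. It would also not follow as in Step~2, since multiplication by $\langle v\rangle^2$ does not commute with $P_N$.
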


The remainder of this section is devoted to proving Proposition \ref{lem:app-1:ext}.

In Section \ref{sec-3:1}, we present the weak formulation of \eqref{ito-epsi-1}. 
 The necessary tightness results are established in Section \ref{sec-3:2}, and the proof of Proposition \ref{lem:app-1:ext} is completed in Section \ref{sec-3:3}.


    

\subsection{Weak formulation}
\label{sec-3:1}

Analogously to \eqref{relation:abc}, we define $a,b,c$ corresponding to kernel $A$ satisfying Assumption \ref{ass:A}
\begin{equation}
\label{def:abc-app}
\begin{aligned}
 &a_{ij}(v-v_*):=\Big(\delta_{ij}-\frac{(v-v_*)_i(v-v_*)_j}{|v-v_*|^2}\Big)A(|v-v_*|),\\
 &
         b_i(v-v_*):=\sum_{j=1}^d\d_j a_{ij}(v-v_*),\quad c(v-v_*):=\sum_{i,j=1}^d\d^2_{ij} a_{ij}(v-v_*).
     \end{aligned}
        \end{equation}
        One can define $\bar a,\bar b,\bar c$ as in  \eqref{def:abc-bar}.

Let $L\in C^1(\R)$ satisfy Assumption \ref{ass:L-s0}.  We define function $\tili(s)$ as follows: $\tili(s):=0$ for $s\le 0$, and for $s>0$ 
\begin{equation}
\label{def:h-bar}
    \tili(s):=\int_0^sL' (r)r\dd r.
\end{equation}
 We note that $0\le \tili(s)\le s$.
The collision operator $Q_{\mu,L}(f,f)$ can be written as
\begin{align*}
    Q_{\mu,L}(f,f)&=\nabla_v\cdot \int_{v_*}\mu^2(f)\mu^2(f_*)A\Pi_{(v-v_*)^\perp}\Big[L' (\mu^2(f))\nabla_v \mu^2(f)-L'(\mu^2(f_*))\nabla_{v_*} \mu^2(f_*)\Big]\\
    &= \nabla_v\cdot\big[\bar a(\mu^2(f))\nabla_v \tili\big(\mu^2(f)\big)\big]-\nabla_v\cdot\int_{v_*}\mu^2(f)a(v-v_*)\nabla_{v_*} \tili\big(\mu^2(f_*)\big).
\end{align*}

By integration by parts and the definition of $\bar b$, the second term on the right-hand side can be written as
\begin{align*}
\nabla_v\cdot\int_{\R^d}\mu^2(f)a(v-v_*)\nabla_{v_*} \tili(\mu^2(f_*))\dd v_*=\nabla_v\cdot\big[\mu^2(f)\bar b\big( \tili(\mu^2(f))\big)\big].
\end{align*}
Thus $Q_{\mu,L}(f,f)$ has the following form
\begin{align*}
    Q_{\mu,L}(f,f)=\nabla_v\cdot\big(\bar a(\mu^2(f))\nabla_v \tili (\mu^2(f))-\mu^2(f)\bar b\big[ \tili (\mu^2(f))\big]\big).
\end{align*}

By using $b(v-v_*)=-b(v_*-v)$ and integration by parts, the collision operator $Q_{\mu,L}(f,f)$ has the following weak formulation
    \begin{align*}
        &\int_{\R^d}\phi  Q_{\mu,L}(f,f)\dd v\\
        =&{}\sum_{i,j=1}^d\int_{v,v_*}\mu^2(f_*)\tili(\mu^2(f)) a_{ij}\d_{v_iv_j}\phi+\sum_{i=1}^d\int_{v,v_*} \mu^2(f_*)\tili(\mu^2(f))b_i\big[\d_{v_i}\phi-(\d_{v_i}\phi)_{_*}\big].
    \end{align*}

We define probabilistic weak solutions to \eqref{ito-epsi-1} as follows.
\begin{definition}[Probabilistic weak solution for the regularised equation]
    \label{def:weak-1}
Let $\alpha\in(0,1)$ and let $f_0\in L^2(\R^d)$.  A pair
$(f,(B^k)_{1\le k\le K})$ is called a probabilistic weak solution to
\eqref{ito-epsi-1} on $[0,T]$ if there exists a stochastic basis
$(\Omega,\mathcal F,(\mathcal F_t)_{t\in[0,T]},\mathbb P)$ satisfying the
usual conditions such that $(B^k)_{1\le k\le K}$ are independent
$(\mathcal F_t)$-Brownian motions, and $f$ is a  progressively
measurable process satisfying
\begin{align*}
f\in L^2\big(\Omega;L^\infty(0,T;L^2(\R^d))\big)
\cap L^2\big(\Omega;L^2(0,T;H^1(\R^d))\big).
\end{align*}
Moreover, for every $\phi\in C_c^\infty(\R^d)$, the real-valued process
$t\mapsto\langle f(t),\phi\rangle$ has a continuous
$(\mathcal F_t)$-adapted version and, almost surely, for all $t\in[0,T]$,
\begin{equation}
\label{weak:app-1}
     \begin{aligned}
&\int_{v}f_t(v)\phi(v)\,\dd v-\int_{v}f_0(v)\phi(v)\,\dd v\\
={}&\alpha\int_0^t\int_{v}f\,\Delta\phi\,\dd v\dd s\\
&+\sum_{i,j=1}^d\int_0^t\int_{v,v_*}
\mu^2(f_*)\tili(\mu^2(f))a_{ij}(v-v_*)\d_{v_iv_j}\phi(v)
\,\dd v\dd v_*\dd s\\
&+\sum_{i=1}^d\int_0^t\int_{v,v_*}
\mu^2(f)\tili(\mu^2(f_*))b_i(v-v_*)
\Big(\d_{v_i}\phi(v)-\d_{v_i}\phi(v_*)\Big)
\,\dd v\dd v_*\dd s\\
&-\frac{\sqrt{\eps}}{2}\sum_{k=1}^K\int_0^t\int_{v,v_*}
\tn\phi(v)\cdot G_k(v,v_*)\sigma(f)\sigma(f_*)
\,\dd v\dd v_*\,\dd B^k_s\\
&-\eps\int_0^t\int_{v,v_*,w}
\big(\tn\phi(v)\cdot F_1(v,v_*,w)\big)
\sigma(f_*)\sigma(f_w)\sigma(f)\sigma'(f)
\,\dd v\dd v_*\dd w\dd s\\
&+\eps\int_0^t\int_{v,v_*,w}
\nabla_v\cdot\big(\tn\phi(v)F_4(v,v_*,w)\big)
\sigma(f_*)\sigma(f_w)\varsigma(f)
\,\dd v\dd v_*\dd w\dd s .
\end{aligned}
\end{equation}
All terms in \eqref{weak:app-1} are finite under the above assumptions.  In
particular, since $\phi$ is compactly supported and the regularised
coefficients are bounded with compact support, the deterministic terms are
controlled by $\|f\|_{L^2}$ and $\|\nabla_v f\|_{L^2}$ on bounded velocity
sets.  The stochastic integral is well-defined because, for each $k$,
\begin{align*}
\int_0^T\left|
\int_{v,v_*}\tn\phi(v)\cdot G_k(v,v_*)\sigma(f)\sigma(f_*)
\,\dd v\dd v_*
\right|^2\dd s
\le C_{\phi,G_k}\int_0^T\|f(s)\|_{L^2}^4\,\dd s<+\infty
\end{align*}
almost surely.
\end{definition}

\subsection{Uniform estimates and tightness of the Galerkin scheme}\label{sec-3:2}

We construct solutions by a Galerkin approximation.  Let
$\{e_j\}_{j\ge1}\subset\mathcal S(\R^d)$ be an orthonormal basis of
$L^2(\R^d)$ which is also orthogonal in $H^1(\R^d)$.  For $m\in\N$, set
$E_m:=\operatorname{span}\{e_1,\ldots,e_m\}$ and denote by $\Pi_m$ the
$L^2$-orthogonal projection onto $E_m$:
\begin{align*}
    \Pi_m f:=\sum_{j=1}^m\langle f,e_j\rangle_{L^2}e_j .
\end{align*}

The $m$-th Galerkin approximation is the $E_m$-valued equation
\begin{equation}
\label{app-2:Galerkin}
\begin{aligned}
        \dd  f_m=&\Pi_m\Big[\alpha\Delta f_m\dd t+\nabla_v\cdot\big(\bar a(\mu^2(f_m))\nabla_v \tili(\mu^2(f_m))-\bar b (\tili(\mu^2(f_m))) \mu^2(f_m)\big)\dd t\\
        &-\frac{\sqrt{\eps}}{2}\sum_{k= 1}^K\tn\cdot\big(G_k(v,v_*)\sigma(f_m)\sigma(f_{m,*})\big)\dd B^k\\
        &+\eps\tn\cdot \Big(\int_{w}\sigma(f_{m,*})\sigma(f_{m,w})\Big(F_4 \nabla_v \varsigma(f_m)+\sigma(f_m)\sigma'(f_m)F_1\big)\Big)\dd t\Big],\\
f_m|_{t=0}=&\Pi_m f_0 .
\end{aligned}
\end{equation}
Writing $f_m(t)=\sum_{j=1}^m h_j(t)e_j$, \eqref{app-2:Galerkin} is
equivalent to a finite-dimensional SDE
\begin{align}
\label{galerkin:finite-sde}
\dd h_i(t)=\mathcal B_i(h(t))\,\dd t
+\sum_{k=1}^K\mathcal G_{ik}(h(t))\,\dd B^k_t,
\qquad
h_i(0)=\langle f_0,e_i\rangle_{L^2},
\quad 1\le i\le m,
\end{align}
where $\mathcal B_i$ and $\mathcal G_{ik}$ are obtained by testing the
right-hand side of \eqref{app-2:Galerkin} against $e_i$.  Since all norms on
$E_m$ are equivalent and since $a,b,G_k,F_1,F_4,\sigma,\sigma'$ and
$\varsigma$ are smooth on this regularised level, the maps
$h\mapsto\mathcal B(h)$ and $h\mapsto\mathcal G(h)$ are locally Lipschitz on
$\R^m$.  Classical finite-dimensional SDE theory therefore gives a unique
maximal strong solution $h(t)$, or equivalently an $E_m$-valued process
$f_m(t)$, up to its explosion time $\tau_m$.

For $R>0$ define
\begin{align*}
\tau_{m,R}:=\inf\{t\ge0:\|f_m(t)\|_{L^2}\ge R\}\wedge T .
\end{align*}
The next lemma is first proved on $[0,T\wedge\tau_{m,R}]$, with constants
independent of $R$.  This stopped estimate will then imply that
$\tau_m>T$ almost surely, so the local Galerkin solution is in fact global
on $[0,T]$.

\begin{lemma}[Uniform $L^2$-estimate for the Galerkin solutions]
\label{lem:time-L2}
Assume the hypotheses of Proposition~\ref{lem:app-1:ext}. Let  $m\in\mathbb N$ be arbitrarily fixed. Let $f_m$ be the
maximal local solution to the Galerkin equation \eqref{app-2:Galerkin} with
initial datum $\Pi_m f_0$. Then $\tau_m>T$ almost surely. 

Moreover, for every
$q\ge1$, there exists a constant
\begin{align*}
C=C\big(q,T,d,C_K,\alpha,C_{\sigma^*},C_{\sigma_*},C_{\mu_*},C_{\mu_*},z^*,A_0,
\|f_0\|_{L^2}\big)>0,
\end{align*}
independent of $m$, such that
\begin{align}
\label{galerkin:L2-uniform}
\sup_{m\in\mathbb N}
\mathbb E\Big[
\sup_{t\in[0,T]}\|f_m(t)\|_{L^2_v}^{2q}
+\Big(\alpha\int_0^T\|\nabla_v f_m(s)\|_{L^2_v}^2\,\dd s\Big)^q
\Big]\le C .
\end{align}
\end{lemma}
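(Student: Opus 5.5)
We apply It\^o's formula to $\|f_m(t\wedge\tau_{m,R})\|_{L^2}^2$ for the finite-dimensional SDE \eqref{galerkin:finite-sde}. Since $f_m\in E_m$ and $\Pi_m$ is the $L^2$-orthogonal projection, the projection drops out of every pairing with $f_m$. The resulting identity is
\begin{align*}
\dd\|f_m\|_{L^2}^2 = 2\langle f_m, \text{drift}\rangle\,\dd t + \sum_{k=1}^K\big\|\Pi_m\big(\tfrac{\sqrt\eps}{2}\tn\cdot(G_k\sigma(f_m)\sigma(f_{m,*}))\big)\big\|_{L^2}^2\dd t + \dd M_t .
\end{align*}
Here $M$ is the martingale $-\sqrt{\eps}\sum_k\langle f_m,\tn\cdot(G_k\sigma\sigma_*)\rangle\,\dd B^k$.

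We bound the terms one at a time. The viscous term gives $-2\alpha\|\nabla f_m\|_{L^2}^2$, which is the only coercivity we rely on; the Landau and It\^o-correction terms are treated as perturbations, since no structural cancellation between them is available on the Galerkin level.

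For the Landau drift we use the divergence form. Integration by parts gives
\begin{align*}
-2\langle\nabla f_m,\bar a(\mu^2(f_m))\nabla\tili(\mu^2(f_m))-\bar b(\tili(\mu^2(f_m)))\mu^2(f_m)\rangle .
\end{align*}
By Assumption~\ref{ass:A}, $|\bar a(\mu^2(f))|\le A_0\|\mu^2(f)\|_{L^1}\le A_0 C_{\mu^*}C_{\mu_*}\|f\|_{L^1}$. This $L^1$ bound is not yet available, so we instead bound $\mu^2\le C_{\mu^*}^2$ on the compact support. That gives $|\bar a(\mu^2(f))|\le C\|\mu^2(f)\|_{L^1(B)}$, which is too weak, so we use the cleaner route $\mu^2(r)\le C_{\mu^*}\mu(r)\le C_{\mu^*}C_{\mu_*}|r|$, Young's convolution inequality, and $|a|\in L^2$. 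This gives $\|\bar a(\mu^2(f))\|_{L^\infty}\le C\|f\|_{L^2}$, and similarly for $\bar b$. Together with $|\nabla\tili(\mu^2(f))|\le L'(\mu^2)\mu^2\cdot 2\mu'|\nabla f|/\mu\lesssim|\nabla f|$ (using $L'(s)s\le1$ and the Lipschitz bound on $\mu$), this yields terms of size $C\|f\|_{L^2}\|\nabla f\|_{L^2}^2$. That is cubic and cannot be absorbed. We therefore exploit boundedness instead: $\|\bar a(\mu^2(f))\|_{L^\infty}\le A_0\|\mu^2(f)\|_{L^1(B_{z^*}(v))}\le C_{\mu^*}^2A_0|B_{z^*}|$, and $|\nabla\tili(\mu^2(f))|\le C|\nabla f|$. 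The first piece is then at most $C\|\nabla f\|_{L^2}^2$.

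This is the main obstacle. Absorbing that piece requires a sign, and it has one: $\bar a$ is positive semidefinite and $\nabla\tili(\mu^2(f))=\tili'(\mu^2)\,2\mu\mu'\nabla f$ with $\tili'(\mu^2)2\mu\mu'\ge0$. Hence $-2\langle\nabla f,\bar a\nabla\tili(\mu^2(f))\rangle\le0$, and the term can simply be discarded. The $\bar b$ term is bounded by $C\|\nabla f\|_{L^2}\|\mu^2(f)\|_{L^2}\le\alpha/4\|\nabla f\|^2+C_\alpha\|f\|_{L^2}^2$, using $\|\bar b(\tili)\|_\infty\le C$ by boundedness of $\tili(\mu^2)\le C_{\mu^*}^2$ on the support of $b$.

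For the It\^o-correction drift we use $\|G_k\|_{W^{2,\infty}\cap W^{2,1}}\le C_K$ from \eqref{G-bdd:app-1}, so that $F_1,F_4$ are bounded and integrable in $(v_*,w)$ uniformly in $v$. The boundedness of $\sigma,\sigma'$ and $\varsigma(f)\le C|f|$, with $\nabla\varsigma(f)=\sigma'(f)^2\nabla f$, then bounds $\int_{v_*,w}\sigma_*\sigma_w|F_4|\le C$. The correction is thus at most $C\|\nabla f\|_{L^2}(\|\nabla f\|_{L^2}\cdot C\eps+\|f\|_{L^2})$. We first try absorbing the $\eps$-quadratic part into $\alpha$; if it does not absorb, we combine it with the quadratic-variation term, where the Stratonovich structure should make the $\|\nabla f\|^2$ parts cancel. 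On $\{\|\Pi_m\cdot\|\le\|\cdot\|\}$, the quadratic-variation term is bounded by $C\eps\|\nabla_v(\sigma(f)\int G_k\sigma_*)\|^2\le C\eps(\|\nabla f\|^2+\|f\|^2)$. Since the two matching pieces are $\varsigma$-type versus $\sigma'^2$-type, we expect exact cancellation of the top order, up to lower-order terms bounded by $C\|f\|_{L^2}^2$ because $\sigma$ is bounded. This cancellation is where the detailed computation lies.

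With these bounds in place, the stopped process satisfies
\begin{align*}
\|f_m(t\wedge\tau)\|^2+\alpha\int_0^{t\wedge\tau}\|\nabla f_m\|^2\le\|f_0\|^2+C\int_0^{t\wedge\tau}\|f_m\|^2+\sup|M| .
\end{align*}
We raise this to the power $q$ and take the supremum in time and the expectation. Burkholder--Davis--Gundy applies with $\dd\langle M\rangle\le C\|f_m\|^2(\|\nabla f_m\|\,\|f_m\|+\|f_m\|^2)$, where integration by parts is used to move $\tn\cdot$ onto $f_m$. Young's inequality and Gronwall's lemma then give \eqref{galerkin:L2-uniform} on $[0,\tau_{m,R}]$, with constants independent of $R$ and $m$. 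Finally, Chebyshev's inequality gives $\mathbb P(\tau_{m,R}<T)\le C/R^2\to0$, so $\tau_m>T$ almost surely, and monotone convergence removes the stopping.
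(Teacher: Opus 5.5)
Your skeleton (It\^o's formula for the stopped Galerkin solution, dropping the $\bar a$-part of the Landau drift by its sign, Young's inequality for the $\bar b$-part, BDG, Gr\"onwall, and Chebyshev for non-explosion) is the paper's, and those steps are fine. The gap is in the one non-routine step: the second-order terms generated by the noise. The It\^o correction and the quadratic variation $\sum_k\|\Pi_m\mathcal B_k(f_m)\|_{L^2}^2$ are each of size $\eps\|\nabla_v f_m\|_{L^2}^2$, with constants depending on $C_K,C_{\sigma_*},C_{\sigma^*}$. Since $\alpha,\eps\in(0,1)$ are arbitrary and independent, your first option (absorbing these into $\alpha\|\nabla_v f_m\|_{L^2}^2$) is not available. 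Your second option is only asserted (``we expect exact cancellation''), and as stated it is incomplete, because the top order of the correction has two different pieces.

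The diagonal piece comes from $\nabla_v f\otimes\nabla_v\sigma(f)$ at the same point $v$, and it does cancel. Write $\tn\cdot(G_k(v,w)\sigma\sigma_w)=2\int_w[(\nabla_v\cdot G_k(v,w))\sigma\sigma_w+G_k(v,w)\cdot\nabla_v\sigma\,\sigma_w]$ and $V_k(v):=\int_w G_k(v,w)\sigma(f_w)$. With the normalisation of Appendix~\ref{app:stratonovich-ito}, the correction contributes $-\eps\sum_k\int_v\sigma'(f)^2|\nabla_v f\cdot V_k|^2$. By $\|\Pi_m g\|_{L^2}\le\|g\|_{L^2}$ (you use this in the right direction), the quadratic variation is at most $+\eps\sum_k\int_v\sigma'(f)^2|\nabla_v f\cdot V_k|^2$ plus terms with at most one gradient. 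This is the cancellation of Remark~\ref{rmk:formal-ito-algebra}, and you need to actually carry it out.

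The mixed piece is $\eps\int_{v,v_*,w}\sigma(f_*)\sigma(f_w)\nabla_{v_*}f_*\otimes\nabla_v\varsigma(f):F_4$, coming from the $-\nabla_{v_*}f_*$ half of $\tn f$. A direct bound gives only $C\eps\|\nabla_v f\|_{L^2}^2$. It has no counterpart in the quadratic variation, whose density involves only $\nabla_v f(v)$. So it is \emph{not} a lower-order term controlled by the boundedness of $\sigma$, and your lumped bound $C\|\nabla f\|(\|\nabla f\|C\eps+\|f\|)$ hides it.

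The fix is to integrate the mixed piece by parts in $v$. This is legitimate because $\sigma(f_*)$, $\sigma(f_w)$ and $\nabla_{v_*}f_*$ do not depend on $v$. It becomes $-\eps\int\varsigma(f)\sigma(f_*)\sigma(f_w)\nabla_{v_*}f_*\cdot(\nabla_v\cdot F_4)$, which is bounded by $C\|f\|_{L^2}\|\nabla_v f\|_{L^2}$ since $\varsigma(f)\lesssim|f|$. This is the step the paper refers to with ``for the $F_4$ contribution we integrate by parts in $v$''.

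Only after both manipulations does every noise term take the $F_1$--$F_4$ form of $T_1$ in Lemma~\ref{lem:cutoff-ito} (with $S(r)=r^2$). Each such term carries at most one derivative of $f$, so the total is at most $\delta\|\nabla_v f\|_{L^2}^2+C_\delta(1+\|f\|_{L^2}^2)$ for arbitrary $\eps$. Note also that the surviving one-gradient terms are of type $\|f\|_{L^2}\|\nabla_v f\|_{L^2}$, so they need Young's inequality; they are not bounded by $C\|f\|_{L^2}^2$ from the boundedness of $\sigma$ alone. With this in place, the rest of your argument closes as written.
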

\begin{proof}
Fix $m\in\mathbb N$ and write $f=f_m$.  We first work on the stopped
interval $[0,T\wedge\tau_{m,R}]$.  Since $\Pi_m$ is an orthogonal projection
in $L^2(\R^d)$ and $f(t)\in E_m$, It\^o's formula may be applied directly to
$\|f(t)\|_{L^2}^2$.  For every $t\in[0,T]$, almost surely,
\begin{align}
\label{galerkin:L2-energy-identity}
\|f(t\wedge\tau_{m,R})\|_{L^2}^2
+&\alpha\int_0^{t\wedge\tau_{m,R}}\|\nabla_vf(s)\|_{L^2}^2\,\dd s\notag\\
\le& \|\Pi_mf_0\|_{L^2}^2
+C\int_0^{t\wedge\tau_{m,R}}(1+\|f(s)\|_{L^2}^2)\,\dd s
+M_{m,R}(t),
\end{align}
where $C$ is independent of $m$, and
\begin{align*}
M_{m,R}(t)
:=\frac12\sum_{k=1}^K\int_0^t\int_{v,v_*}
\mathbb{1}_{\{s\le\tau_{m,R}\}}
\tn f\cdot G_k(v,v_*)\sigma(f)\sigma(f_*)\,\dd v\dd v_*\,\dd B^k(s)
\end{align*}
is a continuous square-integrable martingale.  Let us give the estimates leading to
\eqref{galerkin:L2-energy-identity}.  The artificial diffusion contributes
$-\alpha\|\nabla_v f\|_{L^2}^2$.  For the Landau drift we use
\begin{align*}
Q_{\mu,L}(f,f)
=\nabla_v\cdot\big(\bar a(\mu^2(f))\nabla_v\tili(\mu^2(f))
-\mu^2(f)\bar b(\tili(\mu^2(f)))\big).
\end{align*}
The coercive part is non-positive in the $L^2$ balance:
\begin{align*}
&-\int_v\nabla_v f\cdot
\bar a(\mu^2(f))\nabla_v\tili(\mu^2(f))\,\dd v\\
=&{}-\int_v L'(\mu^2(f))\mu^2(f)(\mu^2)'(f)
(\nabla_v f)^T\bar a(\mu^2(f))\nabla_v f\,\dd v\le0.
\end{align*}
The $b$-part is controlled by the compact support and boundedness of $b$, the boundedness of
$\mu$ and $\tili(\mu^2(\cdot))$, and Young's inequality:
\begin{align*}
\left|\int_v\nabla_v f\cdot
\bar b(\tili(\mu^2(f)))\mu^2(f)\,\dd v\right|
\le
\delta\|\nabla_v f\|_{L^2}^2+C_\delta(1+\|f\|_{L^2}^2).
\end{align*}
The deterministic Stratonovich-to-It\^o corrections are estimated similarly.
For the $F_1$ contribution, the bounds on $\sigma,\sigma'$ and the
$W^{1,1}\cap W^{1,\infty}$ bounds of $F_1$ give
\begin{align*}
&\left|\int_{v,v_*,w}
f\,\sigma(f_*)\sigma(f_w)\sigma(f)\sigma'(f)
\tn f\cdot F_1\,\dd v\dd v_*\dd w\right|\\
&\qquad
\le \delta\|\nabla_vf\|_{L^2}^2
+C_\delta(1+\|f\|_{L^2}^2).
\end{align*}
For the $F_4$ contribution we integrate by parts in $v$ so that no
uncontrolled second derivative of $f$ appears.  Since
$|\varsigma(f)|\lesssim |f|$ and $F_4$ is smooth and compactly supported,
\begin{align*}
&\left|\int_{v,v_*,w}
\sigma(f_*)\sigma(f_w)\tn f\cdot F_4\nabla_v\varsigma(f)
\,\dd v\dd v_*\dd w\right|\\
&\qquad
\le \delta\|\nabla_vf\|_{L^2}^2
+C_\delta(1+\|f\|_{L^2}^2).
\end{align*}
Choosing $\delta>0$ sufficiently small with respect to $\alpha$ gives
\eqref{galerkin:L2-energy-identity}.  The projection $\Pi_m$ does not
increase the $L^2$ norm, and all constants are independent of $m$.

For the martingale part, the Burkholder--Davis--Gundy inequality and the
bound $\sigma(f)\le C_{R_0}$ yield, for every $\delta>0$,
\begin{align*}
\mathbb E\sup_{r\le t}|M_{m,R}(r)|^q
&\le C_q\mathbb E\left[
\int_0^{t\wedge\tau_{m,R}}\sum_{k=1}^K
\left|\int_{v,v_*}\tn f\cdot G_k\sigma(f)\sigma(f_*)\right|^2\dd s
\right]^{q/2}\\
&\le \delta\,\mathbb E\left(\int_0^{t\wedge\tau_{m,R}}
\|\nabla_vf(s)\|_{L^2}^2\,\dd s\right)^q\\
&\qquad
+C_{\delta,q}\mathbb E\left(1+\int_0^{t\wedge\tau_{m,R}}
\|f(s)\|_{L^2}^2\,\dd s\right)^q .
\end{align*}
Taking the supremum in time in \eqref{galerkin:L2-energy-identity}, choosing
$\delta>0$ sufficiently small, and applying Gr\"onwall's inequality gives the
stopped estimate
\begin{align}
\label{galerkin:L2-uniform-stopped}
\sup_{m\in\mathbb N}\sup_{R>0}
\mathbb E\Big[
\sup_{t\in[0,T]}\|f_m(t\wedge\tau_{m,R})\|_{L^2_v}^{2q}+
\Big(\alpha\int_0^{T\wedge\tau_{m,R}}
\|\nabla_v f_m(s)\|_{L^2_v}^2\,\dd s\Big)^q
\Big]\le C .
\end{align}
This is the stopped version of the standard $L^2$ energy computation for the
regularised equation; the indicator $\mathbb{1}_{\{s\le\tau_{m,R}\}}$ only
restricts the time integrals and does not change any of the deterministic
estimates.

The stopped estimate excludes explosion.  Indeed, for $R>\|\Pi_mf_0\|_{L^2}$
and on the event $\{\tau_{m,R}<T\}$, continuity of the finite-dimensional
path gives $\|f_m(\tau_{m,R})\|_{L^2}=R$.  Hence
\begin{align*}
\mathbb P(\tau_{m,R}<T)
\le R^{-2q}
\mathbb E\sup_{t\in[0,T]}
\|f_m(t\wedge\tau_{m,R})\|_{L^2}^{2q}
\le CR^{-2q}\longrightarrow0 .
\end{align*}
Letting $R\to\infty$ yields $\tau_m>T$ almost surely.  Finally, applying
Fatou's lemma to \eqref{galerkin:L2-uniform-stopped} gives
\eqref{galerkin:L2-uniform} for the global Galerkin solution.  The estimate
is uniform in $m$ because $\|\Pi_mf_0\|_{L^2}\le\|f_0\|_{L^2}$.
\end{proof}

\begin{lemma}[Uniform local time regularity for $(f_m)_{m\in\mathbb N}$]
\label{lem:time-regularity}
Assume the hypotheses of Proposition~\ref{lem:app-1:ext}. Let
$\chi\in C_c^\infty(\R^d)$, $l>d/2+2$, and $\beta\in(0,1/2)$. Then, for
every
\begin{align*}
p>\frac{1}{1/2-\beta},
\end{align*}
there exists a constant
\begin{align*}
C=C\big(p,\beta,l,T,\chi,d,C_K,\alpha,C_{\sigma^*},C_{\sigma_*},C_{\mu_*},C_{\mu_*},z^*,A_0,
\|f_0\|_{L^2}\big)>0,
\end{align*}
independent of $m$, such that
\begin{align}
\label{galerkin:time-regularity}
\sup_{m\in\mathbb N}
\mathbb E\big[
\|f_m\chi\|_{C^\beta([0,T];H^{-l}(\R^d))}^{p}
\big]\le C .
\end{align}
In particular, the laws of $(f_m\chi)_{m\in\mathbb N}$ are tight in
$C^{\beta'}([0,T];H^{-l-1}(\R^d))$ for every $\beta'<\beta$.
\end{lemma}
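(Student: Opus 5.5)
We plan to prove \eqref{galerkin:time-regularity} by decomposing the Galerkin equation \eqref{app-2:Galerkin} into a drift part and a martingale part. We then estimate each in $H^{-l}$, using the uniform moment bound \eqref{galerkin:L2-uniform} of Lemma~\ref{lem:time-L2}. Since the projection $\Pi_m$ does not commute with multiplication by $\chi$, we will not work with $f_m\chi$ directly. Instead we use duality. For $\psi\in H^{l}(\R^d)$ we have
\[
\langle f_m(t)\chi,\psi\rangle=\langle f_m(t),\Pi_m(\chi\psi)\rangle .
\]
Because $l>d/2+2$, Sobolev embedding gives $\|\Pi_m(\chi\psi)\|_{W^{2,\infty}}\lesssim\|\chi\psi\|_{H^l}\lesssim_\chi\|\psi\|_{H^l}$. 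Here we use that $\Pi_m$ is bounded on $H^1$, since the basis is orthogonal in $H^1$. For higher regularity we would need $\Pi_m$ bounded on $H^l$; if this is not available for the chosen basis, we would instead choose the $e_j$ as eigenfunctions of a Hermite-type operator, so that $\Pi_m$ is uniformly bounded on all $H^l$. We take this as the working assumption. Every term of the weak formulation \eqref{weak:app-1} is then tested against $\phi=\Pi_m(\chi\psi)$ with $\|\phi\|_{W^{2,\infty}}\lesssim\|\psi\|_{H^l}$.

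\textbf{Drift part.} For $0\le s<t\le T$ we bound the drift increment $\int_s^t\langle \mathcal B(f_m(r)),\phi\rangle\,\dd r$. We move all derivatives onto $\phi$ as in \eqref{weak:app-1}. This is possible because the regularised $a,b,c$ are bounded with compact support (Assumption~\ref{ass:A}), $\mu,\sigma,\tili(\mu^2)$ and $\varsigma$ are bounded and Lipschitz with linear growth, and $F_1,F_4$ are controlled by \eqref{G-bdd:app-1}. The only exception is the $F_4$ term in the Galerkin form, which carries $\nabla_v\varsigma(f)$; we integrate it by parts as in \eqref{weak:app-1}. The resulting bound is
\[
|\langle \mathcal B(f_m(r)),\phi\rangle|\lesssim \|\phi\|_{W^{2,\infty}}\big(1+\|f_m(r)\|_{L^2}^4\big).
\]
The quartic power comes from the products $\sigma(f_*)\sigma(f_w)\sigma(f)$ together with Cauchy--Schwarz against the $L^1\cap L^\infty$ coefficients, and from the $\alpha f\Delta\phi$ term. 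Hence the drift increment is at most $|t-s|\,\|\psi\|_{H^l}\sup_r(1+\|f_m(r)\|_{L^2}^4)$. This drift part is Lipschitz in time with $p$-th moment bounded by \eqref{galerkin:L2-uniform} with $q=2p$.

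\textbf{Martingale part.} We set $M_t=\sum_k\int_0^t\Phi_k(r)\,\dd B^k_r$, where $\Phi_k(r)\in H^{-l}$ is the functional
\[
\psi\mapsto -\frac{\sqrt\eps}{2}\int_{v,v_*}\tn\Pi_m(\chi\psi)\cdot G_k\sigma(f_m)\sigma(f_{m,*}) .
\]
The bounds above give $\|\Phi_k(r)\|_{H^{-l}}\lesssim 1+\|f_m(r)\|_{L^2}^2$. By the Burkholder--Davis--Gundy inequality in the Hilbert space $H^{-l}$,
\[
\hE\|M_t-M_s\|_{H^{-l}}^p\lesssim |t-s|^{p/2}\,\hE\sup_r\big(1+\|f_m(r)\|_{L^2}^{2p}\big),
\]
which is bounded uniformly in $m$ by Lemma~\ref{lem:time-L2}. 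The Kolmogorov continuity theorem, in its Garsia--Rodemich--Rumsey form, then yields $\hE\|M\|_{C^\beta([0,T];H^{-l})}^p\le C$ for every $\beta<1/2-1/p$. This is exactly the constraint $p>1/(1/2-\beta)$. We add the $p$-th moment of $\|\Pi_m f_0\chi\|_{H^{-l}}\le C\|f_0\|_{L^2}$ to control the sup-norm part of the Hölder norm.

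\textbf{Tightness.} The bound \eqref{galerkin:time-regularity} alone does not give compactness in $C^{\beta'}([0,T];H^{-l-1})$, since $H^{-l}\hookrightarrow H^{-l-1}$ is not compact on $\R^d$. We therefore use the localisation. Write $f_m\chi=f_m\chi\tilde\chi$ with $\tilde\chi\equiv1$ on $\supp\chi$. Then $f_m\chi$ is bounded in $L^\infty(0,T;L^2)$ with support in a fixed compact set $K_\chi$. The embedding of $L^2$ functions supported in $K_\chi$ into $H^{-l}$ is compact by Rellich. An Arzelà--Ascoli / Simon-type argument, combining the uniform $C^\beta([0,T];H^{-l})$ bound with pointwise relative compactness in $H^{-l-1}$, gives compact embedding into $C^{\beta'}([0,T];H^{-l-1})$ for $\beta'<\beta$. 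Tightness then follows from Chebyshev's inequality.

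The main obstacle we expect is the non-commutation of $\Pi_m$ with the localisation $\chi$, that is, the uniform $W^{2,\infty}$ control of $\Pi_m(\chi\psi)$. The rest of the argument is routine bookkeeping.
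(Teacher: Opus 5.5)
Your proposal is correct and follows essentially the same route as the paper: you split $f_m\chi$ into initial datum, drift and martingale, bound each polynomially in $\|f_m\|_{L^2}$ using the arbitrary moments of Lemma~\ref{lem:time-L2}, use BDG together with Kolmogorov to get the condition $p>1/(1/2-\beta)$, and obtain compactness from the fixed support of $\chi$ via Arzel\`a--Ascoli. Your explicit duality treatment of $\Pi_m$ is more careful than the paper, which simply asserts uniform boundedness of $\Pi_m$ on the relevant Sobolev norms; the Hermite-type basis does work for compactly supported $\chi\psi$ through the Hermite--Sobolev norm, but since $\phi=\Pi_m(\chi\psi)$ is not compactly supported and $f_m$ is only in $L^2$, you must bound $\alpha\int f\Delta\phi$ by $\|\Delta\phi\|_{L^2}\lesssim\|\psi\|_{H^l}$ rather than by $\|\phi\|_{W^{2,\infty}}$.
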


\begin{proof}
Fix $m\in\mathbb N$ and write $f=f_m$.  We decompose the localised equation
as
\begin{align*}
f(t)\chi=f(0)\chi+D_m(t)+M_m(t),
\end{align*}
where $D_m$ contains all deterministic terms in \eqref{app-2:Galerkin}, and
\begin{align*}
M_m(t)
:=-\frac{\sqrt{\eps}}{2}\sum_{k=1}^K\int_0^t
\chi\,\Pi_m\tn\cdot\big(G_k(v,v_*)\sigma(f)\sigma(f_*)\big)\,\dd B^k
\end{align*}
is the localised martingale term.

\medskip

\noindent\textbf{Deterministic increments.}
Let $\phi\in H^l(\R^d)$ with $\|\phi\|_{H^l}\le1$. Since $l>d/2+2$,
\begin{align}
\label{galerkin:Sobolev-time-reg}
\|\chi\phi\|_{W^{2,\infty}}\le C_{\chi,l}.
\end{align}
Using the weak form of \eqref{app-2:Galerkin} and the self-adjointness of
$\Pi_m$, for every $0\le s<t\le T$ we estimate
\begin{align*}
|\langle D_m(t)-D_m(s),\phi\rangle|
\lesssim{}&
\int_s^t\int_v |f|\,|\Delta(\chi\phi)|\,\dd v\dd r\\
&+\int_s^t\int_{v,v_*}
\big|\mu^2(f_*)\tili(\mu^2(f))\big|
|a(v-v_*)|\,|\nabla^2(\chi\phi)|\,\dd v\dd v_*\dd r\\
&+\int_s^t\int_{v,v_*}
\big|\mu^2(f)\tili(\mu^2(f_*))\big|
|b(v-v_*)|\,|\nabla(\chi\phi)(v)-\nabla(\chi\phi)(v_*)|
\,\dd v\dd v_*\dd r\\
&+\int_s^t\int_{v,v_*,w}
|\sigma(f_*)\sigma(f_w)|
\Big(|\varsigma(f)|\,|\nabla_v\cdot(\tn(\chi\phi)F_4)|
\\
&\hspace{5.5cm}
 +|\sigma(f)\sigma'(f)|\,|\tn(\chi\phi)\cdot F_1|\Big)
\dd v\dd v_*\dd w\dd r .
\end{align*}
Here, the $F_4$ term has been integrated by parts in $v$.  Since
$a,b,F_1,F_4$ are smooth and compactly supported in this approximation
section, while $\sigma,\sigma',\varsigma$ and $\tili(\sigma^2(\cdot))$ are
bounded by the assumptions, \eqref{galerkin:Sobolev-time-reg} and
\eqref{galerkin:L2-uniform} imply
\begin{align}
\label{galerkin:deterministic-time-increment}
\mathbb E\|D_m(t)-D_m(s)\|_{H^{-l}}^p
\le C|t-s|^{p/2},
\end{align}
where $C$ is independent of $m$.  The exponent $1/2$ comes only from the
terms estimated by $\int_s^t\|\nabla_vf(r)\|_{L^2}\dd r$; all remaining
terms are in fact of order $|t-s|$.

\medskip

\noindent\textbf{Martingale increments.}
For the martingale part, integration by parts gives, for
$\|\phi\|_{H^l}\le1$,
\begin{align*}
|\langle M_m(t)-M_m(s),\phi\rangle|
\le \frac12\left|
\sum_{k=1}^K\int_s^t\int_{v,v_*}
\nabla_v\Pi_m(\chi\phi)\cdot
G_k(v,v_*)\sigma(f)\sigma(f_*)\,\dd v\dd v_*\,\dd B_k
\right|.
\end{align*}
By the Burkholder--Davis--Gundy inequality, the uniform bounds on $G_k$, and
the boundedness of $\sigma$, for every $p\ge2$,
\begin{align}
\label{galerkin:martingale-time-increment}
\mathbb E\|M_m(t)-M_m(s)\|_{H^{-l}}^p
&\le C_p\mathbb E\Big[
\int_s^t\sum_{k=1}^K
\Big|\int_{v,v_*}\nabla_v\Pi_m(\chi\phi)\cdot
G_k\sigma(f)\sigma(f_*)\,\dd v\dd v_*\Big|^2\dd r
\Big]^{p/2}\notag\\
&\le C_p|t-s|^{p/2}.
\end{align}
Indeed, the last inequality is precisely the point where the higher
$L^2$-moments in Lemma~\ref{lem:time-L2} are used.  Since
$l>d/2+2$, $\|\phi\|_{H^l}\le1$ implies
$\|\nabla_v\Pi_m(\chi\phi)\|_{L^\infty}\le C_{\chi,l}$, uniformly in $m$.
Moreover, by the compact support and boundedness of $G_k$ and by
$|\sigma(z)|\le C_{\sigma_*}|z|$ on the relevant range,
\begin{align*}
\Big|\int_{v,v_*}\nabla_v\Pi_m(\chi\phi)\cdot
G_k(v,v_*)\sigma(f)\sigma(f_*)\,\dd v\dd v_*\Big|
\le C_{\chi,l,C_K}\|\sigma(f)\|_{L^2}^2
\le C_{\chi,l,C_K,C_{\sigma_*}}\|f\|_{L^2}^2 .
\end{align*}
Hence
\begin{align*}
\mathbb E\|M_m(t)-M_m(s)\|_{H^{-l}}^p
&\le C_p\,
\mathbb E\left(\int_s^t\|f(r)\|_{L^2}^{4}\,\dd r\right)^{p/2}  \\
&\le C_p |t-s|^{p/2}
\mathbb E\left[\sup_{r\in[0,T]}\|f(r)\|_{L^2}^{2p}\right]
\le C_p|t-s|^{p/2},
\end{align*}
where the last step follows from \eqref{galerkin:L2-uniform} with
$q=p$.  This explains why the preceding $L^2$ estimate is stated with
arbitrary higher moment $q$.
The constants are independent of $m$, because $\Pi_m$ is bounded on the
finite number of Sobolev norms of the fixed smooth function $\chi\phi$
appearing above.

\medskip

\noindent\textbf{Conclusion.}
Combining \eqref{galerkin:deterministic-time-increment} and
\eqref{galerkin:martingale-time-increment}, we have
\begin{align*}
\mathbb E\|f_m(t)\chi-f_m(s)\chi\|_{H^{-l}}^p
\le C|t-s|^{p/2}.
\end{align*}
Since $p(1/2-\beta)>1$, the quantitative Kolmogorov continuity theorem
implies
\begin{align*}
\sup_m\mathbb E\big[
[f_m\chi]_{C^\beta([0,T];H^{-l})}^{p}\big]\le C.
\end{align*}
The bound on $\sup_t\|f_m(t)\chi\|_{H^{-l}}$ follows from
\eqref{galerkin:L2-uniform}. This proves \eqref{galerkin:time-regularity}.
Finally, the compact embedding
$H^{-l}_{\supp\chi}\Subset H^{-l-1}$ and the Arzel\`a--Ascoli theorem give
\begin{align*}
C^\beta([0,T];H^{-l})\Subset
C^{\beta'}([0,T];H^{-l-1}),\qquad \beta'<\beta,
\end{align*}
and Chebyshev's inequality yields the asserted tightness.
\end{proof}

\begin{proposition}[Tightness of the Galerkin laws]
\label{prop:galerkin-tightness}
Assume the hypotheses of Proposition~\ref{lem:app-1:ext}. Let
$(f_m,(B^k)_{1\le k\le K})$ be the Galerkin solutions to
\eqref{app-2:Galerkin}. Then, for every $\beta'\in(0,1/2)$ and
$l>d/2+2$, the laws of
\begin{align*}
\big(f_m,\nabla_vf_m,(B^k)_{1\le k\le K}\big)
\end{align*}
are tight on
\begin{align*}
\mathbb Y
&:=\Big(L^2([0,T];L^2_{\loc}(\R^d))
\cap C^{\beta'}([0,T];H^{-l-1}_{\loc}(\R^d))\Big)\\
&\qquad\times\big(L^2([0,T];L^2(\R^d)),w\big)
\times C([0,T];\R^K).
\end{align*}
\end{proposition}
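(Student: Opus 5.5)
Tightness on a product space follows from tightness of each marginal, so I will treat the three components separately. The Brownian component is trivial: its law $(B^k)_{1\le k\le K}$ does not depend on $m$, and a single Radon measure on the Polish space $C([0,T];\R^K)$ is tight. For the gradient component, the space $(L^2([0,T];L^2(\R^d)),w)$ has closed balls $\{g:\|g\|_{L^2_{t,v}}\le R\}$ that are weakly compact (and metrisable on bounded sets). Lemma~\ref{lem:time-L2} with $q=1$ gives $\sup_m\mathbb E\|\nabla_vf_m\|_{L^2_{t,v}}^2\le C/\alpha$, so Chebyshev yields $\mathbb P(\|\nabla_v f_m\|_{L^2_{t,v}}>R)\le C/(\alpha R^2)$, uniformly in $m$. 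This gives tightness, since $\alpha$ is fixed at this level.

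The main work is the first component. Here I would use a localisation-plus-Aubin--Lions--Simon argument. The topology of $L^2([0,T];L^2_{\loc})\cap C^{\beta'}([0,T];H^{-l-1}_{\loc})$ is generated by the seminorms obtained after multiplying by cut-offs $\chi_N\in C_c^\infty(\R^d)$, where $\chi_N=1$ on $B_N$ and $\supp\chi_N\subset B_{N+1}$. By a diagonal argument it therefore suffices to show that, for every fixed $N$ and $\eta>0$, there is a set $K_N$ that is compact in $L^2([0,T];L^2(B_{N+1}))\cap C^{\beta'}([0,T];H^{-l-1})$ and satisfies $\mathbb P(f_m\chi_N\notin K_N)\le \eta 2^{-N}$ for all $m$. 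The intersection over $N$ of the preimages is then compact in the Fréchet space.

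I would take $K_N$ to be the set of $g$ with
\[
\|g\|_{L^2([0,T];H^1_0(B_{N+1}))}+\|g\|_{C^{\beta}([0,T];H^{-l})}\le R_N ,
\]
for some $\beta\in(\beta',1/2)$. The embedding $H^1_0(B_{N+1})\Subset L^2(B_{N+1})\hookrightarrow H^{-l}$ holds, so the Aubin--Lions--Simon lemma (with the $L^2$-in-time bound in $H^1$ and the $C^\beta$ bound in $H^{-l}$) gives compactness of $K_N$ in $L^2([0,T];L^2(B_{N+1}))$. Compactness in $C^{\beta'}([0,T];H^{-l-1})$ is already recorded in Lemma~\ref{lem:time-regularity}.

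The probability estimate comes from two bounds. First, $\|\nabla(f_m\chi_N)\|_{L^2}\le C_N(\|f_m\|_{L^2}+\|\nabla f_m\|_{L^2})$, which is controlled in expectation by Lemma~\ref{lem:time-L2}. Second, the $C^\beta([0,T];H^{-l})$ bound is given by \eqref{galerkin:time-regularity}. Chebyshev then yields the bound uniformly in $m$. The only delicate point is ensuring that the time-Hölder and spatial-$H^1$ bounds hold for the same localised object. This is exactly the step I expect to be the obstacle, and I will check that Simon's criterion applies with target space $H^{-l}$ rather than requiring an $H^{-1}$ time derivative. Finally, the joint laws are tight because the product of compact sets is compact.
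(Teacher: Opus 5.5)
Your proposal is correct and follows essentially the same route as the paper. You localise with cut-offs, combine the uniform $L^2_tH^1_v$ bound of Lemma~\ref{lem:time-L2} with the $C^\beta([0,T];H^{-l})$ bound of Lemma~\ref{lem:time-regularity} through Aubin--Lions--Simon and Chebyshev, and handle the gradient and Brownian components by weak compactness of $L^2$-balls and the $m$-independent Brownian law. The step you flag as a possible obstacle is fine: Lemma~\ref{lem:time-regularity} is stated for the same localised object $f_m\chi$, and Simon's criterion only needs time-translate control in some Banach space into which $L^2(B_{N+1})$ embeds continuously, so $H^{-l}$ works; your diagonal argument over radii just makes explicit what the paper leaves implicit.
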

\begin{proof}
Let $\chi_R\in C_c^\infty(\R^d)$ be equal to one on $B_R$ and supported in
$B_{2R}$. By Lemma~\ref{lem:time-L2},
\begin{align*}
\sup_{m\in\N}\mathbb E\Big[
\|f_m\|_{L^\infty_tL^2_v}^2
+\|\nabla_v f_m\|_{L^2_tL^2_v}^2\Big]<+\infty.
\end{align*}
By Lemma~\ref{lem:time-regularity}, for every $R>0$, the sequence
$(f_m\chi_R)_{m\ge1}$ is tight in
$C^{\beta'}([0,T];H^{-l-1}(\R^d))$. Moreover, the compact embedding
\begin{align*}
L^2(0,T;H^1(B_{2R}))
\cap C^\beta([0,T];H^{-l}(B_{2R}))
\Subset L^2(0,T;L^2(B_R))
\end{align*}
implies the tightness of $(f_m)_{m\ge1}$ in
$L^2([0,T];L^2_{\loc}(\R^d))$. The gradient component is tight in
$L^2([0,T];L^2(\R^d))$ endowed with the weak topology by the uniform
$L^2$ bound, and the Brownian laws are tight on $C([0,T];\R^K)$. Therefore
the joint laws are tight on $\mathbb Y$.
\end{proof} 

\subsection{Proof of Proposition \ref{lem:app-1:ext}}
\label{sec-3:3}

We follow a compactness--identification strategy, dividing the proof into the following four steps. A similar strategy will also be used in Sections \ref{sec-5:limit} and \ref{sec-6:limit}.

\medskip

\noindent\textbf{Step 1: Compactness and Skorokhod representation.}
Let $B=(B^k)_{1\le k\le K}$ denote the Brownian motions driving the
Galerkin equation.  By the tightness result above and the tightness of
Brownian laws on $C([0,T];\R^K)$, the laws of
$(f_m,\nabla_v f_m,B)$ are tight on
\begin{align*}
\mathbb X\times C([0,T];\R^K),
\end{align*}
where
\begin{align*}
\mathbb{X}
:=
\Big(L^2([0,T];L^2_{\loc}(\R^d))
\cap C^\beta([0,T];H^{-l}_{\loc}(\R^d))\Big)
\times\big(L^2([0,T];L^2(\R^d)),w\big).
\end{align*}
The Skorokhod--Jakubowski representation theorem gives a stochastic basis
$(\bar\Omega,\bar{\mathcal F},\bar{\mathbb P})$ and random variables
$(\bar f_m,\nabla_v\bar f_m,\bar B_m)$ and
$(\bar f,\nabla_v\bar f,\bar B)$ such that, along a subsequence,
\begin{align}
\label{convergence-1}
(\bar f_m,\nabla_v\bar f_m,\bar B_m)
\to
(\bar f,\nabla_v\bar f,\bar B)
\quad
\bar{\mathbb P}\hbox{-a.s. in }\mathbb X\times C([0,T];\R^K),
\end{align}
and, for each $m$,
\begin{equation}\label{identityinlaws}
(\bar f_m,\nabla_v\bar f_m,\bar B_m)
\overset{d}{=}
(f_m,\nabla_vf_m,B).
\end{equation}
In particular, almost surely,
\begin{align}
\label{sec3:strong-local}
\bar f_m\to\bar f
\quad\hbox{strongly in }L^2([0,T];L^2_{\loc}(\R^d)),
\qquad
\nabla_v\bar f_m\rightharpoonup\nabla_v\bar f
\quad\hbox{weakly in }L^2_{t,v}.
\end{align}
Here the second component of the limit is identified with the distributional
velocity gradient of $\bar f$: for every
$\varphi\in C_c^\infty((0,T)\times\R^d)$, the strong local convergence of
$\bar f_m$ and the weak convergence of $\nabla_v\bar f_m$ give
\begin{align*}
\int_0^T\int_v \bar f\,\partial_{v_i}\varphi\,\dd v\dd t
=\lim_{m\to\infty}\int_0^T\int_v \bar f_m\,\partial_{v_i}\varphi\,\dd v\dd t
=-\lim_{m\to\infty}\int_0^T\int_v \partial_{v_i}\bar f_m\,\varphi\,\dd v\dd t,
\end{align*}
which equals $-\int_0^T\int_v \partial_{v_i}\bar f\,\varphi\,\dd v\dd t$.
The uniform estimate of Lemma~\ref{lem:time-L2} is preserved under equality in
law and gives
\begin{align}
\label{sec3:uniform-L2}
\sup_m\bar{\mathbb E}\Big[
\sup_{t\le T}\|\bar f_m(t)\|_{L^2_v}^2
+\alpha\int_0^T\|\nabla_v\bar f_m(s)\|_{L^2_v}^2\,\dd s
\Big]<+\infty .
\end{align}

Let
\begin{align*}
\bar{\mathcal G}_t^0
:=\sigma\big(\bar f|_{[0,t]},\nabla_v\bar f|_{[0,t]},\bar B|_{[0,t]}\big),
\qquad
\bar{\mathcal G}_t
:=\hbox{the usual augmentation of }\bar{\mathcal G}_t^0 .
\end{align*}
The identity in law
\eqref{identityinlaws}, the convergence \eqref{convergence-1}, and Vitali's
theorem imply that $\bar B=(\bar B^1,\ldots,\bar B^K)$ is a family of
independent Brownian motions with respect to
$(\bar{\mathcal G}_t)_{t\in[0,T]}$.  Indeed, for every bounded continuous
functional $\Theta$ of the paths up to time $s$, set
\begin{align*}
\Theta_m:=\Theta\big(\bar f_m|_{[0,s]},\nabla_v\bar f_m|_{[0,s]},\bar B_m|_{[0,s]}\big).
\end{align*}
The identities
\begin{align*}
\bar{\mathbb E}\big[\Theta_m(\bar B_m^k(t)-\bar B_m^k(s))\big]=0,
\end{align*}
and
\begin{align*}
\bar{\mathbb E}\big[
\Theta_m\big((\bar B_m^{k_1}(t)-\bar B_m^{k_1}(s))
(\bar B_m^{k_2}(t)-\bar B_m^{k_2}(s))
-(t-s)\delta_{k_1k_2}\big)
\big]=0
\end{align*}
give the martingale and covariance characterisation of Brownian motion after
letting $m\to\infty$.  The passage to the limit follows from the almost sure
convergence in \eqref{convergence-1}, the boundedness of $\Theta$, and the
uniform integrability of the Brownian increments.

\medskip

\noindent\textbf{Step 2: Passing to the limit in the deterministic terms.}
Fix $\phi\in C_c^\infty(\R^d)$ and choose $R>1$ so large that
$\supp\phi\subset B_R$.  Since $\sigma,\sigma',\varsigma$, $\mu^2$, and
$\tili\circ\mu^2$ are bounded and Lipschitz on $\R$, \eqref{sec3:strong-local} yields
\begin{align}
\label{sec3:sigma-strong}
&\sigma(\bar f_m)\to\sigma(\bar f),\quad
\sigma^2(\bar f_m)\to\sigma^2(\bar f),\quad
\varsigma(\bar f_m)\to\varsigma(\bar f),\notag\\
&\mu^2(\bar f_m)\to\mu^2(\bar f),\quad
\tili(\mu^2(\bar f_m))\to\tili(\mu^2(\bar f))
\end{align}
strongly in $L^2([0,T]\times B_R)$, and also strongly in $L^1$ on every
bounded cylinder.  Since $A,a,b,F_1,F_4$ are smooth and compactly supported,
all product variables are restricted to a fixed bounded set depending only on
$R$ and on the support of the kernels.

For the artificial viscosity term, the weak convergence of $\bar f_m$ gives
\begin{align*}
\int_0^t\int_v\bar f_m\,\Delta\Pi_m\phi\,\dd v\dd s
\to
\int_0^t\int_v\bar f\,\Delta\phi\,\dd v\dd s .
\end{align*}
For the Landau drift, using the weak formulation in Definition
\ref{def:weak-1}, we have
\begin{align*}
&\int_0^t\int_{v,v_*}
\mu^2(\bar f_{m,*})\tili(\mu^2(\bar f_m))
a_{ij}(v-v_*)\,\partial_{ij}\Pi_m\phi(v)\,\dd v\dd v_*\dd s\\
&\qquad\longrightarrow
\int_0^t\int_{v,v_*}
\mu^2(\bar f_*)\tili(\mu^2(\bar f))
a_{ij}(v-v_*)\,\partial_{ij}\phi(v)\,\dd v\dd v_*\dd s,
\end{align*}
because $\tili(\mu^2(\cdot))$ is Lipschitz and bounded, and the two
nonlinear factors converge strongly on the relevant compact set.  The same
argument gives
\begin{align*}
&\int_0^t\int_{v,v_*}
\mu^2(\bar f_m)\tili(\mu^2(\bar f_{m,*}))
b_i(v-v_*)\big(\partial_i\Pi_m\phi(v)
-\partial_i\Pi_m\phi(v_*)\big)\,\dd v\dd v_*\dd s\\
&\qquad\longrightarrow
\int_0^t\int_{v,v_*}
\mu^2(\bar f)\tili(\mu^2(\bar f_*))
b_i(v-v_*)\big(\partial_i\phi(v)-\partial_i\phi(v_*)\big)
\,\dd v\dd v_*\dd s.
\end{align*}

It remains only to record the convergence of the deterministic
Stratonovich-to-It\^o correction.  The convergence $\Pi_m\phi\to\phi$ in $C^2$, the strong convergence
\eqref{sec3:sigma-strong}, and the boundedness of $\sigma'$ imply
\begin{align*}
&\int_0^t\int_{v,v_*,w}
\big(\tn(\Pi_m\phi)(v)\cdot F_1(v,v_*,w)\big)
\sigma(\bar f_{m,*})\sigma(\bar f_{m,w})
\sigma(\bar f_m)\sigma'(\bar f_m)\\
&\to \int_0^t\int_{v,v_*,w}
\big(\tn(\phi)(v)\cdot F_1(v,v_*,w)\big)
\sigma(\bar f_{*})\sigma(\bar f_{w})
\sigma(\bar f)\sigma'(\bar f),\\
&\int_0^t\int_{v,v_*,w}
\nabla_v\cdot\big(\tn(\Pi_m\phi)(v)F_4(v,v_*,w)\big)
\sigma(\bar f_{m,*})\sigma(\bar f_{m,w})
\varsigma(\bar f_m)\\
&\to \int_0^t\int_{v,v_*,w}
\nabla_v\cdot\big(\tn(\phi)(v)F_4(v,v_*,w)\big)
\sigma(\bar f_{*})\sigma(\bar f_{w})
\varsigma(\bar f).
\end{align*}

Thus, the residual obtained from the
Galerkin equation converges to the residual associated with the limiting weak
formulation.

\medskip

\noindent\textbf{Step 3: Identification of the martingale part.}
For each $\phi\in C_c^\infty(\R^d)$ define
\begin{align*}
H_{m,k}^\phi(t)
:=
\int_{v,v_*}
\tn(\Pi_m\phi)(v)\cdot G_k(v,v_*)
\sigma(\bar f_m(t,v))\sigma(\bar f_m(t,v_*))\,\dd v\dd v_* .
\end{align*}
Then, by the equality of laws and the Galerkin equation, the martingale
residual satisfies
\begin{align*}
\bar M_m^\phi(t)
=-\frac{\sqrt{\eps}}{2}\sum_{k=1}^K\int_0^tH_{m,k}^\phi(s)\,\dd\bar B_m^k(s).
\end{align*}
Let
\begin{align*}
H_k^\phi(t)
:=
\int_{v,v_*}
\tn\phi(v)\cdot G_k(v,v_*)
\sigma(\bar f(t,v))\sigma(\bar f(t,v_*))\,\dd v\dd v_* .
\end{align*}
The compact support of $G_k$ and $\phi$, together with
\eqref{sec3:sigma-strong}, gives
\begin{align}
\label{sec3:H-conv}
H_{m,k}^\phi\to H_k^\phi
\quad\hbox{strongly in }L^2(0,T),\qquad 1\le k\le K.
\end{align}
Indeed, the difference is bounded by the sum of the projection error
$\|\Pi_m\phi-\phi\|_{C^1}$ times the uniform $L^2$ bounds of
$\sigma(\bar f_m)$, and two terms with one factor
$\sigma(\bar f_m)-\sigma(\bar f)$, which vanish by
\eqref{sec3:sigma-strong}.

Let $\bar M^\phi$ be the limit residual obtained in Step~$2$.  The convergence
of all deterministic terms implies
\begin{align*}
\bar M_m^\phi(t)\to\bar M^\phi(t)
\quad\hbox{for every }t\in[0,T]\quad\bar{\mathbb P}\hbox{-a.s.}
\end{align*}
Let $0\le s\le t\le T$ and let $\Theta$ be a bounded continuous functional
of $(\bar f,\nabla_v\bar f,\bar B)$ restricted to $[0,s]$.  Passing to the
limit in the martingale identity for $\bar M_m^\phi$ yields
\begin{align*}
\bar{\mathbb E}\Big[
\Theta\big(\bar M^\phi(t)-\bar M^\phi(s)\big)
\Big]=0,
\end{align*}
so $\bar M^\phi$ is a continuous $\bar{\mathcal G}_t$-martingale.
Moreover, It\^o's product formula for the Galerkin martingale gives, for
each $1\le j\le K$,
\begin{align*}
\bar{\mathbb E}\Big[
\Theta_m\Big(
\bar M_m^\phi(t)\bar B_m^j(t)-\bar M_m^\phi(s)\bar B_m^j(s)
+\frac{\sqrt{\eps}}{2}\int_s^tH_{m,j}^\phi(r)\,\dd r
\Big)\Big]=0.
\end{align*}
Using \eqref{sec3:H-conv} and \eqref{convergence-1}, we obtain
\begin{align}
\label{sec3:cross-bracket}
\langle \bar M^\phi,\bar B^j\rangle_t
=-\frac{\sqrt{\eps}}{2}\int_0^tH_j^\phi(r)\,\dd r .
\end{align}
Similarly,
\begin{align*}
\bar{\mathbb E}\Big[
\Theta_m\Big(
(\bar M_m^\phi(t))^2-(\bar M_m^\phi(s))^2
-\frac{\eps}{4}\int_s^t\sum_{k=1}^K|H_{m,k}^\phi(r)|^2\,\dd r
\Big)\Big]=0
\end{align*}
passes to the limit, and therefore
\begin{align}
\label{sec3:quad-bracket}
\langle \bar M^\phi\rangle_t
=\frac{\eps}{4}\int_0^t\sum_{k=1}^K|H_k^\phi(r)|^2\,\dd r .
\end{align}
Equations \eqref{sec3:cross-bracket}--\eqref{sec3:quad-bracket} identify
the martingale completely.  Indeed, set
\begin{align*}
N^\phi(t):=\bar M^\phi(t)
+\frac{\sqrt{\eps}}{2}\sum_{k=1}^K\int_0^tH_k^\phi(s)\,\dd\bar B^k(s).
\end{align*}
Using the Brownian covariance, \eqref{sec3:cross-bracket}, and
\eqref{sec3:quad-bracket}, we get $\langle N^\phi\rangle_t=0$ for all $t$.
Since $N^\phi(0)=0$, it follows that $N^\phi\equiv0$, namely
\begin{align}
\label{sec3:martingale-characterisation}
\bar M^\phi(t)
=-\frac{\sqrt{\eps}}{2}\sum_{k=1}^K\int_0^t\int_{v,v_*}
\tn\phi(v)\cdot G_k(v,v_*)\sigma(\bar f)\sigma(\bar f_*)\,
\dd v\dd v_*\,\dd\bar B^k .
\end{align}

\medskip

\noindent\textbf{Step 4: Conclusion.}
Combining Step~$2$ with \eqref{sec3:martingale-characterisation}, we conclude
that $(\bar f,(\bar B^k)_{1\le k\le K})$ satisfies the weak formulation
\eqref{weak:app-1} for every $\phi\in C_c^\infty(\R^d)$ and every
$t\in[0,T]$, outside a null set independent of $t$ after using the
continuity of all terms.  The lower semicontinuity of the $L^2$ and
$H^1$ bounds in \eqref{sec3:uniform-L2} gives
\begin{align*}
\bar f\in L^\infty(0,T;L^2(\R^d))
\cap L^2(0,T;H^1(\R^d))
\quad\bar{\mathbb P}\hbox{-a.s.}
\end{align*}
Finally, since $f_0\ge0$ and the limiting equation has the same weak
structure as the approximating equation, the negative-part argument carried
out in the proof of non-negativity below applies to $\bar f$ and gives
$\bar f\ge0$ almost everywhere.  Hence $\bar f$ is a probabilistic weak
solution of \eqref{ito-epsi-1} in the sense of Definition~\ref{def:weak-1}.
This completes the proof.

\section{Non-negativity, moment estimates, and entropy dissipation estimates}
\label{sec:L-1:functional}
In this section, we establish non-negativity, conservation of mass and momentum, the energy inequality, and entropy dissipation estimates for the solutions constructed for \eqref{ito-epsi-1} in Proposition \ref{lem:app-1:ext}. We spell out the precise equation used throughout the section, since all estimates below are consequences of this fixed regularised It\^o formulation. In weak form, for each smooth test function $\phi$ with at most quadratic growth,
\begin{equation}
\label{app:eq:weak-cl}
\begin{aligned}
\langle f_t,\phi\rangle
=&\,\langle f_0,\phi\rangle
+\alpha\int_0^t\langle f_s,\Delta \phi\rangle\,\dd s
-\frac12\int_0^t\int_{v,v_*}\tn\phi\cdot A\mu^2(f_s)\mu^2(f_{s,*})\tn L(\mu^2(f_s))\,\dd s\\
&+\frac{\sqrt{\eps}}{2}\sum_{k=1}^K\int_0^t\int_{v,v_*}\tn\phi\cdot G_k(v,v_*)\sigma(f_s)\sigma(f_{s,*})\,\dd B_s^k\\
&+\frac{\eps}{2}\int_0^t\int_{v,v_*,w}\sigma(f_{s,*})\sigma(f_{s,w})
\big(\nabla_v\phi-\nabla_{v_*}\phi_*\big)\cdot
\Big(\sigma'(f_s)\sigma(f_s)F_1+F_4\nabla_v\varsigma(f_s)\Big)\,\dd s .
\end{aligned}
\end{equation}
Equivalently, and only as a compact formal notation, this is the equation
\begin{align}\label{app:eq:cl}
\partial_t f
&= \alpha \Delta f + \frac{1}{2} \tn \cdot \big( A \mu^2(f) \mu^2(f_*) \tn L(\mu^2(f))\big) - \frac{\sqrt{\eps}}{2} \tn \cdot \big( A^{1/2} \sigma(f) \sigma(f_*) \xi_K \big)\notag   \\
&\quad + \frac{\eps}{2} \sum_{k=1}^K \tn \cdot \Big( G_k(v,v_*) \sigma'(f) \sigma(f_*) \tn \cdot \big( G_k(v,w) \sigma(f) \sigma(f_w) \big) \Big), 
\end{align}

We define the mass and energy by
\begin{align*}
   m_t := \int_{v} f_t, 
   \qquad 
   E_t := \int_{v} |v|^2 f_t.
\end{align*}

Motivated by the structure of the regularised Landau equation, we introduce the approximated $(\mu,L)$-entropy.
\begin{definition}
   \label{def:H-h-sigma}
Let $L\in C(\R;\R)$ and $\mu\in C(\R;\R_+)$. We define the $(\mu,L)$-entropy as follows
\begin{equation}
\mathcal{H}_{\mu,L}(f) := \int_{v} h_{\mu,L}(f(v))\,\dd v\quad\text{and}\quad h_{\mu,L}(s) := \int_0^s L(\mu^2(r))\dd r.
\end{equation}

\end{definition}

The functional is finite from below on the class used in Proposition \ref{lem:l-1:app}. Indeed, the assumptions on $L$ and $\mu$ imply
\begin{equation*}
|h_{\mu,L}(r)|\lesssim r|\log r|\mathbb{1}_{\{r>1\}}+r^2\mathbb{1}_{\{0\leq r\leq 1\}},
\end{equation*}
and therefore $\mathcal H_{\mu,L}(f)>-\infty$ whenever $f\in L^2\cap L\log L$. This estimate will be used again in the passage $n\to\infty$ below.

In the particular case $\mu(t)=\sqrt t$ and $L(r)=\log r$, the $(\mu,L)$-entropy (up to mass $m(f)$) coincides with the Boltzmann entropy  $\mathcal{H}(f) =
\int_{v} f \log f$.
For notation convenience, we define $h=0$ for $s\le 0$ and for $s\ge0$ 
\begin{align*}
    h(s):=\int_0^s\log r=s\log s-s.
\end{align*}
We note that 
\begin{align*}
  \int_v h(f)+m(f)=  \cH(f).
\end{align*}

Finally, let $\cD_{\mu,L}$ denote the instantaneous $(\mu,L)$-entropy dissipation, defined by
\begin{equation}
\label{def:D-h-sigma}
\cD_{\mu,L}(f) := \frac{1}{2}\int_{v,v_*} A \mu^2(f)\mu^2(f_*) \big|\tn L(\mu^2(f))\big|^2 \ge 0.
\end{equation}
The non-negativity follows from $A\ge0$ and the square structure. With this convention the cumulative entropy dissipation on $[0,t]$ is $\int_0^t\cD_{\mu,L}(f_s)\,\dd s$.


In this section, we show the following proposition.

\begin{proposition}\label{lem:l-1:app}
Suppose that the initial datum $f_0$ satisfies
\begin{align*}
   f_0 \in L^2 \cap L^1_2 \cap L \log L(\Do;\R_+).
\end{align*}
Let $A$ satisfy Assumption \ref{ass:A}, let $(\mu,\sigma)$ satisfy Assumption \ref{ass:sigma-R0}, let $L$ satisfy Assumption \ref{ass:L-s0}, and let the family $(G_k)_{k \geq 1}$ satisfy Assumption \ref{ass:G-k:app}. Let $d\ge 2$. Let  $\alpha \in (0,1)$ and $\eps\in(0,1)$ be arbitrarily fixed. Let $(f,(B_k)_{k=1,\dots,K})$, $(\Omega,\mathcal{F},(\mathcal{F}(t)),\mathbb{P})$ be a probabilistic weak solution to \eqref{app:eq:cl} with initial data $f_0$. 

Then $f\ge0$ a.e. and $f\in L^\infty(0,T;L^2(\Do))$ almost surely; more precisely the Galerkin paths and the lower semicontinuity argument from Proposition \ref{lem:app-1:ext} give $\hE\sup_{t\le T}\|f_t\|_{L^2}^2<\infty$. Moreover, almost surely for every $t\in[0,T]$,
\begin{gather}
    \int_v f_t(v)\dd v = \int_v f_0(v)\dd v, \label{cl:mass} \\
    \int_v vf_t(v)\dd v = \int_v vf_0(v)\dd v, \label{cl:momentum} \\
    \int_v |v|^2 f_t(v)\dd v \le \int_v |v|^2 f_0(v)\dd v + 2 \alpha d\, T\, \|f_0\|_{L^1(\Do)}, \label{cl:energy-app}
\end{gather}
and for every $t\in[0,T]$, 
\begin{gather}
    \hE\big[\cH_{\mu,L}(f_t)\big]+\hE\Big[\int_0^t\cD_{\mu,L}(f_s)\,\dd s\Big]\le \hE\big[\cH_{\mu,L}(f_0)\big]+ C\hE\big[\| f_0\|_{L^1}^3+1\big]\label{entropy-ineq:mu-L}
\end{gather}
for some constant $C=C(C_{\sigma_*},C_K,T)>0$.

Furthermore, for all $\lambda>0$, there exists $\eps_0\in (0,1)$ such that if $\eps\in(0,\eps_0)$, then we have for every $t\in[0,T]$, 
\begin{equation}
\label{H:esp:exp}
\begin{aligned}
\hE\exp\left[
\sup_{s\in[0,t]}\left(\lambda\cH_{\mu,L}(f_s)+\frac{\lambda}{2}\int_0^s \cD_{\mu,L}(f_r)\,\dd r\right)\right]
\le C_\lambda\exp\left[\lambda\cH_{\mu,L}(f_0)+C\lambda\big(\| f_0 \|_{L^1}^3+1\big)\right].
\end{aligned}
\end{equation}

More precisely, $\eps_0$ and $C_\lambda$ depend only on $\lambda$, $C_{\sigma_*}$ and $C_K$.

\end{proposition}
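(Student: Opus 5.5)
The plan is to work at the level of the regularised weak formulation \eqref{app:eq:weak-cl}. Here all coefficients ($A$, $G_k$, $F_1$, $F_4$, $\mu$, $\sigma$, $L$) are bounded, smooth and compactly supported, and $f\in L^2(\Omega;L^2_tH^1_v)$. This is enough to justify a cut-off It\^o formula for functionals $\int_v \Phi(f_t)\psi\,\dd v$ with $\Phi\in C^2$ of controlled growth. I would prove such a formula first, by mollifying in $v$ and passing to the limit using the $H^1$ bound. The properties in the statement then follow in order: non-negativity, then mass and momentum, then energy, then the entropy inequality, and finally the exponential bound.

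\textbf{Non-negativity.} Apply the It\^o formula to $\int_v \Phi_\delta(f)$ with $\Phi_\delta$ a $C^2$ approximation of $(r^-)^2$. Since $\mu$, $\sigma$, $\sigma'$ and $\varsigma$ all vanish on $\{f\le 0\}$, the Landau drift, the noise flux and both correction terms are supported on $\{f>0\}$, where $\Phi_\delta'$ and $\Phi_\delta''$ vanish. The same holds for the quadratic variation. Only $-\alpha\int \Phi_\delta''(f)|\nabla f|^2\le0$ survives, so $\hE\|f_t^-\|_{L^2}^2\le\|f_0^-\|_{L^2}^2=0$.

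\textbf{Mass and momentum.} Test with $\phi_R(v)=\chi(v/R)(1,v)$ and let $R\to\infty$. For $\phi=1$ and $\phi=v$, $\nabla_v\phi-\nabla_{v_*}\phi_*=0$, so $\tn\phi=0$ and every term except the $\alpha\Delta$ term vanishes in the limit. That term also vanishes because $\Delta(1,v)=0$. The cut-off errors go to zero using the compact support of $a$, $b$, $G_k$ together with $f\in L^1_2$. The $L^1_2$ bound itself is obtained a priori, by the same localisation applied to $|v|^2$.

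\textbf{Energy.} For $\phi=|v|^2$ we have $\tn\phi=2\Pi_{(v-v_*)^\perp}(v-v_*)=0$, so the Landau, noise and correction terms all vanish identically. This is the key cancellation. The remaining term is $\alpha\int f\Delta|v|^2=2d\alpha m_t$, which gives \eqref{cl:energy-app} after integrating in time and using mass conservation.

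\textbf{Entropy.} Apply It\^o to $\cH_{\mu,L}(f)$, whose derivative is $h_{\mu,L}'(f)=L(\mu^2(f))$.
\begin{itemize}
\item The Landau drift gives exactly $-\cD_{\mu,L}(f)$.
\item The viscosity term gives $-\alpha\int L'(\mu^2)(\mu^2)'|\nabla f|^2\le0$, because $L'\ge0$ and $\mu'\ge0$.
\item The martingale term is $\frac{\sqrt\eps}{2}\sum_k\int \tn L(\mu^2(f))\cdot G_k\sigma(f)\sigma(f_*)\,\dd B^k$.
\item The It\^o correction and the quadratic variation term $\frac{\eps}{8}\int L'(\mu^2)(\mu^2)'|\ldots|^2$ must be combined.
\end{itemize}
I would bound all $\eps$-terms by $\frac14\cD_{\mu,L}+C\,m^3$. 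For this I use Cauchy--Schwarz in the $A\mu^2\mu_*^2$-weighted $L^2$. The inequality $\sigma\le\mu$, the bounds $L'(s)s\le1$ and $\sigma'\le C_{\sigma_*}$, and $\|\sigma(f)\|_{L^2}^2\le m$ absorb the factors $\sigma(f)\sigma(f_*)\sigma(f_w)$, which is where the cubic power of mass comes from. The bounds on $F_1$, $F_4$ control the rest. This is the step I expect to be the main obstacle. The difficulty is the $F_4\nabla\varsigma(f)$ term tested against $\tn L(\mu^2(f))$, which produces $L'(\mu^2)\,|\nabla f|^2$-type quantities not directly bounded by $\cD_{\mu,L}$. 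I would first try to cancel it exactly against the quadratic variation term, using the identity $\varsigma'=(\sigma')^2$, the structure $F_4=\sum_k G_k\otimes G_k$ and antisymmetry in $(v,v_*)$. Whatever remainder does not cancel I would integrate by parts onto $F_4$, and absorb what is left into $\alpha\|\nabla f\|^2$ if necessary. Taking expectations then yields \eqref{entropy-ineq:mu-L}.

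\textbf{Exponential estimate.} Write the pathwise inequality $Y_s:=\cH_{\mu,L}(f_s)+\frac34\int_0^s\cD_{\mu,L}\le \cH_{\mu,L}(f_0)+C(m^3+1)+M_s$. The bracket satisfies $\dd\langle M\rangle\le C\eps(m^2+1)\,\cD_{\mu,L}\,\dd t$, again by Cauchy--Schwarz. Then $\exp(2\lambda M-2\lambda^2\langle M\rangle)$ is a supermartingale. For $\eps<\eps_0(\lambda)$, the term $2\lambda^2\langle M\rangle$ is dominated by $\frac{\lambda}{4}\int\cD_{\mu,L}$. Combining Cauchy--Schwarz in $\Omega$ with Doob's maximal inequality applied to the exponential supermartingale then gives \eqref{H:esp:exp}.
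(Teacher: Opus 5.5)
Your non-negativity, conservation and energy steps are fine. For non-negativity, taking $\Phi_\delta''$ supported in $(-\infty,0]$ makes every nonlinear term vanish identically, which is cleaner than the paper's $S^-_\epsilon$ with its $\epsilon^2$-errors and Gr\"onwall argument.

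\textbf{The gap is in the entropy step.} Cauchy--Schwarz in the $A\mu^2(f)\mu^2(f_*)$-weighted $L^2$ can only absorb contributions that are paired with the full projected difference $\tn L(\mu^2(f))$. Your cancellation of the diagonal part of the Stratonovich correction against the quadratic variation is correct. But however you organise it, terms with \emph{single-variable} gradients remain. With $h=h_{\mu,L}$, examples are
\begin{equation*}
\int h''(f_*)\sigma(f_*)\sigma(f_w)\,\nabla_{v_*}f_*\otimes\nabla_v\varsigma(f):F_4
\quad\text{and}\quad
\int h''(f)\sigma(f_*)\sigma(f_w)\,\nabla_v\sigma^2(f)\cdot F_2 ,
\end{equation*}
and the second one comes from the quadratic variation, so it has no $\tn L$ factor at all.

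These terms cannot be handled by your tools:
\begin{itemize}
\item $\cD_{\mu,L}$ does not control $\nabla_{v_*}f_*$ or $\nabla_v f$ separately.
\item Integrating by parts only in $v$ still leaves the factor $h''(f_*)\sigma(f_*)\nabla_{v_*}f_*$.
\item Absorbing into the viscosity via Young against $\alpha\int h''(f)|\nabla f|^2$ costs a factor $\eps^2/\alpha$.
\item Absorbing via the separate $L^2$ estimate gives constants depending on $\alpha$, $\|f_0\|_{L^2}$ and the regularisation. These hold only in expectation and cannot be exponentiated.
\end{itemize}
Either way you lose the stated $C=C(C_{\sigma_*},C_K,T)$. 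Its independence of $\alpha$ and of the regularisation is exactly what Sections~\ref{sec:limit-L2}--\ref{sec:limit-L1} need when $n\to\infty$ and then $\alpha\to0$, where $\|f_0^\alpha\|_{L^2}\to\infty$.

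\textbf{The missing idea} is that every such gradient is an exact gradient of a bounded function of $f$. From $\sigma\le\mu$ and $L'(s)s\le1$ you get $L'(\mu^2)\mu\sigma\le1$. Set
\begin{equation*}
l_1(s)=\int_0^sL'(\mu^2)\mu'\mu\sigma\,\dd r,\qquad
l_2(s)=\int_0^sL'(\mu^2)\mu'\mu\sigma'\sigma\,\dd r .
\end{equation*}
Then:
\begin{itemize}
\item $h''(f_*)\sigma(f_*)\nabla f_*=2\nabla_{v_*}l_1(f_*)$ and $h''(f)\nabla\sigma^2(f)=4\nabla_vl_2(f)$;
\item $0\le l_1\le\mu$ and $0\le l_2\le C_{\sigma_*}\mu$;
\item $h''\sigma^2\le2\mu\mu'$ is bounded.
\end{itemize}
The two gradients in the $F_4$ term act on different variables, so you can integrate by parts separately in $v_*$ and in $v$. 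This moves all derivatives onto $F_1,F_2,F_4$ and the cut-off. The result is a pathwise bound of order $(\|f_0\|_{L^1}+1)^3$ with no $\nabla f$, no $\cD_{\mu,L}$ and no $\alpha$. This is the paper's argument. It gives \eqref{entropy-ineq:mu-L} and also the deterministic drift constant you need before exponentiating.

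\textbf{Two minor points in the exponential step.}
\begin{itemize}
\item By orthonormality of $(g_k)$ and Bessel's inequality,
\begin{equation*}
\sum_k\Big|\int\tn L(\mu^2(f))\cdot G_k\sigma(f)\sigma(f_*)\Big|^2\le\int A\sigma^2(f)\sigma^2(f_*)|\tn L(\mu^2(f))|^2\le2\cD_{\mu,L}(f).
\end{equation*}
So no mass factor enters the bracket. Your $(m^2+1)$ would make $\eps_0$ depend on $\|f_0\|_{L^1}$, contrary to the statement.
\item A Cauchy--Schwarz reduction to $\hE\sup_sZ_s$ does not close, because there is no $L^1$ maximal inequality. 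Instead, use the weak-type bound $\mathbb P(\sup_sZ_s>R)\le1/R$ for the nonnegative supermartingale $Z=\exp(2aM-2a^2\langle M\rangle)$. Combined with the layer-cake formula, this gives $\hE(\sup_sZ_s)^{1/2}\le2$.
\end{itemize}
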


The remainder of this section is devoted to proving Proposition \ref{lem:l-1:app}.
We present a cut-off It\^o formula in Section \ref{sec:ito}. 

We show Proposition~\ref{lem:l-1:app} by showing the non-negativity of $f$, conservation laws and energy bounds, and the entropy inequality in Section \ref{sec:4-1}, \ref{sec:4-2} and \ref{sec:4-3}, respectively.

\subsection{A cut-off It\^o formula}\label{sec:ito}

In this subsection, we show a cut-off It\^o formula in Lemma \ref{lem:cutoff-ito}.

For orientation, if one ignores the cut-off terms, the localised formula of
Lemma~\ref{lem:cutoff-ito} reduces formally to the following identity for
\eqref{ito-epsi-1} and any smooth $S:\R\to\R$:
\begin{equation}
    \label{dS-1:alpha}
 \begin{aligned}
\frac{\dd}{\dd t} \int_vS(f)=&-\alpha \int_{v}\nabla S'(f)\cdot \nabla f
-\frac12\int_{v,v_*}\tn S'(f)\cdot\big( A\mu^2\mu_*^2\tn L( \mu^2)\big)\\
&+\frac{\sqrt{\eps}}{2}\sum_{k=1}^K\int_{v,v_*}\tn S'(f)\cdot\big(G_k(v,v_*)\sigma(f)\sigma(f_*)\big)\dd B_k\\
          &+\eps\Big[\frac12\int_{v,v_*,w}S''(f_*)\sigma'(f)\sigma(f)\sigma(f_*)\sigma(f_w)\nabla_{v_*} f_*\cdot F_1\\
          &\qquad+S''(f_*)\sigma(f_*)\sigma(f_w)\nabla_{v_*} f_*\otimes \nabla_v \varsigma(f) F_4\\
           &\qquad +\frac12 S''(f)\sigma(f_*)\sigma(f_w)\nabla_v \sigma^2(f)\cdot F_2\\
&\qquad +S''(f)\sigma^2(f)\sigma(f_*)\sigma(f_w)F_3\Big].
\end{aligned}
\end{equation}
More details of the calculations can be found in Remark \ref{rmk:formal-ito-algebra} below.

To use the It\^o formula on the whole space, we begin by introducing a sequence of cut-off functions and performing It\^o's formula calculations. 
\begin{assumption}
    \label{ass:chi}
    Let $\chi=\chi(v)$ be a smooth cut-off function satisfying $\chi(v)=1$ for $0\le |v|\le 1$ and $\chi(v)=0$ for $|v|\ge 2$. Moreover, assume that $|\chi(v)|\le 1$ and $|\nabla \chi(v)|+|\nabla^2\chi(v)|\le 1$. Fix a sufficiently large constant $D\ge 4d$. For each $m\in\mathbb{N}$, define the rescaled function $\chi_m(v)=\chi(v/m^D)$.

   Then we have 
    \begin{align*}
     |\nabla_v\chi_m(v)|\le m^{-D}\mathbb{1}_{\{m^D\le |v|\le 2m^D\}}
     \quad\text{and}\quad
     |\Delta_v\chi_m(v)|\le m^{-2D}\mathbb{1}_{\{m^D\le |v|\le 2m^D\}}.  
    \end{align*}
\end{assumption}

\begin{lemma}\label{lem:cutoff-ito}
Assume the hypotheses of Proposition~\ref{lem:l-1:app}. Let
$S\in C^2(\R)$ be such that the terms below are finite, and let $\chi$ be a
cut-off function satisfying Assumption~\ref{ass:chi}. Then we have almost
surely, for every $t\in[0,T]$,  
\begin{equation}\label{ito:cut-off}
 \begin{aligned}
\int_{v} \chi(v)S(f_t)=& \int_{v} \chi(v)S(f_0)-\alpha\int^t_0\int_{v}\nabla(\chi(v)S'(f))\cdot\nabla f\dd s\\
&-\frac12\int^t_0\int_{v,v_*}\tn \big(\chi(v)S'(f)\big)\cdot\big( A\mu^2(f)\mu^2(f_*)\tn L( \mu^2(f))\big)\dd s\\
&+\frac{\sqrt{\eps}}{2}\sum_{k=1}^K\int^t_0\int_{v,v_*}\tn \big(\chi(v)S'(f)\big)\cdot\big(G_k(v,v_*)\sigma(f)\sigma(f_*)\big)\dd B_k\\
   &+\eps\big(T_1(f,S,\chi)+T_2(f,S,\chi)\big),
\end{aligned}
\end{equation}
where 
\begin{align*}
T_1(f,S,\chi):=  \frac12\int^t_0\int_{v,v_*,w}&\chi(v_*)S''(f_*) \sigma'(f)\sigma(f)\sigma(f_*)\sigma(f_w)\nabla_{v_*} f_*\cdot F_1\\   &+\chi(v)S''(f)\sigma^2(f)\sigma(f_*)\sigma(f_w)F_3\\
   &+\chi(v_*)S''(f_*)\sigma(f_*)\sigma(f_w)\nabla_{v_*} f_*\otimes \nabla_v \varsigma(f) : F_4\\
   & +\frac12 \chi(v)S''(f)\sigma(f_*)\sigma(f_w)\nabla_v \sigma^2(f)\cdot F_2\dd s,  \\
T_2(f,S,\chi):=-\frac12\int^t_0\int_{v,v_*,w}&\sigma(f_*)\sigma(f_w)\big(S'(f)\nabla_v \chi(v)-S'(f_*)\nabla_{v_*} \chi(v_*)\big)\\
&\quad\cdot\Big( \sigma'(f)\sigma(f)F_1+F_4 \nabla_v \varsigma(f)\Big)\dd s.
\end{align*}
\end{lemma}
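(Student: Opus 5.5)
The idea is to derive \eqref{ito:cut-off} first for the Galerkin approximations $f_m$ of \eqref{app-2:Galerkin}, and then pass to the limit. For each fixed $m$, $f_m$ is a finite-dimensional semimartingale with values in $E_m\subset\mathcal S(\R^d)$. The functional $\Phi(f):=\int_v\chi S(f)$ is $C^2$ on $E_m$, so the classical finite-dimensional It\^o formula applies directly. However, $\Pi_m$ appears in the equation and does not commute with multiplication by $S'(f)\chi$. I would therefore run the computation on a regularisation that avoids projection errors. One option is to mollify the weak formulation \eqref{weak:app-1} of the limit solution $f$ in $v$: test with $\rho_\delta(\cdot-v)$ to get $f^\delta=f*\rho_\delta$, a genuine Itô process for each $v$. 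Apply It\^o's formula pointwise to $\chi(v)S(f^\delta_t(v))$, integrate in $v$, and let $\delta\to0$ using $f\in L^2_\omega L^2_tH^1_v$ from Proposition \ref{lem:app-1:ext}. This is the standard strategy for weak SPDE solutions, and it is the one I would adopt.

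The It\^o computation has four parts. The drift terms $\alpha\Delta f$ and $Q_{\mu,L}$ give, after integration by parts, the first two integrals with $\chi S'(f)$ as the test function. For the Landau term, the transfer to $\tn(\chi S'(f))$ uses the integration by parts formula for $\tn\cdot$. The martingale term comes out directly. The Stratonovich-to-It\^o correction $\eps\,\tn\cdot(\int_w\sigma_*\sigma_w(F_4\nabla\varsigma+\sigma\sigma'F_1))$ is tested against $\chi S'(f)$, and the product rule splits $\tn(\chi S'(f))$ into two pieces. The piece $S'\nabla\chi$ produces exactly $T_2$. The piece $\chi S''\nabla f$ (in both the $v$ and $v_*$ slots) produces the $F_1$ and $F_4$ contributions in $T_1$.

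The It\^o quadratic-variation term $\frac{\eps}{8}\sum_k\int\chi S''(f)|\tn\cdot(G_k\sigma\sigma_*)|^2$ must be expanded by writing $\tn\cdot(G_k\sigma\sigma_*)=\int_{v_*}[\nabla_v\cdot G_k\,\sigma\sigma_*+G_k\cdot\nabla_v\sigma\,\sigma_*]\times2$, using antisymmetry. Squaring introduces the second integration variable $w$ and yields the $F_3$ term, the $F_2$ term (via $\sigma\nabla\sigma=\frac12\nabla\sigma^2$), and a term $\chi S''(f)\sigma_*\sigma_w|\sigma'|^2\nabla f\otimes\nabla f:F_4$. This last term has to combine with the $\chi S''(f)$-part of the correction into the stated form. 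Through the identity $\nabla\varsigma=(\sigma')^2\nabla f$ and the symmetry $v_*\leftrightarrow w$ of $F_4$ after summation, the $v$-slot pieces cancel. Only the $\chi(v_*)S''(f_*)$ terms and the $F_2,F_3$ terms survive, matching $T_1$. This is the algebra recorded in Remark \ref{rmk:formal-ito-algebra}, and I would verify the constants $\frac12$ and $1$ carefully against it.

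The main obstacle is justifying the limit $\delta\to0$. The products contain $\nabla f$ in both $v$ and $v_*$, and also $S''(f)\nabla f\otimes\nabla f$, which is only $L^1$. I would use the bounds $S\in C^2$ with $S''$ bounded on the relevant range (as assumed through ``the terms below are finite''), together with the boundedness and compact support of $G_k,F_i,a,b$ and of $\sigma,\sigma',\mu$. Given these, the commutator $(\tn\cdot(\cdots))*\rho_\delta-\tn\cdot(\cdots*\rho_\delta)$ vanishes in $L^2_{t,v}$ by a Friedrichs-type lemma, and dominated convergence handles the quadratic gradient terms. The stochastic integral converges in probability by BDG. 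Continuity in $t$ and the null set being independent of $t$ then follow from continuity of all terms.
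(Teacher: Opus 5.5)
Your proposal is correct and follows essentially the same route as the paper: It\^o's formula for $\int_v\chi S(f_t)$, then splitting $\tn(\chi S'(f))$ into two parts. The $\chi S''(f)\nabla f$ part has $v$-slot $F_1$ and $F_4$ pieces that cancel against the quadratic variation and leave $T_1$, and the $S'(f)\nabla\chi$ part gives $T_2$. The only difference is that you justify the It\^o formula by mollifying in $v$, where the paper simply cites Krylov; your Friedrichs commutator step is superfluous there, since $f\in L^2_tH^1_v$ lets you pass $\delta\to0$ by pairing the $L^2_tH^{-1}_v$ drift with $(\chi S'(f^\delta))*\rho_\delta\to\chi S'(f)$ in $L^2_tH^1_v$ (for bounded $S''$).
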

\begin{proof}
Since $\chi(v)$ also depends on $v$, applying It\^o's formula (\cite{Krylov}) yields the following identity, almost surely, for every $t\in[0,T]$. The factors $-\eps/4$ and $\eps/8$ below come respectively from the It\^o drift correction and the quadratic variation of the noise with coefficient $\sqrt{\eps}/2$:
\begin{align*}
\int_{v} \chi(v)S(f_t)
=& \int_{v} \chi(v)S(f_0)-\alpha\int^t_0\int_{v}\nabla(\chi(v)S'(f))\cdot\nabla f\dd s\\
&-\frac12\int^t_0\int_{v,v_*}\tn \big(\chi(v)S'(f)\big)\cdot\big( A\mu^2(f)\mu^2(f_*)\tn L( \mu^2(f))\big)\dd s\\
&+\frac{\sqrt{\eps}}{2}\sum_{k=1}^K\int^t_0\int_{v,v_*}\tn \big(\chi(v)S'(f)\big)\cdot\big(G_k(v,v_*)\sigma(f)\sigma(f_*)\big)\dd B_k\\
	&-\frac{\eps}{4}\sum_{k=1}^K\int^t_0\int_{v,v_*}\tn \big(\chi(v)S'(f)\big)\cdot 
	\Big(G_k(v,v_*) \sigma'(f)\sigma(f_*)\tn\cdot\big(G_k(v,w)\sigma(f)\sigma(f_w)\big) \Big)\dd s\\
	&+\frac{\eps}{8} \sum_{k=1}^K\int^t_0\int_{v,v_*}\chi(v)S''(f)\big|\tn\cdot\big(G_k(v,v_*)\sigma(f)\sigma(f_*)\big)\big|^2\dd s.
\end{align*}

A direct computation based on the chain rule and the definition of $\tilde{\nabla}$ shows that 
\begin{align*}
   \tn \big(\chi(v)S'(f)\big)
   =&\Pi_{(v-v_*)^\perp}\big(\chi(v)S''(f)\nabla_v f-\chi(v_*)S''(f_*)\nabla_{v_*} f_*\big)\\
   &+\Pi_{(v-v_*)^\perp}\big(S'(f)\nabla_v \chi(v)-S'(f_*)\nabla_{v_*} \chi(v_*)\big).
\end{align*}

We give the calculation of the two correction terms.  For notational convenience, set
\begin{equation*}
\sigma=\sigma(f),\qquad \sigma_*=\sigma(f_*),\qquad
\sigma_w=\sigma(f_w),\qquad
\mathcal R_k(v):=\tn_{(v,w)}\cdot\big(G_k(v,w)\sigma\sigma_w\big).
\end{equation*}
The antisymmetry of $G_k(v,w)\sigma\sigma_w$ gives
\begin{equation}
\label{localised-chain:div}
\mathcal R_k(v)
=2\int_w\Big((\nabla_v\cdot G_k(v,w))\sigma\sigma_w
+G_k(v,w)\cdot\nabla_v\sigma\,\sigma_w\Big).
\end{equation}
Consequently, the definitions in \eqref{def:F_i} imply
\begin{align}
\sum_{k=1}^K G_k(v,v_*)\mathcal R_k(v)
&=2\int_w\sigma_w\Big(\sigma F_1+F_4\nabla_v\sigma\Big), \label{localised-chain:F14}\\
\sum_{k=1}^K(\nabla_v\cdot G_k(v,v_*))\mathcal R_k(v)
&=2\int_w\sigma_w\Big(\sigma F_3+F_2\cdot\nabla_v\sigma\Big). \label{localised-chain:F23}
\end{align}
Here and below tensor contractions are understood in the natural way.  We split
\begin{equation*}
\tn(\chi S'(f))=\mathcal A_\chi+\mathcal B_\chi,
\end{equation*}
where
\begin{align*}
\mathcal A_\chi
&:=\Pi_{(v-v_*)^\perp}\big(\chi(v)S''(f)\nabla_vf
-\chi(v_*)S''(f_*)\nabla_{v_*}f_*\big),\\
\mathcal B_\chi
&:=\Pi_{(v-v_*)^\perp}\big(S'(f)\nabla_v\chi(v)
-S'(f_*)\nabla_{v_*}\chi(v_*)\big).
\end{align*}
Using \eqref{localised-chain:div}--\eqref{localised-chain:F23}, the chain rules
\begin{equation*}
\nabla_v\sigma=\sigma'(f)\nabla_vf,\qquad
\nabla_v\varsigma(f)=[\sigma'(f)]^2\nabla_vf,
\end{equation*}
and integration by parts in $v$ and $v_*$, the part containing $\mathcal A_\chi$ satisfies
\begin{align*}
	&-\frac{\eps}{4}\sum_{k=1}^K\int_0^t\int_{v,v_*}
	\mathcal A_\chi\cdot G_k(v,v_*)\sigma'(f)\sigma_*\mathcal R_k(v)\,\dd s\\
	&\quad+\frac{\eps}{8}\sum_{k=1}^K\int_0^t\int_{v,v_*}
	\chi(v)S''(f)\big|\tn\cdot(G_k(v,v_*)\sigma\sigma_*)\big|^2\,\dd s\\
	&=\eps T_1(f,S,\chi).
\end{align*}
More explicitly, the terms in which the derivative falls on $G_k(v,v_*)$ give the
$F_1$- and $F_3$-terms in $T_1$ through \eqref{localised-chain:F14} and
\eqref{localised-chain:F23}; those in which it falls on $\sigma$ give the
$F_4$- and $F_2$-terms, since
$\sigma'(f)\nabla_v\sigma=\nabla_v\varsigma(f)$ and
$2\sigma\nabla_v\sigma=\nabla_v\sigma^2(f)$.  The remaining cut-off part is
\begin{align*}
	&-\frac{\eps}{4}\sum_{k=1}^K\int_0^t\int_{v,v_*}
	\mathcal B_\chi\cdot G_k(v,v_*)\sigma'(f)\sigma_*\mathcal R_k(v)\,\dd s\\
	&=-\frac{\eps}{2}\int_0^t\int_{v,v_*,w}\sigma_*\sigma_w
\big(S'(f)\nabla_v\chi(v)-S'(f_*)\nabla_{v_*}\chi(v_*)\big)\\
&\hspace{4cm}\cdot\big(\sigma'(f)\sigma F_1+F_4\nabla_v\varsigma(f)\big)\,\dd s
	=\eps T_2(f,S,\chi).
\end{align*}
Thus all the terms produced by the spatial dependence of $\chi$ are retained in
$T_2$, while the localised version of the unweighted chain-rule correction is
exactly $T_1$.

Combining the above identities with the definitions of $T_1(f,S,\chi)$ and $T_2(f,S,\chi)$, we obtain that, almost surely for every $t\in[0,T]$, 
\begin{equation}
\begin{aligned}
\int_{v} \chi(v)S(f_t)
=&\int_{v} \chi(v)S(f_0)
-\alpha\int^t_0\int_{v}\nabla(\chi(v)S'(f))\cdot\nabla f\dd s\\
&-\frac12\int^t_0\int_{v,v_*}\tn \big(\chi(v)S'(f)\big)\cdot\big( A\mu^2(f)\mu^2(f_*)\tn L( \mu^2(f))\big)\dd s\\
&+\frac{\sqrt{\eps}}{2}\sum_{k=1}^K\int^t_0\int_{v,v_*}\tn \big(\chi(v)S'(f)\big)\cdot\big(G_k(v,v_*) \sigma(f)\sigma(f_*)\big)\dd B_k\\
&+\eps\big(T_1(f,S,\chi)+T_2(f,S,\chi)\big).
\end{aligned}
\end{equation}

This completes the proof.

\end{proof}

\begin{remark}[Formal algebra behind the correction terms]
\label{rmk:formal-ito-algebra}
The proof above is the rigorous form of the It\^o computation: the formula is
applied to the scalar semimartingale
$\int_v\chi(v)S(f_t(v))\,\dd v$, or equivalently to Galerkin
approximations followed by a passage to the limit.  It is useful, however,
to record the pointwise algebra which explains the structure of
$T_1$ and $T_2$.  This calculation is only formal, because the limiting SPDE
is not an equation to which It\^o's formula can be applied pointwise in
$v$.

Suppress the cut-off $\chi$, the artificial diffusion, and the regularisation
of the entropy variable.  For the It\^o form of \eqref{SDE-1}, a formal
application of It\^o's formula gives
\begin{align*}
\dd S(f)=&S'(f)Q(f,f)\dd t
-\frac{\sqrt{\eps}}{2}\sum_{k=1}^KS'(f)\tn\cdot\big(G_k(v,v_*)\sigma(f)\sigma(f_*)\big)\dd B_k\\
	&+\frac{\eps}{4}\sum_{k=1}^KS'(f)\tn\cdot \Big(G_k(v,v_*)\sigma'(f)\sigma(f_*)
	\tn\cdot\big(G_k(v,w)\sigma(f)\sigma(f_w)\big) \Big)\dd t\\
	&+\frac{\eps}{8} \sum_{k= 1}^KS''(f)
\big|\tn\cdot\big(G_k(v,v_*)\sigma(f)\sigma(f_*)\big)\big|^2\dd t.
\end{align*}
After integration in $v$, the two correction terms are transformed by
integration by parts.  For the Stratonovich--It\^o correction one obtains,
for each $k$,
\begin{equation}
        \label{st-it}
\begin{aligned}
&\frac14\int_{v,v_*}\tn S'(f)\cdot \Big(G_k(v,v_*)\sigma'(f)\sigma(f_*)
\tn\cdot\big(G_k(v,w)\sigma(f)\sigma(f_w)\big) \Big)\\
=&{}\frac14\int_{v,v_*}S''(f)G_k(v,v_*)\cdot\nabla_v\sigma(f)\sigma(f_*)
\tn\cdot\big(G_k(v,w)\sigma(f)\sigma(f_w)\big)\\
&-\frac14\int_{v,v_*}S''(f_*)\nabla_{v_*} f_*\cdot G_k(v,v_*)\sigma'(f)\sigma(f_*)
\tn\cdot\big(G_k(v,w)\sigma(f)\sigma(f_w)\big).
\end{aligned}
\end{equation}
On the other hand, the quadratic variation term gives
\begin{align*}
&\frac18 \int_vS''(f)
\big|\tn\cdot\big(G_k(v,v_*)\sigma(f)\sigma(f_*)\big)\big|^2\\
=&{}\frac14 \int_{v,v_*}S''(f)G_k(v,v_*)\cdot\nabla_v\sigma(f)\sigma(f_*)
\tn\cdot\big(G_k(v,w)\sigma(f)\sigma(f_w)\big)\\
&+\frac14 \int_{v,v_*}S''(f)\big(\nabla_v\cdot G_k(v,v_*)\big)\sigma(f)\sigma(f_*)
\tn\cdot\big(G_k(v,w)\sigma(f)\sigma(f_w)\big).
\end{align*}
The first terms on the right-hand sides cancel with opposite signs in the
It\^o identity.  Consequently, formally,
\begin{equation}
    \label{dS-00}
 \begin{aligned}
\frac{\dd}{\dd t} \int_vS(f)= 
&-\frac12\int_{v,v_*}\tn S'(f)\cdot\big( A\mu^2\mu_*^2\tn \log ( \mu^2)\big)\\
&+\frac{\sqrt{\eps}}{2}\sum_{k= 1}^K\int_{v,v_*}\tn S'(f)\cdot\big(G_k(v,v_*)\sigma(f)\sigma(f_*)\big)\dd B_k\\
&+\frac{\eps}{4}\sum_{k= 1}^K\int_{v,v_*}S''(f_*)\nabla_{v_*} f_*\cdot G_k(v,v_*)\sigma'(f)\sigma(f_*)
\tn\cdot\big(G_k(v,w)\sigma(f)\sigma(f_w)\big)\\
&+\frac{\eps}{4} \sum_{k= 1}^K\int_{v,v_*}S''(f)\big(\nabla_v\cdot G_k(v,v_*)\big)\sigma(f)\sigma(f_*)
\tn\cdot\big(G_k(v,w)\sigma(f)\sigma(f_w)\big).
\end{aligned}
\end{equation}
Finally, using the antisymmetry of
$G_k(v,w)\sigma(f)\sigma(f_w)$,
\begin{align*}
\tn\cdot\big(G_k(v,w)\sigma(f)\sigma(f_w)\big)
=2\int_w\Big[
(\nabla_v\cdot G_k(v,w))\sigma(f)\sigma(f_w)
 +G_k(v,w)\cdot\nabla_v\sigma(f)\sigma(f_w)
\Big],
\end{align*}
and the definitions of $F_1,\ldots,F_4$ in \eqref{def:F_i}, the last two
lines of \eqref{dS-00} become
\begin{equation}
    \label{dS-1}
 \begin{aligned}
&\eps\Big[\frac12\int_{v,v_*,w}S''(f_*)\sigma'(f)\sigma(f)\sigma(f_*)\sigma(f_w)
\nabla_{v_*} f_*\cdot F_1\\
&\quad+\int_{v,v_*,w}S''(f_*)\sigma(f_*)\sigma(f_w)
\nabla_{v_*} f_*\otimes \nabla_v \varsigma(f):F_4\\
&\quad+\frac12\int_{v,v_*,w}S''(f)\sigma(f_*)\sigma(f_w)
\nabla_v \sigma^2(f)\cdot F_2\\
&\quad+\int_{v,v_*,w}S''(f)\sigma^2(f)\sigma(f_*)\sigma(f_w)F_3\Big] .
\end{aligned}
\end{equation}
Formula \eqref{ito:cut-off} is precisely the localised and rigorous version
of this algebra, with the additional cut-off contribution collected in
$T_2$.
\end{remark}

\subsection{Non-negativity of solutions}\label{sec:4-1}

Let $f^-:=\min(f,0)$.
We use a smooth approximation of the negative part in order to justify the chain rule. Let $S_\epsilon^-\in C^2(\R)$ be convex, non-negative, and such that
\begin{equation*}
S_\epsilon^-(r)\to \frac12(r^-)^2,\qquad
(S_\epsilon^-)'(r)\to r^-,\qquad
0\le (S_\epsilon^-)''(r)\le \mathbb{1}_{\{r\le\epsilon\}}
\end{equation*}
pointwise. Moreover, let
$(S_\epsilon^-)'(r)=0$ for $r\ge\epsilon$ and $(S_\epsilon^-)''(r)=\mathbb{1}$ for $r\le 0$. We apply the cut-off It\^o's formula \eqref{ito:cut-off} to the solution $f$ with cut-off function $\chi_m(v)$ given as in Assumption \ref{ass:chi}. Then, we send $m\to\infty$ and $\epsilon\downarrow0$ to show that $f^-=0$.

 By \eqref{ito:cut-off}, we have 
\begin{equation}
\label{weak-+}
 \begin{aligned}
&\int_vS_\epsilon^-(f_t)\chi_m(v)-\int_vS_\epsilon^-(f_0)\chi_m(v)+\alpha\int_0^t\int_v\nabla_v\big((S_\epsilon^-)'(f)\chi_m(v)\big)\cdot \nabla_v f\\
={}&-\int_0^t\int_{v} \big(\nabla_v \big((S_\epsilon^-)'(f)\chi_m(v)\big)\big)^T\bar a (\mu^2)\nabla_v \tili(\mu^2(f))\\
&
-\int_0^t\int_{v} \nabla_v\big((S_\epsilon^-)'(f)\chi_m(v)\big)\cdot  \bar b(\tili(\mu^2(f)))\mu^2(f)\\
&+\frac{\sqrt{\eps}}{2}\sum_{k=1}^K\int_0^t\int_{v,v_*}\tn \big((S_\epsilon^-)'(f)\chi_m(v)\big)\cdot\big(G_k(v,v_*)\sigma(f)\sigma(f_*)\big)\dd B_k\\
          &+\epsilon\big(T_1(f,S_\epsilon^-,\chi_m)+T_2(f,S_\epsilon^-,\chi_m)\big).
\end{aligned}
\end{equation}

\medskip

\noindent\textbf{The diffusion term:} We note that 
\begin{align*}
    &\int_v\nabla_v\big((S_\epsilon^-)'(f)\chi_m(v)\big)\cdot \nabla_v f\\
    ={}&\int_{v}(S_\epsilon^-)''(f)|\nabla_v f|^2\chi_m+\int_{v}(S_\epsilon^-)'(f)\nabla_v f\cdot \nabla_v\chi_m.
\end{align*}
By using of $|(S_\epsilon^-)'(f)|\le |f|$, the second term on the right-hand side is bounded by 
\begin{align}
\label{est:chi-S-2}
  \Big|\int_0^t\int_{v}(S_\epsilon^-)'(f)\nabla_v f\cdot \nabla_v\chi_m\Big|\lesssim_T m^{-D}\big(\|f\|_{L^\infty_tL^2_v}+\|\nabla_v f\|_{L^2_tL^2_v}\big).
\end{align}

\medskip

\noindent\textbf{The Landau drift terms:} 
Concerning $\bar a$ term,  we have 
\begin{align*}
    &\int_{v} \big(\nabla_v \big((S_\epsilon^-)'(f)\chi_m(v)\big)\big)^T\bar a (\mu^2)\nabla_v \tili(\mu^2(f))\\
    ={}&\int_{v} (S_\epsilon^-)''(f)\chi_m(v)(\nabla_v f)^T\bar a (\mu^2)\nabla_v \tili(\mu^2(f))\\
    &+\int_{v} (S_\epsilon^-)'(f)\nabla_v\chi_m(v)\bar a (\mu^2)\nabla_v \tili(\mu^2(f)),
\end{align*}
where the first term on the right-hand side is non-negative because $\bar a(\mu^2)$ is non-negative and $\tili$ is monotonically increasing. By using of $|(S_\epsilon^-)'(f)|\le |f|$ and  $|\nabla_v \tili(\mu^2(f)) |\lesssim_{C_{\mu_*},C_{\mu^*}}|\nabla_v f|$, the second term is bounded by 
\begin{align}
\label{est:chi-S-1}
   &\Big|\int_0^t\int_{v} (S_\epsilon^-)'(f)\nabla_v\chi_m(v)\bar a (\mu^2)\nabla_v \tili(\mu^2(f))\Big|\\
   \lesssim&{}_{A_0,C_{\mu_*},C_{\mu^*},T}m^{-D}\big(\|f\|_{L^\infty_tL^2_{v}}+\|\nabla_v f\|_{L^2_tL^2_{v}}\big).\notag
\end{align}

Concerning the $\bar b$ term, we have 
\begin{align*}
&\Big|\int_0^t\int_{v} \nabla_v\big((S_\epsilon^-)'(f)\chi_m(v)\big)\cdot  \bar b(\tili(\mu^2(f)))\mu^2(f)\Big|\\
\le&{}\Big|\int_0^t\int_{v} \chi_m(S_\epsilon^-)''(f)\nabla_v f \cdot  \bar b(\tili(\mu^2(f)))\mu^2(f)\Big|+\Big|\int_0^t\int_{v} (S_\epsilon^-)'(f)\nabla_v\chi_m(v)\cdot  \bar b(\tili(\mu^2(f)))\mu^2(f)\Big|.
\end{align*}
The first term on the right-hand side is controlled by the same Young
inequality used in the $L^2$ energy estimate: for every $\delta>0$, we have
\begin{align*}
    &\Big|\int_0^t\int_{v} \chi_m(S_\epsilon^-)''(f)\nabla_v f\cdot  \bar b(\tili(\mu^2(f)))\mu^2(f)\Big|\\
    \lesssim&{}_{A_0,z^*}\int_0^t\int_{v} \sqrt{\chi_m(S_\epsilon^-)''(f)}|\nabla_v f|\big(|f^-|^2+f^2\mathbb{1}_{\{0\le f\le\epsilon\}}\big)\\
    \le&{} \delta\int_0^t\int_{v}(S_\epsilon^-)''(f)|\nabla_v f|^2\chi_m+
C_\delta\int_0^t\| f^-\|_{L^2}^2+\epsilon^2T\| f\|_{L^\infty_tL^2_v}^2.
\end{align*}
The bound $\mu^2(f)\le f^2$ ensures the following bounds for the second term 
\begin{align*}
\Big|\int_0^t\int_{\{0\le f\le\epsilon\}} \nabla_v f\cdot  \bar b(\tili(\mu^2(f)))\mu^2(f)\Big|
\lesssim_{A_0,z^*,T } \epsilon^2\|\nabla_v f\|_{L^2_tL^2_v}.
\end{align*}
To treat the third term, we repeat the arguments as in  \eqref{est:chi-S-1} to derive 
\begin{align*}
\Big|\int_0^t\int_{v} (S_\epsilon^-)'(f)\nabla_v\chi_m(v)\cdot  \bar b(\tili(\mu^2(f)))\mu^2(f)\Big|
\lesssim_{A_0,z^*,T} m^{-D}\big(\|f\|_{L^\infty_tL^2_{v}}+\|\nabla_v f\|_{L^2_tL^2_{v}}\big).
\end{align*}

\medskip

\noindent\textbf{The error term $T_1$:} 
Concerning the error term $T_1$, we use the following fact that
\begin{align*}
 f\mathbb{1}_{\{f\le \epsilon\}}=f^-+  f \mathbb{1}_{\{0\le f\le \epsilon\}}.
\end{align*}
The facts  $\sigma(r)=0$ for $r\le0$ and $|\sigma(r)|+|\varsigma(r)|\lesssim |r|$ imply that
\begin{align*}
    &\left|\int^t_0\int_{v,v_*,w}\chi_m(S_\epsilon^-)''(f_*)\sigma'(f)\sigma(f)\sigma(f_*)\sigma(f_w)\nabla_{v_*} f_*\cdot F_1\dd s\right|\\
\le &{}\delta\int_0^t\int_{v}(S_\epsilon^-)''(f)|\nabla_v f|^2\chi_m+
C_\delta\int_0^t\| f^-\|_{L^2}^2+\epsilon^2\| f\|_{L^\infty_tL^2_v}^2
\end{align*}
for some $C=C(T,C_{\sigma^*},C_K)$. The same bounds also hold for $F_2,F_3,F_4$ terms. Hence,  we have 
\begin{align*}
|T_1(f,S_\epsilon^-,\chi_m)|\le \delta\int_0^t\int_{v}(S_\epsilon^-)''(f)|\nabla_v f|^2\chi_m+
C_\delta\int_0^t\| f^-\|_{L^2}^2+\epsilon^2\| f\|_{L^\infty_tL^2_v}^2.
\end{align*}

\medskip

\noindent\textbf{The error term $T_2$:} 
Concerning the $T_2$ term, we write 
\begin{align*}
T_2&=T_{2,1}+T_{2,2}\\
&:=\frac12\int^t_0\int_{v,v_*,w}\sigma(f_*)\sigma(f_w)\big((S_\epsilon^-)'(f)\nabla_v \chi_m(v)-(S_\epsilon^-)'(f_*)\nabla_{v_*} \chi_m(v_*)\big)\cdot  \sigma'(f)\sigma(f)F_1\dd s\\
&+\frac12\int^t_0\int_{v,v_*,w}\sigma(f_*)\sigma(f_w)\big((S_\epsilon^-)'(f)\nabla_v \chi_m(v)-(S_\epsilon^-)'(f_*)\nabla_{v_*} \chi_m(v_*)\big)\cdot F_4 \nabla_v \varsigma(f)\dd s.
\end{align*}
The boundedness of $\sigma$ and $\sigma'$, and $|(S_\epsilon^-)'(f)|\le |f|$ ensure that  
\begin{align*}
|T_{2,1}|    &\lesssim_{C_K,C_{\sigma_*},C_{\sigma^*},T}\|\nabla\chi_m\|_{L^{\infty}}\|f\|_{L^\infty_tL^2_{v}}.
\end{align*}
By integration by parts, we have  
\begin{align*}
   |T_{2,2}|&=\Big|\int^t_0\int_{v,v_*,w}\nabla_v\cdot\big(\big(\nabla_v \chi_m(v) (S_\epsilon^-)'(f)-\nabla_{v_*} \chi_m (v_*)(S_\epsilon^-)'(f_*)\big) F_4\big) \sigma(f_*)\sigma(f_w)\varsigma(f)\dd s\Big|\\
   &\lesssim_{C_{\sigma_*},C_K,C_{\sigma^*}} \|\nabla\chi_m\|_{W^{1,\infty}}\int^t_0\|f\|_{L^2}+\|\nabla f\|_{L^2}\dd s.
\end{align*}
Combining $T_{2,1}$ and $T_{2,2}$ terms, we have 
\begin{align*}
|T_{2}|    &\lesssim_{C_K,C_{\sigma_*},C_{\sigma^*},T}m^{-D}\big(\|f\|_{L^\infty_tL^2_{v}}+\|\nabla_v f\|_{L^2_tL^2_{v}}\big).
\end{align*}    

\medskip

\noindent\textbf{The stochastic noise term:} 
The stochastic integral in \eqref{weak-+} is a true martingale after the standard stopping-time localisation. Its quadratic variation is bounded by
\begin{align*}
&\sum_{k=1}^K\int_0^t
\Big|\int_{\{|v-v_*|\le R\}}\tn\big((S_\epsilon^-)'(f)\chi_m(v)\big)\cdot G_k(v,v_*)\sigma(f)\sigma(f_*)\Big|^2\dd s\\
\lesssim&{}_{C_K,T,z^*,C_{\sigma^*}} \|\chi_m\|_{W^{1,\infty}}\big(\|f\|_{L^\infty_tL^2_{v}}+\|\nabla_v f\|_{L^2_tL^2_{v}}\big),
\end{align*}
using the compact support and boundedness assumptions on $G_k$ together with $|\sigma(r)|\le C_{\sigma^*}$. Hence, its expectation is zero.

\medskip 

We combine the above terms.  Choosing $\delta>0$ sufficiently small with respect to $\alpha$ and taking expectations in \eqref{weak-+}, we obtain
\begin{align*}
    &\hE\Big[\int_vS_\epsilon^-(f_t)\chi(v)\Big]-\hE\Big[\int_vS_\epsilon^-(f_0)\chi(v)\Big]\\\lesssim&{}_{A_0,C_{\sigma^*},T}(m^{-D}+\epsilon^2)\hE\big[\|f\|_{L^\infty_tL^2_{v}}+\|\nabla_v f\|_{L^2_tL^2_{v}}+1\big]+\hE\Big[\int_0^t\|f^-(s)\|_{L^2_v}^2\,\dd s\Big].
\end{align*}

We pass to the limit by letting $m\to\infty$, then Fatou's lemma implies that
\begin{align*}
    &\hE\Big[\int_vS_\epsilon^-(f_t)\Big]\le \hE\Big[\int_vS_\epsilon^-(f_0)\Big]+C\epsilon^2\hE\big[\|f\|_{L^\infty_tL^2_{v}}+\|\nabla_v f\|_{L^2_tL^2_{v}}+1\big]+\hE\Big[\int_0^t\|f^-(s)\|_{L^2_v}^2\,\dd s\Big].
\end{align*}
Notice that 
\begin{align*}
2\int_vS_\epsilon^-(f)-\int_{\{0\le f\le\epsilon\}}f^2\le \|f^-\|_{L^2_v}^2\le 2\int_vS_\epsilon^-(f).   
\end{align*}
Let $\epsilon\to0$. We have 
\begin{equation*}
\hE\|f^-(t)\|_{L^2}^2\le C\int_0^t\hE\|f^-(s)\|_{L^2}^2\,\dd s .
\end{equation*}
Since $f_0\ge0$, $f^-(0)=0$. Gr\"onwall's inequality gives $f^-\equiv0$, hence $f\ge0$ almost everywhere.

\subsection{Conservation laws and energy bounds}\label{sec:4-2}



Let $\{\chi_m\}_{m\in\mathbb{N}}$ be a sequence of cut-off functions approximating the constant function $1$, as constructed in Assumption~\ref{ass:chi}. We shall then establish mass and momentum conservation laws and energy bounds. 

\medskip

\noindent\textbf{Mass conservation law \eqref{cl:mass}:}
    We test \eqref{app:eq:cl} with $\chi_m$ to obtain that, almost surely, for every $t\in[0,T]$,
\begin{equation}
\label{cl:test}
\begin{aligned}
&\int_{v} f(t,v)\chi_m
=\int_{v} f_0(v)\chi_m
+\alpha\int_0^t\int_{v} f\,\Delta\chi_m\\
&
+\sum_{i,j=1}^d\int_0^t\int_{v,v_*}\tili(\mu^2(f))\mu^2(f_*) a_{ij}\d_{ij}\chi_m+\sum_{i=1}^d\int_0^t\int_{v,v_*}\tili(\mu^2(f))\mu^2(f_*)b_i(\d_i\chi_m-\d_{i*}\chi_{m*})\\
&
-\frac{\sqrt{\eps}}{2}\int_0^t\int_{v,v_*}\tn\chi_m\cdot\left(\sqrt{A}\sigma(f)\sigma(f_*) \dd \xi_K\right)\\
&-\eps\int_0^t\int_{v,v_*}\sigma (f_*)\sigma(f_w)\Big(\big(\tn \chi_m\cdot F_1 \big)\sigma (f)\sigma'(f)
-\nabla_v\cdot\big(\tn \chi_m F_4\big)\varsigma(f)\Big).
\end{aligned}
\end{equation}

We now show that each term containing derivatives of $\chi_m$ vanishes as $m\to\infty$. By It\^o's isometry, for every $t\in[0,T]$,
\begin{align*}
\hE\Big[\Big|\int^t_0\int_{v}\tn\chi_m\cdot\left(\sqrt{A}\sigma(f)\sigma(f_*) \dd \xi_K\right)\Big|^2\Big]^{\frac12}
&\lesssim \hE\Big[\int^t_0\|\sigma(f)\|_{L^2}^4\|\nabla\chi_m\|_{L^\infty}^2\dd s\Big]^{\frac12}\\
&\lesssim m^{-D}\,\hE\Big[\int_0^t\|f_s\|_{L^1}^2\,\dd s\Big]^{\frac12}\to0.
\end{align*}
Hence, along a subsequence, we obtain that almost surely, for every $t\in[0,T]$,
$$
\int^t_0\int_{v}\tn\chi_m\cdot\left(\sqrt{A}\sigma(f)\sigma(f_*) \dd \xi_K\right)\to0.
$$

Similarly, all remaining terms on the right-hand side can be estimated almost surely and uniformly in $m$ by (up to a constant independent of $m$)
\begin{align*}
&\int^t_0\|f\|_{L^1}\|\Delta \chi_m\|_{L^\infty}
+\big(\|f\|_{L^2}^4+\|F_1\|_{L^1}+\|F_4\|_{W^{1,1}}\big)\|\nabla\chi_m\|_{W^{1,\infty}}\dd s\\
\leq&  m^{-D}\int^t_0\|f\|_{L^1}
+\big(\|f\|_{L^2}^4+\|F_1\|_{L^1}+\|F_4\|_{W^{1,1}}\big)\dd s \to 0,
\end{align*}
as $m\to\infty$, almost surely. Consequently, \eqref{cl:mass} follows from Fatou's lemma together with the non-negativity of $f$.

\medskip

\noindent\textbf{Energy bounds \eqref{cl:energy-app}:} Notice that $\tn |v|^2 = 0$. We define $\eta_m(v) := |v|^2 \chi_m(v)$. A straightforward computation yields
\begin{align*}
\nabla \eta_m &= 2v \chi_m + |v|^2 \nabla \chi_m, \\
\Delta \eta_m &= 2d \chi_m + 2v \cdot \nabla_v \chi_m + |v|^2 \Delta \chi_m, \quad \text{for all } v \in \mathbb{R}^d.
\end{align*}

We test \eqref{app:eq:cl} by $\eta_m$, and \eqref{cl:test} holds with $\chi_m$ replaced by $\eta_m$.
We first show the uniform bound of $\int_v f\eta_m$.

We recall that we denote the mass of $f$ by $m_0$. By the properties of $\eta_m$, we obtain
\begin{align*}
\left| \int_0^t \int_{\mathbb{R}^d} f \, \Delta \eta_m \, \dd s \right|
\le \int_0^t \left( 2d m_0 + C\int_{\{m^D\le |v|\le2m^D\}} f_s(v)\,\dd v \right) \dd s.
\end{align*}

We note that, for all $i,j=1,\dots,d$,
\begin{align*}
\d_i\eta_m=2v_i\chi_m+|v|^2\d_i\chi_m,\quad     \d_{ij}\eta_m=2\delta_{ij}\chi_m+2v_i\d_j\chi_m+2v_j\d_i\chi_m+|v|^2\d_{ij}\chi_m.
\end{align*}
The Assumption \ref{ass:chi}  implies that 
 \begin{align}
\label{eta:bdd-0}
|\d_i\eta_m|\lesssim \langle v\rangle \chi_m+1\lesssim \eta_m+1\quad\text{and}\quad     |\d_{ij}\eta_m|\lesssim 1.
\end{align}
The boundedness  of $a$ and $b$ ensure that
\begin{align}
\label{energy:a-b:bdd}
 \big|a_{ij}\d_{ij}\eta_m+b_i(\d_i\eta_m-\d_{i*}\eta_{m*})\big|\lesssim_{A_0} \eta_m+\eta_{m*}+1.
\end{align}
As a consequence, the Landau collision term is bounded by
\begin{align*}
 \Big|\int_0^t\int_{v}Q_{\mu,L}(f,f)\eta_m(v)\Big|\lesssim
m_0\Big(m_0+\int_0^t\int_vf\eta_m(v)\Big).
\end{align*}

By \eqref{eta:bdd-0}, we have 
\begin{align}
\label{eta:bdd-1}
 \big|\tn\eta_m\big|\lesssim 1+\eta_m+\eta_{m*}.
\end{align}
Concerning the stochastic term, by It\^o's isometry, for every $t\in[0,T]$, we have 
\begin{align*}
&\hE\Big[\Big|\int^t_0\int_{v}\tn\eta_m\cdot\left(\sqrt{A}\sigma(f)\sigma(f_*) \dd \xi_K\right)\Big|^2\Big]^{\frac12}
\lesssim \hE\Big[m_0\Big(m_0+\int_0^t\int_vf\eta_m(v)\dd s\Big)\Big]^{\frac12}.
\end{align*}
With the help of \eqref{eta:bdd-0}, we  have
\begin{align*}
\left| \int_0^t \int_{v} \tn \eta_m \cdot F_1 \sigma(f_w)\sigma(f_*)\sigma(f)\sigma'(f) \, \dd s \right|
\lesssim_{C_K,C_{\sigma_*}}m_0^2 \Big(m_0+\int_0^t\int_v f\eta_m(v)\dd s\Big).
\end{align*}

Notice that $\nabla_v\cdot\Pi_{(v-v_*)^\perp}=-(d-1)\frac{v-v_*}{|v-v_*|^2}$, and hence, 
\begin{equation} \label{cl:eta:badd-2} \begin{aligned} \big|\nabla_v\cdot \tn \eta_m \big|&\le \big|\nabla_v\cdot \Pi_{(v-v_*)^\perp}(\nabla_v\eta_m(v)-\nabla_{v_*}\eta_m(v_*))\big|+|\Delta \eta_m|\\ 
&\lesssim \|\nabla^2\eta_m\|_{L^\infty}\big(\big|\nabla_v\cdot \Pi_{(v-v_*)^\perp}\big||v-v_*|+1\big)\lesssim 1.\end{aligned} 
\end{equation}
By \eqref{eta:bdd-0} and \eqref{cl:eta:badd-2}, the same bound as for $F_1$ term holds
\begin{align*}
\Big|\int_0^t \int_{v} \int_{w} \sigma(f_*) \sigma(f_w) \varsigma(f)
\nabla_v \cdot \big( \tn \eta_m F_4 \big) \, \dd s \Big|
\lesssim_{C_K,C_{\sigma_*}}m_0^2 \Big(m_0+\int_0^t\int_v f\eta_m(v)\dd s\Big).
\end{align*}

Combining the above estimates, we deduce
\begin{align*}
\mathbb{E}\Big[\int_{v} \eta_m f_t\Big]
\le \mathbb{E}\Big[\int_{v} \eta_m f_0\Big]
+ C\mathbb{E}\Big[1+\int_0^t\int_v f\eta_m(v)\dd s\Big]
\end{align*}
for some constant $C=C(m_0,C_K,C_{\sigma_*},A_0)>0$ independent of $m$. 
By Gr\"onwall's inequality and the assumption $f_0 \in L^1_2(v)$, we obtain the uniform bound
\begin{align*}
\sup_{t \in [0,T]} \mathbb{E}\Big[\int_{v} \eta_m f_t\Big]
\le C
\end{align*}
for some constant $C> 0$ independent of $m$. By Fatou's lemma, we have 
\begin{align*}
\sup_{t \in [0,T]} \mathbb{E}\Big[\int_{v} |v|^2 f_t\Big]
\le C.
\end{align*}

Now we show the energy equality \eqref{cl:energy-app} holds. We first show the vanishing of the stochastic term by using a more delicate estimate of $\eta_m$.
By using of  $\Pi_{(v-v_*)^\perp}(v-v_*) = 0$, we have 
\begin{equation}
\label{tn:eta}    
\begin{gathered}
    \tn\eta_m=\Pi_{(v-v_*)^\perp}\big(2v(1-\chi_m(v))-2v_*(1-\chi_m(v_*))+|v|^2\nabla\chi_m(v)-|v_*|^2\nabla_*\chi_m(v_*)\big),\\
 \text{and}\quad    |\tn \eta_m|
\lesssim  |v| \mathbb{1}_{B_{m^D}^c}(v) + |v_*| \mathbb{1}_{B_{m^D}^c}(v_*) +m^D\mathbb{1}_{\{m^D\le |v|\le 2m^D\}}+m^D\mathbb{1}_{\{m^D\le |v_*|\le 2m^D\}}.
\end{gathered}
\end{equation}
By It\^o's isometry, we have
\begin{equation}
\label{energy-sto-0}
\begin{aligned}
&\hE\Big[\Big|\int^t_0\int_{v}\tn\eta_m\cdot\left(\sqrt{A}\sigma(f)\sigma(f_*) \dd \xi_K\right)\Big|^2\Big]^{\frac12}\\
\lesssim&{}_{m_0,A_0}\hE\Big[\int_0^t\int_{\{|v|\ge m^D\}} |v|^2 f\Big]^{\frac12}\\
&\quad+\hE\Big[\int_0^t\int_{\{m^D\le |v|\le2m^D\}} |v|^2 f\Big]^{\frac12}
\to0
\end{aligned}
\end{equation}
as $m\to\infty$. 

We pass to the limit by the dominated convergence theorem for the remaining terms of the equation. We only need to verify the following pointwise bounds
\begin{gather*}
\tn \eta_m(v)\to \tn|v|^2=0\quad\text{and}\quad\\ 
a_{ij}\d_{ij}\eta_m+b_i(\d_i\eta_m-\d_{i*}\eta_{m*})=\nabla_v\cdot\big(A\tn \eta_m\big)\to \nabla_v\cdot\big(A\tn |v|^2\big)=0.
\end{gather*}

\medskip

\noindent\textbf{Momentum conservation law\eqref{cl:momentum}:} For $i=1,\dots,d$, we define $\beta^i_m(v) := v_i \chi_m(v)$. A straightforward computation yields  
 \begin{align*}
\nabla \beta^i_m &= e_i \chi_m + v_i \nabla \chi_m, \\
\Delta \beta^i_m &= \d_i\chi_m + v_i \Delta \chi_m, \quad \text{for all } v \in \mathbb{R}^d.
\end{align*}

We test \eqref{app:eq:cl} by $\beta^i_m$, and \eqref{cl:test} holds with $\chi_m$ replaced by $\beta^i_m$. Similar to \eqref{tn:eta}, we have 
\begin{gather*}
    \tn\beta^i_m=\Pi_{(v-v_*)^\perp}\big(e_i(1-\chi_m(v))-e_i(1-\chi_m(v_*))+v_i\nabla\chi_m(v)-{ v_{i*}}\nabla_*\chi_m(v_*)\big),\\
 \text{and}\quad    |\tn \beta^i_m|
\lesssim  \mathbb{1}_{B_{m^D}^c}(v) +  \mathbb{1}_{B_{m^D}^c}(v_*) +m\|\nabla\chi_m\|_{L^\infty}.
\end{gather*}
Then the stochastic term vanishes, similar to \eqref{energy-sto-0}.  Concerning the remaining terms, the following pointwise convergences hold
\begin{gather*}
\tn \beta^i_m(v)\to \tn v_i=0\quad\text{and}\quad\\ 
a_{ij}\d_{ij}\beta^i_m+b_i(\d_i\beta^i_m-\d_{i*}\beta^i_{m*})=\nabla_v\cdot\big(A\tn \beta^i_m\big)\to \nabla_v\cdot\big(A\tn v_i\big)=0.
\end{gather*}
Similar to showing the energy equality, we use the energy bound and the dominated convergence theorem to pass to the limit to show \eqref{cl:momentum}.





%
%

\subsection{Entropy dissipation estimates}\label{sec:4-3}
In this Section, we show the entropy inequalities \eqref{entropy-ineq:mu-L} and \eqref{H:esp:exp}. 

We will repeatedly use the following proposition.
\begin{proposition}
\label{prop:log-chi}
Let $L$ satisfy Assumption~\ref{ass:L-s0}, let $\mu$ satisfy
Assumption~\ref{ass:sigma-R0}, and let
$f\in L^2(\R^d;\R_+)\cap L^1_2(\R^d)$. Then we have
\begin{align}
&|L(\mu^2(f))|\lesssim_{C_{\mu^*},C_L}1\quad \text{and}\quad |L(\mu^2(f))|\le |\log(\mu^2(f))| \label{log-chi:pw}
\end{align}
for all $v\in\Do$. Furthermore, we have
\begin{align}
\int_{v}\mu^2(f)|L(\mu^2(f))|+\int_{v}f|\log f|\lesssim \|f\|_{L^1_2}+\|f\|_{L^2}. \label{LlogL:sigma}
\end{align}
\end{proposition}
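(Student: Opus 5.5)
The plan is to prove \eqref{log-chi:pw} pointwise from the assumptions on $L$ and $\mu$, and then to split \eqref{LlogL:sigma} into the regions $\{f\ge1\}$ and $\{f<1\}$. Each region is handled by an interpolation that is linear in $\|f\|_{L^2}$, respectively in $\|f\|_{L^1_2}$. One caveat: the implicit constant will depend on the mass $m_0=\|f\|_{L^1}$, which is admissible because the mass is fixed along the evolution by \eqref{cl:mass}. With that dependence allowed, additive constants can be absorbed via $1\le m_0^{-1}\|f\|_{L^1_2}$.

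\textbf{Pointwise bounds.} By Assumption~\ref{ass:sigma-R0}, $0\le\mu^2(f)\le\min\{C_{\mu^*}^2,f\}$. By Assumption~\ref{ass:L-s0}, $L$ is non-decreasing with $L\ge -C_L$ on $[0,\infty)$. Hence
\begin{align*}
-C_L\le L(\mu^2(f))\le L(C_{\mu^*}^2)\le \log^+ C_{\mu^*}^2 ,
\end{align*}
which is the first bound in \eqref{log-chi:pw}. The second bound is the assumption $|L(s)|\le|\log s|$ evaluated at $s=\mu^2(f)$.

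\textbf{The region $\{f\ge1\}$.} Here $\log f\le 2f^{1/2}$, so Cauchy--Schwarz gives
\begin{align*}
\int_{\{f\ge1\}} f\log f\le 2\int_v f^{1/2}f\le 2\|f\|_{L^1}^{1/2}\|f\|_{L^2}=2m_0^{1/2}\|f\|_{L^2},
\end{align*}
which is linear in $\|f\|_{L^2}$.

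\textbf{The region $\{f<1\}$.} Fix $\delta\in(0,1)$ so small that $\langle v\rangle^{-2(1-\delta)/\delta}\in L^1(\R^d)$. Since $f|\log f|\le C_\delta f^{1-\delta}$ on $[0,1]$, H\"older's inequality with exponents $1/(1-\delta)$ and $1/\delta$ gives
\begin{align*}
\int_{\{f<1\}}f|\log f|\le C_\delta\int_v \big(\langle v\rangle^2 f\big)^{1-\delta}\langle v\rangle^{-2(1-\delta)}\le C_{\delta,d}\|f\|_{L^1_2}^{1-\delta}.
\end{align*}
We then use $x^{1-\delta}\le 1+x$ and $1\le m_0^{-1}\|f\|_{L^1_2}$. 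This bounds the right-hand side by $C(m_0)\|f\|_{L^1_2}$.

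\textbf{The $\mu^2|L(\mu^2)|$ term.} Set $s=\mu^2(f)$ and use $|L(s)|\le|\log s|$. On $\{s\le1\}$, monotonicity of $r\mapsto r^{1-\delta}$ and $s\le f$ give
\begin{align*}
s|\log s|\le C_\delta s^{1-\delta}\le C_\delta f^{1-\delta},
\end{align*}
and the H\"older argument above applies verbatim. On $\{s>1\}$ we necessarily have $f\ge s>1$, and $r\mapsto r\log r$ is increasing on $[1,\infty)$, so $s\log s\le f\log f$, which reduces to the $\{f\ge1\}$ case. Summing the pieces yields \eqref{LlogL:sigma}.

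\textbf{Main obstacle.} The only delicate point is keeping the bound linear rather than quadratic in $\|f\|_{L^2}$. The naive estimate $f\log f\le f^2$ would produce $\|f\|_{L^2}^2$. The interpolation through $f^{3/2}$ and Cauchy--Schwarz is what trades one factor of $\|f\|_{L^2}$ for the conserved mass $m_0^{1/2}$.
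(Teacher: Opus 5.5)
Your proof is correct and has the same skeleton as the paper's. The pointwise bounds come from the monotonicity and lower bound of $L$ together with $\mu\le C_{\mu^*}$. The integral bound splits at level $1$ and treats the small-density part with the same JKO-type H\"older estimate $\int_v f^{1-l}\le \big(\int_v\langle v\rangle^{-2(1-l)/l}\big)^{l}\|f\|_{L^1_2}^{1-l}$.

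You differ on the large-density part and in the bookkeeping. The paper bounds $s\log s\lesssim s^{1+l}\le f^2$ on $\{f\ge1\}$, so its chain actually ends at $\|f\|_{L^1_2}^{1-l}+\|f\|_{L^2}^2$. Its concluding $\lesssim\|f\|_{L^1_2}+\|f\|_{L^2}$ is therefore not justified as displayed, and neither are the suppressed constants in $s|\log s|\le C_l s^{1\mp l}$. Your route does deliver the right-hand side of \eqref{LlogL:sigma}: $\log f\le 2\sqrt f$ with Cauchy--Schwarz gives $2m_0^{1/2}\|f\|_{L^2}$, and $\|f\|_{L^1_2}\ge m_0$ linearises $\|f\|_{L^1_2}^{1-\delta}$.

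Your mass caveat is necessary, not cosmetic. Under $f\mapsto\lambda f$ the left-hand side of \eqref{LlogL:sigma} grows like $\lambda\log\lambda$, while the right-hand side grows only like $\lambda$. So no constant independent of $f$ can work.

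In the paper's later uses the bound only needs to be finite: it either multiplies vanishing cut-off factors or serves as a dominating function. Either version therefore suffices there.
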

\begin{proof}
The bounds in \eqref{log-chi:pw} follow directly from Assumption \ref{ass:L-s0}. In particular, we have
\begin{align}
      |L(\mu^2(f))|
      &\le \big|L(0)\big|+\log(\mu^2(f))\mathbb{1}_{\{f\ge 1\}}
      \lesssim_{C_L,C_{\mu_*},C_{\mu^*}} 1.
      \label{bdd:L}
\end{align}

We next follow a standard argument, as in \cite{JKO98}, to prove
\begin{equation}
\label{LlogL:sigma:detail}
\begin{aligned}
    \int_{v}\mu^2(f)|\log(\mu^2(f))|
    &\le \int_{\{0\le f\le 1\}}\mu^{2(1-l)}(f)
    + \int_{\{f\ge 1\}}\mu^{2(1+l)}(f)\\
    &\le \int_{\{0\le f\le 1\}} f^{1-l} + \int_{\{f\ge 1\}} f^2\\
    &\le \Big(\int_{v}\langle v\rangle^{-\frac{2(1-l)}{l}}\,\dd v\Big)^{l}
    \Big(\int_{v}\langle v\rangle^2 f\,\dd v\Big)^{1-l}
    + \int_{v} f^2\\
    &\lesssim \|f\|_{L^1_2}+\|f\|_{L^2},
\end{aligned}
\end{equation}
where $l\in(0,1)$ is chosen sufficiently small so that $\langle v\rangle^{-\frac{2(1-l)}{l}}\in L^1(\Do)$.

\end{proof}

Let $(\chi_m)_{m\in\mathbb{N}_+}$ be a sequence of cut-off functions as specified in Assumption \ref{ass:chi}. Recall that $\cH_{\mu,L}$ and $h_{\mu,L}$ are defined in Definition \ref{def:H-h-sigma}. For every $m\in\mathbb{N}_+$, we introduce the following approximation of $\cH_{\mu,L}(f)$:
\begin{align*}
    \cH_{\mu,L,m}(f):=\int_{v} \chi_m(v)\,h_{\mu,L}(f)\,\dd v.
\end{align*}
Correspondingly, we define
\begin{align*}
    \cD_{\mu,L,m}(f):= \frac12\int_{v,v_*}\chi_m(v) A \mu^2(f)\mu^2(f_*)\big|\tn L(\mu^2(f))\big|^2.
\end{align*}

%
%

In Section \ref{sec-4:entropy-1}, we show that for every $m\in\mathbb N$ and every
$t\in[0,T]$, almost surely,
\begin{equation}
    \label{H-theorem:1}
 \begin{aligned}
 &\cH_{\mu,L,m}(f_t)+\int_0^t\cD_{\mu,L,m}(f_s)\,\dd s \\
\le &{}\cH_{\mu,L,m}(f_0)+\sqrt{\eps}M_{m,t}
+C_1m^{-D}\big(\|f\|_{L^\infty_tL^2_v}+1\big)^3\big(\|f\|_{L^2_tH^1_v}+1\big)+\eps C_2\big(\|f_0\|_{L^1}^3+1\big),
\end{aligned}
\end{equation}
where the martingale term $M_{m,t}$ is defined in \eqref{entropy:M-mt} below.
Here, the constants $C_1=C_1(C_{\sigma_*},C_{\sigma^*},C_{\mu_*},C_{\mu^*},C_K,d,A_0,z^*,T)>0$ and $C_2=C_2(C_{\sigma_*},C_K,T)>0$ are independent of $m$.  

In Section \ref{sec-4:entropy-2}, we show the estimate \eqref{entropy-ineq:mu-L}.
More precisely, we take the expectation of \eqref{H-theorem:1} then let $m\to\infty$, where we use the usual stopping-time argument applied to the martingale term. 

In Section \ref{sec-4:entropy-3}, we show 
the sharper exponential estimate \eqref{H:esp:exp}, which is obtained after passing first to the limit $m\to\infty$ in the martingale and its quadratic variation.

\subsubsection{Localised entropy dissipation estimates}
\label{sec-4:entropy-1}
By definitions and the chain rule, we have 
\begin{align*}
h_{\mu,L}'(f)\chi_m(v)=L(\mu^2(f))\chi_m(v)\quad\text{and}\quad h_{\mu,L}''(f)\chi_m(v)=2L'(\mu^2(f))\mu'(f)\mu(f)\chi_m(v).
\end{align*}

By \eqref{log-chi:pw} and the chain rule formula \eqref{ito:cut-off}, we have almost surely, for every $t\in[0,T]$,
\begin{equation}
\label{ito:sec-7}
 \begin{aligned}
\cH_{\mu,L,m}(f_t)= &\cH_{\mu,L,m}(f_0)- \alpha\int^t_0\int_{v}\nabla f\cdot \nabla_v\big(L (\mu^2)\chi_m\big) \\
&-\frac12 \int^t_0\int_{v,v_*}A\mu^2\mu_{*}^2\tn L (\mu^2)\cdot \tn \big(L (\mu^2)\chi_m\big)\\
&+\frac{\sqrt{\eps}}{2}\sum_{k=1}^K\int^t_0\int_{v,v_*}\tn \big(L (\mu^2)\chi_m\big)\cdot\big(G_k(v,v_*)\sigma(f)\sigma(f_*)\big)\dd B_k\\
           &+\eps\big(T_1(f,h_{\mu,L},\chi_m)+T_2(f,h_{\mu,L},\chi_m)\big).
\end{aligned}
\end{equation}

In the following, we estimate each term individually.

\medskip

\noindent\textbf{The diffusion term: } By the chain rule and the integration by parts formula, we have 
\begin{align*}
   &\int^t_0\int_{v}\nabla f\cdot \nabla_v\big(L (\mu^2)\chi_m\big)\\
   =&{}\int^t_0\int_{v}\chi_m\nabla f\cdot \nabla_vL (\mu^2)+\int^t_0\int_{v}L (\mu^2)\nabla f\cdot \nabla\chi_m\\
   =&{}\int^t_0\int_{v}\chi_m\nabla f\cdot \nabla_vL (\mu^2)+\int^t_0\int_{v}f L (\mu^2)\Delta\chi_m-\int^t_0\int_{v} f \nabla_vL (\mu^2)\cdot \nabla \chi_m\\
   =:&{}I_1+I_2+I_3.
\end{align*}

Notice that, by the chain rule and the properties of $L$ and $\mu$, we have $I_1 \ge 0$. 
Regarding the term $I_2$, by \eqref{LlogL:sigma}, we obtain  
\begin{align*}
    \big|I_2\big| &\lesssim_{C_{L},C_{\mu_*},C_{\mu^*}} \int_0^t \|f\|_{L^1_2}\,\|\Delta \chi_m\|_{L^\infty}.
\end{align*}

Concerning $I_3$, using the fact that 
\begin{align*}
    |\nabla_v L(\mu^2)| 
    \le 2 L'(\mu^2)\mu(f)\mu'(f)|\nabla_v f|
    \lesssim_{C_L,C_{\mu_*},C_{\mu^*}} |\nabla_v f|,
\end{align*}
we deduce that 
\begin{align*}
    |I_3|
    \lesssim \int_0^t \|\nabla f\|_{L^2}\|f\|_{L^2}\|\nabla \chi_m\|_{L^\infty}.
\end{align*}

Hence, we obtain
\begin{align*}
    -\alpha \int_0^t \int_v \nabla f \cdot \nabla_v\big(L(\mu^2(f))\chi_m\big)
    \lesssim \int^t_0\big(\|\nabla f\|_{L^2}\|f\|_{L^2} + \|f\|_{L^1_2}\big)\|\nabla \chi_m\|_{W^{1,\infty}}.
\end{align*}

\medskip

\noindent\textbf{The Landau collision term:} Concerning the collision term, we have 
\begin{align*}
    &\frac12\int^t_0\int_{v,v_*}A\mu^2(f)\mu^2(f_*)\tn L(\mu^2(f))\cdot \tn \big(L(\mu^2(f))\chi_m\big)\\
    =&{}\int^t_0\cD_{\mu,L,m}(f)\,\dd s+\frac12\int^t_0\int_{v,v_*}L(\mu^2(f))A\mu^2(f)\mu^2(f_*)\tn L(\mu^2(f))\cdot \tn \chi_m.
\end{align*}

Recall that $z^*>0$ be such that $\operatorname{supp} A\subset\{|v-v_*|\le z^*\}$.  Using the pointwise bounds $\mu(r)\le C_{\mu^*}$, $\mu^2(r)\le r$, $|L(\mu^2(f))|\lesssim_{C_L,C_{\mu^*}}1$, and $0\le L'(s)s\le 1$, we estimate the cut-off error without using the $L^1$ or $L^2$ norm of $\nabla\chi_m$.  Here $|L(\mu^2(f))|$ is absorbed into the constant because $L$ is the regularised logarithm and $\mu^2(f)\in[0,C_{\mu^*}^2]$.  Indeed,
\begin{align*}
|\tn\chi_m|\le |\nabla\chi_m(v)|+|\nabla\chi_m(v_*)|\lesssim m^{-D},
\qquad
|\tn L(\mu^2(f))|\lesssim_{C_L,C_{\mu_*},C_{\mu^*}} |\nabla_v f|+|\nabla_{v_*}f_*|.
\end{align*}
Therefore, using the compact support of $A$,
\begin{align*}
&\left|\int^t_0\int_{v,v_*}L(\mu^2(f))A\mu^2(f)\mu^2(f_*)\tn L(\mu^2(f))\cdot \tn \big(\chi_m\big)\right|\\
&\lesssim_{C_{\mu^*},C_L,z^*}m^{-D}\int_0^t\int_{\{|v-v_*|\le z^*\}}f f_*\big(|\nabla_v f|+|\nabla_{v_*} f_*|\big)\,\dd v\dd v_*\dd s\\
&\lesssim_{C_{\mu^*},C_L,z^*,d}m^{-D}\int_0^t\|f_s\|_{L^1_v}\|f_s\|_{L^2_v}\|\nabla f_s\|_{L^2_v}\,\dd s.
\end{align*}

\medskip

\noindent\textbf{The error $T_1$ term:} We define 
\begin{align}
    \label{def:l-i}
    l_{1}(s):=\int_0^sL' (\mu^2(r))\mu'(r)\mu(r)\sigma(r)\dd r\quad\text{and}\quad l_{2}(s):=\int_0^sL' (\mu^2(r))\mu'(r)\mu(r)\sigma'(r)\sigma(r)\dd r.
\end{align}
By assumption $0\le \sigma\le \mu$ and $0\le L'(s)s\le 1$, we have 
\begin{equation}
\label{bdd:mu-sigma}
0\le L'(\mu^2)\sigma\mu\le 1.   
\end{equation}
As a direct consequence, we have 
\begin{align*}
    &0\le l_1(s)\le \mu(s).
\end{align*}
Concerning $l_2$, for $0\le r\le1$, by \eqref{bdd:mu-sigma} and the bounds $0\le \mu'(r)\le \frac{1}{2\sqrt{r}}$ and $0\le \sigma'(r)\le C_{\sigma_*}$, we  have 
\begin{align*}
    0\le l_2(s)\le \int_0^s\mu'(r)\sigma'(r)\le \sqrt{s},\quad \forall\, s\in[0,1].
\end{align*}
For $r\ge1$, we use $0\le \sigma'\sigma,\,\mu'\mu\le \frac12$, and $0\le L'(s)\le 1$ to derive 
\begin{align*}
 0\le l_2(s)=l_2(1)+ \int_1^s1\le s,\quad \forall\, s\in(1,\infty).    
\end{align*}
Hence, we have
\begin{align*}
    0\le l_2(s)\le \sqrt s\mathbb{1}_{\{0\le s\le 1\}}+s\mathbb{1}_{\{s\ge 1\}}.
\end{align*}

Hence, $T_1$ can be written as 
    \begin{align*}
T_1= \int^t_0\int_{v,v_*,w}&\chi_m(v_*)\big(L' (\mu^2_*)\mu'(f_*)\mu(f_*)\big)\sigma'(f)\sigma(f)\sigma(f_*)\sigma(f_w)\nabla_{v_*} f_*\cdot F_1\\   &+2\chi_m(v)\big(L' (\mu^2)\mu'(f)\mu(f)\big)\sigma^2(f)\sigma(f_*)\sigma(f_w)F_3\\
   &+2\chi_m(v_*)\big(L' (\mu^2_*)\mu'(f_*)\mu(f_*)\big)\sigma(f_*)\sigma(f_w)\nabla_{v_*} f_*\otimes \nabla_v \varsigma(f) : F_4\\
   & +\chi_m(v)\big(L' (\mu^2)\mu'(f)\mu(f)\big)\sigma(f_*)\sigma(f_w)\nabla_v \sigma^2(f)\cdot F_2\dd s,  \\
   = \int^t_0\int_{v,v_*,w}&\chi_m(v_*)\sigma'(f)\sigma(f)\sigma(f_w)\big(L' (\mu^2_*)\mu'(f_*)\mu(f_*)\sigma(f_*)\big)\nabla_{v_*} f_*\cdot F_1\\   &+2\chi_m(v)\big(L' (\mu^2)\mu'(f)\mu(f)\big)\sigma^2(f)\sigma(f_*)\sigma(f_w)F_3\\
   &+2\chi_m(v_*)\sigma(f_w)\big(L' (\mu^2_*)\mu'(f_*)\mu(f_*)\sigma(f_*)\big)\nabla_{v_*} f_*\otimes \nabla_v \varsigma(f) : F_4\\
   & +2\chi_m(v)\sigma(f_*)\sigma(f_w)\big(L' (\mu^2)\mu'(f)\mu(f)\sigma'(f)\sigma(f)\big)\nabla_v f\cdot F_2\dd s\\
      = \int^t_0\int_{v,v_*,w}&\chi_m(v_*)\sigma'(f)\sigma(f)\sigma(f_w)\nabla_{v_*} l_1(f_*)\cdot F_1\\   &+2\chi_m(v)\big(L' (\mu^2)\mu'(f)\mu(f)\big)\sigma^2(f)\sigma(f_*)\sigma(f_w)F_3\\
   &+2\chi_m(v_*)\sigma(f_w)\nabla_{v_*} l_1(f_*)\otimes \nabla_v \varsigma(f) : F_4\\
   & +2\chi_m(v)\sigma(f_*)\sigma(f_w)\nabla_v l_2(f)\cdot F_2\dd s.
\end{align*}

Applying integration by parts to the $F_1$, $F_2$ and $F_4$ terms and then taking absolute values, we may bound $T_1$ by the absolute value of
\begin{align*}
\int^t_0\int_{v,v_*,w}&-\sigma'(f)\sigma(f)\sigma(f_w) l_1(f_*)\nabla_{v_*}\cdot (\chi_m(v_*)F_1)\\
&-2\varsigma(f)\sigma(f_w)l_1(f_*)\nabla_{v_*} \cdot (\chi_m(v_*)\nabla_v \cdot F_4)\\
&+2\sigma(f_*)\sigma(f_w)l_2(f)\nabla_v \cdot (\chi_m(v)F_2)\\
&+ 2\chi_m(v)\big(L' (\mu^2)\mu'(f)\mu(f)\sigma^2(f)\big)\sigma(f_*)\sigma(f_w)F_3\dd s.
\end{align*}
By similar argument as in \eqref{bdd:mu-sigma}, we have $L' (\mu^2)\sigma^2\le 1$. Combining this with $0\le \mu'\mu\le \frac12$, we have 
\begin{align*}
    \Big|2\chi_m(v)\big(L' (\mu^2)\mu'(f)\mu(f)\sigma^2(f)\big)\sigma(f_*)\sigma(f_w)F_3\Big|\le \sigma(f_*)\sigma(f_w)|F_3|.
\end{align*}
Hence, the $T_1$ term is bounded by
(up to a constant depending only on $C_{\sigma_*}$ and $C_K$)
\begin{align*}
&\int^t_0\int_{v,v_*,w}
\sigma(f)\sigma(f_w) \sigma(f_*)\big|\nabla_{v_*}\cdot (\chi_m(v_*)F_1)\big|+\varsigma(f)\sigma(f_w)\sigma(f_*)\big|\nabla_{v_*} \cdot (\chi_m(v_*)\nabla_v \cdot F_4)\big|\\
           &\qquad +\sigma(f_*)\sigma(f_w)\sigma(f)\big|\nabla_v \cdot (\chi_m(v)F_2)\big|+ \sigma(f_*)\sigma(f_w)\big|F_3\big|\dd s\\
\lesssim&{}_{C_{\sigma_*},C_K}\int^t_0(\| f_0\|_{L^1}+1)^3\dd s,
\end{align*}
where we use the bounds in Remark \ref{rmk:sigma-f:L2}.

\medskip

\noindent
\textbf{The error $T_2$ term:} 
We write 
\begin{align*}
T_2&=T_{2,1}+T_{2,2}\\
&:=\frac12\int^t_0\int_{v,v_*,w}\sigma(f_*)\sigma(f_w)\big(L (\mu^2)\nabla_v \chi_m(v)-L (\mu^2_*)\nabla_{v_*} \chi_m(v_*)\big)\cdot  \sigma'(f)\sigma(f)F_1\dd s\\
&+\frac12\int^t_0\int_{v,v_*,w}\sigma(f_*)\sigma(f_w)\big(L (\mu^2)\nabla_v \chi_m(v)-L (\mu^2_*)\nabla_{v_*} \chi_m(v_*)\big)\cdot F_4 \nabla_v \varsigma(f)\dd s.
\end{align*}
The boundedness of $\sigma$ and $\sigma'$ ensure that  
\begin{align*}
|T_{2,1}|    &\lesssim_{C_{\mu^*},C_K,C_L,C_{\sigma_*},C_{\sigma^*},T}\|\nabla \chi_m\|_{L^\infty}.
\end{align*}
By integration by parts and the bounds in Remark \ref{rmk:sigma-f:L2}, we have  
\begin{align*}
   |T_{2,2}|&=\Big|\int^t_0\int_{v,v_*,w}\nabla_v\cdot\big(\big(\nabla_v \chi_m(v) L (\mu^2)-\nabla_{v_*} \chi_m (v_*)L (\mu^2_*)\big) F_4\big) \sigma(f_*)\sigma(f_w)\varsigma(f)\dd s\Big|\\
   &\lesssim_{C_L,C_K,C_{\mu_*},C_{\mu^*},C_{\sigma_*},C_{\sigma^*}} \|\nabla\chi_m\|_{W^{1,\infty}}\int^t_0\|f\|_{L^2}\|\nabla f\|_{L^2}\dd s.
\end{align*}

\medskip

\noindent
\textbf{The stochastic noise term:} Let
\begin{equation}
\label{entropy:M-mt}
M_{m,t}:=\frac12\sum_{k=1}^K\int_0^t\int_{v,v_*}\tn\big(L(\mu^2(f_s))\chi_m\big)\cdot
\big(G_k(v,v_*)\sigma(f_s)\sigma(f_{s,*})\big)\,\dd B_s^k .
\end{equation}
By the ONB identity for $G_k$, its quadratic variation satisfies
\begin{align*}
\langle M_m\rangle_t
&\lesssim \int_0^t\int_{v,v_*}A\sigma^2(f_s)\sigma^2(f_{s,*})
\big|\tn \big(L(\mu^2(f_s))\chi_m\big)\big|^2\,\dd s .
\end{align*}
Notice that $\chi_m^2\le\chi_m$ and 
\begin{align*}
\big|\tn \big(L(\mu^2)\chi_m\big)\big|^2\le &{} |\tn L(\mu^2) |^2\chi_m(v)+\big|\nabla_vL(\mu_*^2)\big|^2\big|\chi_m(v)-\chi_m(v_*)\big|^2    \\
&+|L(\mu^2)||\nabla_v\chi_m(v)|+|L(\mu_*^2)||\nabla_v\chi_m(v_*)|.
\end{align*}
By using of $|\nabla_v L(\mu^2)|\lesssim_{C_L,C_{\mu_*},C_{\mu^*}}  |\nabla_v f|$, we have 
\begin{align*}
\big|\tn \big(L(\mu^2)\chi_m\big)\big|^2\lesssim &{}|\tn L(\mu^2) |^2\chi_m(v)+C|\nabla_v f||v-v_*|^2\|\nabla \chi_m\|_{L^\infty}^2 \\
&+\|\nabla \chi_m\|_{L^\infty}\big(|L(\mu^2)|+|L(\mu_*^2)|\big).
\end{align*}
Then we have
\begin{align}\label{martingale-control}
    &\int^t_0\int_{v,v_*}A\mu^2(f)\mu^2(f_*)\big|\tn \big(L(\mu^2)\chi_m\big)\big|^2\dd s\notag\\
    \le&{} 2\int^t_0\cD_{\mu,L,m}(f)\dd s\notag\\
   &+ C \int^t_0\|\nabla\chi_m\|_{L^\infty}\int_{v}f|\nabla f|+\mu^2(f)|\log \mu^2(f)|\dd s.\notag\\
   \le&{} 2\int^t_0\cD_{\mu,L,m}(f)\dd s\notag\\
   &+ C \int^t_0\|\nabla\chi_m\|_{L^\infty}\big(\|f\|_{L^2}\|\nabla f\|_{L^2}+\|f\|_{L^1_2}\big)\dd s,
\end{align}
where, in the last inequality, we use 
\eqref{LlogL:sigma}.

\medskip

Put the above estimates together.  Then, for every $m\in\mathbb N$ and every
$t\in[0,T]$, almost surely,
\begin{align}
\label{localised-entropy-pathwise-exp-prep}
&\cH_{\mu,L,m}(f_t)+\int_0^t\cD_{\mu,L,m}(f_s)\,\dd s \notag\\
\le &{}\cH_{\mu,L,m}(f_0)+\sqrt{\eps}M_{m,t}
+C_1m^{-D}\mathcal R_T(f)+\eps C_2\big(\|f_0\|_{L^1}^3+1\big),
\end{align}
where
\begin{equation*}
\mathcal R_T(f):=\big(\|f\|_{L^\infty_tL^2_v}+1\big)^3\big(\|f\|_{L^2_tH^1_v}+1\big).
\end{equation*}
Here, $C_1>0$ depends only on the fixed regularisation constants and
$C_2=C_2(C_{\sigma_*},C_K,T)$.  Notice that $\mathcal R_T(f)<\infty$ almost
surely by the $L^2$ estimate, the energy estimate, and the artificial
diffusion estimate.

\subsubsection{The entropy inequality \eqref{entropy-ineq:mu-L}}\label{sec-4:entropy-2}

We recall the stochastic noise term \eqref{entropy:M-mt}. By \eqref{martingale-control}, 
 $\langle M_m\rangle_t$ is integrable after the usual stopping-time localisation, and the stopped martingales have zero expectation. Letting the stopping level go to infinity gives $\hE M_{m,t}=0$. This justifies dropping the stochastic term in the expected entropy inequality.

 Hence, \eqref{localised-entropy-pathwise-exp-prep} implies that
 \begin{equation}
 \begin{aligned}
&\hE\big[\cH_{\mu,L,m}(f_t)\big]+\hE\Big[\int_0^t\cD_{\mu,L,m}(f_s)\,\dd s\Big]\le \hE\big[\cH_{\mu,L,m}(f_0)\big]\\
&\quad + C_1 m^{-D}\,\hE\Big[\mathcal R_T(f)\Big]
+ C_2\hE\big[\| f_0\|_{L^1}^3+1\big],
\end{aligned}
\end{equation}
Then to show the entropy inequality \eqref{entropy-ineq:mu-L}, we only need to pass to the limit in entropy and entropy dissipation terms. 

\medskip

\noindent\textbf{ The $\cH_{\mu,L}(f_0)$ term:}
We have the following pointwise convergence 
\begin{align*} 
   \chi_m(v) h_{\mu,L}(f_0)\to h_{\mu,L}(f_0)\quad\forall v\in\Do.
\end{align*}
By \eqref{log-chi:pw}, we have 
\begin{align*}
       |L(\mu^2(f_0))|\le |\log (\mu(f_0)^2)|,
    \end{align*}
   moreover, by definition,
    \begin{align*}
        \Big|\log(\mu^2(r))\Big|\le&{}-\log( \mu^2(r))\mathbb{1}_{\{0< r\le 1\}}+\log(\mu^2(r))\mathbb{1}_{\{r> 1\}}\\
        \le&{}-\log(C_{\mu_*}^2r^2)\mathbb{1}_{\{0< r\le 1\}}+r\mathbb{1}_{\{r\ge 1\}}.
    \end{align*}
    As a consequence, we have 
    \begin{equation}
   \label{H:bdd-epsilon-0}
    \begin{aligned}
        |h_{\mu,L}(s)|\le&{} \int_0^s \Big(\big|\log(\mu^2(r))\big|\mathbb{1}_{\{0< r\le 1\}}+r\mathbb{1}_{\{r\ge 1\}}\Big)\,\dd r\\
        \lesssim&{} s\big|\log s\big|\mathbb{1}_{\{0\le s\le 1\}}+s^2\mathbb{1}_{\{s\ge 1\}}.
    \end{aligned}
    \end{equation}
    Since $f_0\in L\log L\cap L^2(\Do;\R_+)$, we have 
     \begin{align*}
        &\chi_m(v)| h_{\mu,L}(f_0)|\lesssim  f_0|\log f_0|+f_0^2\in L^1(\Do).
    \end{align*}
By the dominated convergence theorem, we have  
    \begin{align*}
\lim_{m\to\infty}\cH_{\mu,L,m}(f_0)= \cH_{\mu,L}(f_0).
\end{align*}

\medskip

\noindent\textbf{ The entropy dissipation:} By assumptions of $A$, $\mu$ and $L$, we have 
\begin{align*}
0\le A\mu^2\mu_*^2\big|\tn L(\mu^2)\big|^2\lesssim_{A_0,C_L,C_{\mu_*},C_{\mu^*}} \big(|\nabla_v f|^2 +|\nabla_{v_*} f_*|^2\big)\mathbb{1}_{\{|v-v_*|\le z^*\}}\in L^1([0,T]\times \G) .   
\end{align*}
By the monotonicity convergence theorem, we have 
    \begin{align*}
   \int_0^t\cD_{\mu,L}(f_s)\,\dd s=\lim_{m\to\infty} \int_0^t\cD_{\mu,L,m}(f_s)\,\dd s\quad \forall t\in[0,T].
\end{align*}

    \medskip

\noindent\textbf{ The $\cH_{\mu,L}(f_t)$ term:}
   By using of \eqref{H:bdd-epsilon-0} and \eqref{LlogL:sigma}, we have 
    \begin{align*}
        |h_{\mu,L}(f)|\lesssim&{} f|\log f|+f^2\in L^1(\Do).
    \end{align*}
By a direct computation, we observe that for every $r\in[0,1]$,
\begin{align*}
(r\log r)^- = -r\log r \leq ar + e^{-a}, \quad \text{for every } a\geq 0.
\end{align*}
For every $v\in\mathbb{R}^{d}$, choosing $a=|v|^2$, it follows that almost surely, for every $t\in[0,T]$,
\begin{equation}
    \label{LlogL:minus}
\begin{aligned}
\int_{\{f\in[0,1]\}} f \log f
\geq& -\int_{\mathbb{R}^{d}} |v|^2 f
-\int_{\mathbb{R}^{d}} e^{-|v|^2}\\
\geq& C\left(-\int_{\mathbb{R}^{d}} |v|^2 f_{0}-C(d,T,f_0)
-\int_{\mathbb{R}^{d}} e^{-|v|^2}\right) > -\infty. 
\end{aligned}
\end{equation}
This shows that the entropy functional is bounded from below. Consequently, $\mathcal{H}_{\mu,L}(f_t)$ is almost surely finite for every $t\in[0,T]$.
By  Fatou's lemma, we have 
\begin{align*}
    \cH_{\mu,L}(f_t)\le \liminf_{m\to\infty} \cH_{\mu,L,m}(f_t)\quad \forall t\in[0,T].
\end{align*}

\subsubsection{The exponential entropy inequality \eqref{H:esp:exp}}\label{sec-4:entropy-3}

To show \eqref{H:esp:exp}, we let $m\to\infty$ in \eqref{localised-entropy-pathwise-exp-prep} before applying the exponential martingale argument.
This order is important because the estimate to be exponentiated has to be
written in terms of the limiting martingale and its quadratic variation.

By the previous subsection, we have 
\begin{gather*}
\lim_{m\to\infty}\cH_{\mu,L,m}(f_0)=\cH_{\mu,L}(f_0),\quad \cH_{\mu,L}(f_t)\le \liminf_{m\to\infty}\cH_{\mu,L,m}(f_t),\\
\text{and}\quad \int_0^t\cD_{\mu,L}(f_s)\,\dd s
=\lim_{m\to\infty}\int_0^t\cD_{\mu,L,m}(f_s)\,\dd s.
\end{gather*}

It remains to identify the limiting martingale.  Define
\begin{equation*}
H_{m,k}(s):=\int_{v,v_*}\tn\big(L(\mu^2(f_s))\chi_m\big)
\cdot G_k(v,v_*)\sigma(f_s)\sigma(f_{s,*})\,\dd v\dd v_* .
\end{equation*}
The estimate \eqref{martingale-control}, the bound $0\le \sigma\le\mu$, and
the convergence $\chi_m\uparrow1$ imply that $(H_{m,k})_{m\ge1}$ is Cauchy in
$L^2(\Omega\times[0,T])$ after the standard stopping-time localisation; the
localising sequence is then removed by monotone convergence and
\eqref{martingale-control}.  Its limit is
\begin{equation*}
H_k(s):=\int_{v,v_*}\tn L(\mu^2(f_s))
\cdot G_k(v,v_*)\sigma(f_s)\sigma(f_{s,*})\,\dd v\dd v_* .
\end{equation*}
Consequently, by It\^o isometry, we have 
\begin{equation}
\label{martingale-m-limit-exp}
M_{m,t}\longrightarrow
M_t:=\frac12\sum_{k=1}^K\int_0^t H_k(s)\,\dd B_s^k
\end{equation}
in probability, and along a subsequence almost surely.
Furthermore,
\begin{equation}
\label{bracket-m-limit-exp}
\langle M_m\rangle_t=\frac14\sum_{k=1}^K\int_0^t|H_{m,k}(s)|^2\,\dd s
\longrightarrow
\langle M\rangle_t:=\frac14\sum_{k=1}^K\int_0^t|H_k(s)|^2\,\dd s
\end{equation}
in probability, again along the same subsequence almost surely.  Passing to
the limit by letting $m\to\infty$ in \eqref{martingale-control} gives the bracket estimate
\begin{equation}
\label{limit-bracket-control-exp}
\langle M\rangle_t\le 4\int_0^t\cD_{\mu,L}(f_s)\,\dd s,
\qquad t\in[0,T].
\end{equation}
Combining \eqref{localised-entropy-pathwise-exp-prep}--\eqref{martingale-m-limit-exp} and using $m^{-D}\mathcal R_T(f)\to0$, we obtain, for every
$t\in[0,T]$, almost surely,
\begin{equation}
\label{entropy-limit-before-exp}
\cH_{\mu,L}(f_t)+\int_0^t\cD_{\mu,L}(f_s)\,\dd s
\le \cH_{\mu,L}(f_0)+\sqrt{\eps}M_t+\eps C_2\big(\|f_0\|_{L^1}^3+1\big).
\end{equation}

We now apply the exponential martingale argument to the limiting martingale
$M$.  Fix an arbitrary $\lambda>0$.  Since the stochastic term in
\eqref{entropy-limit-before-exp} is $\sqrt{\eps}M_t$, the exponential
martingale contains the compensator
$\frac{\lambda^2\eps}{2}\langle M\rangle_t$.  Choose
$\eps_0(\lambda)>0$ small such that
\begin{equation*}
2\lambda^2\eps\le \frac{\lambda}{2}
\qquad \forall\, \eps\in(0,\eps_0).
\end{equation*}
Multiplying \eqref{entropy-limit-before-exp} by $\lambda$, subtracting
$\frac{\lambda^2\eps}{2}\langle M\rangle_t$, and using
\eqref{limit-bracket-control-exp}, we obtain
\begin{align*}
&\lambda\cH_{\mu,L}(f_t)+\frac{\lambda}{2}\int_0^t\cD_{\mu,L}(f_s)\,\dd s\\
&\le \lambda\cH_{\mu,L}(f_0)+C_2\lambda\big(\|f_0\|_{L^1}^3+1\big)
+\lambda\sqrt{\eps}M_t-\frac{\lambda^2\eps}{2}\langle M\rangle_t,
\end{align*}
where we used $\eps\le1$ to absorb the factor $\eps$ in the deterministic
correction error into the constant.  The Dol\'eans exponential
\begin{equation*}
\mathcal E_t^{\lambda,\eps}:=
\exp\left(\lambda\sqrt{\eps}M_t-\frac{\lambda^2\eps}{2}\langle M\rangle_t\right)
\end{equation*}
is a nonnegative local martingale and hence a supermartingale.  Therefore,
since $f_0$ is deterministic,
\begin{align}\label{exponential-bound-1}
&\hE\exp\left[\lambda\cH_{\mu,L}(f_t)+\frac{\lambda}{2}\int_0^t\cD_{\mu,L}(f_s)\,\dd s\right]\notag\\
&\le
\hE\left[
\exp\left(\lambda\cH_{\mu,L}(f_0)+C_2\lambda\big(\|f_0\|_{L^1}^3+1\big)\right)
\mathcal E_t^{\lambda,\eps}
\right]\notag\\
&\le
\hE\exp\left(\lambda\cH_{\mu,L}(f_0)+C_2\lambda\big(\|f_0\|_{L^1}^3+1\big)\right).
\end{align}

The preceding argument gives the endpoint estimate \eqref{exponential-bound-1}.
We now derive the maximal version.  Since
\eqref{entropy-limit-before-exp} holds for every $s\in[0,t]$, multiplying it
by $\lambda$ and keeping only one half of the dissipation gives
\begin{align}
\label{entropy-limit-before-exp-2}
&\sup_{s\in[0,t]}
\left(\lambda\cH_{\mu,L}(f_s)+\frac{\lambda}{2}\int_0^s\cD_{\mu,L}(f_r)\,\dd r\right)\notag\\
&\le \lambda\cH_{\mu,L}(f_0)
+C_2\lambda\big(\|f_0\|_{L^1}^3+1\big)
+\sup_{s\in[0,t]}\left(\lambda\sqrt{\eps}M_s-\frac{\lambda}{2}\int_0^s\cD_{\mu,L}(f_r)\,\dd r\right).
\end{align}
By \eqref{limit-bracket-control-exp},
\begin{equation*}
\frac{\lambda}{2}\int_0^s\cD_{\mu,L}(f_r)\,\dd r
\ge \frac{\lambda}{8}\langle M\rangle_s .
\end{equation*}
Thus, with $a:=\lambda\sqrt\eps$,
\begin{align}
\label{maximal-martingale-reduction}
&\sup_{s\in[0,t]}
\left(\lambda\cH_{\mu,L}(f_s)+\frac{\lambda}{2}\int_0^s\cD_{\mu,L}(f_r)\,\dd r\right)\notag\\
&\le \lambda\cH_{\mu,L}(f_0)
+C_2\lambda\big(\|f_0\|_{L^1}^3+1\big)
+\sup_{s\in[0,t]}\left(aM_s-\frac{\lambda}{8}\langle M\rangle_s\right).
\end{align}
It remains to estimate the last supremum.  Choose $\eps_0(\lambda)>0$ so
small that $a^2\le\lambda/8$ for $0<\eps<\eps_0(\lambda)$, and set
\begin{equation*}
Z_s:=\exp\left(2aM_s-2a^2\langle M\rangle_s\right),\qquad s\in[0,t].
\end{equation*}
Then $(Z_s)_{s\in[0,t]}$ is a nonnegative local martingale and hence a
supermartingale.  Since $a^2\le\lambda/8$, we have
\begin{equation*}
\exp\left[
\sup_{s\in[0,t]}\left(aM_s-\frac{\lambda}{8}\langle M\rangle_s\right)
\right]
\le \left(\sup_{s\in[0,t]}Z_s\right)^{1/2}.
\end{equation*}
Doob's maximal inequality in its weak $L^1$ form yields
\begin{equation*}
\mathbb P\left(\sup_{s\in[0,t]}Z_s>R\right)\le \frac{1}{R},
\qquad R>0.
\end{equation*}
Consequently,
\begin{align}
\label{doob-exp-max-bound}
\hE\left(\sup_{s\in[0,t]}Z_s\right)^{1/2}
&=\int_0^\infty \mathbb P\left(\left(\sup_{s\in[0,t]}Z_s\right)^{1/2}>y\right)\,\dd y\notag\\
&\le 1+\int_1^\infty y^{-2}\,\dd y
\le 2.
\end{align}
Taking the exponential in \eqref{maximal-martingale-reduction}, then using
\eqref{doob-exp-max-bound}, gives
\begin{align}\label{exponential-bound-2}
&\hE\exp\left[
\sup_{s\in[0,t]}\left(\lambda\cH_{\mu,L}(f_s)
+\frac{\lambda}{2}\int_0^s\cD_{\mu,L}(f_r)\,\dd r\right)\right]\notag\\
&\le
2\exp\left(\lambda\cH_{\mu,L}(f_0)
+C_2\lambda\big(\|f_0\|_{L^1}^3+1\big)\right).
\end{align}
This proves \eqref{H:esp:exp}, after absorbing the harmless factor $2$ into
the constant $C_\lambda$ and renaming $C_2$ as $C$.

\section{Pass to the limits in \texorpdfstring{$L^2$-framework}{}}
\label{sec:limit-L2}

In this section, we show the existence results of the following approximation It\^o's equation in the $L^2\cap L^1$-framework
\begin{equation}
    \label{eq:ito-2}
\begin{aligned}
\partial_t f
&=\alpha\Delta f+\frac12\tn\cdot\big( A f f_*\tn \log f\big)-\frac{\sqrt{\eps}}{2}\tn\cdot( A^{1/2}  \sigma(f) \sigma( f_*)\xi_K)\\
&\quad+\frac{\eps}{2}\sum_{k= 1}^K\tn\cdot \Big(G_{k}(v,v_*) \sigma'(f) \sigma(f_{*}) \tn\cdot\big(G_{k}(v,w) \sigma(f) \sigma(f_{w})\big) \Big),
\end{aligned}
\end{equation}
where $A$ is given by \eqref{def:cA}
\begin{align}
A(|v-v_*|)=|v-v_*|^{2+\gamma},\quad \gamma\in(-2,0),    
\end{align}
and $\sigma(r)\in C^1(\R_+;\R_+)$ satisfies Assumption \ref{ass:sigma-R0-0}
\begin{equation}
\label{def:sigma-final}
\begin{gathered}
\sigma(r)=\sqrt r \quad r\in(r_0,\infty),\\
0\le \sigma'(r)\le C_{r_0}:=\frac{1}{2\sqrt{r_0}}\quad\text{and}\quad  0\le \sigma(r)\le\sqrt{r}\quad r\in[0,r_0].    
\end{gathered}
\end{equation}

\begin{definition}
   \label{def:entropy-sigma}
  We denote by $\cH(f)$ the Boltzmann-Shannon entropy for $f\in L^1(\Do;\R_+)$
\begin{equation}
\cH(f):=\int_v f\log f\dd v
\end{equation}
whenever the positive part $\max(0,f\log f)$ is integrable, and otherwise set $\cH(f)=+\infty$.

Let $\cD(f)$ denote the Landau entropy dissipation
\begin{align*}
\cD(f):=\frac12\int_{v,v_*}A f f_*\big|\tn \log f\big|^2\ge 0.
\end{align*}
\end{definition}

\begin{proposition}\label{lem:l-12:log}
Let the initial datum $f_0$ satisfy 
\begin{align*}
   f_0\in L^2\cap L^1_2\cap L\log L(\Do;\R_+).
\end{align*}
Let $d\ge 2$. Let 
$\alpha\in(0,1)$ and $\eps\in(0,1)$ be arbitrarily fixed.
Let $\sigma$ satisfy Assumption~\ref{ass:sigma-R0-0}. Let the family
$(G_k)_{k\geq 1}$ satisfy Assumption~\ref{ass:G-k:app}.  Then there exists a probabilistic weak solution
$(f,(B_k)_{k=1,\ldots,K})$ to \eqref{eq:ito-2} with initial datum $f_0$. Moreover, we have $f\in L^\infty([0,T];L^2(\Do;\R_+))$, and we have, almost surely, for every $t\in[0,T]$,
\begin{gather*}
    \int_v (1,v)f_t(v)\dd v=\int_v (1,v)f_0(v)\dd v,\\
    \int_v|v|^2  f_t(v)\dd v\le  \int_v|v|^2  f_0(v)\dd v +2\alpha d T \|f_0\|_{L^1},\\
    \end{gather*}
    and for almost every $t\in[0,T]$, 
    \begin{gather*}
    \hE\big[\cH(f_t)\big]+\hE\Big[\int_0^t\cD(f)\Big]\le \hE\big[\cH(f_0)\big]+C\hE\big[\| f_0\|_{L^1}^2\big] 
    \end{gather*}
    for some $C=C(\|\sigma'\|_{L^\infty},C_K)>0$.

Moreover, for every $\lambda>0$ there exists $\eps_0(\lambda)>0$ such that, if $0<\eps\le\eps_0(\lambda)$, then for almost every $t\in[0,T]$, 
\begin{equation}
\label{H:limit-L2-exp}
\hE\exp\left[
\operatorname{esssup}_{s\in[0,t]}\left(\lambda\cH(f_s)+\frac{\lambda}{2}\int_0^s\cD(f_r)\,\dd r\right)\right]
\le C_\lambda\exp\left[\lambda\cH(f_0)+C\lambda\big(\|f_0\|_{L^1}^3+1\big)\right].
\end{equation}

\end{proposition}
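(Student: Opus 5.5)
We construct $f$ as a limit $n\to\infty$ of the regularised solutions $f_n$ supplied by Proposition~\ref{lem:app-1:ext}, with $\alpha$ fixed. Take $A_n\in C_c^\infty$ satisfying Assumption~\ref{ass:A} and $A_n\to A$ locally uniformly on $\{v\neq v_*\}$, for instance $A_n=A\,\psi(n|z|)\psi(|z|/n)$ with a smooth cutoff $\psi$. Take $\sigma_n$ bounded with $\sigma_n=\sigma$ on $[0,n]$ and $\sup_n\|\sigma_n'\|_\infty\lesssim\|\sigma'\|_\infty$. Take $\mu_n\to\sqrt{\cdot}$ with $\mu_n\ge\sigma_n$, and $L_n$ as in Example~\ref{expl:sigma-L} with $s_0=1/n$, so that $L_n(\mu_n^2(r))\to\log r$. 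Since the $G_k$ are supported in $K^{-1}\le|v-v_*|\le K$ by \eqref{Gk:annual}, we have $G_{k,n}=G_k$ for $n$ large, so the noise part of the equation is essentially fixed. Proposition~\ref{lem:l-1:app} then gives, uniformly in $n$, non-negativity, conservation of mass and momentum, the energy bound \eqref{cl:energy-app}, the entropy inequality \eqref{entropy-ineq:mu-L}, and the exponential bound \eqref{H:esp:exp}.

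\textbf{Step 1: uniform estimates.} I must first check that the $L^2$ bound of Lemma~\ref{lem:time-L2} is uniform in $n$ and does not depend on $C_{\sigma^*},C_{\mu^*},A_0$. I would redo the $L^2$ energy estimate. The $\bar a$-part is still non-positive. In the $\bar b$-part the growth of $\bar b$ is handled by $|b|\lesssim|z|^{\gamma+1}$ with $\gamma+1>-1$, combined with Hardy--Littlewood--Sobolev, the mass and energy bounds, and absorption into $\alpha\|\nabla f\|^2$. The noise terms are controlled using $\sigma_n(r)\le\sqrt r$, $\|\sigma_n(f)\|_{L^2}\le\|f\|_{L^1}^{1/2}$ and the fixed $G_k$. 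This gives uniform moments of $\|f_n\|_{L^\infty_tL^2}$ and $\alpha\|\nabla f_n\|_{L^2_{t,x}}^2$. Time regularity follows exactly as in Lemma~\ref{lem:time-regularity}, using the weak form: the $a$-term has quadratic growth and the $b$-term is tamed by the difference $\nabla\phi(v)-\nabla\phi(v_*)$.

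\textbf{Step 2: compactness and identification.} Tightness in $L^2_tL^2_{\loc}\cap C^\beta_tH^{-l}_{\loc}$ holds, and tail control in $v$ comes from the energy bound. The Skorokhod--Jakubowski representation is then applied as in Section~\ref{sec-3:3}. For the deterministic Landau terms I would write the weak form via $\tili_n(\mu_n^2(f))\mu_n^2(f_*)$ tested against $a_n\partial^2\phi$ and against $b_n(\nabla\phi-\nabla\phi_*)$. Since $\tili_n(s)\to s$ and $\mu_n^2(r)\to r$, strong $L^1_{\loc}$ convergence together with the energy bound for the tails passes to the limit. The singular part near $v=v_*$ is controlled uniformly by $|b_n||\nabla\phi-\nabla\phi_*|\lesssim|z|^{\gamma+2}\|\phi\|_{C^2}$, which is integrable. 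The noise and It\^o-correction terms converge as in Steps~2--3 of Section~\ref{sec-3:3}. The martingale is identified through the brackets, which gives a weak solution of \eqref{eq:ito-2}. Mass, momentum and energy pass to the limit by lower semicontinuity and uniform integrability, with mass equality coming from tightness of tails in $L^1$.

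\textbf{Step 3: entropy (the main obstacle).} I have to pass to the limit in $\cH_{\mu_n,L_n}$ and $\cD_{\mu_n,L_n}$. For the entropy I would write $h_{\mu_n,L_n}(f)\to f\log f-f$ and use Fatou's lemma together with the lower bound \eqref{LlogL:minus}. For the dissipation I would represent $\cD_{\mu_n,L_n}$ as $2\int A_n|\tn(\mu_n(f)\mu_n(f_*))\text{-type}|^2$, i.e.\ as $\int A_n\big|\Pi(\mu_n^2(f_*)\nabla\ldots)\big|^2/(\mu_n^2\mu_{n*}^2)$. It is a convex, lower semicontinuous functional of the fluxes, so weak lower semicontinuity follows by identifying the weak limit of $\sqrt{A_n}\mu_n^2\mu_{n*}^2\tn L_n(\mu_n^2)\rightharpoonup\sqrt A ff_*\tn\log f$ in $L^2_{\loc}$ on $\{v\neq v_*\}$. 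Expectations are handled by Fatou's lemma in $\omega$ after Skorokhod. Restricting to a.e.\ $t$ accounts for $\cH$ being only lower semicontinuous along $L^2_tL^2_{\loc}$ convergence. The exponential bound \eqref{H:limit-L2-exp} follows in the same way, since $\exp$ is monotone and $\operatorname{esssup}$ is lower semicontinuous. The delicate point is that $\cH_{\mu_n,L_n}(f_0)\to\cH(f_0)-\|f_0\|_{L^1}$; here I use dominated convergence with \eqref{H:bdd-epsilon-0}, which is uniform in $n$ for $f_0\in L^2\cap L\log L$. The mass shift then only changes the constant.
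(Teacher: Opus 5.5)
Your overall scheme is the paper's: the approximations $A_n,\sigma_n,\mu_n,L_n$, uniform bounds, Skorokhod--Jakubowski, identification of the martingale through its brackets, and lower semicontinuity plus Fatou for the entropy, the dissipation and the exponential bound. However, the step that carries this proposition, the $n$-uniform $L^2$ estimate, does not close along the route you describe.

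In $\int_v\nabla_v f\cdot\bar b(\tili_n(\mu_n^2(f)))\,\mu_n^2(f)$ you take absolute values and use $|b_n|\lesssim|z|^{\gamma+1}$. For $\gamma\le-1$ this can be made to work. For $\gamma\in(-1,0)$ it cannot, because $|z|^{\gamma+1}$ is then a positive power, so Hardy--Littlewood--Sobolev plays no role. The far field (mass far out interacting with mass near the origin) leaves you with a term of size $\int_v|\nabla_v f|\,f\,\langle v\rangle^{\gamma+1}$. In general this cannot be absorbed into $\delta\|\nabla_v f\|_{L^2}^2+C_\delta(1+\|f\|_{L^2}^2)$ with $C_\delta$ depending only on mass and energy:
\begin{itemize}
\item interpolating $\langle v\rangle^{2(\gamma+1)}f^2=(\langle v\rangle^2 f)^{\gamma+1}f^{1-\gamma}$ and applying Gagliardo--Nirenberg only works for $\gamma+1\le 4/(d+2)$;
\item a single bump of mass $\mu$ and width $\mu^\beta$ at distance $\mu^{-1/2}$ (so the energy stays bounded), with $\frac{2-(\gamma+1)/2}{d+1}<\beta<\frac{\gamma+1}{2}$ and $\mu\to0$, shows the required inequality is false when $\gamma+1>4/(d+2)$, e.g.\ $d=3$, $\gamma>-1/5$.
\end{itemize}
Taking absolute values destroys a cancellation: on such a bump $\bar b$ is almost constant, and a gradient integrated against an almost constant field is almost zero.

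The missing idea is to integrate by parts first, as the paper does. Write $\mu_n^2(f)\nabla_v f=\nabla_v\tilde\mu_n(f)$ with $\tilde\mu_n(s)=\int_0^s\mu_n^2(r)\,{\rm d}r\le s^2/2$, and move the derivative onto $b_n$. The term becomes $-\int_{v,v_*}\tilde\mu_n(f)\,c_n(v-v_*)\,\tili_n(\mu_n^2(f_*))$, with $|c_n|\lesssim|z|^\gamma$ uniformly in $n$. This kernel is bounded on $\{|z|\ge1\}$, so the far field costs only $\|f_0\|_{L^1}\|f\|_{L^2}^2$. The near field reduces to $\sup_{v_*}\int_v|v-v_*|^\gamma f^2$, which the paper controls by Pitt's inequality and the frequency split $R^{|\gamma|}\|f\|_{L^2}^2+R^{|\gamma|-2}\|\nabla_v f\|_{L^2}^2$; this is exactly where $\gamma>-2$ enters. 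After this integration by parts your own tools also suffice: $|z|^\gamma$ restricted to $\{|z|\le1\}$ lies in $L^r$ for some $r\in(d/2,d/|\gamma|)$. Young's inequality with the mass, plus Gagliardo--Nirenberg, then produces only the power $d/r<2$ of $\|\nabla_v f\|_{L^2}$, which can be absorbed.

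A smaller inaccuracy in Step~3 concerns the flux $\sqrt{A_n}\mu_n^2(f)\mu_n^2(f_*)\tn L_n(\mu_n^2(f))$. It is bounded only in $L^1$, not in $L^2_{\loc}$, since $\mu_n(f)\mu_n(f_*)$ is not uniformly bounded. Your joint-convexity argument therefore has to be run with convergence in the sense of measures. The paper avoids this with the square-root flux $\Pi_{(v-v_*)^\perp}\big(\mu_n(f_*)\nabla_v q_n(f)-\mu_n(f)\nabla_{v_*}q_n(f_*)\big)$, which is genuinely bounded in $L^2$: its $A_n$-weighted square integrates to $2\int_0^t\cD_n$. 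The paper identifies its weak limit against a fixed kernel $A_m\le A_n$ and then lets $m\to\infty$ by monotone convergence.
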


The remainder of this section is devoted to proving Proposition \ref{lem:l-12:log}.

To show the existence result, we approximate the equation, we approximate \eqref{eq:ito-2} by \eqref{ito-epsi-1}, for which the existence and properties of solutions were established in Sections \ref{sec:Galerkin} and \ref{sec:L-1:functional}. The necessary tightness results are established in Section \ref{sec-5:tight}, and the proof of Proposition \ref{lem:l-12:log} is completed in Section \ref{sec-5:limit}.

\medskip

We present the details of approximations here.

\begin{mathblock}[quadruple]{quadruple}

The logarithm function, Landau mobility, $\sigma$ and kernel $A$ are approximated by:
\begin{itemize}
    \item \textbf{Approximation to $\log$:} Let $n\in\N$.  We take a sequence of $L_{n}(s)$ as in Example \ref{expl:sigma-L} by taking $s_0=n^{-1}$. We note that $L_n$ satisfies  Assumption \ref{ass:L-s0}. For all $s\in(0,\infty)$, the following pointwise convergence holds
\begin{align}
\label{def:L-n}
   L_n(s)\to L(s):=\log s\quad \text{as}\quad n\to\infty.
\end{align}

\item \textbf{Approximation to the Landau mobility:} We take $\mu_n$ as in Example  \ref{expl:sigma-L} by choosing $r_0=n^{-1}$ and $R_0=n$.
We note that $\mu_n$
satisfies assumption \ref{ass:sigma-R0-0}. Moreover, we have
\begin{equation}
\label{def:mu-n}
\begin{gathered}
    0\le \mu_n(r)\le \sqrt r\quad\text{and}
    \quad 0\le \mu_n'(r)\le  \frac{1}{2\sqrt{r}}\quad\forall\, r\in[0,\infty),\\
    \frac{\sqrt{n}}{2}\le \mu_n'(r)\le  \frac{3\sqrt{n}}{2} \quad \forall\,r\in[0,n^{-1}],\\\mu_n(r)=\sqrt{r}\quad \forall\,r\in[n^{-1},n],\\
    \text{and}\quad \mu_n^2(r)\to r\quad\text{locally uniformly on }[0,\infty).
\end{gathered}
\end{equation}



\item \textbf{Approximation to $\sigma$:} Let $r_0\in(0,1)$ be fixed as given in Assumption \ref{ass:sigma-R0-0}.
We take $\sigma_n(r)$ as in Example  \ref{expl:sigma-L} by choosing $r_0=r_0$ and $R_0=n$.
We note that $\sigma_n$
satisfies assumption \ref{ass:sigma-R0-0}. Moreover, we have, for all $r\in[0,\infty)$ and $n\in\N$,
{\begin{equation}
\label{def:sigma-n}
\begin{gathered}
    \sigma_{n}(r)\le \sigma_{n+1}(r)\le \sigma(r)\quad\text{and} \quad \sigma_{n}'(r)\le \sigma'(r) \le C_{r_0}.
    \end{gathered}
    \end{equation}}
    

   \item \textbf{Approximation to $A$:}    Let $\chi^1=\chi^1(r)$ be a smooth cut-off function such that $0\le \chi^1\le 1$, $\chi^1(r)=1$ for $r\in[0,1]$ and $\chi^1(r)=0$ for $r\in[2,\infty)$.  We define the rescaled function $\chi^1_n(r)=\chi(r/n)$.
We take the approximation sequence as follows 
\begin{equation}
\label{def:A-n}  
A_n(|z|):=A(|z|)\big(\chi^1_n-\chi^1_{n^{-1}}\big)(|z|)\quad z\in\Do.
\end{equation}
We note that $A_n$ satisfies  Assumption \ref{ass:A}. Moreover, we have 
$$0\le A_n\le A_{n+1}\le A \quad \forall \,n\in\N,$$
 and  the following pointwise convergence holds
\begin{align*}
 A_n(|v-v_*|)= A(|v-v_*|)\quad \text{as}\quad n\to\infty.    
\end{align*} 
\end{itemize}
    
\end{mathblock}

Corresponding to the approximation kernel $A_n$, we define  $(a_{n},b_n,c_n)$ analogous to \eqref{relation:abc}
\begin{align*}
 &a_{n,ij}(v-v_*):=\Big(\delta_{ij}-\frac{(v-v_*)_i(v-v_*)_j}{|v-v_*|^2}\Big)A_n(|v-v_*|),\\
         &b_{n,i}(v-v_*):=\sum_{j=1}^d\d_j a_{n,ij}(v-v_*),\quad c_n(v-v_*):=\sum_{i,j=1}^d\d^2_{ij} a_{n,ij}(v-v_*).
\end{align*}
In particular, by \eqref{abc} the bounds \eqref{abc:bounds:n} holds uniformly in $n\in\N$
\begin{align}
\label{bdd:ab-n}
    |a_n|\lesssim |v-v_*|^{\gamma+2}\quad\text{and}\quad |b_n|\lesssim |v-v_*|^{\gamma+1},
\end{align}
and  using, for all $r\in(0,\infty)$, 
$$|(\chi^1_n-\chi^1_{n^{-1}})'(r)|\le n^{-1}\mathbb{1}_{\{0\le r\le2n\}}(r)+n\mathbb{1}_{\{0\le r\le2n^{-1}\}}(r)\lesssim r^{-1},$$ we have 
\begin{align}
\label{bdd:c-n}
    |c_n|\lesssim\frac{A_n(|v-v_*|)}{|v-v_*|^2}+\frac{|A_n'(|v-v_*|)|}{|v-v_*|}\lesssim |v-v_*|^\gamma.
\end{align}

We define the family $G_{k,n}$ associated to the approximation kernel $A_n$ 
\begin{align}
\label{def:G-k:n}
    G_{k,n}(v,v_*):=\sqrt{A_n(v-v_*)}\Pi_{(v-v_*)^\perp}g_k(v,v_*)\in\R^d.
\end{align}
We note that the uniform bounds \eqref{bdd:ab-n} and \eqref{bdd:c-n} ensure that, up to a constant, the bound of $G_k$ in Assumption \ref{ass:G-k:app} holds also here and uniformly in $n\in\N$
\begin{equation}
\begin{aligned}
\sum_{k=1}^K\|G_{k,n}\|_{W^{2,\infty}\cap W^{2,1}(\G)}
\le C_K<+\infty.
\end{aligned}
\end{equation}
Correspondingly, we define $F_{i,n}$ as in \eqref{def:F_i} corresponding to $G_{k,n}$. Moreover, we have the following convergences
\begin{align}
\label{G-k:conv}
    G_{k,n}\to G_{k}\quad\text{in}\quad W^{2,1}\cap W^{2,\infty}(\G),\quad F_{i,n}\to F_{i}\quad\text{in}\quad W^{1,1}\cap W^{1,\infty}(\R^{3d})
\end{align}
as $n\to\infty$. In particular, the convergence is immediate by choosing an active family ${g_k}$ satisfying \eqref{Gk:annual} that
\begin{align*}
\cup_{1\le k\le K} \supp\big(g_k\big)\subset \{r\in \R_+\mid  K^{-1}\le r\le K\}\times \R^d\times \sd.
\end{align*}

Furthermore, we define 
\begin{align*}
&\cH_n(f):=\int_vh_n(f)\dd v,\quad h_n (s):=\int_0^s L_n(\mu_n^2(r))\dd r,.
\end{align*}

We denote the regularised family by
$(A_n,\sigma_n,\mu_n^2(f)\mu_n^2(f_*),L_n,\cH_n,\cD_n,G_{k,n},F_{i,n})$,
and the corresponding limiting objects by
$(A,\sigma,ff_*,\log,\cH,\cD,G_k,F_i)$. The notation emphasises that $\mu_n$ regularises the deterministic
Landau mobility and converges to the square root, while $\sigma_n$ regularises the
stochastic coefficient and converges to $\sigma$.

For each $n\in\mathbb{N}$, let $f_n$ be a probabilistic weak solution of \eqref{eq:ito-2:app-n} on a probability space $(\Omega_n,\mathcal{F}_n,\mathbb{P}_n)$, whose existence is guaranteed by Proposition~\ref{lem:app-1:ext}:
\begin{equation}
    \label{eq:ito-2:app-n}
\begin{aligned}
\partial_t f
&=\alpha\Delta f+\frac12\tn\cdot\big( A_n \mu_n^2(f) \mu_n^2(f_*)\tn L_n (\mu_n^2(f))\big)-\frac{\sqrt{\eps}}{2}\tn\cdot( A_n^{1/2}  \sigma_n(f) \sigma_n( f_*)\xi_K)\\
&\quad+\frac{\eps}{2}\sum_{k= 1}^K\tn\cdot \Big(G_{k,n}(v,v_*) \sigma_n'(f) \sigma_n(f_{*}) \tn\cdot\big(G_{k,n}(v,w) \sigma_n(f) \sigma_n(f_{w})\big) \Big).
\end{aligned}
\end{equation}
To simplify the notation, we write $\mathbb{E}$ for the expectation with respect to $\mathbb{P}_n$ and suppress the dependence on $n\in\mathbb{N}$ whenever no confusion can arise.

\medskip

In Section \ref{sec-5:tight},  we establish an $L^2$-uniform bound in Lemma~\ref{lem:L-2:app-refine} and a time-regularity estimate in Lemma~\ref{lem:time-regularity:L-2-2}, both uniformly with respect to $n\in\mathbb{N}$. As a consequence, we shall prove the tightness of $(f_n,\nabla f_n)$ in the space
$$
\mathbb{X}:=L^2([0,T];L^2_{\mathrm{loc}}(\mathbb{R}^d))
\cap \mathcal{C}^{\beta}([0,T];H^{-l}_{\mathrm{loc}}(\mathbb{R}^d))
\times \big(L^2([0,T];L^2(\mathbb{R}^d)),w\big).
$$

In Section \ref{sec-5:limit}, we pass to the limit by letting $n\to\infty$ in the weak formulation \eqref{weak:app-1}.
Moreover, by Proposition~\ref{lem:l-1:app}, for every $n\in\mathbb{N}$, the mass conservation law as well as the energy and entropy inequalities hold uniformly in $n$:
\begin{gather}
    \int_v f_n(t,v)\dd v=\int_v f_0(v)\dd v,\quad a.s. \text{ for every }t\in[0,T],\label{mass:h-eps}\\
     \int_v vf_n(t,v)\dd v = \int_v vf_0(v)\dd v, \quad a.s. \text{ for every }t\in[0,T],\label{momentum:h-eps}\\
    \int_v|v|^2  f_n(t,v)\dd v\le  \int_v|v|^2  f_0(v)\dd v +2\alpha d T \|f_0\|_{L^1(\Do)},\quad a.s. \text{ for every }t\in[0,T]\label{energy:h-eps}
    \\
\hE\big[\cH_{n}(f_n(t))\big]+\hE\Big[\int_0^t\cD_{n}(f_n)\Big]
\le \hE\big[\cH_{n}(f_0)\big]+C\hE\big[\| f_0\|_{L^1}^2\big],\quad \text{for every }t\in[0,T], \label{H:h-eps}\\
\hE\exp\left[
\sup_{s\in[0,t]}\left(\lambda\cH_n(f_n(s))+\frac{\lambda}{2}\int_0^s\cD_n(f_n(r))\,\dd r\right)\right]
\le C_\lambda\exp\left[\lambda\cH_n(f_0)+C\lambda\big(\|f_0\|_{L^1}^3+1\big)\right],\label{H:h-eps-exp}
\end{gather}
for every $t\in[0,T]$, $\lambda>0$ and $0<\eps\le\eps_0(\lambda)$, where $\eps_0(\lambda)$ is as in Proposition~\ref{lem:l-1:app}.  The constant $C=C(r_0,C_K)>0$ is independent of $n$ and $\eps$.

\subsection{Uniform estimates and tightness results}\label{sec-5:tight}

We emphasise that the $L^2$-estimate at the fixed regularised level depends
on the compact support and boundedness of $b_n$. By exploiting the uniform
mass and energy bounds, we now derive a refined $L^2$-estimate that holds
uniformly with respect to $n\in\mathbb{N}$.
\begin{lemma}[Uniform $L^2$-estimate for $(f_n)_{n\in\mathbb N}$]
    \label{lem:L-2:app-refine}
 Let $f_0\in L^2(\R^d)\cap L^1_2(\R^d;\R_+)$. Let the approximation pair $(L_n,\mu_n,\sigma_n,A_n)$ be given as in Approximation \ref{quadruple}. For each
$n\in\mathbb N$, let $f_n$ be a probabilistic weak solution to
\eqref{eq:ito-2:app-n} on
$(\Omega_n,\mathcal F_n,(\mathcal F_{n,t})_{t\in[0,T]},\mathbb P_n)$
with initial datum $f_0$. We denote expectation with
respect to $\mathbb P_n$ by
$\mathbb E_n$. 
Then there exists a constant
\begin{align*}
C=C\big(d,\gamma,\alpha,r_0,T,C_K,\|f_0\|_{L^1_2}\big)>0,
\end{align*}
independent of $n$, such that 
\begin{align}
\label{L-2:app-re-integrated}
\sup_{n\in\mathbb N}\mathbb E\Big[
\sup_{t\in[0,T]}\|f_n(t)\|_{L^2}^2\Big]
+\alpha\sup_{n\in\mathbb N}\mathbb E
\int_0^T\|\nabla_v f_n(s)\|_{L^2}^2\,\dd s
\le C\big(1+\|f_0\|_{L^2}^2\big).
\end{align}

\end{lemma}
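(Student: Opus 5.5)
The plan is to redo the $L^2$ energy estimate of Lemma~\ref{lem:time-L2}, but this time at the level of the solutions $f_n$ of \eqref{eq:ito-2:app-n}. Every constant that earlier depended on the compact support and boundedness of $b_n$ (through $z^*$ and $A_0$) will be replaced by a constant depending only on $\gamma$, the conserved mass \eqref{mass:h-eps} and the energy bound \eqref{energy:h-eps}.

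\textbf{Setting up the Itô identity.} To justify Itô's formula for $\|f_n\|_{L^2}^2$, I would apply the cut-off Itô formula of Lemma~\ref{lem:cutoff-ito} with $S(r)=r^2/2$ and $\chi=\chi_m$ from Assumption~\ref{ass:chi}. Letting $m\to\infty$ is harmless: at fixed $n$ we already know $f_n\in L^2(\Omega;L^\infty_tL^2_v\cap L^2_tH^1_v)$, so the $T_2$-terms and all other cut-off errors vanish exactly as in Section~\ref{sec:4-3}. Where necessary I would localise with stopping times $\tau_R=\inf\{t:\|f_n(t)\|_{L^2}\ge R\}$ and remove them by Fatou's lemma at the end. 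After this step, the artificial diffusion contributes $-\alpha\|\nabla_v f_n\|_{L^2}^2$.

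\textbf{Landau drift.} For the drift written in the form $\nabla_v\cdot(\bar a_n(\mu_n^2)\nabla_v\tili(\mu_n^2(f))-\mu_n^2(f)\bar b_n(\tili(\mu_n^2(f))))$, the $\bar a_n$-part is non-positive, as in Lemma~\ref{lem:time-L2}. The $\bar b_n$-part is the crux, since $|b_n|\lesssim|z|^{\gamma+1}$ is neither bounded nor compactly supported uniformly in $n$.

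To handle it, I would write $\mu_n^2(f)\nabla_v f=\nabla_v\Psi_n(f)$ with $\Psi_n(s)=\int_0^s\mu_n^2\le s^2/2$. Integrating by parts turns the term into $-\int_v\Psi_n(f)\,\bar c_n(\tili(\mu_n^2(f)))$. By \eqref{bdd:c-n}, $|c_n|\lesssim|z|^\gamma$, and since $0\le\tili(\mu_n^2(f))\le f$, it suffices to bound
\begin{equation*}
\int_{v,v_*}f^2(v)\,|v-v_*|^{\gamma}f(v_*)\,\dd v\dd v_*
\le \|f\|_{L^1}\sup_{v_*}\int_v f^2(v)|v-v_*|^{\gamma}\,\dd v .
\end{equation*}
For $\gamma\in(-2,0)$, a Hardy-type interpolation inequality gives
\begin{equation*}
\int_v g^2|v-v_*|^{\gamma}\,\dd v\lesssim \|g\|_{L^2}^{2+\gamma}\|\nabla g\|_{L^2}^{-\gamma}
\end{equation*}
uniformly in $v_*$. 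For $d\ge3$ this follows by interpolating Hardy's inequality with the trivial bound. For $d=2$ I would instead split $|z|\le1$ and $|z|>1$, and use Gagliardo--Nirenberg together with $|z|^\gamma\mathbb 1_{|z|\le1}\in L^{p}$ for some $p<2/|\gamma|$. Young's inequality then yields
\begin{equation*}
\le \delta\|\nabla_v f\|_{L^2}^2+C_\delta\big(\|f_0\|_{L^1}\big)\|f\|_{L^2}^2 .
\end{equation*}
This right-hand side is linear in $\|f\|_{L^2}^2$ thanks to mass conservation, so Grönwall's inequality applies. I expect this step to be the main obstacle, in particular getting the endpoint $d=2$ right and doing it with constants independent of $n$.

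\textbf{Correction terms, martingale term, and conclusion.} The Stratonovich-to-Itô corrections involving $F_{1,n},F_{4,n}$ are bounded as in Lemma~\ref{lem:time-L2}. The inputs are now uniform in $n$: the bound $\sup_n\|F_{i,n}\|_{W^{1,1}\cap W^{1,\infty}}\lesssim C_K$, together with $\sigma_n\le\sigma\le\sqrt{\cdot}$, $\sigma_n'\le C_{r_0}$, $\varsigma_n(r)\le C_{r_0}^2r$ and Remark~\ref{rmk:sigma-f:L2}. With these, each correction term is at most $\delta\|\nabla_vf\|_{L^2}^2+C(1+\|f\|_{L^2}^2)$, with $C=C(r_0,C_K,\|f_0\|_{L^1})$. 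For the martingale term I would use BDG with $q=1$ and the estimate
\begin{equation*}
\Big|\int\nabla_vf\cdot G_{k,n}\sigma_n(f)\sigma_n(f_*)\Big|\lesssim_{C_K}\|\nabla_v f\|_{L^2}\|\sigma_n(f)\|_{L^2}\|\sigma_n(f)\|_{L^1}\le C(r_0,\|f_0\|_{L^1})\|\nabla_vf\|_{L^2}\|f\|_{L^2}.
\end{equation*}
This gives $\delta\,\mathbb E\int_0^T\|\nabla f\|^2+C_\delta\,\mathbb E\int_0^T\|f\|_{L^2}^2$.

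Finally, I would choose $\delta$ small relative to $\alpha$, take the supremum in time and then expectations, and apply Grönwall's inequality. Removing the stopping times by Fatou's lemma then gives \eqref{L-2:app-re-integrated}, with a constant depending only on $d,\gamma,\alpha,r_0,T,C_K,\|f_0\|_{L^1_2}$.
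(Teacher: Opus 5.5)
Your argument is essentially the paper's proof: the cut-off It\^o formula with $S(r)=r^2/2$, the sign of the $\bar a_n$-part, and rewriting the $\bar b_n$-part via $\Psi_n$ (the paper's $\tilde\mu$) and \eqref{bdd:c-n} as $\int f^2f_*|v-v_*|^\gamma$, followed by mass conservation and Gr\"onwall. The only real difference is how you prove $\sup_{v_*}\int|v-v_*|^\gamma g^2\lesssim\|g\|_{L^2}^{2+\gamma}\|\nabla g\|_{L^2}^{-\gamma}$.

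The paper uses Pitt's inequality $\int|v|^\gamma g^2\le C\int|\xi|^{-\gamma}|\hat g|^2$ and cuts frequencies at $|\xi|=R$. This holds for every $d\ge2$ because $-\gamma<2\le d$, so the $d=2$ endpoint you single out as the main obstacle is not an obstacle there. Your Hardy ($d\ge3$) and Gagliardo--Nirenberg ($d=2$) route is also correct and avoids Fourier analysis, at the price of a case split.

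\textbf{The martingale step does not close as written.} From your bound $C\|\nabla_vf\|_{L^2}\|f\|_{L^2}$ on the integrand, BDG gives $\mathbb E\big(\int_0^T\|\nabla_vf\|_{L^2}^2\|f\|_{L^2}^2\,\dd s\big)^{1/2}$. This is not bounded in general by $\delta\,\mathbb E\int_0^T\|\nabla_vf\|_{L^2}^2+C_\delta\,\mathbb E\int_0^T\|f\|_{L^2}^2$; take both norms large on a short time interval. The best available splitting is $\eta\,\mathbb E\sup_t\|f\|_{L^2}^2+C\eta^{-1}\,\mathbb E\int_0^T\|\nabla_vf\|_{L^2}^2$, and its second term cannot be absorbed by $\alpha\,\mathbb E\int_0^T\|\nabla_vf\|_{L^2}^2$.

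The fix is already among your inputs. Use $\sigma_n\le\sqrt{\cdot}$, i.e.\ $\|\sigma_n(f)\|_{L^2}\le\|f\|_{L^1}^{1/2}$. Then the integrand is at most $C\|f_0\|_{L^1}^{3/2}\|\nabla_vf\|_{L^2}$, and BDG gives $\delta\,\mathbb E\int_0^T\|\nabla_vf\|_{L^2}^2+C_\delta$, as in the paper. Alternatively, first bound $\mathbb E\int_0^T\|\nabla_vf\|_{L^2}^2$ from the expected identity without the supremum, and only then run BDG.

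\textbf{Be precise about which correction terms you estimate.} They are the $T_1$ terms of Lemma~\ref{lem:cutoff-ito} with $S''\equiv1$. These involve $F_{1,n},\dots,F_{4,n}$, and the $F_{4,n}$ term must be integrated by parts in $v$. They are not the drift-form $F_{1,n},F_{4,n}$ terms of Lemma~\ref{lem:time-L2}. In that form, the It\^o quadratic variation and the $\nabla_vf\otimes\nabla_v\varsigma_n(f)$ part of the drift are each quadratic in $\nabla_vf$ with coefficient of order $\eps$. They must be combined, not bounded separately.
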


\begin{proof}
We fix an arbitrary $n\in\mathbb N$. Throughout the proof, we write
\begin{equation*}
f=f_n,\quad\mu=\mu_n,\quad \sigma=\sigma_n,\quad \tili=\tili_n,\quad F_i=F_{i,n},\quad A=A_n,\quad (a,b,c)=(a_n,b_n,c_n), 
\end{equation*}
only to simplify notation. All constants below are independent of $n$.
Recall that $(\chi_m)_{m\geq 1}$ denotes the sequence of cutoff functions introduced in Assumption~\ref{ass:chi}. Applying It\^o's formula to $\int_v \chi_m |f_n|^2$ and then passing to the limit as $m\to\infty$, we obtain that, almost surely, for every $t\in[0,T]$,
\begin{equation*}
    \begin{aligned}
 &\|f(t)\|_{L^2}^2+\alpha\int^t_0\|\nabla_vf\|_{L^2}^2\dd s\\
 =&{}\|f_0\|_{L^2}^2-\int^t_0\int_v (\nabla_v f)^T\bar a (\mu^2)\nabla_v \tili (\mu^2(f))\dd s-\int^t_0\int_v \nabla_vf\cdot  \bar b(\tili (\mu^2(f)))\mu^2(f)\dd s\\
&+\frac{\sqrt{\eps}}{2}\sum_{k=1}^K\int^t_0\int_{v,v_*}\tn f\cdot\big(G_k(v,v_*)\sigma\sigma_*\big)\dd B_k\\
&+\frac{\eps}{2}\int^t_0\int_{v,v_*,w}\Big(\sigma'(f)\sigma(f)\sigma(f_*)\sigma(f_w)\nabla_{v_*} f_*\cdot F_1 +\sigma'(f)\sigma(f)\sigma(f_*)\sigma(f_w)\nabla_v f\cdot F_2\\
&\quad+\sigma^2(f)\sigma(f_*)\sigma(f_w)F_3+\sigma(f_*)\sigma(f_w)\nabla_{v_*} f_*\otimes \nabla_v \varsigma(f) :F_4\Big)\dd s.
 \end{aligned}
\end{equation*}

We next estimate each term on the right-hand side uniformly with respect to $n$. This shows that, for every $\delta\in(0,1)$, the right-hand side of the above equality is bounded by 
\begin{align*}
   \int^t_0   \delta\|\nabla_v f\|_{L^2(\Do)}^2
    + C\big(\|f\|_{L^2(\Do)}^2+ 1\big)\dd s.
\end{align*}

\medskip

\noindent\textbf{The collision term:}
Compared with the fixed regularised $L^2$ estimate, the only new point is to
replace the bound for the $b$ terms by one that is uniform in $n$.

We define 
\begin{align*}
\tilde\mu(s)=\int_0^s \mu^2(r)\dd r,\quad 0\le \tilde\mu(f)\le \frac{f^2}{2}.    
\end{align*}
Then using the integration by parts formula to see that almost surely, for every $t\in[0,T]$, 
    \begin{align*}
        &\Big|\int^t_0\int_v \nabla_vf\cdot   {\bar b}(\tili(\mu^2(f)))\mu^2(f)\dd s\Big|\\
        =&{}\Big|\int^t_0\int_{v,v_*} \nabla_v\tilde\mu(f)\cdot  b(v-v_*)\tili(\mu^2(f_*))\dd s\Big|\\
         =&{}\Big|\int^t_0\int_{v,v_*} \tilde\mu(f)   c(v-v_*)\tili(\mu^2(f_*))\dd s\Big|\\
        \lesssim&{}\int^t_0\int_{v,v_*} f^2 f_* |v-v_*|^\gamma\dd s,
             \end{align*}
             where we use the uniform bound \eqref{bdd:c-n}.

In the following, we show that for every $\delta\in(0,1)$, there exists a constant $C_\delta>0$ such that:
    \begin{align}\label{claim-bn}
    \int^t_0\int_{v,v_*} f^2 f_* |v-v_*|^\gamma\dd s
    \le \int^t_0 \delta \|\nabla_v f\|_{L^2(\Do)}^2
    + C_\delta \|f\|_{L^2(\Do)}^2 \dd s.
\end{align}

The proof is adapted from the propagation of $L^p$-norm estimates for the Landau equation established in \cite{Wu14}.
We decompose the domain as $\Do=\{|v-v_*|\le 1\}\cup \{|v-v_*|>1\}$. On the region $\{|v-v_*|>1\}$, we have
\begin{align*}
    \int^t_0\int_{\{|v-v_*|>1\}} f^2 f_* |v-v_*|^\gamma\dd s
    \le
    \int^t_0 \|f\|_{L^2}^2 \|f_0\|_{L^1}\dd s.
\end{align*}

On the region $\{|v-v_*|\le 1\}$, we obtain that almost surely, for every $t\in[0,T]$, 
\begin{align*}
  \int^t_0\int_{\{|v-v_*|\le 1\}} f^2 f_* |v-v_*|^\gamma\dd s
  \lesssim{}&
  \int^t_0\int_{v_*}
  f_*
  \Big(
  \int_{\{|v-v_*|\le 1\}}
  |v-v_*|^\gamma f^2\dd v
  \Big)
  \dd v_*\dd s
  \\
  \le{}&
  \int^t_0
  \|f_0\|_{L^1(\Do)}
  \sup_{v_*\in\mathbb{R}^d}
  \int_{\{|v-v_*|\le 1\}}
  |v-v_*|^\gamma f^2\dd v
  \dd s.
\end{align*}

Let $\hat f$ denote the Fourier transform of $f$ with respect to the variable $v$:
\begin{equation*}
    \hat f(x,\xi)
    =(2\pi)^{-\frac{d}{2}}
    \int_v f(v)e^{-iv\cdot\xi}\dd v.
\end{equation*}
Applying Pitt's inequality (see, for instance, \cite{Bec08,Bec08b,Bec12}), we obtain, for every $v_*\in\mathbb{R}^d$,
\begin{equation}
\label{soft-pitt}
\begin{aligned}
\int_v |v-v_*|^\gamma f^2\dd v
=
\int_v |v|^\gamma f^2(v+v_*)\dd v
\le
C_{pitt}
\int_\xi |\xi|^{-\gamma} |\widehat{f}|^2\dd \xi.
\end{aligned}
\end{equation}

Next, we decompose $\mathbb{R}^d$ as
$$
\mathbb{R}^d=\{|\xi|\le R\}\cup\{|\xi|>R\},
$$
for some $R>0$ large to be determined later. On the region $\{|\xi|\le R\}$, Parseval's theorem yields
\begin{align*}
 \int_{\{|\xi|\le R\}}
 |\xi|^{-\gamma}
 |\widehat{f}|^2
 \dd \xi
 \le
 R^{|\gamma|}
 \|f\|_{L^2(\Do)}^2.
\end{align*}

Similarly, on the region $\{|\xi|>R\}$,
\begin{align*}
 \int_{\{|\xi|>R\}}
 |\xi|^{-\gamma}
 |\widehat{f}|^2
 \dd \xi
 \le
 R^{|\gamma|-2}
 \int_\xi |\xi|^2 |\widehat{f}|^2 \dd \xi
 =
 R^{|\gamma|-2}
 \|\nabla_v f\|_{L^2}^2.
\end{align*}

Combining the above estimates and choosing $R>0$ large yields \eqref{claim-bn}.

\medskip

\noindent\textbf{The stochastic noise term:} 
The stochastic integral is a continuous local martingale. After the
standard stopping-time localisation, its expectation is zero and hence it
does not contribute to \eqref{L-2:app-re-integrated}. To obtain the estimate of the
time supremum in \eqref{L-2:app-re-integrated}, the
Burkholder--Davis--Gundy inequality, the uniform coefficient bound, and
$\sigma(f)\le \sqrt{f}$ and $\sigma(f)\le C_{r_0}f$ give, for every $\delta>0$,
 \begin{align*} 
&\sum_{k=1}^K\hE\Big[\sup_{t\in[0,T]}\Big|\int^t_0\int_{v,v_*}\tn f\cdot\big(G_{k,n}(v,v_*)\sigma(f)\sigma(f_*)\big)\dd B_k\Big|\Big]\\
    \le&{}C\sum_{k=1}^K\hE\Big[\Big(\int^T_0\Big|\int_{v,v_*}\sigma(f)\sigma(f_*)\tn  f\cdot G_{k,n}(v,v_*)\Big|^2\dd s\Big)^{\frac12}\Big]\\
    \lesssim&{}_{C_K,C_{r_0}}\hE\Big[\Big(\int^T_0\Big(\int_{v,v_*}f_*\sqrt{f}|\nabla_v f|\Big)^2\dd s\Big)^{\frac12}\Big]\\
     \le&{}\delta \hE\Big[\int^T_0\|\nabla_v f\|_{L^2}^2\dd s\Big]+C(\delta,C_{r_0},C_K,T)\|f_0\|_{L^1}^2.
\end{align*}

\medskip

\noindent\textbf{The Stratonovich-to-It\^o correction terms:} We make use of the bound $\sigma(f)\le \sqrt{f}$. By H\"older's and Young's inequalities, we obtain, almost surely, for every $t\in[0,T]$,
\begin{equation*}
 \begin{aligned}
 &\Big|\int^t_0\int_{v,v_*,w}
 \sigma'(f)\sigma(f)\sigma(f_*)\sigma(f_w)\nabla_{v_*} f_*
 \cdot F_1\dd s\Big|\\
 &\le
 C_{r_0}
 \Big|
 \int^t_0\int_{v,v_*,w}
 \sigma(f)\sigma(f_*)\sigma(f_w)\nabla_{v_*} f_*
 \cdot F_1\dd s
 \Big|
 \\
 &\le
 C_{r_0}
 \int^t_0
 \|f\|_{L^1}^3
 \|\nabla_{v_*} f_*\|_{L^2}
 \|F_1\|_{L^2_{v,w}L^\infty_{v_*}}
 \dd s
 \\
 &\le
 \delta \int^t_0
 \|\nabla_v f\|_{L^2}^2\dd s
 +
 C(\delta,C_{r_0},C_K,T)\|f_0\|_{L^1}^{6}.
 \end{aligned}
\end{equation*}

Similarly, almost surely, for every $t\in[0,T]$,
\begin{equation}
 \begin{aligned}
 \Big|
 \int^t_0\int_{v,v_*,w}
 \sigma'(f)\sigma(f)\sigma(f_*)\sigma(f_w)
 \nabla_v f\cdot F_2
 \dd s
 \Big|
 \le
 \delta \int^t_0
  \|\nabla_v f\|_{L^2}^2\dd s
 +
 C(\delta,C_{r_0},C_K,T)\|f_0\|_{L^1}^{6},
 \end{aligned}
\end{equation}
and
\begin{align*}
 \Big|
 \int^t_0\int_{v,v_*,w}
 \sigma^2(f)\sigma(f_*)\sigma(f_w)F_3
 \dd s
 \Big|
 \le
 C(C_K,T)
 \|f_0\|_{L^1}^3.
\end{align*}

For the $F_4$ term, we first integrate by parts with respect to $v$. Using the bound
$$
|\varsigma(f)|\lesssim_{C_{r_0}} |f|,
$$
together with the same Young inequality used for the $F_2$-type correction,
we deduce that, almost surely, for every $t\in[0,T]$,
\begin{align*}
 &\Big|
 \int^t_0\int_{v,v_*,w}
 \sigma(f_*)\sigma(f_w)
 \nabla_{v_*} f_*
 \otimes
 \nabla_v \varsigma(f)
 :F_4
 \dd s
 \Big|\\
 =&{}
 \Big|
 \int^t_0\int_{v,v_*,w}
 \varsigma(f)\sigma(f_*)\sigma(f_w)
 \nabla_{v_*} f_*
 \cdot
 \big(\nabla_v\cdot F_4\big)
 \dd s
 \Big|
 \\
 \le&{}
 \delta \int^t_0
 \|\nabla_v f\|_{L^2}^2\dd s
 +
 C(\delta,C_{r_0},C_K,T)
 \|f_0\|_{L^1}^{6}.
\end{align*}

\end{proof}

\begin{remark}[$L^2$-estimate in the case of $\gamma=-2$]
    \label{rmk:-2}
For the deterministic Landau equation, the existence of weak solutions in $L^1$-framework in the moderately soft potential case $\gamma\in[-2,0)$ can be treated in a unified way. However, for the fluctuating equation, we first establish existence results for the approximate equations in an $L^2$-framework. The uniform $L^2$-bounds in Lemma \ref{lem:L-2:app-refine} fail in the case $\gamma=-2$. More precisely, the estimate \eqref{claim-bn} fails
\begin{align*}
    \int^t_0\int_{v,v_*} f^2 f_* |v-v_*|^\gamma\dd s
    \le \int^t_0 \delta \|\nabla_v f\|_{L^2(\Do)}^2
    + C_\delta \|f\|_{L^2(\Do)}^2 \dd s.
\end{align*}
The proof of \eqref{claim-bn} in Lemma \ref{lem:L-2:app-refine} relies on the condition $\gamma>-2$, where the positivity of $2+\gamma>0$ ensures the smallness of the coefficient multiplying $\|\nabla_v f\|_{L^2(\Do)}^2$. In the deterministic case $\gamma=-2$ considered in \cite{Wu14}, this smallness is instead obtained from the superlinear integrability of $f$. More precisely, this requires a uniform-in-time bound on the Boltzmann entropy $|\cH(f)|$. However, in the SPDE case, we only have uniform bounds on the expectation of the entropy, as in \eqref{H:h-eps}. Hence, this strategy is not available here. A similar difficulty also appears in Lemma \ref{lem:Des}, in the derivation of the weighted Fisher information bound, where superlinear integrability is likewise used.

We explain in detail why the strategy of \cite{Wu14} fails in the present setting in the case of $\gamma=-2$. We  split the domain $\{|v-v_*|\le 1\}$ into $\{|v-v_*|\le 1\}\cap \{f_*\le K\}$ and $\{|v-v_*|\le 1\}\cap \{f_*> K\}$ for some $K>1$ to be determined later.

On the domain of $f_*\ge K$, we have
\begin{equation}
\label{omega-2}
\begin{aligned}
\int_{|v-v_*|\le 1,\,f_*\ge K} |v-v_*|^{-2}f^2 f_*
\le\int_{\{f_*> K\}}f_*\Big(\sup_{v_*\in\R^d}\int_{\R^d} |v-v_*|^{-2}f^2\Big).
\end{aligned}
\end{equation}
Similar to \eqref{soft-pitt}, we use Pitt's and Parseval's theorems to derive 
\begin{align*}
\int_{\Do} |v-v_*|^{-2}f^2\dd v\le C_{pitt}\int_{\R^d} |\xi|^{2}|\hat f |^2\dd \xi=C_{pitt}\|\nabla_vf \|_{L^2_v}^2.
\end{align*}
Notice that \eqref{omega-2} can be controlled by $\alpha \|\nabla_vf \|_{L^2_v}^2$ by choosing $K\sim \exp(C |\cH(f)|)$ such that
\begin{align*}
\int_{\{f>K\}}f\le \frac{\int_vf |\log f|}{\log K}\le \frac{|\cH(f)|+CE}{\log K}\le \alpha/4,
\end{align*}
where we use the estimate \eqref{LlogL:minus}. 

However, on the domain of $f_*\le K$, by changing the variable, we have
\begin{align*}
&\int_{|v-v_*|\le 1,\,f_*\le K} |v-v_*|^{-2}f^2 f_*\le K\int_{\{|v|\le1\}} |v|^{-2}\int_{v}f^2\sim  e^{C|\cH(f)|}\|f\|_{L^2_v}^2.
\end{align*}
Due to the lack of pathwise entropy bounds, we cannot control the right-hand side of the above inequality almost surely.

\end{remark}

We establish a refined version of Lemma~\ref{lem:time-regularity} that yields a time-regularity estimate uniform in $n\in\mathbb{N}$. 
\begin{lemma}[Uniform local time regularity for $(f_n)_{n\in\mathbb N}$]
\label{lem:time-regularity:L-2-2}
Let $f_0\in L^2(\R^d)\cap L^1_2(\R^d;\R_+)$. Let $n\in\N$. Let $f_n$ be the same as in Lemma \ref{lem:L-2:app-refine}.
Let
$\chi\in C_c^\infty(\R^d)$, $l>d/2+2$, and $\beta\in(0,1/2)$. Then,
for every
\begin{align*}
p>\frac{1}{1/2-\beta},
\end{align*}
there exists a constant
\begin{align*}
C=C\big(p,\beta,l,T,\chi,d,\gamma,r_0,C_K,
\|f_0\|_{L^1_2}\big)>0,
\end{align*}
independent of $n$, such that
\begin{align}
\label{time-regularity:L-2-2}
\sup_{n\in\mathbb N}
\mathbb E_n\big[
\|f_n\chi\|_{C^\beta([0,T];H^{-l}(\R^d))}^{p}
\big]\le C.
\end{align}
In particular, the laws of $(f_n\chi)_{n\in\mathbb N}$ are tight in
$C^{\beta'}([0,T];H^{-l-1}(\R^d))$ for every $\beta'<\beta$.
\end{lemma}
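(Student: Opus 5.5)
We follow the proof of Lemma~\ref{lem:time-regularity}: decompose $f_n(t)\chi=f_n(0)\chi+D_n(t)+M_n(t)$ into the localised drift and martingale parts from the weak formulation \eqref{weak:app-1} (with $A_n,\mu_n,L_n,\sigma_n,G_{k,n},F_{i,n}$). Then show $\mathbb E_n\|f_n(t)\chi-f_n(s)\chi\|_{H^{-l}}^p\le C|t-s|^{p/2}$ uniformly in $n$, and conclude with Kolmogorov's criterion and the compact embedding $C^\beta([0,T];H^{-l}_{\supp\chi})\Subset C^{\beta'}([0,T];H^{-l-1})$, exactly as before. For the deterministic part we test against $\phi$ with $\|\phi\|_{H^l}\le1$. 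Since $l>d/2+2$, we have $\|\chi\phi\|_{W^{2,\infty}}\le C$. The new feature is that the fixed-level constants (bounded $a,b$, boundedness of $\mu,\sigma$) are no longer available, so every bound must use only $n$-uniform quantities: mass conservation \eqref{mass:h-eps}, the energy bound \eqref{energy:h-eps}, the uniform kernel bounds \eqref{bdd:ab-n}, the uniform bounds on $G_{k,n},F_{i,n}$, and the uniform $L^2$/$H^1$ moments of Lemma~\ref{lem:L-2:app-refine}.

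The viscous term is bounded by $\int_s^t\|f\|_{L^1}$. In the $a$-term we use $\mu_n^2(f_*)\le f_*$, $0\le\tili_n(\mu_n^2(f))\le\mu_n^2(f)\le f$ and $|a_n|\lesssim|v-v_*|^{\gamma+2}\lesssim\langle v\rangle^2+\langle v_*\rangle^2$, so its contribution over $[s,t]$ is at most $C|t-s|\,\|f_0\|_{L^1}\|f_0\|_{L^1_2}$ (up to the $\alpha$-energy correction). In the $b$-term we use $|b_n|\lesssim|v-v_*|^{\gamma+1}$ together with $|\nabla(\chi\phi)(v)-\nabla(\chi\phi)(v_*)|\lesssim\min(1,|v-v_*|)$. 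The integrand is then bounded by $f f_*(|v-v_*|^{\gamma+2}\mathbb 1_{|v-v_*|\le1}+1)$, which is again bounded by the mass squared times $|t-s|$, because $\gamma+2>0$. The correction terms are handled as in Lemma~\ref{lem:L-2:app-refine}. We bound $|\sigma_n(f)|\le\min(\sqrt f,C_{r_0}f)$ and $|\varsigma_n(f)|\le C_{r_0}^2 f$, and integrate the $F_4$-term by parts in $v$ onto the test function. This gives bounds by $\|F_{i,n}\|_{W^{1,\infty}}\|f\|_{L^1}^{3}|t-s|$, uniformly in $n$. All deterministic increments are therefore $O(|t-s|)$ with a deterministic constant depending on $\|f_0\|_{L^1_2}$, up to the energy term, which is almost surely bounded by \eqref{energy:h-eps}.

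For the martingale part we apply BDG. The integrand is $\int_{v,v_*}\tn(\chi\phi)\cdot G_{k,n}\sigma_n(f)\sigma_n(f_*)$, which is bounded by $\|\nabla(\chi\phi)\|_\infty\|G_{k,n}\|_{L^\infty}\|\sigma_n(f)\|_{L^1}^2\lesssim C_{r_0}^2\|f_0\|_{L^1}^2$. So the $p$-th moment of the increment is bounded by $C|t-s|^{p/2}$ with no need for higher $L^2$ moments; this is a simplification compared with Lemma~\ref{lem:time-regularity}. Alternatively one can use $\sigma_n\le\sqrt f$ and the $L^1$ bound of $G_{k,n}$ combined with the $L^\infty$ bound.

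The main obstacle is the singular $b$-term. The $n$-uniform bound rests on $\gamma+1>-1$ and on the cancellation coming from the difference of gradients of the test function. Near the diagonal we have $|v-v_*|^{\gamma+1}\cdot|v-v_*|=|v-v_*|^{\gamma+2}$, which stays bounded, so only mass is needed and we never have to use the $H^1$ norm. Away from the diagonal the growth $|v-v_*|^{\gamma+1}\le1$ is harmless. If some correction term unexpectedly requires a gradient of $f$, we fall back to Lemma~\ref{lem:L-2:app-refine}, which gives $\int_s^t\|\nabla f\|_{L^2}\le|t-s|^{1/2}(\int_0^T\|\nabla f\|^2)^{1/2}$. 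This produces the exponent $1/2$ in time, but it requires moments of order $p/2$ of the dissipation. Those moments would have to be upgraded from Lemma~\ref{lem:L-2:app-refine} via BDG in the same way as in Lemma~\ref{lem:time-L2}.
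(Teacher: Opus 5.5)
Your proposal follows the paper's proof essentially step by step. As in the paper, $n$-uniform mass, energy and kernel bounds make every drift increment $O(|t-s|)$ deterministically, and the BDG estimate for the martingale closes with $\|\sigma_n(f_n)\|_{L^1}^2\lesssim\|f_0\|_{L^1}^2$, so no higher $L^2$ moments are needed; Kolmogorov's criterion plus the compact embedding then finish the proof.

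One small slip concerns the $b$-term. For $\gamma\in(-1,0)$ the factor $|v-v_*|^{\gamma+1}$ is not bounded by $1$ off the diagonal, so that term is not controlled by mass alone. Use instead $|b_n(v-v_*)\cdot(\nabla(\chi\phi)(v)-\nabla(\chi\phi)(v_*))|\lesssim|v-v_*|^{2+\gamma}\lesssim\langle v\rangle^2+\langle v_*\rangle^2$ together with the energy bound \eqref{energy:h-eps}. This is exactly what the paper does, and what you already do for the $a$-term.
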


\begin{proof}
We fix an arbitrary $n\in\mathbb N$. As in Lemma~\ref{lem:L-2:app-refine}, we suppress
the index $n$ and write
\begin{equation*}
f=f_n,\quad \mu=\mu_n,\quad \sigma=\sigma_n,\quad \tili=\tili_n,\quad
G_k=G_{k,n},\quad F_i=F_{i,n},\quad (a,b)=(a_n,b_n).
\end{equation*}
All constants below are independent of $n$. Write the localised weak equation
as
\begin{align*}
f(t)\chi=f_0\chi+D(t)+M(t),
\end{align*}
where $D$ contains all deterministic terms and
\begin{align*}
M(t)
=-\frac{\sqrt{\eps}}{2}\sum_{k=1}^K\int_0^t
\chi\,\tn\cdot\big(G_k(v,v_*)\sigma(f)\sigma(f_*)\big)\,\dd B_k
\end{align*}
is the localised martingale term.

\medskip

\noindent\textbf{Deterministic increments.}
Let $\phi\in H^l(\R^d)$ with $\|\phi\|_{H^l}\le1$. Since
$l>d/2+2$, Sobolev embedding yields
\begin{align}
\label{Sobolev-time-reg}
\|\chi\phi\|_{W^{2,\infty}}\le C_{\chi,l}.
\end{align}
Using the weak formulation of \eqref{eq:ito-2:app-n}, together with the bounds
$\mu_n^2(f_n),\,\sigma_n^2(f_n)\le f_n$, $|\sigma_n'(f_n)|\le C_{r_0}$, and the uniform coefficient estimates 
$$
a(v-v_*)\lesssim \langle v-v_*\rangle^2,
\qquad
|b(v-v_*)(\nabla_v\varphi-\nabla_{v_*}\varphi_*)|
\lesssim
\|\nabla^2\varphi\|_{L^{\infty}}|v-v_*|^{2+\gamma}
\le
C(\varphi)\langle v-v_*\rangle^2,
$$
we deduce that, for every $0\le s<t\le T$,
\begin{align*}
|\langle D(t)-D(s),\phi\rangle|
\lesssim{}&
\int_s^t\int_v f\,|\Delta(\chi\phi)|\,\dd v\dd r\\
&+\int_s^t\int_{v,v_*}ff_*
\big(\langle v\rangle^2+\langle v_*\rangle^2\big)
|\nabla^2(\chi\phi)|\,\dd v\dd v_*\dd r\\
&+\int_s^t\int_{v,v_*,w}ff_*f_w
\big(1+|F_1|+|F_4|+|\nabla_vF_4|\big)
\|\chi\phi\|_{W^{2,\infty}}\,\dd v\dd v_*\dd w\dd r.
\end{align*}
The mass and energy estimates \eqref{mass:h-eps}--\eqref{energy:h-eps},
\eqref{Sobolev-time-reg}, and the uniform bounds for $F_{i,n}$ therefore
give
\begin{align}
\label{deterministic-time-increment}
\|D(t)-D(s)\|_{H^{-l}}
\le C|t-s|\big(1+\|f_0\|_{L^1_2}^3\big),
\end{align}
where $C$ is independent of $n$.

\medskip

\noindent\textbf{Martingale increments.}
For every $\phi\in H^l(\R^d)$ with $\|\phi\|_{H^l}\le1$, integration by
parts gives
\begin{align*}
|\langle M(t)-M(s),\phi\rangle|
\le \frac12\Big|
\sum_{k=1}^K\int_s^t\int_{v,v_*}
\nabla_v(\chi\phi)\cdot G_{k,n}(v,v_*)\sigma(f)\sigma(f_*)\,\dd v\dd v_*
\dd B_k\Big|.
\end{align*}
By the Burkholder--Davis--Gundy inequality, the uniform bounds on
$G_{k,n}$, and $\sigma^2(f)\le f$, for every $p\ge2$,
\begin{align}
\label{martingale-time-increment}
\mathbb E\|M(t)-M(s)\|_{H^{-l}}^p
&\le C_p\mathbb E\Big[
\int_s^t\sum_{k=1}^K
\Big|\int_{v,v_*}\nabla_v(\chi\phi)\cdot
G_{k,n}\sigma(f)\sigma(f_*)\,\dd v\dd v_*\Big|^2\dd r
\Big]^{p/2}\notag\\
&\le C_p\|f_0\|_{L^1}^{2p}|t-s|^{p/2}.
\end{align}
Let us spell out the last estimate.  By
\eqref{Sobolev-time-reg}, we have $\|\nabla_v(\chi\phi)\|_{L^\infty}\le C_{\chi,l}$.
Using the uniform coefficient bounds and the elementary estimate
$\|\sigma(f)\|_{L^1}\le C_{r_0}\|f\|_{L^1}$ from
Remark~\ref{rmk:sigma-f:L2}, we have, for every $r\in[0,T]$,
\begin{align*}
\Big|\int_{v,v_*}\nabla_v(\chi\phi)\cdot
G_{k,n}(v,v_*)\sigma(f)\sigma(f_*)\,\dd v\dd v_*\Big|
&\le C_{\chi,l,C_K}\|\sigma(f(r))\|_{L^1}^2  \\
&\le C_{\chi,l,C_K,r_0}\|f_0\|_{L^1}^2.
\end{align*}

 Therefore the quadratic variation on $[s,t]$ is
bounded by $C\|f_0\|_{L^1}^4|t-s|$, and BDG gives
\eqref{martingale-time-increment}.  This is the main difference from
Lemma~\ref{lem:time-regularity}: in the Galerkin $L^2$ framework of
Section~\ref{sec:Galerkin}, the corresponding martingale increment is
controlled by $\|f_m\|_{L^2}^2$ and hence requires the higher
$L^2$-moment estimate \eqref{galerkin:L2-uniform}, whereas here the
$L^1$ conservation closes the estimate directly.
The constants in \eqref{martingale-time-increment} are independent of
$n$.

\medskip

\noindent\textbf{Conclusion.}
Combining \eqref{deterministic-time-increment} and
\eqref{martingale-time-increment}, we obtain
\begin{align*}
\mathbb E\|f(t)\chi-f(s)\chi\|_{H^{-l}}^p
\le C_p|t-s|^{p/2}.
\end{align*}
Since $p(1/2-\beta)>1$, the quantitative Kolmogorov continuity theorem
implies
\begin{align*}
\mathbb E\big[
[f\chi]_{C^\beta([0,T];H^{-l})}^{p}\big]\le C.
\end{align*}
Moreover,
\begin{align*}
\mathbb E
\sup_{t\in[0,T]}\|f(t)\chi\|_{H^{-l}}^p
\lesssim_{\chi,l}\|f_0\|_{L^1}^p+C,
\end{align*}
which proves \eqref{time-regularity:L-2-2}. Finally, since all paths are
supported in the fixed compact set $\supp\chi$, the compact local
embedding $H^{-l}\Subset H^{-l-1}$ and the Arzel\`a--Ascoli theorem imply
\begin{align*}
C^\beta([0,T];H^{-l})\Subset
C^{\beta'}([0,T];H^{-l-1}),\qquad \beta'<\beta.
\end{align*}
Chebyshev's inequality then yields the asserted tightness.
\end{proof}

With the aid of the uniform estimates established above, we are now in a position to prove the tightness of the approximation sequence. 
\begin{proposition}[Tightness of the approximation laws]
\label{prop:limit-L2-tightness}
Assume the hypotheses of Proposition~\ref{lem:l-12:log}.  Let
$(f_n,(B^k_n)_{1\le k\le K})$ be the sequence of probabilistic weak
solutions to the regularised equation \eqref{eq:ito-2:app-n}.  Then the
laws of
\begin{align*}
\big(f_n,\nabla_v f_n,(B^k_n)_{1\le k\le K}\big)
\end{align*}
are tight on
\begin{align*}
\mathbb X
&:=\Big(L^2([0,T];L^2_{\loc}(\R^d))
\cap C^{\beta'}([0,T];H^{-l-1}_{\loc}(\R^d))\cap L^1([0,T];L^1(\R^d))\Big)\\
&\qquad\times \big(L^2(0,T;L^2(\R^d)),w\big)
\times C([0,T];\R^K),
\end{align*}
for every $\beta'\in(0,1/2)$ and $l>d/2+2$. 
\end{proposition}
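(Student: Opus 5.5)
The plan follows Proposition~\ref{prop:galerkin-tightness}, with the Galerkin estimates replaced by the $n$-uniform bounds of Lemma~\ref{lem:L-2:app-refine} and Lemma~\ref{lem:time-regularity:L-2-2}. The one genuinely new ingredient is tightness in $L^1([0,T];L^1(\R^d))$. For that we use the uniform mass, energy and entropy bounds \eqref{mass:h-eps}--\eqref{H:h-eps}; the $L^2$ bound handles integrability.

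\textbf{Step 1 (local $L^2$ and H\"older components).} Fix $R>0$ and $\chi_R\in C_c^\infty(\R^d)$ with $\chi_R=1$ on $B_R$ and support in $B_{2R}$. Lemma~\ref{lem:L-2:app-refine} gives
\[
\sup_n\mathbb E_n\Big[\|f_n\|_{L^2_tH^1_v}^2+\|f_n\|_{L^\infty_tL^2_v}^2\Big]<\infty,
\]
and Lemma~\ref{lem:time-regularity:L-2-2} gives $\sup_n\mathbb E_n\|f_n\chi_R\|_{C^\beta([0,T];H^{-l})}^p<\infty$. The Aubin--Lions--Simon embedding
\[
L^2(0,T;H^1(B_{2R}))\cap C^\beta([0,T];H^{-l}(B_{2R}))\Subset L^2(0,T;L^2(B_R))
\]
then yields compact sets of large probability via Chebyshev. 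A diagonal argument over $R\in\N$ gives tightness in $L^2([0,T];L^2_{\loc})\cap C^{\beta'}([0,T];H^{-l-1}_{\loc})$, since the local Fr\'echet topology is determined by countably many seminorms.

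\textbf{Step 2 (global $L^1$ component).} Local $L^2$ convergence already gives local $L^1$ convergence on $[0,T]\times B_R$. It remains to control the tails uniformly. The energy bound \eqref{energy:h-eps} is almost sure and uniform in $n$, so
\[
\int_0^T\int_{|v|>R}f_n\le R^{-2}T\Big(\|f_0\|_{L^1_2}+2\alpha dT\|f_0\|_{L^1}\Big).
\]
The candidate compact set is
\[
\mathcal K_M=\Big\{g:\ \sup_t\|g(t)\|_{L^1_2}\le C_0,\ \|g\|_{L^2_tH^1_v}\le M,\ \|g\chi_R\|_{C^\beta H^{-l}}\le M_R\ \forall R\Big\}.
\]
We show $\mathcal K_M$ is relatively compact in $L^1([0,T];L^1)$. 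Take a sequence in $\mathcal K_M$. Extract a subsequence converging in $L^2([0,T]\times B_R)$ for every $R$, hence in $L^1([0,T]\times B_R)$ on bounded cylinders. The uniform tail bound $C_0R^{-2}$ then upgrades this to convergence in $L^1([0,T]\times\R^d)$: this is the Vitali/Dunford--Pettis criterion, since uniform integrability follows from the $L^2$ bound locally and the weight $\langle v\rangle^2$ at infinity. The intersection of the three topologies is handled as in Jakubowski's framework: a set compact in each factor and closed in their intersection is compact in the intersection.

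\textbf{Step 3 (remaining factors).} The gradient component is tight in $(L^2(0,T;L^2),w)$, because closed balls are weakly compact (metrizable on bounded sets) and Lemma~\ref{lem:L-2:app-refine} gives uniform second moments. The Brownian laws do not depend on $n$, so they are tight on $C([0,T];\R^K)$. Tightness of marginals implies tightness of the joint law on the product.

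The main obstacle is checking that the intersection space has the required compact sets, in particular that the $L^1$ compactness argument in Step 2 is correct. The key point is that the energy bound holds almost surely and uniformly in $n$, so the tails are controlled deterministically on the compact set $\mathcal K_M$ rather than only in expectation.
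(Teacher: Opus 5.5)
Your proposal is correct and follows essentially the same route as the paper. The $n$-uniform $L^\infty_tL^2_v\cap L^2_tH^1_v$ bound of Lemma~\ref{lem:L-2:app-refine}, combined with the localised time regularity of Lemma~\ref{lem:time-regularity:L-2-2} and Aubin--Lions--Simon, gives local $L^2$ (hence local $L^1$) tightness; the almost-sure uniform energy bound \eqref{energy:h-eps} controls the velocity tails and upgrades this to $L^1([0,T];L^1(\R^d))$; and the gradient and Brownian components are handled by weak compactness of balls and the $n$-independence of the Brownian laws. Your write-up only makes the compact sets $\mathcal K_M$ explicit, and the entropy bound \eqref{H:h-eps} you cite is not actually needed.
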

\begin{proof}
 Let $R>1$ be arbitrarily fixed. Let $\chi_R\in C_c^\infty(\R^d;\R_+)$ be a smooth cut-off function given as in Assumption \ref{ass:chi}.
By Lemma~\ref{lem:L-2:app-refine}, we have the unifor bound
\begin{align*}
\sup_{n\in\N}\mathbb E\Big[
\|f_n\|_{L^\infty_tL^2_v}^2
+\|\nabla_v f_n\|_{L^2_tL^2_v}^2\Big]<+\infty.
\end{align*}
By Lemma~\ref{lem:time-regularity:L-2-2}, for every $R>0$ the sequence
$(f_n\chi_R)_{n\ge1}$ is tight in
$C^{\beta'}([0,T];H^{-l-1}(\R^d))$.  The compact embedding
\begin{align*}
L^2([0,T];H^1(B_{2R}))\cap C^\beta([0,T];H^{-l}(B_{2R}))
\Subset L^2([0,T];L^2(B_R))
\end{align*}
then gives tightness of $(f_n)_{n\ge1}$ in
$L^2([0,T];L^2_{\loc}(\R^d))$. Furthermore, by H\"older's inequality, the above estimate implies the tightness of $(f_n)_{n\ge1}$ in $L^1([0,T];L^1_{\mathrm{loc}}(\mathbb{R}^d))$. 
The uniform energy bound \eqref{energy:h-eps} ensures that the mass tails are controlled uniformly small. It follows that $(f_n)_{n\ge1}$ is tight in $L^1([0,T];L^1(\mathbb{R}^d))$. The gradient component is tight in
$L^2([0,T];L^2(\R^d))$ endowed with the weak topology by the uniform
$L^2$-bound, and the Brownian laws are tight on $C([0,T];\R^K)$.  Hence
the joint laws are tight on $\mathbb X$.
\end{proof}

\subsection{Proof of Proposition \ref{lem:l-12:log}}\label{sec-5:limit}

We divide the proof into the following five steps.

\medskip

\noindent\textbf{Step 1: Skorokhod representation.}
To simplify the notation, we denote the collection of Brownian motions by $B_n=(B_n^k)_{1\leq k\leq K}$. By Proposition~\ref{prop:limit-L2-tightness}, the laws of
$(f_n,\nabla_v f_n,B_n)$ are tight on $\mathbb X$.

Since the weak topology on the gradient component is not metrisable on
the whole space, we use the Jakubowski--Skorokhod representation theorem.
There are a new probability space
$(\bar\Omega,\bar{\mathcal F},\bar{\mathbb P})$ and random variables
\begin{align*}
(\bar f_n,\nabla_v\bar f_n,\bar B_n)
\quad\hbox{and}\quad
(\bar f,\nabla_v\bar f,\bar B)
\end{align*}
such that
\begin{align}
\label{limit-L2:skorokhod}
(\bar f_n,\nabla_v\bar f_n,\bar B_n)
\to(\bar f,\nabla_v\bar f,\bar B)
\quad\bar{\mathbb P}\hbox{-almost surely in }\mathbb X,
\end{align}
and the law of $(\bar f_n,\nabla_v\bar f_n,\bar B_n)$ is the same as that
of $(f_n,\nabla_v f_n,B_n)$. 

In particular, we have almost surely, for every $R>0$,
\begin{align}
\label{limit-L2:conv}
&\bar f_n\to\bar f
\quad\hbox{strongly in }L^2([0,T];L^2(B_R))\cap C^{\beta'}([0,T];H^{-l-1}_{\loc}(\R^d))\cap L^1([0,T];L^1(\mathbb{R}^{2d})),\notag\\
&\nabla_v\bar f_n\rightharpoonup\nabla_v\bar f
\quad\hbox{weakly in }L^2([0,T];L^2(\R^d)),\\
&\bar B_n\to\bar B
\quad\hbox{in }C([0,T];\R^K).\notag
\end{align}
The non-negativity of $\bar f$ follows from the strong local convergence
and the non-negativity of $\bar f_n$.

\medskip

\noindent\textbf{Step 2: Convergence of the nonlinear coefficients.}
We note that
\begin{equation}
\label{sigma:d}
\sigma_n'( \bar{f}_n)\le C_{r_0}\quad \forall\, n\in\N,\quad   \sigma_n'( \bar{f}_n)\to  {\sigma}'(\bar{f})\quad \text{pointwisely as $n\to\infty$} .
\end{equation}
 By the construction in Approximation  \ref{quadruple}, we have the following uniform bounds for all $n\in\N$ and pointwise convergence as $n\to\infty$
\begin{equation}
    \label{coe:n:conv}
\begin{gathered}
0\le \sigma_n( \bar{f}_n)\lesssim_{C_{r_0}} \min\big(\sqrt{\bar{f}_n},\bar{f}_n\big),\quad \sigma_n( \bar{f}_n) \to  {\sigma}(\bar f),\\
0\le \varsigma_n( \bar{f}_n)\lesssim_{C_{r_0}} \min\big(\sqrt{\bar{f}_n},\bar{f}_n\big),\quad \varsigma_n( \bar{f}_n) \to  {\varsigma}(\bar{f}),\\
0\le \mu_n( \bar{f}_n)\le \sqrt{\bar{f}_n},\quad \mu_n( \bar{f}_n) \to  \sqrt{\bar{f}},\\
0\le \tili_n (\mu_n^2(\bar{f}_n))\le \mu_n^2(\bar{f}_n),\quad\tili_n (\mu_n^2(\bar{f}_n))\to \bar{f}.
\end{gathered}
\end{equation}
By the dominated convergence theorem, we have, for every $R>0$,
\begin{align}
\label{L2-local}
\mu_n(\bar f_n)\to  \sqrt{\bar f},\qquad \sigma_n(\bar f_n)\to\sigma(\bar f),\qquad
\varsigma_n(\bar f_n)\to\varsigma(\bar f)
\quad\hbox{in }L^2(0,T;L^2(B_R)),
\end{align}
and
\begin{align}
\label{DCT:energy}
\mu_n^2(\bar f_n)\to \bar f,\qquad
\tili_n(\mu_n^2(\bar f_n))\to \bar f
\quad\hbox{in }L^1(0,T;L^1(B_R)).
\end{align}
The uniform  energy bounds \eqref{energy:h-eps}
control the velocity tails
\begin{align}
\label{tail}
\int_{B_R^c}\bar f_n
\le \frac{1}{R^2}
\int_v |v|^2\bar f_n\to0
\end{align}
as $R\to\infty$ uniformly in $t$ and $n$.  The upper bounds in \eqref{coe:n:conv} ensure that the above 
convergences hold globally with the same exponents
\begin{equation}
\label{L12-global}
    \begin{gathered}
\mu_n(\bar f_n)\to  \sqrt{\bar f},\quad \sigma_n(\bar f_n)\to\sigma(\bar f),\quad
\varsigma_n(\bar f_n)\to\varsigma(\bar f)
\quad\hbox{in }L^2(0,T;L^2(\Do)),\\ 
\mu_n^2(\bar f_n)\to \bar f,\quad
\tili_n(\mu_n^2(\bar f_n))\to \bar f
\quad\hbox{in }L^1(0,T;L^1(\Do)).
    \end{gathered}
\end{equation}

\medskip

\noindent\textbf{Step 3: Pass to the limit in weak formulation \eqref{weak:app-1}: deterministic terms.}
Let $\phi\in C_c^\infty(\R^d)$. We show that
\begin{align}
&\int_0^t\int_{v}\bar f_n\,\Delta\phi\to \int_0^t\int_{v}\bar f\,\Delta\phi,\label{Delta:n+}\\
&\int_0^t\int_{v,v_*}
\mu_n^2(\bar f_{n*})\tili_n(\mu_n^2(\bar f_n))a_{n}:\nabla^2\phi\to \int_0^t\int_{v,v_*}
\bar f\bar f_{*}a:\nabla^2\phi,
\label{a:n+}\\
&\int_0^t\int_{v,v_*}
\mu^2_n(\bar f_{n*})\tili_n(\mu^2_n(\bar f_n))b_n
\cdot \big(\nabla\phi-\nabla_{*}\phi_*\big)
\to \int_0^t\int_{v,v_*}
\bar f\bar f_* b
\cdot \big(\nabla\phi-\nabla_{*}\phi_*\big),\label{b:n+}\\
&\int_0^t\int_{v,v_*,w}
\big(\tn\phi(v)\cdot F_{1,n}(v,v_*,w)\big)\sigma_n(\bar f_n)\sigma_n'(\bar f_n)
\sigma_n(\bar f_{n*})\sigma_n(\bar f_{nw})\notag\\
&\qquad \to \int_0^t\int_{v,v_*,w}
\big(\tn\phi(v)\cdot F_{1}(v,v_*,w)\big)\sigma(\bar f)\sigma'(\bar f)
\sigma(\bar f_{*})\sigma(\bar f_{w}),\label{F1:n+}\\
&\int_0^t\int_{v,v_*,w}
\nabla_v\cdot\big(\tn\phi(v)F_{4,n}(v,v_*,w)\big)
\varsigma_n(\bar f_n)
\sigma_n(\bar f_{n*})\sigma_n( \bar f_{nw})\notag\\
&\qquad \to \int_0^t\int_{v,v_*,w}
\nabla_v\cdot\big(\tn\phi(v)F_{4}(v,v_*,w)\big)
\varsigma(\bar f)
\sigma(\bar f_{*})\sigma(\bar f_{w}).\label{F4:n+}
\end{align}

The limit \eqref{Delta:n+} holds as a direct consequence of \eqref{limit-L2:conv}.

We next pass to the limit for Landau drift terms \eqref{a:n+} and \eqref{b:n+}.  
By uniform bound \eqref{bdd:ab-n}, we have 
\begin{align*}
 |a_n|\le  | v-v_*|^{2+\gamma}\quad\text{and}\quad 
|b_n\cdot (\nabla\varphi-\nabla_{*}\varphi_*)|
\lesssim
\|\nabla^2\varphi\|_{L^{\infty}}| v-v_*|^{2+\gamma}.
\end{align*}
Since $2+\gamma\in(0,2)$ and
$|v-v_*|^{2+\gamma}\le \langle v\rangle^2+\langle v_*\rangle^2$,
the uniform energy bound implies that the contributions of \eqref{a:n+} and \eqref{b:n+} over the region $\{|v|+|v_*|\ge R\}$ are uniformly small in $n$ by choosing $R$ sufficiently large, using an argument similar to that in \eqref{tail}.
On the bounded region $\{|v|+|v_*|\le R\}$, we pass to the limit by using the dominated convergence theorem and the pointwise convergence \eqref{coe:n:conv}.

We now pass to the Stratonovich-to-It\^o correction terms \eqref{F1:n+} and \eqref{F4:n+}.  By \eqref{sigma:d} and \eqref{L12-global}, we have 
\begin{gather*}
\sigma_n(\bar f_n)\sigma_n'(\bar f_n)
\sigma_n(\bar f_{n*})\sigma_n(\bar f_{nw})\to \sigma(\bar f)\sigma'(\bar f)
\sigma(\bar f_{*})\sigma(\bar f_{w})\quad \text{in}\quad L^2_tL^2_{v,v_*,w}\\
\text{and}\quad \varsigma_n(\bar f_n)
\sigma_n(\bar f_{n*})\sigma_n(\bar f_{nw})\to
\varsigma(\bar f)
\sigma(\bar f_{*})\sigma(\bar f_{w})\quad \text{in}\quad L^2_tL^2_{v,v_*,w}.
\end{gather*}
Combining with the convergence of $F_{i,n}$ in \eqref{G-k:conv}, the convergences \eqref{F1:n+} and \eqref{F4:n+} hold.

\medskip

\noindent\textbf{Step $4$: Identification of the martingale part.}
Let $\bar{\mathcal G}_t$ be the augmented filtration generated by
$\bar f|_{[0,t]}$, $\nabla_v\bar f|_{[0,t]}$, and
$\bar B|_{[0,t]}$.  As in Proposition~\ref{lem:app-1:ext}, the identity
in law and the convergence \eqref{limit-L2:skorokhod}, combined with the
same martingale-characterisation argument as in Step~3 of
Proposition~\ref{lem:app-1:ext}, imply that
$(\bar B^k)_{1\le k\le K}$ is a family of independent Brownian motions
with respect to $(\bar{\mathcal G}_t)_{t\in[0,T]}$.

	For each $n$ and each test function $\phi$, define the 
	martingale $\bar M_n^\phi$ by subtracting from
	$\langle \bar f_n(t),\phi\rangle$ all deterministic terms in the weak
	formulation of \eqref{eq:ito-2:app-n}.  By the equality of laws,
	setting
	\begin{align*}
	H_{n,k}^\phi(t)
	:=
	\int_{v,v_*}
	\tn\phi(v)\cdot G_{k,n}(v,v_*)\sigma_n(\bar f_n(t,v))
	\sigma_n(\bar f_n(t,v_*))\,\dd v\dd v_*,
	\end{align*}
	we have
	\begin{align*}
	\bar M_n^\phi(t)
	=-\frac{\sqrt{\eps}}{2}\sum_{k=1}^K\int_0^t H_{n,k}^\phi(s)\,\dd\bar B_n^k(s).
	\end{align*}
	The convergence proved in Step~$2$ implies that the residuals converge,
	for every $t\in[0,T]$,
	\begin{align*}
	\bar M_n^\phi(t)\to\bar M^\phi(t),
	\end{align*}
	where $\bar M^\phi$ is the residual obtained from the limit equation.
	We now identify this residual.

	Let
	\begin{align*}
	H_{k}^\phi(t)
	:=
	\int_{v,v_*}
	\tn\phi(v)\cdot G_{k}(v,v_*)\sigma(\bar f(t,v))
	\sigma(\bar f(t,v_*))\,\dd v\dd v_* .
	\end{align*}
	We first record the convergence of the covariation densities.  By the convergence of $G_{k,n}$ in \eqref{G-k:conv}, the convergence in
	Step~$2$ gives, almost surely,
	\begin{align*}
	H_{n,k}^\phi\to H_k^\phi
	\quad\hbox{in }L^2([0,T]),\qquad 1\le k\le K.
	\end{align*}
	Indeed, the difference is bounded by the sum of
	\begin{align*}
	&\int_{v,v_*}|\tn\phi|\,|G_{k,n}-G_k|\,
	|\sigma_n(\bar f_n)|\,|\sigma_n(\bar f_{n,*})|\,\dd v\dd v_*,
	\\
	&\int_{v,v_*}|\tn\phi|\,|G_k|\,
	|\sigma_n(\bar f_n)-\sigma(\bar f)|\,|\sigma_n(\bar f_{n,*})|\,\dd v\dd v_*,
	\\
	&\int_{v,v_*}|\tn\phi|\,|G_k|\,
	|\sigma(\bar f)|\,|\sigma_n(\bar f_{n,*})-\sigma(\bar f_*)|\,\dd v\dd v_*,
	\end{align*}
	and each term converges to zero in $L^2(0,T)$ by the strong
	$L^2_{t,v}$ convergence of $\sigma_n(\bar f_n)$, the uniform
	$L^\infty_tL^2_v$ bounds following from $\sigma_n^2(r)\le r$, and the
	uniform compact support of $G_{k,n}$.
	
	 Let $0\le s\le t\le T$ and let
	$\Theta$ be a bounded continuous functional of
	$(\bar f,\nabla_v\bar f,\bar B)$ restricted to $[0,s]$.  The equality
	of laws, applied first to the approximating system, gives
	\begin{align}
	\label{limit-L2:mart-id-1}
	\bar{\mathbb E}\Big[
	\Theta_n\big(\bar M_n^\phi(t)-\bar M_n^\phi(s)\big)
	\Big]=0,
	\end{align}
	where $\Theta_n$ denotes the same functional evaluated at
	$(\bar f_n,\nabla_v\bar f_n,\bar B_n)$.  Passing to the limit, using
	the convergence above and a standard localisation by the uniform
	$L^\infty_tL^2_v$ bound, shows that $\bar M^\phi$ is a continuous
	$\bar{\mathcal G}_t$-martingale.
	
	Next, for each $1\le j\le K$, It\^o's product rule for the
	approximating stochastic integral yields
	\begin{align}
	\label{limit-L2:cross-id-n}
	\bar{\mathbb E}\Big[
	\Theta_n\Big(
	&\bar M_n^\phi(t)\bar B_n^j(t)
	-\bar M_n^\phi(s)\bar B_n^j(s)
	+\frac{\sqrt{\eps}}{2}\int_s^t H_{n,j}^\phi(r)\,\dd r
	\Big)\Big]=0 .
	\end{align}
	The sign comes from
	$\dd\langle \bar M_n^\phi,\bar B_n^j\rangle_t
	=-\frac{\sqrt{\eps}}{2}H_{n,j}^\phi(t)\dd t$.  Letting $n\to\infty$ in
	\eqref{limit-L2:cross-id-n} gives
	\begin{align}
	\label{limit-L2:cross-id}
	\bar M^\phi(t)\bar B^j(t)
	+\frac{\sqrt{\eps}}{2}\int_0^t H_j^\phi(r)\,\dd r
	\quad\hbox{is a }\bar{\mathcal G}_t\hbox{-martingale}.
	\end{align}
	Equivalently,
	\begin{align}
	\label{limit-L2:cross-bracket}
	\langle \bar M^\phi,\bar B^j\rangle_t
	=
	-\frac{\sqrt{\eps}}{2}\int_0^t H_j^\phi(r)\,\dd r.
	\end{align}
	Similarly, again by It\^o's formula,
	\begin{align}
	\label{limit-L2:quad-id-n}
	\bar{\mathbb E}\Big[
	\Theta_n\Big(
	(\bar M_n^\phi(t))^2-(\bar M_n^\phi(s))^2
	-\frac{\eps}{4}\int_s^t\sum_{k=1}^K |H_{n,k}^\phi(r)|^2\,\dd r
	\Big)\Big]=0 .
	\end{align}
	The convergence $H_{n,k}^\phi\to H_k^\phi$ in $L^2(0,T)$ implies
	\begin{align}
	\label{limit-L2:quad-id}
	(\bar M^\phi(t))^2
	-\frac{\eps}{4}\int_0^t\sum_{k=1}^K |H_k^\phi(r)|^2\,\dd r
	\quad\hbox{is a }\bar{\mathcal G}_t\hbox{-martingale},
	\end{align}
	and hence
	\begin{align}
	\label{limit-L2:quad-bracket}
	\langle \bar M^\phi\rangle_t
	=
	\frac{\eps}{4}\int_0^t\sum_{k=1}^K |H_k^\phi(r)|^2\,\dd r.
	\end{align}
	As a consequence, 
	\begin{align}\label{martingale-characterisation}
	\bar M^\phi(t)
	=-\frac{\sqrt{\eps}}{2}\sum_{k=1}^K\int_0^t\int_{v,v_*}
	\tn\phi(v)\cdot G_k(v,v_*)\sigma(\bar f)
\sigma(\bar f_*)\,\dd v\dd v_*\,\dd\bar B^k .
\end{align}
Thus $\bar f$ satisfies the weak formulation corresponding to
\eqref{eq:ito-2}.  This proves that
$(\bar f,(\bar B^k)_{1\le k\le K})$ is a probabilistic weak solution.

\medskip

\noindent\textbf{Step $5$: Pass to the limit in conservation laws, energy and entropy inequalities.}
It remains to pass to the estimates \eqref{mass:h-eps},
\eqref{momentum:h-eps},
\eqref{energy:h-eps}, and \eqref{H:h-eps}.  Since
$(\bar f_n,\bar B_n)$ and $(f_n,B_n)$ have the same law, the three estimates
hold for $\bar f_n$ with the same constants.  By \eqref{limit-L2:conv}, after
possibly extracting a further subsequence, $\bar f_n(t)\to \bar f(t)$ in
$L^1(\R^d)$ for a.e. $t\in[0,T]$ as well as pointwisely.  Combining this convergence
with the uniform second moment bound in \eqref{energy:h-eps}, we have for almost every $t\in[0,T]$
\begin{align}\label{mass-energy-aet}
\int_v(1,v)\bar f_t(v)\,\dd v
&=\lim_{n\to\infty}\int_v(1,v)\bar f_n(t,v)\,\dd v
=\int_v (1,v) f_0(v)\,\dd v,\notag\\
\int_v |v|^2\bar f_t(v)\,\dd v
&\le \liminf_{n\to\infty}\int_v |v|^2\bar f_n(t,v)\,\dd v\notag\\
&\le \int_v |v|^2f_0(v)\,\dd v+2\alpha dT\|f_0\|_{L^1}.
\end{align}
In the following, we show that the above preservation of mass and the energy bounds hold almost surely for every $t\in[0,T]$. 

For each $R>1$, let $\chi_R\in C_c^\infty(\mathbb{R}^d)$ be a smooth cut-off function such that $0\le \chi_R\le 1$,
with $\chi_R=1$ on $\{|v|\le R\}$ and $\chi_R=0$ on $\{|v|\ge 2R\}$. By the weak continuity in time, it follows that, almost surely, for every $t\in[0,T]$, there exists a sequence $(s_j)_{j\ge1}$ with $s_j\to t$ as $j\to\infty$ such that
\begin{align*}
\int_v \bar f_t\,\chi_R
=
\lim_{j\to\infty}
\int_v \bar f_{s_j}\,\chi_R,
\end{align*}
for every $R>1$, and \eqref{mass-energy-aet} holds at each time $s_j$.

Consequently,
\begin{align*}
\int_v \bar f_t\,\chi_R
=
\lim_{j\to\infty}
\int_v \bar f_{s_j}\,\chi_R
\le
\|f_0\|_{L^1},
\end{align*}
which yields
\begin{align*}
\|\bar f_t\|_{L^1}\le \|f_0\|_{L^1}.
\end{align*}
We next show that this inequality is in fact an equality. Indeed,
\begin{align*}
\int_v \bar f_t
&=
\lim_{R\to\infty}
\int_v \bar f_t\,\chi_R
=
\lim_{R\to\infty}
\lim_{j\to\infty}
\int_v \bar f_{s_j}\,\chi_R
\\
&=
\lim_{R\to\infty}
\lim_{j\to\infty}
\left(
\int_v \bar f_{s_j}
-
\int_v (1-\chi_R)\bar f_{s_j}
\right)
\\
&=
\|f_0\|_{L^1}
-
\lim_{R\to\infty}
\lim_{j\to\infty}
\int_v (1-\chi_R)\bar f_{s_j}.
\end{align*}

Repeat the argument as in \eqref{tail}, we have 
\begin{align*}
\lim_{R\to\infty}
\lim_{j\to\infty}
\int_v (1-\chi_R)\bar f_{s_j}
&\le
\limsup_{R\to\infty}
\limsup_{j\to\infty}
\int_v
\frac{|v|^2}{R^2}
(1-\chi_R)
\bar f_{s_j}=0.
\end{align*}
Therefore, the mass conservation law holds almost surely for every $t\in[0,T]$.

By a similar argument, one also obtains that the momentum conservation law and the energy estimate in \eqref{mass-energy-aet} hold almost surely for every $t\in[0,T]$.

\medskip

We next pass to the limit by letting $n\to\infty$ for the entropy inequalities \eqref{H:h-eps} and \eqref{H:h-eps-exp}.  We recall the definition
\begin{align*}
h_n(r)=\int_0^r L_n(\mu_n^2(s))\,\dd s\quad\text{and}\quad
h(r)=r\log r-r.
\end{align*}
The convergence in \eqref{def:L-n} and \eqref{def:mu-n}, together with the dominated convergence theorem, implies $h_n(r)\to h(r)$ for
every $r\ge0$. Moreover, by the bounds on $\mu_n$ and $L_n$, together with
\eqref{LlogL:sigma}, there is a constant $C$ independent
of $n$ such that
\begin{align*}
|h_n(r)|\le C\big(1+r|\log r|\mathbb 1_{\{0<r\le1\}}+r^2
\big),\qquad r\ge0.
\end{align*}
Since $f_0\in L^1_2\cap L^2(\Do)$, the dominated convergence yields
\begin{align}
\lim_{n\to\infty}\cH_n(f_0)=\cH(f_0)-m_0.
\label{limit-L2:initial-entropy}
\end{align}

It remains to show that almost surely, 
\begin{align}
\label{limit-L2:diss-lsc-final}
\int_0^t\cD(\bar{f})
\le
\liminf_{n\to\infty}
\int_0^t\cD_{n}(\bar{f}_{n}),
\qquad
\forall\, t\in[0,T].
\end{align}
We prove \eqref{limit-L2:diss-lsc-final} by first freezing a bounded Landau kernel and then removing the truncation. Define $q_n(s)=0$ for $s\le0$, and for $s\ge0$,
\begin{align*}
q_n(s):=\int_0^s2L_n'\big(\mu_n^2(r)\big)\mu_n(r)^2\mu_n'(r)\,\dd r.
\end{align*}
The factor $2$ is the chain-rule factor in $\frac{\dd}{\dd r}L_n(\mu_n^2(r))=2L_n'(\mu_n^2(r))\mu_n(r)\mu_n'(r)$. With this convention, $\mu_n(f_{n,*})\nabla q_n(f_n)-\mu_n(f_n)\nabla q_n(f_{n,*})$ is exactly the square-root flux associated with $\mu_n^2(f_n)\mu_n^2(f_{n,*})|\widetilde\nabla L_n(\mu_n^2(f_n))|^2$.
We note that 
\begin{align*}
\int_0^t\cD_n(\bar f_n)
&=
\frac12
\int_0^t
\int_{v,v_*}
A_n \Big|\Pi_{(v-v_*)^\perp}
\Big(
\mu_n(\bar f_{n,*})\nabla_vq_n(\bar f_n)
-
\mu_n(\bar f_{n})\nabla_{v_*}q_n(\bar f_{n,*})
\Big)\Big|^2,
\\
\text{and}\quad\int_0^t\cD(\bar f)
&=
\frac12
\int_0^t
\int_{v,v_*}
A \Big|\Pi_{(v-v_*)^\perp}
\Big(
2\sqrt{\bar f_*}\nabla_v\sqrt{\bar f}
-
2\sqrt{\bar f}\nabla_{v_*}\sqrt{\bar f_*}
\Big)\Big|^2.
\end{align*}
We note that 
\begin{gather*}
0\le q_n(s)\le 2\mu_n(s),
\qquad
0\le \mu_n(s)\le \sqrt{s},\\
\mu_n(\bar f_n)\to \sqrt{\bar f},
\qquad
q_n(\bar f_n)\to 2\sqrt{\bar f},
\qquad
\text{almost surely, pointwisely as }n\to\infty.
\end{gather*}

Fix $t\in[0,T]$ and fix
$\omega$ in the full-probability set on which the convergences in
\eqref{limit-L2:conv}, \eqref{L12-global}, and the almost everywhere convergence
of $\bar f_n$ hold.  If
\begin{align*}
\liminf_{n\to\infty}\int_0^t\cD_n(\bar f_n)=+\infty,
\end{align*}
then there is nothing to prove.  Otherwise we choose a subsequence, not relabelled,
such that the above liminf is finite.  For each fixed $m\in\mathbb N$, the
monotone construction of $A_n$ gives $A_m\le A_n$ for all $n\ge m$, and therefore
\begin{align*}
\sqrt{a_m}
\Big(
\mu_n(\bar f_{n,*})\nabla_vq_n(\bar f_n)
-
\mu_n(\bar f_{n})\nabla_{v_*}q_n(\bar f_{n,*})
\Big)
\end{align*}
is bounded in $L^2((0,t)\times\mathbb{R}^{2d})$.  Here
\begin{align*}
a_m(v-v_*):=A_m(|v-v_*|)\Pi_{(v-v_*)^\perp}.
\end{align*}
Hence, along a further subsequence, it converges weakly in
$L^2((0,t)\times\mathbb{R}^{2d})$.  We identify the weak limit.  It is enough to test
against any $\Phi\in C_c^\infty((0,t)\times\mathbb{R}^{2d})$ and prove
\begin{align}
\label{doal:X}
&\int_0^t\int_{v,v_*}
\Phi\sqrt{a_m}
\Big(
\mu_n(\bar f_{n,*})\nabla_vq_n(\bar f_n)
-
\mu_n(\bar f_{n})\nabla_{v_*}q_n(\bar f_{n,*})
-
2(\nabla_v-\nabla_{v_*})
\sqrt{\bar f\bar f_*}
\Big)
\longrightarrow0 .
\end{align}
By integration by parts in $(v,v_*)$ and the exchange of the two velocity
variables, the absolute value of the left-hand side of \eqref{doal:X} is bounded by
\begin{align*}
\Big|
\int_0^t\int_{v,v_*}
\nabla_v\cdot
\big(
\Phi\sqrt{a_m}
-
\Phi_*\sqrt{a_m}
\big)
\Big(
\mu_n(\bar f_{n,*})q_n(\bar f_n)
-
2\sqrt{\bar f\bar f_*}
\Big)
\Big|,
\end{align*}
where $\Phi_*(s,v,v_*):=\Phi(s,v_*,v)$.  We note that
$\sqrt{a_m}=\sqrt{A_m}\Pi_{(v-v_*)^\perp}$.  Similar to deriving the uniform
bound of $(a_m,b_m)$ in \eqref{bdd:ab-n},
\begin{align*}
|\sqrt{a_m}|
\lesssim
|v-v_*|^{1+\frac{\gamma}{2}},
\qquad
\big|\nabla_v\cdot \sqrt{a_m}\big|
\lesssim
|v-v_*|^{\frac{\gamma}{2}}.
\end{align*}
Together with
$|\Phi-\Phi_*|\lesssim \|\nabla\Phi\|_{L^\infty}|v-v_*|$, this gives
\begin{equation}
\label{bdd:phi-a}
\begin{aligned}
\big|
\nabla_v\cdot
(
\Phi \sqrt{a_m}
-
\Phi_* \sqrt{a_m}
)
\big|
\lesssim
\|\Phi\|_{W^{1,\infty}}
|v-v_*|^{1+\frac{\gamma}{2}}.
\end{aligned}
\end{equation}
On the compact support of $\Phi$, the right-hand side is bounded.  Moreover,
from \eqref{L12-global}, the inequality $0\le q_n\le2\mu_n$, and
$q_n(\bar f_n)\to2\sqrt{\bar f}$ pointwise, we have
\begin{align*}
\mu_n(\bar f_{n,*})q_n(\bar f_n)
\longrightarrow
2\sqrt{\bar f\bar f_*}
\quad\text{strongly in }L^1_{\rm loc}((0,t)\times\mathbb{R}^{2d}).
\end{align*}
Therefore \eqref{doal:X} follows.  Consequently,
\begin{align*}
&\sqrt{a_m}
\Big(
\mu_n(\bar f_{n,*})\nabla_vq_n(\bar f_n)
-
\mu_n(\bar f_{n})\nabla_{v_*}q_n(\bar f_{n,*})
\Big)\\
&\qquad\rightharpoonup
2\sqrt{a_m}
\Big(
\sqrt{\bar f_*}\nabla_v\sqrt{\bar f}
-
\sqrt{\bar f}\nabla_{v_*}\sqrt{\bar f_*}
\Big)
\end{align*}
weakly in $L^2((0,t)\times\mathbb{R}^{2d})$.  The weak lower semicontinuity of the
$L^2$ norm gives
\begin{equation}
\label{limit:H:fixed-m}
\begin{aligned}
&\frac12
\int_0^t
\int_{v,v_*}
A_m \Big|\Pi_{(v-v_*)^\perp}
\Big(
2\sqrt{\bar f_*}\nabla_v\sqrt{\bar f}
-
2\sqrt{\bar f}\nabla_{v_*}\sqrt{\bar f_*}
\Big)\Big|^2\\
\le&{}
\frac12
\liminf_{n\to\infty}
\int_0^t
\int_{v,v_*}
A_m \Big|\Pi_{(v-v_*)^\perp}
\Big(
\mu_n(\bar f_{n,*})\nabla_vq_n(\bar f_n)
-
\mu_n(\bar f_{n})\nabla_{v_*}q_n(\bar f_{n,*})
\Big)\Big|^2.
\end{aligned}
\end{equation}

Finally, by the monotone convergence theorem, 
\begin{align*}
\frac12
\int_0^t
\int_{v,v_*}
A_m \Big|\Pi_{(v-v_*)^\perp}
\Big(
2\sqrt{\bar f_*}\nabla_v\sqrt{\bar f}
-
2\sqrt{\bar f}\nabla_{v_*}\sqrt{\bar f_*}
\Big)\Big|^2
\uparrow
\int_0^t
\cD(\bar f)\,\dd s,
\end{align*}
while $A_m\le A_n$ for $n\ge m$ implies
\begin{align*}
\frac12
\int_0^t
\int_{v,v_*}
A_m \Big|\Pi_{(v-v_*)^\perp}
\Big(
\mu_n(\bar f_{n,*})\nabla_vq_n(\bar f_n)
-
\mu_n(\bar f_{n})\nabla_{v_*}q_n(\bar f_{n,*})
\Big)\Big|^2
\le
\int_0^t
\cD_n(\bar f_n)\,\dd s.
\end{align*}

It remains to consider the entropy at time $t$. We use the elementary pointwise bound
$(r\log r)^-\le |v|^2r+e^{-|v|^2}$ for $r\ge0$, which is the same estimate as in \eqref{LlogL:minus}. Together with the mass--energy bounds already established in the limit, this gives that, for every finite $T$, there exists $C_T>0$ such that
\begin{align*}
h_n(\bar f_n(t,v))
+
C_T(1+|v|^2)\bar f_n(t,v)
&\ge0,
\\
h(\bar f(t,v))
+
C_T(1+|v|^2)\bar f(t,v)
&\ge0.
\end{align*}
Fatou's lemma, the almost everywhere convergence of $\bar f_n$ along the subsequence, the lower semicontinuity of the second moment, and conservation of mass imply
\begin{align}
\cH(\bar f_t)-m_0
\le
\liminf_{n\to\infty}
\cH_n(\bar f_n(t))
\qquad
\text{for a.e. }t\in[0,T].
\label{limit-L2:entropy-lsc}
\end{align}
On the one hand, the entropy inequality implies that the right-hand side is finite, and therefore $\cH(\bar f)<+\infty$. On the other hand, \eqref{LlogL:minus} yields $\cH(\bar f)>-\infty$. Combining \eqref{limit-L2:initial-entropy}, \eqref{limit-L2:entropy-lsc}, and \eqref{limit-L2:diss-lsc-final} with \eqref{H:h-eps}, and then applying Fatou's lemma in expectation, we obtain
\begin{align*}
\bar{\mathbb E}\big[\cH(\bar f_t)\big]
+
\bar{\mathbb E}
\left[
\int_0^t
\cD(\bar f_s)\,\dd s
\right]
\le
\cH(f_0)
+
C\|f_0\|_{L^1}^2.
\end{align*}

It remains to pass the maximal exponential entropy estimate to the limit.  By
\eqref{limit-L2:entropy-lsc} and \eqref{limit-L2:diss-lsc-final}, we have
\begin{align*}
&\operatorname{esssup}_{s\in[0,t]}\left(\lambda\cH(\bar f_s)+\frac{\lambda}{2}\int_0^s\cD(\bar f_r)\,\dd r\right)\\
\le&{}
\liminf_{n\to\infty}
\operatorname{esssup}_{s\in[0,t]}\left(\lambda\cH_n(\bar f_n(s))+\frac{\lambda}{2}\int_0^s\cD_n(\bar f_n(r))\,\dd r\right).
\end{align*}
Indeed, the entropy lower semicontinuity holds for almost every time and the dissipation term is lower semicontinuous on every interval $[0,s]$; the supremum may therefore be taken over a countable dense subset of full-measure times, which gives the displayed inequality.  Since the exponential function is increasing, Fatou's lemma and the equality of laws imply
\begin{align*}
&\bar{\mathbb E}\exp\left[
\operatorname{esssup}_{s\in[0,t]}\left(\lambda\cH(\bar f_s)+\frac{\lambda}{2}\int_0^s\cD(\bar f_r)\,\dd r\right)\right]\\
\le&{}
\liminf_{n\to\infty}\mathbb E
\exp\left[
\operatorname{esssup}_{s\in[0,t]}\left(\lambda\cH_n(f_n(s))+\frac{\lambda}{2}\int_0^s\cD_n(f_n(r))\,\dd r\right)\right]\\
\le&{}
\liminf_{n\to\infty} C_\lambda\exp\left[\lambda\cH_n(f_0)+C\lambda\big(\|f_0\|_{L^1}^3+1\big)\right].
\end{align*}
Since $\cH_n(f_0)\to \cH(f_0)-m_0$ by \eqref{limit-L2:initial-entropy}, the last term is bounded by the right-hand side of \eqref{H:limit-L2-exp}, after increasing the constant $C$.  This proves \eqref{H:limit-L2-exp}.
This proves all the claimed estimates and completes the proof.

\section{Pass to the limits in \texorpdfstring{$L^1$-framework}{}}
\label{sec:limit-L1}

We complete the proof of Theorem \ref{main:thm} in this section. We show the existence of the probability weak solutions to the following It\^o equation
\begin{equation}
\label{app:eq:L-1}
\begin{aligned}
\partial_t f
&=\frac12\tn\cdot\big(A f f_*\tn \log f\big)-\frac{\sqrt{\eps}}{2}\tn\cdot(A^{1/2}  \sigma(f) \sigma( f_*)\xi_K) \\
&\quad+\frac{\eps}{2}\sum_{k= 1}^K\tn\cdot \Big(G_k(v,v_*) \sigma'(f) \sigma(f_{*}) \tn\cdot\big(G_k(v,w) \sigma(f) \sigma(f_{w})\big) \Big).
\end{aligned}
\end{equation}

 We show in Theorem \ref{thm:L-1} 
\begin{theorem}\label{thm:L-1}
Let $d\ge 2$. Let the interaction kernel $A$ be given by \eqref{def:cA}. Let 
$\sigma$ satisfy Assumption~\ref{ass:sigma-R0-0}. Let the family 
$(G_k)_{1\leq k\le K}$ satisfy Assumption~\ref{ass:G-k:app}.  Let
the initial datum satisfy \eqref{ass:initial-main}, and set $m_0:=\|f_0\|_{L^1}$. 

For  every $\lambda>0$, there  exists $\eps_0>0$ such that, for every $\eps\in(0,\eps_0)$, there exists at least one probabilistic weak solution
$(f,(B_k)_{k=1,\ldots,K})$ to \eqref{app:eq:L-1} with initial datum $f_0$ in
the sense of Definition~\ref{def:weak-sol:L1}.

Moreover, we have almost surely, $f\in L^\infty([0,T];L^1_2(\Do;\R_+))$, $|\cH(f_t)|<+\infty$ for almost every $t\in[0,T]$. Furthermore, we have, almost surely, for every $t\in[0,T]$, 
\begin{gather*}
    \int_v (1,v)f_t(v)\dd v=\int_v (1,v)f_0(v)\dd v,\quad
    \int_v|v|^2  f_t(v)\dd v\le  \int_v|v|^2  f_0(v)\dd v,\end{gather*}
    and for almost every $t\in[0,T]$, 
    \begin{gather*}
    \hE\big[\cH(f_t)\big]+\hE\Big[\int_0^t\cD(f)\Big]\le \hE\big[\cH(f_0)\big]+C
    \end{gather*}
for some $C=C(\|\sigma'\|_{L^\infty},C_K,m_0)>0$. 

Furthermore, we have for almost every $t\in[0,T]$, 
\begin{align*}
\hE\exp\left[
\operatorname{esssup}_{s\in[0,t]}\left(\lambda\cH(f_s)+\frac{\lambda}{2}\int_0^s\cD(f_r)\,\dd r\right)\right]
\le C_\lambda\exp\left[\lambda\cH(f_0)+C\lambda\big(\|f_0\|_{L^1}^3+1\big)\right].
\end{align*}
\end{theorem}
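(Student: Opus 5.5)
We approximate $f_0$ by $f_0^\alpha\in L^2\cap L^1_2\cap L\log L(\Do;\R_+)$ and send $\alpha\to0$. Concretely, take $f_0^\alpha:=\min(f_0,\alpha^{-1})$ renormalised to mass $m_0$, or a mollification; in either case $f_0^\alpha\to f_0$ in $L^1_2$ and $\cH(f_0^\alpha)\to\cH(f_0)$ (using $(r\log r)^-\le|v|^2r+e^{-|v|^2}$ and dominated convergence). Proposition~\ref{lem:l-12:log} then yields solutions $f^\alpha$ of \eqref{eq:ito-2} with uniform-in-$\alpha$ bounds:
\begin{itemize}
\item mass and momentum conservation;
\item energy $\le\int|v|^2f_0^\alpha+2\alpha dT m_0$;
\item the averaged entropy inequality;
\item the exponential maximal estimate \eqref{H:limit-L2-exp}, for $\eps<\eps_0(\lambda)$ with $\eps_0$ independent of $\alpha$.
\end{itemize}
The $L^2$ bound is not uniform in $\alpha$, so compactness must come from $L^1$ information.

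\textbf{Step 1: uniform regularity.} From the entropy and energy bounds we get equi-integrability of $f^\alpha$: $\int f|\log f|\lesssim|\cH(f)|+E+C$ and de la Vallée-Poussin. Combined with the energy tails, this gives weak $L^1$ compactness uniformly in $t$. For strong compactness in velocity, we use Desvillettes' weighted Fisher information bound (Lemma~\ref{lem:Des}), which for $\gamma\in(-2,0)$ gives
\[
\int\langle v\rangle^{\gamma}|\nabla\sqrt f|^2\le C(\cH,E,m_0)\,(1+\cD(f)),
\]
with $C$ exponential in $|\cH(f)|$. Here the exponential maximal estimate is used. By Cauchy--Schwarz,
\[
\hE\Big[\Big(\int_0^T\int\langle v\rangle^{\gamma}|\nabla\sqrt{f^\alpha}|^2\Big)^{1/2}\Big]\le \hE\big[e^{2C\sup|\cH|}\big]^{1/2}\,\hE\big[(T+\textstyle\int_0^T\cD)\big]^{1/2},
\]
and both factors are bounded by \eqref{H:limit-L2-exp} with suitable $\lambda$. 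We also need a lower bound $\cH\ge -C(E,m_0)$ to control $e^{C|\cH|}$ by $e^{C\cH}$. This gives $\sqrt{f^\alpha}$ uniformly bounded in $L^2_tH^1_{\loc}$ in probability, hence $f^\alpha$ bounded in $L^1_tW^{1,1}_{\loc}$.

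\textbf{Step 2: time regularity and tightness.} Time regularity in $C^\beta([0,T];H^{-l}_{\loc})$ follows exactly as in Lemma~\ref{lem:time-regularity:L-2-2}, since that estimate uses only $L^1_2$ bounds. Note that $\alpha\int f\Delta\phi$ is harmless. By Aubin--Lions--Simon and the energy tails we obtain tightness in $L^1([0,T];L^1(\Do))\cap C([0,T];H^{-l-1}_{\loc})$. We then apply Jakubowski--Skorokhod as in Section~\ref{sec-5:limit}.

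\textbf{Step 3: passage to the limit.} In the weak formulation \eqref{weak:app-0}, the $\alpha$-term vanishes. For the $a,b$ terms we use $|a|,|b(\nabla\phi-\nabla\phi_*)|\lesssim|v-v_*|^{2+\gamma}\le\langle v\rangle^2+\langle v_*\rangle^2$. This requires uniform integrability of $ff_*(\langle v\rangle^2+\langle v_*\rangle^2)$, i.e. slightly better than energy control. We handle it by truncating at large $|v|$: the test function has compact support, and in the $b$ term the $v_*$ variable is far only where $\nabla\phi(v_*)=0$. The remaining growth is $|v-v_*|^{1+\gamma}|\nabla\phi(v)|$, which is subquadratic, so strong $L^1$ convergence plus energy bounds suffice. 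The $\sigma$-correction terms converge by $\sigma$ Lipschitz, sublinearity, and strong $L^1$ convergence, since $F_i$ are bounded and integrable. The martingale is identified through cross- and quadratic brackets as before.

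\textbf{Step 4: the stated bounds.} Mass, momentum and energy pass to the limit as in Step 5 of Section~\ref{sec-5:limit}; the energy bound loses the $2\alpha dTm_0$ term. Entropy and dissipation are lower semicontinuous: the dissipation by the square-root flux argument of \eqref{doal:X}, the entropy by Fatou together with \eqref{LlogL:minus}.

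The main obstacle is Step 1. We must turn the exponential-in-entropy constant of Lemma~\ref{lem:Des} into a moment bound using \eqref{H:limit-L2-exp}, and this fixes the dependence of $\eps_0$ on $\lambda$.
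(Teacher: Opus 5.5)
Your proposal is correct and follows essentially the paper's route: regularise $f_0$, invoke Proposition~\ref{lem:l-12:log}, and combine the exponential-in-entropy Desvillettes bound with the maximal exponential entropy estimate to get a uniform $L^1_tW^{1,1}_{\loc}$ bound. From there you use Aubin--Lions--Simon and Skorokhod, pass to the limit in the weak formulation, and conclude by lower semicontinuity of entropy and dissipation. The differences are minor: you regularise $f_0$ by a renormalised truncation instead of mollification plus a Maxwellian, and you apply Cauchy--Schwarz to the square root of the time-integrated weighted Fisher information, which needs only the first moment of $\int_0^T\cD$ rather than the $q$-th moments the paper obtains via H\"older.
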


The remainder of this section is devoted to proving Theorem \ref{thm:L-1}.

To show the existence result, we approximate the equation \eqref{app:eq:L-1} by \eqref{eq:ito-2}, for which the existence and properties of solutions were established in Section \ref{sec:limit-L2}. The necessary tightness results are established in Section \ref{sec-6:tight}, and the proof of Theorem \ref{thm:L-1} is completed in Section \ref{sec-6:limit}.

\medskip

We present the approximation in detail.
We do not normalise the mass. Throughout this section the constants are allowed to depend on $m_0=\|f_0\|_{L^1}$, and the approximation below preserves this mass.
Let $\rho\in C^\infty_c(\R^d;\R_+)$ such that $\supp(\rho)\subset B_1(0)$ and $\int_v \rho=1$. Let $\rho^\alpha(v):=\alpha^{-d}\rho(v/\alpha)$, $\alpha\in(0,1)$ be a sequence of mollifiers. Let $M(v)=(2\pi)^{-d/2}e^{-|v|^2/2}$. We define
\begin{align}
\label{def:f-0-alpha}
 f_0^\alpha
:=(1-\alpha)f_0*\rho^\alpha
\,+\,\alpha m_0 M.
\end{align}
Let $\alpha\in(0,1)$ be fixed. Let $f_\alpha$ denote a probabilistic weak solution of the initial value problem of \eqref{eq:ito-2} given in Proposition~\ref{lem:l-12:log}
\begin{equation}\label{eq:ito-2-2} 
\left\{
\begin{aligned}
&\partial_t f
=\alpha\Delta f+Q(f,f)-\frac{\sqrt{\eps}}{2}\tn\cdot( A^{1/2}  \sigma(f) \sigma( f_*)\xi_K)\\
&\qquad+\frac{\eps}{2}\sum_{k= 1}^K\tn\cdot \Big(G_{k}(v,v_*) \sigma'(f) \sigma(f_{*}) \tn\cdot\big(G_{k}(v,w) \sigma(f) \sigma(f_{w})\big) \Big)\\
&f|_{t=0}=f^\alpha_0.
\end{aligned}
\right.
\end{equation}
In particular, the following mass conservation law, energy and entropy inequalities hold uniformly in $\alpha\in(0,1)$
\begin{equation}
    \label{bdd:alpha-m-n}
\begin{gathered}
    \int_v (1,v)f_\alpha(t,v)\dd v=\int_v (1,v)f^\alpha_0(v)\dd v,\quad a.s., \text{ for every }t\in[0,T],\\
    \int_v|v|^2  f_\alpha(t,v)\dd v\le  \int_v|v|^2  f^\alpha_0(v)\dd v +4\alpha dT  \|f_0\|_{L^1(\Do)},\quad a.s., \text{ for every }t\in[0,T],
    \\
\hE\big[\cH(f_\alpha(t))\big]+\hE\Big[\int_0^t\cD(f_\alpha)\Big]\le \hE\big[\cH(f^\alpha_0)\big]+C,\text{ for almost every }t\in[0,T],\\
\hE\exp\left[
\operatorname{esssup}_{s\in[0,t]}\left(\lambda\cH(f_\alpha(s))+\frac{\lambda}{2}\int_0^s\cD(f_\alpha(r))\,\dd r\right)\right]
\le C_\lambda\exp\left[\lambda\cH(f_0^\alpha)+C\lambda\big(\|f_0^\alpha\|_{L^1}^3+1\big)\right],
\end{gathered}
\end{equation}
for almost every $t\in[0,T]$, and for some $C=C\big(m_0,C_{r_0},C_K\big)>0$. The last estimate holds for every $\lambda>0$ and every $0<\eps\le\eps_0(\lambda)$, where $\eps_0(\lambda)$ is the threshold in Proposition~\ref{lem:l-12:log} and is independent of $\alpha$.

\subsection{Tightness results}\label{sec-6:tight}

We begin with some properties of the approximate initial datum $f^\alpha_0$.

\begin{proposition}
Let $f_0\in L^1_2\cap L\log L(\R^d;\R_+)$. Let $f^\alpha_0$ be defined as in \eqref{def:f-0-alpha}. Then we have $f_0^\alpha\in C^\infty(\Do;\R_+)\cap L^1_2(\Do)\cap L\log L(\Do)$. The following uniform bounds hold
\begin{align}
\label{f-alpha-0:bdd}
\sup_{\alpha\in(0,1)}\|f^\alpha_0\|_{L^1}\le  \|f_0\|_{L^1}+1\quad\text{and}\quad
\sup_{\alpha\in(0,1)}\|f^\alpha_0\|_{L^1_2}\le C\|f_0\|_{L^1_2}
\end{align} 
for some constant $C>0$ independent of $\alpha$.

For any $s\in[0,2)$, as $\alpha\to0$, we have 
\begin{align}
\label{f-alpha-0:limit}
f_0^\alpha\to f_0\quad\hbox{in }L^1(\Do),\quad\text{and}\quad \cH(f^\alpha_0)\to\cH(f_0).
\end{align}   
\end{proposition}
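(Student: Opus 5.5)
The plan is to treat the two parts of $f_0^\alpha=(1-\alpha)f_0*\rho^\alpha+\alpha m_0M$ separately and use the convexity of $r\mapsto r\log r$ for the entropy.

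\textbf{Regularity, positivity and uniform bounds.} Smoothness and non-negativity of $f_0^\alpha$ are immediate, because $\rho^\alpha$ and $M$ are smooth and non-negative. Since $\int\rho^\alpha=1$, Young's inequality gives $\|f_0^\alpha\|_{L^1}=(1-\alpha)m_0+\alpha m_0=m_0$, which is even better than the stated bound. For the second moment we use $\operatorname{supp}\rho^\alpha\subset B_\alpha$ and $|v|^2\le 2|v-y|^2+2|y|^2$, which give
$$
\int|v|^2 f_0*\rho^\alpha\le 2\int|v|^2f_0+2\alpha^2m_0 .
$$
The Gaussian term contributes $\alpha m_0 d$. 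Summing, $\|f_0^\alpha\|_{L^1_2}\le C\|f_0\|_{L^1_2}$.

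\textbf{$L^1$ convergence.} Standard mollifier theory gives $f_0*\rho^\alpha\to f_0$ in $L^1$. Hence
$$
\|f_0^\alpha-f_0\|_{L^1}\le\|f_0*\rho^\alpha-f_0\|_{L^1}+2\alpha m_0\to0 .
$$
The stated parameter $s\in[0,2)$ suggests the intended form of this statement is convergence in $L^1_s$. That upgrade follows from the uniform $L^1_2$ bound by interpolation: the tail $\{|v|>R\}$ is controlled by $R^{s-2}$ times the second moment.

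\textbf{Entropy convergence.} Put $\Phi(r)=r\log r$. For the upper bound, convexity of $\Phi$ gives
$$
\Phi(f_0^\alpha)\le(1-\alpha)\Phi(f_0*\rho^\alpha)+\alpha\Phi(m_0M).
$$
Jensen's inequality applied to the convolution, using that $\rho^\alpha\,dy$ is a probability measure, gives $\Phi(f_0*\rho^\alpha)\le\Phi(f_0)*\rho^\alpha$. Integrating, and using that $\Phi(m_0M)\in L^1$, we get
$$
\cH(f_0^\alpha)\le(1-\alpha)\cH(f_0)+C\alpha,
$$
so $\limsup_{\alpha\to0}\cH(f_0^\alpha)\le\cH(f_0)$.

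For the lower bound, pass to a subsequence along which $f_0^\alpha\to f_0$ almost everywhere. We then apply Fatou's lemma to $\Phi(f_0^\alpha)+|v|^2f_0^\alpha+e^{-|v|^2}$, which is non-negative by the pointwise bound used in \eqref{LlogL:minus}. The second moments $\int|v|^2f_0^\alpha$ converge to $\int|v|^2f_0$: the mollified part converges by the explicit computation above together with the tail control, and the Gaussian part tends to $0$. Fatou therefore yields $\liminf_{\alpha\to0}\cH(f_0^\alpha)\ge\cH(f_0)$. Since every subsequence has a further subsequence with this property, the full family converges.

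\textbf{Main obstacle.} The delicate step is the lower semicontinuity of the entropy. It needs the negative part of $\Phi$ to be handled by the moment-convergence argument rather than by domination.
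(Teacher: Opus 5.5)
Your proof is correct and follows essentially the same route as the paper. The paper bounds the entropy from above via convexity of $r\log r$ together with Jensen's inequality for the mollifier, and from below via lower semicontinuity of $\cH$. The one difference is that the paper simply invokes weak lower semicontinuity of $\cH$. You justify that step explicitly by applying Fatou's lemma to $r\log r+|v|^2r+e^{-|v|^2}\ge0$ and using convergence of the second moments. Your version is the more careful one, since on $\R^d$ this step genuinely needs moment control of the negative part.
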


\begin{proof}
The uniform bounds  \eqref{f-alpha-0:bdd} hold, since, for all $\alpha\in(0,1)$,  
\begin{align*}
\|f^\alpha_0\|_{L^1}\le  \|f_0\|_{L^1}+\|M\|_{L^1}\quad\text{and}\quad
\sup_{\alpha\in(0,1)}\|f^\alpha_0\|_{L^1_2}\le \|\rho\|_{L^1_2}\|f_0\|_{L^1_2}+\|M\|_{L^1_2}, 
\end{align*}
where we use the fact 
\begin{align*}
    \langle v\rangle^2\lesssim \langle w\rangle^2\langle v-w \rangle^2, \quad \text{for all }v,w\in\mathbb{R}^d.
\end{align*}

The first limit in \eqref{f-alpha-0:limit} holds by definition. We show the convergence of entropy. By weak lower semi-continuity of $\cH$, we have 
\begin{align*}
    \cH(f_0)\le\liminf_{\alpha\to0}\cH(f^\alpha_0).
\end{align*}
On the other hand, the convexity of $r\mapsto r\log r$ and Jensen's inequality imply that 
\begin{align*}
  \cH(f^\alpha_0)&=\cH\big((1-\alpha)f_0*\rho^\alpha+\alpha M\big)\\
  &\le \cH\big((1-\alpha)f_0*\rho^\alpha\big)+\alpha\cH\big( M\big)\\
  &\le (1-\alpha)\int_{v,w}f_0(w)\log f_0(w)\rho^\alpha(v-w)+\alpha\cH\big( M\big)\\
  &=  (1-\alpha)\cH(f_0)+\alpha \cH\big(M\big)\to\cH(f_0)
\end{align*}
as $\alpha\to0$, since $f_0\in L\log L(\R^d;\R_+)$.

\end{proof}

{
\textcite{Des15} showed that the weighted Fisher information can be bounded by the Landau entropy dissipation up to a constant depending on the entropy $\cH(f)$
 \begin{equation}
 \label{landau-entropy:ineq}
        \begin{aligned}
    \int_v\langle v\rangle ^{\gamma}|\nabla_v \sqrt{f}|^2&\le C(\cH(f))|\big( 1+\cD(f)\big).
        \end{aligned}
     \end{equation}
For deterministic Landau equations, the entropy is uniformly bounded 
\begin{align*}
 \cH(f_t)\le\cH(f_0)\quad \text{for almost all $t\in[0,T]$}. 
\end{align*}
However, in the stochastic case, we only have uniform bounds of expectation \eqref{bdd:alpha-m-n}
\begin{align*}
\hE\big[\cH(f_t)\big]\le\hE\big[\cH(f_0)\big]+C\quad \text{for almost all $t\in[0,T]$}. 
\end{align*}
We follow \cite{Des15} to derive the following refined weighted Fisher information estimate, in which the constant in \eqref{landau-entropy:ineq} has explicit exponential dependence on the entropy.

We denote the positive part of Boltzmann entropy by 
\begin{equation*}
    \cH_+(f):=\int_v (f\log f)^+\dd v.
\end{equation*}
     
\begin{lemma}[\cite{Des15}, Lemma 2, Theorem 3]\label{lem:Des} Let $\gamma\in(-2,0)$.  Let $f\in L^1_2\cap L\log L(\Do;\R_+)$ with $\int_v f>0$. 
Then we have
        \begin{equation*}
        \label{ineq:Des}
        \begin{aligned}
  \int_v\langle v\rangle ^{\gamma}|\nabla_v \sqrt{f}|^2&\le C_1\exp(C_2 \cH_+(f))(1+\cD(f))
        \end{aligned}
     \end{equation*}
     for some constants $C_i=C_i(d,\gamma,\|f\|_{L^1_2})>0$, $i=1,2$.
\end{lemma}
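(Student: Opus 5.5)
The plan is to follow Desvillettes' argument in \cite{Des15} and track how each constant depends on the entropy. The starting point is the standard rewriting of the dissipation. With $g=\sqrt f$, one has
\begin{align*}
\cD(f)=2\int_{v,v_*}A\,\big|\Pi_{(v-v_*)^\perp}\big(g_*\nabla g-g\nabla_* g_*\big)\big|^2 .
\end{align*}
Take $\varphi_i\in\{1,v_1,\dots,v_d\}$, so that $\nabla\varphi_i\in\{0,e_1,\dots,e_d\}$. Multiplying the vector inside the projection by $f_*$-moments and integrating in $v_*$ produces, for each pair $i<j$, the quantity
\begin{align*}
q_{ij}(v)=\int_{v_*} f_* A(|v-v_*|)\big[(v-v_*)_i\,\partial_j g-(v-v_*)_j\,\partial_i g\big]-\text{(terms with }\nabla_*g_*\text{)},
\end{align*}
which involves only the components of $\Pi_{(v-v_*)^\perp}(\ldots)$. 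By Cauchy--Schwarz, $\int_v\langle v\rangle^{\gamma}|q_{ij}|^2\lesssim \|f\|_{L^1_2}\,\cD(f)$. The terms carrying $\nabla_* g_*$ are integrated by parts in $v_*$. This moves the derivative onto the kernel and gives lower-order contributions of the form $\int f_*\,|v-v_*|^{\gamma+1}g$. These are bounded by moments plus $\|f\|_{L^1}$-type quantities; for $\gamma\in(-2,0)$, $|v-v_*|^{\gamma+1}$ is locally integrable.

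The next step is to invert the resulting linear system. The matrix involved is
\begin{align*}
\Sigma(v)=\int f_*\,A(|v-v_*|)\,(v-v_*)\otimes(v-v_*)\,|v-v_*|^{-2}\dd v_*\qquad\text{(or the analogous moment matrix)},
\end{align*}
and one needs it, together with the $\bar a(f)$-type coefficients, to be uniformly nondegenerate with a lower bound of order $\langle v\rangle^{\gamma}$. This recovers $\langle v\rangle^{\gamma}|\nabla g|^2\lesssim \sum_{i,j}|q_{ij}|^2+\text{(lower order)}$ pointwise. This is Lemma 2 of \cite{Des15}: the smallest eigenvalue of $\bar a(f)$ satisfies $\ge c\langle v\rangle^{\gamma}$.

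The main obstacle is making the constant $c$ explicit in the entropy. Nondegeneracy fails only when $f$ concentrates near a line through $v$. I plan to quantify this as follows. Fix a line $\ell$ and a strip $S_\delta$ of width $\delta$ around it, intersected with the ball $B_R$. Since $\int_{\{f>K\}}f\le \frac{\cH_+(f)+C}{\log K}$, we get
\begin{align*}
\int_{S_\delta\cap B_R}f\le K|S_\delta\cap B_R|+\frac{\cH_+(f)+C(\|f\|_{L^1_2})}{\log K}.
\end{align*}
Choose $R$ from the energy bound so that at least half of the mass lies in $B_R$. Then choose $\log K\sim (\cH_+(f)+C)/m_0$, i.e.\ $K\sim e^{C\cH_+(f)}$, and $\delta\sim K^{-1}R^{-(d-1)}$. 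This leaves a fixed fraction of mass outside every strip, which yields $\lambda_{\min}(\bar a(f))(v)\ge c\,\delta^2\langle v\rangle^{\gamma}$ with $\delta^2\sim e^{-2C\cH_+(f)}$. This is precisely where the factor $\exp(C_2\cH_+(f))$ enters.

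To conclude, I will combine the two steps: weight by $\langle v\rangle^{\gamma}$, absorb the lower-order terms using $\|f\|_{L^1_2}$, and collect the constants. The result is $\int\langle v\rangle^\gamma|\nabla\sqrt f|^2\le C_1e^{C_2\cH_+(f)}(1+\cD(f))$, with $C_1,C_2$ depending only on $d,\gamma,\|f\|_{L^1_2}$.
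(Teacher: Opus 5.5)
You have a genuine gap: Steps~1--2, which are supposed to turn $\cD(f)$ into a bound on $\nabla\sqrt f$, do not work as set up. Your Step~3 is fine: the non-concentration argument via $\int_{\{f>K\}}f\le \cH_+(f)/\log K$, with $K\sim e^{C\cH_+(f)}$ and $\delta\sim K^{-1}R^{1-d}$, is correct, and it is also where the paper obtains its exponential.

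Write $x=v-v_*$, $Y=g_*\nabla g-g\nabla_*g_*$ and $[x,Y]_{ij}=x_iY_j-x_jY_i$. Then $\cD(f)=\int|x|^{\gamma}\sum_{i,j}|[x,Y]_{ij}|^2\dd v\dd v_*$, and your quantity is $q_{ij}=\int\varphi_*g_*A\,[x,Y]_{ij}\dd v_*$. Cauchy--Schwarz therefore leaves the factor $\int\varphi_*^2f_*|v-v_*|^{\gamma+4}\dd v_*$. This needs moments of order $\gamma+4$ (order $\gamma+6$ for $\varphi=v_k$) and grows like $\langle v\rangle^{\gamma+4}$. Since $2\gamma+4>0$, the bound $\int\langle v\rangle^{\gamma}|q_{ij}|^2\lesssim\|f\|_{L^1_2}\cD(f)$ fails. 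A Maxwellian plus a bump of mass $L^{-2}$ at distance $L$ already violates it, with ratio of order $L^{2\gamma+4}$.

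The inversion step is also misidentified. The coefficient of $\nabla g(v)$ in $q_{ij}$ is $[\alpha^\varphi,\nabla g]_{ij}$ with $\alpha^\varphi(v)=\int\varphi_*f_*A\,(v-v_*)\dd v_*$. Eliminating between $\varphi=1$ and $\varphi=v_k$ leaves the columns of $\int f_*A\,(v-v_*)\otimes(v-v_*)\dd v_*$, not $\bar a(f)$. That matrix degenerates when $f$ concentrates on hyperplanes through $v$, and its eigenvalues are of order $\langle v\rangle^{\gamma+2}$ and $\langle v\rangle^{\gamma+4}$. A direct computation along these lines gives only a weight like $\langle v\rangle^{\gamma-2}$, with constants depending on high moments. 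The ellipticity $\bar a(f)\ge c\langle v\rangle^{\gamma}$ plays no role in such a pointwise inversion. It is also not the content of \cite[Lemma~2]{Des15}: Lemmas~1--2 there bound from below the determinant of a localised moment matrix.

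There are two ways to repair this. The paper follows Desvillettes and keeps the kernel out of $q$: it sets $q_{ij}=[v-w,\nabla\log f(v)-\nabla\log f(w)]_{ij}$, so that $\cD(f)=\frac14\int|v-w|^{\gamma}ff_w|q|^2$, and tests against the \emph{localised} weights $e^{-\lambda|w|^2}f_w$ and $w_ke^{-\lambda|w|^2}f_w$. After integrating by parts in $w$, this becomes a $3\times3$ system in $v_i\d_j\log f-v_j\d_i\log f$, $\d_i\log f$, $\d_j\log f$ whose determinant $\Delta_\lambda(f)$ does not depend on $v$. Cramer's rule gives $|\nabla\log f(v)|\lesssim|\Delta_\lambda(f)|^{-1}(1+\lambda^{-1})^2\big(\langle v\rangle+\int e^{-\lambda|w|^2}\langle w\rangle f_w|q|\dd w\big)$. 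Cauchy--Schwarz together with $\sup_v\langle v\rangle^{\gamma}\int|v-w|^{-\gamma}f_w\dd w\lesssim\|f\|_{L^1_2}$ then produces exactly the weight $\langle v\rangle^{\gamma}$ using only second moments. The lower bound on $|\Delta_\lambda(f)|$ uses your strip argument (with $K=e^{4\cH_+(f)+1}$). It also carries an $O(\sqrt\lambda)$ error from \cite[Lemmas~1--2]{Des15}, which forces $\lambda\sim e^{-32\cH_+(f)}$, so the factor $(1+\lambda^{-1})^5$ is a second source of the exponential.

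Alternatively, because $\gamma>-2$, you can drop the linear system and use your ellipticity bound directly. One has $\cD(f)=4\int\nabla\sqrt f\cdot\bar a(f)\nabla\sqrt f\dd v-(d-1)(d+\gamma)\int ff_*|v-v_*|^{\gamma}\dd v\dd v_*$. By a Hardy inequality on unit balls (Sobolev if $d=2$), the last integral is at most $\kappa\int\langle v\rangle^{\gamma}|\nabla\sqrt f|^2+C\kappa^{-p}$, with $p=p(d,\gamma)$ and $C$ depending on $\|f\|_{L^1_2}$. Combining $\bar a(f)\ge c\,e^{-C\cH_+(f)}\langle v\rangle^{\gamma}$ with the choice $\kappa\sim e^{-C\cH_+(f)}$ closes the estimate. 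This route is simpler than the paper's, but it is a different argument from the one you wrote.
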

\begin{proof}

We first follow \cite[Theorem 3]{Des15} to establish the weighted Fisher information bounds, keeping explicit all constants that depend on $H$.

Set
\begin{equation*}
m:=\int_v f(v)\,\dd v,\qquad
u:=\frac1m\int_vvf(v)\,\dd v,\qquad
g(z):=\frac1m f(z+u).
\end{equation*}
Then $\int g=1$, $\int zg(z)\,\dd z=0$, and
\begin{equation}
\label{Des-normalization}
\|g\|_{L^1_2}\le \frac{\|f\|_{L^1_2}}{m},\qquad
\cH_+(g)\le \frac{\cH_+(f)}{m}+\log^+\frac1{m},
\qquad
\cD(g)=m^{-2}\cD(f).
\end{equation}
Without loss of generality, we relabel $g$ as $f$ and consider $f$ to have zero momentum and unit mass. Set
$E:=\|f\|_{L^1_2}$ and write
$H:=\cH_+(f)$.

For $x,\,y\in\R^d$, we  use the notation $[x,y]_{ij}= x_iy_j-x_jy_i$. By using of the identity 
\begin{align*}
|x|^2(y\cdot\Pi_{x^\perp}y)
&=\frac{1}{2}\sum_{ij}|x_iy_j-x_j y_i|^2,
\end{align*}
the entropy dissipation in Definition \ref{def:entropy-sigma} can be written as
\begin{align*}
\cD(f)&=\frac12\int_{v,v_*}|v-v_*|^{\gamma+2}ff_*|\tn \log f|^2\\
&=\frac12\int_{v,w}|v-w|^{\gamma}f(v)f(w)|v-w|^2\big((\nabla_v \log f-\nabla_{w}\log f_w)\\
&\qquad\cdot \Pi_{(v-w)^\perp}(\nabla_v \log f-\nabla_{w}\log f_w)\big)\\
&=\frac14\int_{v,w}|v-w|^\gamma f(v)f(w)|q^f|^2.
\end{align*}
In the above,  $q^f=(q_{ij})_{i,j}$ and $q_{ij}$ is defined via
    \begin{equation}
    \label{eq:q_ij}
    \begin{aligned}
&\Big[v,\frac{\nabla_v f}{f}(v)\Big]_{ij}-w_j  \frac{\d_if}{f}(v)  +w_i  \frac{\d_jf}{f}(v)\\
    =&{}v_i  \frac{\d_jf}{f}(w)  -v_j  \frac{\d_if}{f}(w)-\Big(w_i\frac{\d_j f}{f}(w)-w_j\frac{\d_i f}{f}(w)\Big)+q_{ij}.
    \end{aligned}
    \end{equation}

Let $\lambda\in(0,1/2)$ be a constant to be determined later. 
    We test $q^f$ by $e^{-\lambda|w|^2}f(w)$ and $w_ke^{-\lambda|w|^2}f(w)$, $k=1,\dots, d$,  to drive
    \begin{align*}
&\int_{w}e^{-\lambda|w|^2}f(w) q_{ij}= \Big(v_i\frac{\d_jf}{f}(v)-v_j\frac{\d_if}{f}(v)\Big)\int_{w}e^{-\lambda|w|^2} f(w) \\
&\qquad +\frac{\d_if}{f}(v)\int_{w}e^{-\lambda|w|^2}w_jf(w) -\frac{\d_jf}{f}(v)\int_{w}e^{-\lambda|w|^2} w_if(w) \\
&\qquad -2\lambda v_i\int_{w}e^{-\lambda|w|^2}w_jf(w) +2\lambda v_j\int_{w}e^{-\lambda|w|^2}w_if(w),
    \end{align*}
and 
    \begin{align*}
        &\int_{w}e^{-\lambda|w|^2} q_{ij}w_kf(w)=\Big(v_i\frac{\d_jf}{f}(v)-v_j\frac{\d_if}{f}(v)\Big)\int_{w} e^{-\lambda|w|^2} w_if(w) \\
        &\qquad +\frac{\d_if}{f}(v)\int_{w} e^{-\lambda|w|^2} w_iw_jf(w) -\frac{\d_jf}{f}(v)\int_{w}e^{-\lambda|w|^2} w_i^2f(w) \\
        &\qquad +v_i\int_{w}(\delta_{jk}-2\lambda w_jw_k)e^{-\lambda|w|^2} f(w) -v_j\int_{w}(\delta_{ik}-2\lambda w_iw_k)e^{-\lambda|w|^2} f(w)\\
        &\qquad +\int_{\Do}(\delta_{ik}w_j-\delta_{jk}w_i)e^{-\lambda|w|^2} f(w).
    \end{align*}

Choosing $k=i,j$, we  derive a $3\times3$-system of linear equations for $v_i\frac{\d_jf}{f}(v)-v_j\frac{\d_if}{f}(v)$, $\frac{\d_if}{f}(v)$ and $-\frac{\d_jf}{f}(v)$. By Cramer's formula, $\frac{\d_if}{f}(v)$ is given by 
\begin{equation*}
    \frac{\d_if}{f}(x,v)=\frac{\Delta_i(f)}{\Delta_\lambda(f)}.
\end{equation*}
In the above, $\Delta_\lambda(f)$ and $\Delta_i(f)$ are defined as follows
\begin{align*}
\Delta_\lambda(f)&=\operatorname{Det}\Big(\int_{w}e^{-\lambda|w|^2}f(w)\begin{pmatrix}
    1&w_j&w_i\\
    w_i&w_iw_j&w_i^2\\
    w_j&w_j^2&w_iw_j
    \end{pmatrix}\Big),\\
\Delta_i(f)&=\operatorname{Det}\Big(\int_{w}e^{-\lambda|w|^2}f(w)\begin{pmatrix}
    1&Z_1&w_i\\
    w_i&Z_2&w_i^2\\
    w_j&Z_3&w_iw_j
    \end{pmatrix}\Big),
\end{align*}
where
\begin{align*}
    Z_1&=q_{ij}-2\lambda(v_jw_i-v_iw_j),\\
       Z_2&=w_iq_{ij}+v_j-w_j-2\lambda (v_jw_i^2-v_iw_iw_j),\\
          Z_3&=w_jq_{ij}+w_i-v_i-2\lambda(v_jw_iw_j-v_iw_j^2).
\end{align*}

We note the fact $e^{-\lambda|w|^2} \langle w\rangle^2\le 1+\lambda^{-1}$.
Then we have 
\begin{align*}
    \Big| \frac{\nabla_v f}{f}(v)\Big|
    &\le |\Delta_\lambda(f)|^{-1}\Big(\int_{w}e^{-\lambda|w|^2} \langle w\rangle^2f(w) \Big)^2\Big(\int_{w}e^{-\lambda|w|^2}\langle w\rangle (\sum_{i}|Z_i|)f(w) \Big)\\
     &\le C(E) |\Delta_\lambda(f)|^{-1}(1+\lambda^{-1})^2\Big(\langle v\rangle+\int_{w}e^{-\lambda|w|^2}\langle w\rangle f(w)|q| \Big).
\end{align*}

By Cauchy--Schwarz inequality, we have 
    \begin{align*}
        &\int_{v}\langle v\rangle^{\gamma}f(v)\Big| \frac{\nabla_v f}{f}(v)\Big|^2\\
\le&{}C(1+\lambda^{-1})^4|\Delta_\lambda(f)|^{-2}\Big(\int_vf(v)\langle v\rangle^{\gamma}\Big(\langle v\rangle+\int_{w}e^{-\lambda|w|^2}\langle w\rangle f(w)|q| \Big)^2\Big)\\
\le&{}C(1+\lambda^{-1})^4|\Delta_\lambda(f)|^{-2}\Big(E+\int_vf(v)\langle v\rangle^{\gamma}\Big(\int_{w}e^{-\lambda|w|^2}\langle w\rangle f(w)|q| \Big)^2\Big).
    \end{align*} 

    Notice that 
    \begin{align*}
   &\int_vf(v)\langle v\rangle^{\gamma}\Big(\int_{w}e^{-\lambda|w|^2}\langle w\rangle f(w)|q| \Big)^2\\
   \le&{}\Big(\int_{v,w}f(v)f(w)|q|^2|v-w|^{\gamma}\Big)\Big(\sup_{v\in\R^d}\langle v\rangle^{\gamma}\int_{w}|v-w|^{-\gamma}e^{-\lambda|w|^2}\langle w\rangle^2 f(w) \Big)\\
    \le&{}4(1+(2\lambda)^{-1})\cD(f)\Big(\sup_{v\in\R^d}\langle v\rangle^{\gamma}\int_{w}|v-w|^{-\gamma} f(w) \Big)\\
    \le&{}4E(1+(2\lambda)^{-1})\cD(f),
    \end{align*}
    where we use (see e.g. \cite[Lemma 2.10]{DH25a})
   that 
   \begin{align*}
   \sup_{v\in\R^d}\int_{w}|v-w|^{-\gamma} f(w) \le E  \langle v\rangle^{-\gamma} .
   \end{align*}

   Hence, we have
       \begin{align}
       \label{des-final-1}
        &\int_{v}\langle v\rangle^{\gamma}f(v)\Big| \frac{\nabla_v f}{f}(v)\Big|^2\le C(1+\lambda^{-1})^5|\Delta_\lambda(f)|^{-2}\big(1+\cD(f)\big),
    \end{align}
    where $C=C(E)>0$.

Now we give the precise lower bound of $|\Delta_\lambda(f)|$. 
By \cite[Lemma 1\&2]{Des15},  for any $S>0$,
\begin{align*}
    |\Delta_\lambda(f)|\ge \frac12 e^{-2d\lambda}\Big(\inf_{|\theta|\le 1}\inf_{k\neq l}\int_we^{-\lambda|w|^2}|w_k-\theta w_l|^2f(w)\dd w \Big)^2-4d (S(1-e^{-\lambda S^2})+2dS^{-1})^2.
\end{align*}
For any $\delta>0$ and $R>0$, we have 
\begin{align*}
    &\inf_{|\theta|\le 1}\inf_{k\neq l}\int_we^{-\lambda|w|^2}|w_k-\theta w_l|^2f(w)\\
    \ge&{}\delta^2\inf_{|\theta|\le 1}\inf_{k\neq l}\int_{|w_k-\theta w_l|\ge\delta}e^{-\lambda|w|^2}f(w)\\
    \ge&{}\delta^2e^{-\lambda R^2}\inf_{|\theta|\le 1}\inf_{k\neq l}\int_{|w_k-\theta w_l|\ge\delta,\, |w|\le R}f(w)\\
    =&{}\delta^2e^{-\lambda R^2}\Big(1-\int_{|w|\ge R}f(w)-\sup_{|\theta|\le 1}\sup_{k\neq l}\int_{|w_k-\theta w_l|\le\delta,\, |w|\le R}f(w)\Big)\\
    \ge&{}\delta^2e^{-\lambda R^2}\Big(1-\frac{E}{R^2}-\sup_{|\theta|\le 1}\sup_{k\neq l}\int_{\frac{|w_k-\theta w_l|}{\sqrt{1+\theta^2}}\le\frac{\delta}{\sqrt{1+\theta^2}},\, |w|\le R}f(w)\Big).
\end{align*}

Notice that 
\begin{align*}
\Big|\Big\{\frac{|w_k-\theta w_l|}{\sqrt{1+\theta^2}}\le\frac{\delta}{\sqrt{1+\theta^2}},\, |w|\le R|\Big\}\Big|\le 2\delta (2R)^{d-1}.    
\end{align*}

We split the integration into $f\ge K$ and $0\le f\le K$ for  $K:=e^{4H+1}>1$. Then we have 
\begin{align*}
    &\inf_{|\theta|\le 1}\inf_{k\neq l}\int_we^{-\lambda|w|^2}|w_k-\theta w_l|^2f(w)\\
    \ge&{}\delta^2e^{-\lambda R^2}\Big(1-\frac{E}{R^2}-2\delta K(2R)^{d-1}-\frac{H}{\log K}\Big).
\end{align*}
Then we arrive at the following lower bound as in \cite[Lemma 2]{Des15}
\begin{align*}
    |\Delta_\lambda(f)|\ge \frac12 e^{-2d\lambda}\delta^4e^{-2\lambda R^2}\Big(1-\frac{E}{R^2}-2\delta K(2R)^{d-1}-\frac{H}{\log K}\Big)^2-4d (S(1-e^{-\lambda S^2})+2dS^{-1})^2.
\end{align*}

We take $R=2\sqrt{E}$ and $\delta=e^{-4H-1}2^{-(d+2)}R^{-(d-1)}$, which lead to  $\Big(1-\frac{E}{R^2}-2\delta K(2R)^{d-1}-\frac{H}{\log K}\Big)\ge 1/4$. We recall that $\lambda\in(0,1/2)$. Then we have 
\begin{align*}
    |\Delta_\lambda(f)|\ge c_0e^{-16H}-4d (S(1-e^{-\lambda S^2})+2dS^{-1})^2,\quad c_0:=2^{-8d-11} e^{-d-4- 4E^2}E^{-2(d-1)}.
\end{align*}
We note that, for $S\ge0$, the quantity $4d(S(1-e^{-\lambda S^2})+2dS^{-1})^2$ attains its minimum, whose value is $c_1 d^{3/2}\sqrt{\lambda}$ for some constant $c_1>0$.
Now we take 
\begin{align*}
\lambda=\min\Big(\frac12,\frac{c_0^2}{4d^3c_1^2}  e^{-32H}\Big).
\end{align*}
As a consequence, we have 
\begin{align*}
    |\Delta_\lambda(f)|\ge \frac{c_0}{2}e^{-16H}.
\end{align*}

Substituting $\lambda$ to the weighted Fisher information bound \eqref{des-final-1}, we have 
 \begin{align}
        &\int_{v}\langle v\rangle^{\gamma}f(v)\Big| \frac{\nabla_v f}{f}(v)\Big|^2\lesssim_E (1+e^{160 H})e^{32 H}\big(1+\cD(f)\big)\le e^{192 H}\big(1+\cD(f)\big).
    \end{align}
    Returning to the density before normalization, we use
$\langle v-u\rangle\le\sqrt2\langle v\rangle\langle u\rangle$ and
$\gamma<0$ to obtain
\begin{align*}
\int_v\langle v\rangle^\gamma|\nabla\sqrt {f(v)}|^2\,\dd v
&\le 2^{-\gamma/2}\langle u\rangle^{-\gamma}m
\int_z\langle z\rangle^\gamma|\nabla\sqrt {g(z)}|^2\,\dd z.
\end{align*}
This incurs only an additional constant factor. Therefore, the proof is complete. 
\end{proof}
}






By using of Lemma \ref{lem:Des}, we establish uniform gradient estimates. 
\begin{lemma}[Uniform gradient estimates for $(f_\alpha)_{\alpha\in(0,1)}$]
    \label{lem:gradient-es}
Let $f_0$ satisfy Assumption~\ref{ass:initial-main}, and let
$(f_0^{\alpha})_{\alpha\in(0,1)}$ denote the regularisation of the initial
datum given in \eqref{def:f-0-alpha}. For each $\alpha\in(0,1)$, let
$(f_\alpha,B_\alpha)$ be a probabilistic weak solution to \eqref{eq:ito-2} given in Proposition \ref{lem:l-12:log} on
$$
(\Omega_\alpha,\mathcal F_\alpha,(\mathcal F_{\alpha,t})_{t\in[0,T]},\mathbb P_\alpha)
$$
with initial datum $f_0^\alpha$. We denote by $\mathbb E_\alpha$ the expectation with respect to $\mathbb P_\alpha$.

Let $C_2^{\rm Des}$ be the constant $C_2$ in Lemma~\ref{lem:Des}, and assume $0<\eps\le\eps_0(C_2^{\rm Des})$.
Then, for every $R>1$, there exists a constant
$$
C=C\big(d,R,\gamma,r_0,C_K,\|f_0\|_{L^1}\big)>0,
$$
independent of $\alpha$, such that
\begin{align}
\label{gradient-es}
\sup_{\alpha\in(0,1)}
\mathbb E_\alpha
\int_0^T
\|\nabla_v f_\alpha(s)\|_{L^1(B_R)}
\,\dd s
\le C.
\end{align}
\end{lemma}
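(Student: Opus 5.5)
The plan is to write $\nabla_v f_\alpha = 2\sqrt{f_\alpha}\,\nabla_v\sqrt{f_\alpha}$ and apply Cauchy--Schwarz on $B_R$, inserting the weight $\langle v\rangle^{\gamma}$. Since $\langle v\rangle^{-\gamma}\le \langle R\rangle^{|\gamma|}$ on $B_R$, for a.e.\ $s$ we get
\begin{align*}
\|\nabla_v f_\alpha(s)\|_{L^1(B_R)}
\le 2\langle R\rangle^{|\gamma|/2}\Big(\int_v f_\alpha(s)\Big)^{1/2}\Big(\int_v\langle v\rangle^{\gamma}|\nabla_v\sqrt{f_\alpha(s)}|^2\Big)^{1/2}.
\end{align*}
The mass factor equals $\|f_0^\alpha\|_{L^1}$ by \eqref{bdd:alpha-m-n}. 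This is bounded uniformly by \eqref{f-alpha-0:bdd}.

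Next I will apply Lemma~\ref{lem:Des} pointwise in $s$ to $f_\alpha(s)$. This is allowed because $f_\alpha(s)\in L^1_2\cap L\log L$ for a.e.\ $s$. The constants $C_1,C_2$ depend only on $d,\gamma$ and $\|f_\alpha(s)\|_{L^1_2}$. That norm is bounded uniformly in $\alpha,s,\omega$ by the almost sure mass and energy bounds in \eqref{bdd:alpha-m-n} together with \eqref{f-alpha-0:bdd}, with $\alpha<1$. So there are uniform constants with
\[
\int_v\langle v\rangle^{\gamma}|\nabla_v\sqrt{f_\alpha}|^2\le C_1e^{C_2\cH_+(f_\alpha)}(1+\cD(f_\alpha)).
\]
I will convert $\cH_+$ to $\cH$ using \eqref{LlogL:minus}: $\cH_-(f)\le \int|v|^2f+C_d$. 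This gives $\cH_+\le \cH+C(m_0,E_0)$.

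Then $\sqrt{xy}\le \frac12(x+y)$ gives
\begin{align*}
\int_0^T\|\nabla_v f_\alpha\|_{L^1(B_R)}\,\dd s\lesssim_R \int_0^T\Big(e^{C_2\cH(f_\alpha(s))}+\cD(f_\alpha(s))\Big)\dd s
\le T\operatorname{esssup}_{s\le T}e^{C_2\cH(f_\alpha(s))}+\int_0^T\cD(f_\alpha)\,\dd s.
\end{align*}
Multiplying the dissipation by an exponential would not work without further control, hence this Young split. I will take expectations. The first term is bounded by the exponential estimate in \eqref{bdd:alpha-m-n} with $\lambda=C_2^{\rm Des}$; this is exactly why $\eps\le\eps_0(C_2^{\rm Des})$ is assumed. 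Its right-hand side involves $\cH(f_0^\alpha)$, which is bounded uniformly in $\alpha$ by the Jensen bound $\cH(f_0^\alpha)\le(1-\alpha)\cH(f_0)+\alpha\cH(M)$ from the proof of \eqref{f-alpha-0:limit}. The second term is bounded by the averaged entropy inequality in \eqref{bdd:alpha-m-n}, since $\hE\cH(f_\alpha(t))$ is bounded below uniformly via \eqref{LlogL:minus}. Alternatively, $\lambda\int\cD\le 2\,\mathrm{esssup}(\ldots)$ inside the same exponential bound also works.

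The main obstacle is the exponential dependence on the entropy in Lemma~\ref{lem:Des}. A Hölder split $\hE[e^{C\cH}(1+\cD)]$ would need both exponential entropy moments and higher moments of $\int\cD$. I avoid this with the square-root/Young structure above, which decouples the two factors so that only $\hE\sup e^{C_2\cH}$ and $\hE\int\cD$ are needed. One point needs checking: the measurability of $s\mapsto \cH_+(f_\alpha(s))$ under esssup, and whether the bound holds for a.e.\ $s$ rather than every $s$. Integration in time only needs a.e.\ $s$, so this should be harmless.
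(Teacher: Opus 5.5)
Your argument is correct. It differs from the paper in how the exponential entropy factor is decoupled from the dissipation.

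The paper applies Cauchy--Schwarz jointly in $(\omega,t,v)$. This reduces the claim to a bound on the expected weighted Fisher information $\hE_\alpha\int_0^T\int_v\langle v\rangle^{\gamma}|\nabla_v\sqrt{f_\alpha}|^2$. After Lemma~\ref{lem:Des}, the paper must then control the product $\hE_\alpha\int_0^T e^{C_2^{\rm Des}\cH_+(f_\alpha)}(1+\cD(f_\alpha))\,\dd t$. It does so by H\"older's inequality \eqref{holder-desvillettes-factor}. The first factor uses the exponential estimate at $\lambda=pC_2^{\rm Des}$ with $p>1$. The second uses $q$-th moments of $\int_0^T\cD$, obtained from the exponential bound via $x^q\le C_{q,\delta}e^{\delta x}$.

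You apply Cauchy--Schwarz only in $v$ at fixed $(s,\omega)$. The Desvillettes bound then enters under a square root, and $\sqrt{xy}\le\frac12(x+y)$ splits it additively. You therefore need only the exponential estimate at exactly $\lambda=C_2^{\rm Des}$, plus the first moment of $\int_0^T\cD$ from the averaged entropy inequality.

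This is shorter, and it matches the stated hypothesis better. The natural choice $\eps_0(\lambda)\simeq(8\lambda)^{-1}$ from Section~\ref{sec-4:entropy-3} is decreasing in $\lambda$. Hence the paper's use of $\lambda=pC_2^{\rm Des}$ with $p>1$ formally requires $\eps$ smaller than $\eps_0(C_2^{\rm Des})$. Your route works under $\eps\le\eps_0(C_2^{\rm Des})$ as written.

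In exchange, the paper's route gives the stronger intermediate bound \eqref{weighted-fisher-alpha-bound} on the expected weighted Fisher information itself. Yours controls only $\hE_\alpha\int_0^T\big(\int_v\langle v\rangle^\gamma|\nabla_v\sqrt{f_\alpha}|^2\big)^{1/2}\dd s$. That is exactly enough for \eqref{gradient-es} and for the Aubin--Lions--Simon tightness argument in Lemma~\ref{lem:tight-L1}.
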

\begin{proof}
With the aid of the entropy dissipation estimates for $(f_{\alpha})_{\alpha\in(0,1)}$, for every $R>1$, we obtain
\begin{align*}
\mathbb{E}\int_0^T\int_{B_R} |\nabla_v f_\alpha|\leq m_0^{\frac12}\left(\mathbb{E}\int_0^T\int_{B_R} |\nabla_v \sqrt{f_\alpha}|^2\right)^{1/2}
\lesssim
\left(\mathbb{E}\int_0^T\int_v
\langle v\rangle^\gamma
|\nabla_v \sqrt{f_\alpha}|^2\right)^{1/2}. 
\end{align*}

We now use Lemma~\ref{lem:Des} together with the exponential entropy estimate from Proposition~\ref{lem:l-12:log}.  The decomposition used below is the pointwise positive--negative decomposition of the integrand $g\log g$, not the positive--negative part of the number $\cH(g)$.  More precisely, set
\begin{equation*}
    \cH_+(g):=\int_v (g\log g)^+\,\dd v,
    \qquad
    \cH_-(g):=\int_v (g\log g)^-\,\dd v.
\end{equation*}
Then
\begin{equation*}
    \cH(g)=\cH_+(g)-\cH_-(g),
    \qquad
    |\cH(g)|\le \cH_+(g)+\cH_-(g).
\end{equation*}
First we control the negative part of the integrand $g\log g$ by the mass and energy.  For any $\ell\in(0,1)$,
\begin{equation*}
    (r\log r)^-\le C_\ell r^{1-\ell},
    \qquad r\ge0.
\end{equation*}
Choosing $\ell>0$ sufficiently small so that $\langle v\rangle^{-2(1-\ell)/\ell}\in L^1(\R^d)$ and using H\"older's inequality, we get
\begin{align}
\label{entropy-negative-energy-control}
\cH_-(f_\alpha(t))
&\le C_\ell\int_v f_\alpha(t,v)^{1-\ell}\,\dd v\notag\\
&\le C_\ell
\left(\int_v \langle v\rangle^2 f_\alpha(t,v)\,\dd v\right)^{1-\ell}
\left(\int_v \langle v\rangle^{-2(1-\ell)/\ell}\,\dd v\right)^\ell
\le C,
\end{align}
uniformly in $\alpha$ and for all $t\in[0,T]$, almost surely, by the energy bound in \eqref{bdd:alpha-m-n}.  Consequently,
\begin{equation}
\label{entropy-abs-positive-control}
   \cH_+(f_\alpha(t))=\cH(f_\alpha(t))+\cH_-(f_\alpha(t))\le \cH(f_\alpha(t))+C.
\end{equation}
This is the point where the positive part in Lemma~\ref{lem:Des} is used.  Applying Lemma~\ref{lem:Des} with \eqref{entropy-abs-positive-control}, we obtain
\begin{equation}
\label{desvillettes-alpha-use}
\int_v\langle v\rangle^\gamma|\nabla_v\sqrt{f_\alpha(t)}|^2\,\dd v
\le C\exp\big(C_2^{\rm Des}\cH_+(f_\alpha(t))\big)
\big(1+\cD(f_\alpha(t))\big).
\end{equation}
Moreover, thanks to \eqref{entropy-abs-positive-control}, the exponential estimate in \eqref{bdd:alpha-m-n} controls the positive part of the integrand $f_\alpha\log f_\alpha$: for every $\lambda>0$ and $0<\eps\le\eps_0(\lambda)$,
\begin{equation}
\label{positive-entropy-exp-control}
\sup_{\alpha\in(0,1)}
\mathbb E_\alpha\operatorname{esssup}_{t\in[0,T]}\exp\big(\lambda\cH_+(f_\alpha(t))\big)
\le C_\lambda.
\end{equation}
We next justify the factor $1+\cD(f_\alpha)$ in \eqref{desvillettes-alpha-use}.  Fix $p,q>1$ with $p^{-1}+q^{-1}=1$.  By \eqref{entropy-negative-energy-control},
\begin{equation*}
    \exp\big(C_2^{\rm Des}|\cH(f_\alpha(t))|\big)
    \le C\exp\big(C_2^{\rm Des}\cH_+(f_\alpha(t))\big).
\end{equation*}
Hence H\"older's inequality gives
\begin{align}
\label{holder-desvillettes-factor}
&\mathbb E_\alpha\int_0^T
\exp\big(C_2^{\rm Des}|\cH(f_\alpha(t))|\big)
\big(1+\cD(f_\alpha(t))\big)\,\dd t\notag\\
&\quad\le C
\left(\mathbb E_\alpha
\exp\big(\operatorname{esssup}_{t\in[0,T]}pC_2^{\rm Des}\cH_+(f_\alpha(t))\big)\right)^{1/p}
\left(\mathbb E_\alpha\Big(\int_0^T
1+\cD(f_\alpha(t))\,\dd t\Big)^q\right)^{1/q}.
\end{align}
The first factor is bounded by \eqref{positive-entropy-exp-control}, with $\lambda=pC_2^{\rm Des}$.  For the second factor, it can be controlled by the elementary inequality $x^q\le C_{q,\delta}\exp(\delta x)$, $x\ge0$, combining with \eqref{exponential-bound-2}. Therefore we conclude that 
\begin{equation}
\label{exp-desvillettes-time-control}
\sup_{\alpha\in(0,1)}
\mathbb E_\alpha\int_0^T
\exp\big(C_2^{\rm Des}|\cH(f_\alpha(t))|\big)
\big(1+\cD(f_\alpha(t))\big)\,\dd t
\le C.
\end{equation}
Here the smallness of $\eps$ is exactly the small-noise condition entering Proposition~\ref{lem:l-12:log}: in the exponential martingale argument the quadratic-variation compensator is multiplied by $\eps\lambda^2$, and this term is absorbed by the dissipation part by taking $\eps\le\eps_0(\lambda)$.
Combining Lemma~\ref{lem:Des} with \eqref{exp-desvillettes-time-control}, we obtain
\begin{equation}
\label{weighted-fisher-alpha-bound}
\sup_{\alpha\in(0,1)}
\mathbb E_\alpha\int_0^T\int_v
\langle v\rangle^\gamma|\nabla_v\sqrt{f_\alpha(t)}|^2\,\dd v\dd t
\le C.
\end{equation}
Finally, for $R>1$, since $\langle v\rangle^{-\gamma}$ is bounded on $B_R$ and the mass is conserved,
\begin{align*}
\mathbb E_\alpha\int_0^T\int_{B_R}|\nabla_v f_\alpha|\,\dd v\dd t
&=2\mathbb E_\alpha\int_0^T\int_{B_R}\sqrt{f_\alpha}\,|\nabla_v\sqrt{f_\alpha}|\,\dd v\dd t\\
&\le C_R\|f_0\|_{L^1}^{1/2}
\left(\mathbb E_\alpha\int_0^T\int_v
\langle v\rangle^\gamma|\nabla_v\sqrt{f_\alpha}|^2\,\dd v\dd t\right)^{1/2}
\le C_R.
\end{align*}



This completes the proof.
\end{proof}

We establish a refined version of Lemmas~\ref{lem:time-regularity} and \ref{lem:time-regularity:L-2-2}, yielding a time-regularity estimate. Moreover, as follows from the proof of Proposition~\ref{lem:l-12:log}, this estimate holds uniformly with respect to $\alpha\in(0,1)$. 
\begin{lemma}[Uniform local time regularity for $(f_\alpha)_{\alpha\in(0,1)}$]
\label{lem:time-regularity:L-1}
Assume the hypotheses of Lemma~\ref{lem:gradient-es}. Let
$\chi\in C_c^\infty(\R^d)$, $l>d/2+2$, and $\beta\in(0,1/2)$. Then,
for every
\begin{align*}
p>\frac{1}{1/2-\beta},
\end{align*}
there exists a constant
\begin{align*}
C=C\big(p,\beta,l,T,\chi,d,\gamma,r_0,C_K,
\|f_0\|_{L^1_2}\big)>0,
\end{align*}
independent of $\alpha$, such that
\begin{align}
\label{time-regularity:L-1}
\sup_{\alpha\in(0,1)}
\mathbb E_\alpha\big[
\|f_\alpha\chi\|_{C^\beta([0,T];H^{-l}(\R^d))}^{p}
\big]\le C.
\end{align}
In particular, the laws of $(f_\alpha\chi)_{\alpha\in(0,1)}$ are tight in
$C^{\beta'}([0,T];H^{-l-1}(\R^d))$ for every $\beta'<\beta$.
\end{lemma}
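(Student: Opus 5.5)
The plan is to follow the structure of the proofs of Lemmas~\ref{lem:time-regularity} and \ref{lem:time-regularity:L-2-2}. We write the localised weak equation for $f_\alpha$ as $f_\alpha(t)\chi=f_0^\alpha\chi+D_\alpha(t)+M_\alpha(t)$. Here $D_\alpha$ collects the artificial diffusion, the Landau drift in its $(a,b)$ weak form, and the two Stratonovich-to-It\^o corrections (the $F_1$ term and the $F_4$ term after integrating by parts in $v$). The term $M_\alpha$ is the stochastic integral. We test against $\phi\in H^l$ with $\|\phi\|_{H^l}\le1$. Since $l>d/2+2$, Sobolev embedding gives $\|\chi\phi\|_{W^{2,\infty}}\le C_{\chi,l}$. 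All estimates will use only the mass and energy bounds \eqref{bdd:alpha-m-n} and the uniform bounds \eqref{f-alpha-0:bdd}, which are pathwise and uniform in $\alpha$. We will not use the $L^2$ bound, which degenerates as $\alpha\to0$.

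\textbf{Deterministic increments.} We bound each term by $|t-s|$ times a constant independent of $\alpha$.
\begin{itemize}
\item The diffusion term is bounded by $\alpha\int_s^t\|f_\alpha\|_{L^1}\|\Delta(\chi\phi)\|_{L^\infty}\le C|t-s|$.
\item For the $a$-term we use $|a|\lesssim|v-v_*|^{2+\gamma}\le\langle v\rangle^2+\langle v_*\rangle^2$, so it is bounded by $C\|f_\alpha\|_{L^1}\|f_\alpha\|_{L^1_2}$.
\item For the $b$-term we use $|b\cdot(\nabla(\chi\phi)-\nabla_*(\chi\phi)_*)|\lesssim\|\chi\phi\|_{W^{2,\infty}}|v-v_*|^{2+\gamma}$, which gives the same bound.
\item The correction terms are controlled by $\sigma\le\min(\sqrt r,C_{r_0}r)$, $|\sigma\sigma'|\lesssim r$ and $\varsigma\le C_{r_0}^2 r$, together with the $W^{1,\infty}$ bounds on $F_1,F_4$. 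They are bounded by $C\|f_\alpha\|_{L^1}^3$.
\end{itemize}
Hence $\|D_\alpha(t)-D_\alpha(s)\|_{H^{-l}}\le C|t-s|(1+\|f_0\|_{L^1_2}^3)$ almost surely. This is exactly as in \eqref{deterministic-time-increment}, with constants independent of $\alpha$.

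\textbf{Martingale increments.} After integrating by parts, the integrand is $\int\nabla_v(\chi\phi)\cdot G_k\sigma(f_\alpha)\sigma(f_{\alpha,*})$. It is bounded by $C\|\sigma(f_\alpha)\|_{L^1}^2\le C\|f_0^\alpha\|_{L^1}^2\le C(\|f_0\|_{L^1}+1)^2$, using Remark~\ref{rmk:sigma-f:L2} and the bounded $G_k$. The Burkholder--Davis--Gundy inequality then gives $\mathbb E_\alpha\|M_\alpha(t)-M_\alpha(s)\|_{H^{-l}}^p\le C_p|t-s|^{p/2}$.

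\textbf{Conclusion.} Combining the two estimates gives $\mathbb E_\alpha\|f_\alpha\chi(t)-f_\alpha\chi(s)\|_{H^{-l}}^p\le C|t-s|^{p/2}$. Since $p(1/2-\beta)>1$, Kolmogorov's criterion bounds the H\"older seminorm in $C^\beta([0,T];H^{-l})$. The sup-norm part is bounded by $\|f_\alpha\|_{L^1}\|\chi\|_{L^\infty}\sup\|\phi\|_{L^\infty}\lesssim\|f_0\|_{L^1}+1$. Tightness in $C^{\beta'}([0,T];H^{-l-1})$ then follows from Arzel\`a--Ascoli, the compact embedding $H^{-l}_{\supp\chi}\Subset H^{-l-1}$, and Chebyshev.

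The main obstacle is the singular $b$-term. It needs the cancellation from the difference of gradients, together with $\gamma>-2$ and the energy bound, to close in $L^1$ without $L^2$ control. A second point to check is that the mollified data and the $4\alpha dT$ energy correction are uniform in $\alpha$; \eqref{f-alpha-0:bdd} handles this.
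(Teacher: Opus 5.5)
Your proposal is correct and takes essentially the route the paper intends. The paper gives no separate proof here and refers back to Lemma~\ref{lem:time-regularity:L-2-2}, whose deterministic bound (via $|a|\lesssim|v-v_*|^{2+\gamma}$, the gradient-difference cancellation in the $b$-term, and the linear bounds on $\sigma,\sigma\sigma',\varsigma$ in the corrections) and martingale bound (via $\|\sigma(f)\|_{L^1}\lesssim\|f\|_{L^1}$) use only mass and energy, with constants already independent of $\alpha$; your explicit avoidance of the $\alpha$-dependent $L^2$ estimate is exactly what makes the bound uniform as $\alpha\to0$.
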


\begin{lemma}\label{lem:tight-L1}
Assume the hypotheses of Lemma~\ref{lem:gradient-es}. Then
$(f_{\alpha})_{\alpha\in(0,1)}$ is tight in
$L^1([0,T];L^1(\mathbb{R}^d))\cap
C^{\beta'}([0,T];H^{-l-1}_{\loc}(\R^d))$, for every
$\beta'\in(0,1/2)$ and $l>d/2+2$. 
\end{lemma}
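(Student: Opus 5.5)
The plan is to obtain tightness in each of the two spaces separately and then combine them. Tightness of a sequence of laws on an intersection of two spaces follows from tightness on each factor, because the intersection embeds continuously into the product and compact sets of the form $K_1\cap K_2$ are compact in the intersection topology.

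The $C^{\beta'}([0,T];H^{-l-1}_{\loc}(\R^d))$ part comes directly from Lemma~\ref{lem:time-regularity:L-1}. For each $R$ it gives tightness of $(f_\alpha\chi_R)$ in $C^{\beta'}([0,T];H^{-l-1})$. Taking a countable family $R\in\N$ and using the Fr\'echet (projective limit) structure of the local space, a diagonal choice of compact sets $K_R$ with $\mathbb P_\alpha(f_\alpha\chi_R\notin K_R)\le \eta 2^{-R}$ yields tightness in the local space.

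For $L^1([0,T];L^1(\R^d))$, I would first prove tightness in $L^1([0,T];L^1(B_R))$ for each fixed $R$. I would apply an Aubin--Lions--Simon type compactness criterion to the set
\[
\Big\{ g:\ \int_0^T\|\nabla_v g\|_{L^1(B_{2R})}\,\dd t\le M,\ \ \|g\|_{L^\infty_tL^1_v}\le m_0+1,\ \ \|g\chi_R\|_{C^\beta([0,T];H^{-l})}\le M\Big\}.
\]
This set is relatively compact in $L^1([0,T];L^1(B_R))$, since $W^{1,1}(B_{2R})\Subset L^1(B_R)$ compactly, $L^1\hookrightarrow H^{-l}$ continuously (because $l>d/2$), and the time-equicontinuity comes from the H\"older bound. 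By Chebyshev's inequality, the probability of leaving this set is $O(1/M)$ uniformly in $\alpha$. This uses the gradient estimate of Lemma~\ref{lem:gradient-es}, Lemma~\ref{lem:time-regularity:L-1}, and mass conservation in \eqref{bdd:alpha-m-n}.

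Next I would remove the localisation. The energy bound in \eqref{bdd:alpha-m-n} holds almost surely and uniformly in $\alpha$, via \eqref{f-alpha-0:bdd}, and gives
\[
\int_0^T\int_{B_R^c}f_\alpha\le T R^{-2}C\|f_0\|_{L^1_2}
\]
pathwise. I would then define the compact set $K=\{g:\ g\chi_R\in K_R\ \forall R,\ \int_0^T\int_{B_R^c}|g|\le CR^{-2}\}$. This set is totally bounded in $L^1([0,T];L^1(\R^d))$, because an $\eta$-net is obtained by choosing $R$ with $CR^{-2}<\eta/2$ and using the $\eta/2$-net of $K_R$. It is also closed, and it carries probability at least $1-\eta$ uniformly in $\alpha$.

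The main obstacle is the compactness step in $L^1_{t,v}$. The gradient bound holds only in expectation, not almost surely, so it must be combined with the time regularity in a weak norm through the Simon lemma in the $L^1$ setting. An additional technical issue is that the paths are elements of $L^1_{t,v}$ and not of the $L^2$ framework; this is harmless here because the $H^{-l}$ embedding of $L^1$ works for $l>d/2$.
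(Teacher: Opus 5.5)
Your proposal is correct and follows essentially the same route as the paper. The paper also first gets local $L^1_{t,v}$ tightness from the Aubin--Lions--Simon criterion, combining the local $W^{1,1}$ bound of Lemma~\ref{lem:gradient-es} with the $C^\beta([0,T];H^{-l})$ bound of Lemma~\ref{lem:time-regularity:L-1}, and then removes the localisation using the pathwise energy bound in \eqref{bdd:alpha-m-n}. You make explicit what the paper leaves implicit: the Chebyshev step, the diagonal choice of compact sets over $R\in\N$, and the compactness of $K_1\cap K_2$, which uses that both spaces embed continuously into distributions.
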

\begin{proof}
Combining Lemmas~\ref{lem:gradient-es} and \ref{lem:time-regularity:L-1}, and applying the Aubin--Lions--Simon compactness criterion, we deduce that $(f_\alpha)_{\alpha\in(0,1)}$ is tight in $L^1([0,T];L^1_{\mathrm{loc}}(\mathbb{R}^d))\cap C^{\beta'}([0,T];H^{-l-1}_{\loc}(\R^d))$, for every $\beta'\in(0,1/2)$ and $l>d/2+2$. Furthermore, the energy estimate \eqref{bdd:alpha-m-n} yields the tightness of $(f_\alpha)_{\alpha\in(0,1)}$ in $L^1([0,T];L^1(\mathbb{R}^d))\cap C^{\beta'}([0,T];H^{-l-1}_{\loc}(\R^d))$. This completes the proof.
\end{proof}

\subsection{Proof of Theorem~\ref{thm:L-1}}\label{sec-6:limit}
We are now in a position to prove Theorem~\ref{thm:L-1}.

We divide the proof into four steps, following the structure of the proof in
Subsection~5.2.  The approximation parameter is now the viscosity parameter
$\alpha\downarrow0$, and the approximating solutions are the solutions
$(f_\alpha,B_\alpha)$ to \eqref{eq:ito-2} with initial datum
$f_0^\alpha$.

\medskip

\noindent\textbf{Step 1: Compactness and Skorokhod representation.}
Let $B_\alpha=(B_\alpha^k)_{1\le k\le K}$.  By Lemma~\ref{lem:tight-L1}
and the tightness of Brownian laws on $C([0,T];\R^K)$, the laws of
$(f_\alpha,B_\alpha)$ are tight on
\begin{align*}
\mathbb X_1
:=
\Big(L^1([0,T];L^1(\R^d))
\cap C^{\beta'}([0,T];H^{-l-1}_{\loc}(\R^d))\Big)
\times C([0,T];\R^K),
\end{align*}
for every $\beta'\in(0,1/2)$ and $l>d/2+2$.  Hence, along a sequence
$\alpha\downarrow0$, the Skorokhod representation theorem gives a new
stochastic basis
$(\bar\Omega,\bar{\mathcal F},\bar{\mathbb P})$ and random variables
$(\bar f_\alpha,\bar B_\alpha)$ and $(\bar f,\bar B)$ such that
\begin{align}
\label{limit-L1:skorokhod}
(\bar f_\alpha,\bar B_\alpha)\to(\bar f,\bar B)
\quad\bar{\mathbb P}\hbox{-a.s. in }\mathbb X_1,
\end{align}
and $(\bar f_\alpha,\bar B_\alpha)$ has the same law as
$(f_\alpha,B_\alpha)$ for every $\alpha$.  In particular, almost surely, 
\begin{align}
\label{limit-L1:strong}
\bar f_\alpha\to\bar f
\quad\hbox{strongly in }L^1([0,T]\times\R^d),
\end{align}
and, after extracting a further subsequence,
$\bar f_\alpha\to\bar f$ almost surely, almost everywhere on $[0,T]\times\R^d$.  The
non-negativity of $\bar f$ follows from the non-negativity of
$\bar f_\alpha$.

The estimates \eqref{bdd:alpha-m-n} are preserved under equality in law.
Using the convergence of the initial data and \eqref{f-alpha-0:limit},
we have
\begin{align}
\label{limit-L1:uniform-est}
\sup_{\alpha\in(0,1)}
\bar{\mathbb E}\Big[
\sup_{t\le T}\|\bar f_\alpha(t)\|_{L^1_2}
+\int_0^T\cD(\bar f_\alpha(s))\,\dd s
\Big]<+\infty,
\end{align}
where the entropy is understood through its positive and negative parts as \eqref{LlogL:minus}.

\medskip

\noindent
\textbf{Step 2: Convergence of the deterministic terms.}
From \eqref{limit-L1:strong} and the fact $|\sqrt s-\sqrt t|^2\le |s-t|$, $s,\,t>0$, we obtain
\begin{align}
\label{limit-L1:sigma-conv}
\sqrt{\bar f_\alpha}\to\sqrt{\bar f},
\qquad
\sigma(\bar f_\alpha)\to\sigma(\bar f)
\quad\hbox{strongly in }L^2([0,T]\times\R^d).
\end{align}
We note that $\sigma'$ and $\varsigma'$ are bounded and 
\begin{align}
\label{limit-L1:sigma2-conv}
\sigma'(\bar f_\alpha)\to\sigma'(\bar f) \quad\text{pointwisely},
\qquad
\varsigma(\bar f_\alpha)\to\varsigma(\bar f)
\quad\hbox{strongly in }L^1([0,T]\times\R^d).
\end{align}
The convergence of the products in the variables $(v,v_*)$ and
$(v,v_*,w)$ follows from \eqref{limit-L1:sigma-conv}--\eqref{limit-L1:sigma2-conv}
and the uniform mass bound.  For instance,
\begin{align*}
\|\sigma(\bar f_\alpha)\sigma(\bar f_{\alpha,*})
-\sigma(\bar f)\sigma(\bar f_*)\|_{L^1_{t,v,v_*}}
\le C\|\sigma(\bar f_\alpha)-\sigma(\bar f)\|_{L^2_{t,v}},
\end{align*}
and the analogous estimates hold for all factors appearing in the It\^o
correction.

Let $\phi\in C_c^\infty(\R^d)$.
The artificial viscosity term vanishes since
\begin{align*}
\left|\alpha\int_0^t\int_v\bar f_\alpha\,\Delta\phi\,\dd v\dd s\right|
\le \alpha T\|\Delta\phi\|_{L^\infty}\|f_0\|_{L^1}\longrightarrow0 .
\end{align*}

The passage to the limit in the Landau
	drift follows from the compact--tail decomposition carried out in Step~$2$ of
	Subsection~\ref{sec:limit-L2}.  The only difference is that there is no
	approximation of the kernel here, and the nonlinear coefficient convergence
	comes from the global strong $L^1$ convergence
	\eqref{limit-L1:strong}, whereas in
	Subsection~\ref{sec:limit-L2} it was obtained from the $L^2$ compactness
	framework and the convergence of
	$\tili_n(\mu_n^2(\bar f_n))$ and $\mu_n^2(\bar f_n)$.  Indeed, on every
	bounded set, the products in the Landau drift converge by
	\eqref{limit-L1:strong} and the uniform mass bound.  On the complement,
	the kernel estimates used in Subsection~\ref{sec:limit-L2}, together with
	the uniform second-moment bound \eqref{bdd:alpha-m-n} and the fact that
	$\gamma<0$, make the tail contributions uniformly small.  Passing first
	to the limit $\alpha\to0$ on the bounded region and then letting the
	truncation radius tend to infinity gives the convergence of the full
	Landau drift.

For the It\^o correction terms we use the weak form
\eqref{weak:app-0}.  The term containing $F_4\nabla\varsigma(\bar f_\alpha)$
is first integrated by parts in $v$; therefore only
$\varsigma(\bar f_\alpha)$, $\sigma(\bar f_\alpha)$ and
$\sigma(\bar f_\alpha)\sigma'(\bar f_\alpha)$ appear without derivatives,
tested against the fixed kernels $F_1,F_4$ and their first derivatives.
By \eqref{limit-L1:sigma-conv}--\eqref{limit-L1:sigma2-conv}, the boundedness
of $\sigma'$ and the assumed bounds on $F_1,F_4$, every deterministic
correction term converges to the corresponding term with $\bar f$.

\medskip

\noindent
\textbf{Step 3: Identification of the martingale part.}
Let $\bar{\mathcal G}_t$ be the augmented filtration generated by
$\bar f|_{[0,t]}$ and $\bar B|_{[0,t]}$.  As in Subsection~5.2, the
identity in law and the convergence \eqref{limit-L1:skorokhod} imply that
$(\bar B^k)_{1\le k\le K}$ is a family of independent Brownian motions with
respect to $(\bar{\mathcal G}_t)_{t\in[0,T]}$.

For a test function $\phi\in C_c^\infty(\R^d)$, let
$\bar M_\alpha^\phi$ be the residual obtained by subtracting from
$\langle \bar f_\alpha(t),\phi\rangle$ all deterministic terms in the weak
formulation of \eqref{eq:ito-2}.  Then
\begin{align*}
\bar M_\alpha^\phi(t)
=-\frac{\sqrt{\eps}}{2}\sum_{k=1}^K\int_0^t H_{\alpha,k}^{\phi}(s)\,\dd\bar B_\alpha^k(s),
\end{align*}
where
\begin{align*}
H_{\alpha,k}^{\phi}(s)
:=
\int_{v,v_*}
\tn\phi(v)\cdot G_k(v,v_*)\,
\sigma(\bar f_\alpha(s,v))\sigma(\bar f_\alpha(s,v_*))
\,\dd v\dd v_* .
\end{align*}
By \eqref{limit-L1:sigma-conv},
\begin{align}
\label{limit-L1:H-conv}
H_{\alpha,k}^{\phi}\to H_k^\phi
\quad\hbox{in }L^2(0,T),
\end{align}
where
\begin{align*}
H_k^\phi(s)
:=
\int_{v,v_*}
\tn\phi(v)\cdot G_k(v,v_*)\,
\sigma(\bar f(s,v))\sigma(\bar f(s,v_*))
\,\dd v\dd v_* .
\end{align*}
The deterministic convergence established in Step~$2$ shows that
$\bar M_\alpha^\phi(t)\to\bar M^\phi(t)$ for every $t$, where
$\bar M^\phi$ is the residual associated with $\bar f$.

Let $0\le s\le t\le T$ and let $\Theta$ be a bounded continuous functional
of $(\bar f,\bar B)$ restricted to $[0,s]$.  Applying the martingale
identities to the approximating solutions and passing to the limit by
\eqref{limit-L1:skorokhod} and \eqref{limit-L1:H-conv}, we obtain
\begin{align*}
\bar{\mathbb E}\Big[
\Theta\big(\bar M^\phi(t)-\bar M^\phi(s)\big)
\Big]=0,
\end{align*}
so $\bar M^\phi$ is a continuous $\bar{\mathcal G}_t$-martingale.  Moreover,
for every $1\le j\le K$,
\begin{align*}
\bar M^\phi(t)\bar B^j(t)
+\frac{\sqrt{\eps}}{2}\int_0^tH_j^\phi(r)\,\dd r
\end{align*}
is a martingale, and
\begin{align*}
(\bar M^\phi(t))^2
-\frac{\eps}{4}\int_0^t\sum_{k=1}^K|H_k^\phi(r)|^2\,\dd r
\end{align*}
is a martingale.  Equivalently,
\begin{align*}
\langle \bar M^\phi,\bar B^j\rangle_t
=-\frac{\sqrt{\eps}}{2}\int_0^tH_j^\phi(r)\,\dd r,
\qquad
\langle \bar M^\phi\rangle_t
=\frac{\eps}{4}\int_0^t\sum_{k=1}^K|H_k^\phi(r)|^2\,\dd r .
\end{align*}
Therefore,
\begin{align*}
\bar M^\phi(t)
=-\frac{\sqrt{\eps}}{2}\sum_{k=1}^K\int_0^t H_k^\phi(s)\,\dd\bar B^k(s).
\end{align*}
Combining this identity with Step~$2$ proves that
$(\bar f,(\bar B^k)_{1\le k\le K})$ satisfies the weak formulation of
\eqref{app:eq:L-1}.  Thus it is a probabilistic weak solution.

\medskip

\noindent
\textbf{Step 4: Passage of the estimates and entropy dissipation.}
Since $(\bar f_\alpha,\bar B_\alpha)$ and $(f_\alpha,B_\alpha)$ have the
same law, \eqref{bdd:alpha-m-n} holds for $\bar f_\alpha$.  By
\eqref{limit-L1:strong}, Fatou's lemma, and the convergence of
$f_0^\alpha$ in $L^1_2$, we obtain that almost surely, for almost every $t\in[0,T]$,
\begin{align*}
\int_v(1,v)\bar f_t(v)\,\dd v
&=\int_v(1,v)f_0(v)\,\dd v,\quad 
\int_v|v|^2\bar f_t(v)\,\dd v
\le \int_v|v|^2f_0(v)\,\dd v .
\end{align*}
Consequently $\bar f\in L^\infty([0,T];L^1_2(\R^d))$ almost surely.

It remains to pass to the entropy inequality.  The initial entropy
converges by \eqref{f-alpha-0:limit}.  

We now prove the lower semicontinuity of the dissipation. The proof closely follows the argument for showing entropy inequality in the step $4$ of the proof of Proposition \ref{lem:l-12:log}. We define $a(v-v_*)=A(|v-v_*|)\Pi_{(v-v_*)^\perp}$. We note that
\begin{align*}
\cD(f_\alpha)=2\int_{v,v_*}\big|\sqrt{a}(\nabla_v-\nabla_{v_*}\big)\sqrt{\bar f_\alpha\bar f_{\alpha,*}}\big|^2.
\end{align*}
Fix 
$\omega$ in the full-probability set on which \eqref{limit-L1:strong} and the
a.e. convergence of $\bar f_\alpha$ hold, and fix $t\in[0,T]$. Then \eqref{LlogL:minus} and \eqref{f-alpha-0:limit} ensure that $\int_0^t\cD(f_\alpha)\le \cH(f_0)+C\|f_0\|_{L^1_2}$, and hence, $\sqrt{a}(\nabla_v-\nabla_{v_*}\big)\sqrt{\bar f_\alpha\bar f_{\alpha,*}}$ is uniformly bounded in
$L^2([0,T]\times\R^{2d})$. We show that 
\begin{align*}
\sqrt{a}\big(\nabla_v-\nabla_{v_*}\big)\sqrt{\bar f_\alpha\bar f_{\alpha,*}}\rightharpoonup
\sqrt{a}\big(\nabla_v-\nabla_{v_*}\big)\sqrt{\bar f\bar f_{*}}\quad\text{as}\quad \alpha\to0
\end{align*}
in $L^2([0,T]\times\R^{2d})$.
 We only need to show that, for every $\Psi\in C_c^\infty([0,t]\times\R^{2d};\R^d)$, we have
\begin{align}
\label{conv:6}
\lim_{\alpha\to0}\int_0^t\int_{v,v_*}(\nabla_v-\nabla_{v_*})\cdot(\sqrt{a}\Psi) \sqrt{\bar f_\alpha\bar f_{\alpha,*}}
=\int_0^t\int_{v,v_*}(\nabla_v-\nabla_{v_*})\cdot(\sqrt{a}\Psi) \sqrt{\bar f \bar f_{*}}.
\end{align} 
By similar arguments as in \eqref{bdd:phi-a}, we have
\begin{equation}
\begin{aligned}
\big|(\nabla_v-\nabla_{v_*})\cdot(\sqrt{a}\Psi) 
\big|
\lesssim 
\|\Psi\|_{W^{1,\infty}}
|v-v_*|^{1+\frac{\gamma}{2}}.
\end{aligned}
\end{equation}
Since $\Psi$ is compactly supported and $\bar f_\alpha\to\bar f$ strongly in
$L^1((0,T)\times\R^d)$, we have
$\sqrt{\bar f_\alpha\bar f_{\alpha,*}}\to\sqrt{\bar f\bar f_*}$ strongly in
$L^1$ on the support of $\Psi$. Hence \eqref{conv:6} follows. Therefore the
weak limit is
$\sqrt{a}(\nabla_v-\nabla_{v_*})\sqrt{\bar f\bar f_*}$, and the weak lower
semicontinuity of the $L^2$ norm gives
\begin{align}
\label{limit-L1:D-lsc}
\int_0^t\cD(\bar f_s)\,\dd s
\le
\liminf_{\alpha\to0}
\int_0^t\cD(\bar f_\alpha(s))\,\dd s .
\end{align}

For the entropy at time $t$,
the lower semicontinuity of $\cH$ on $L^1$, together with the
uniform second moment bound controlling the negative part as in \eqref{LlogL:minus}, gives for almost every $t$,
\begin{align}
\label{limit-L1:H-lsc}
\cH(\bar f_t)
\le
\liminf_{\alpha\to0}\cH(\bar f_\alpha(t)).
\end{align}
The lower bound as in  \eqref{LlogL:minus} and the second moment
estimate imply that $|\cH(\bar f_t)|<+\infty$ for almost every
$t$.

Combining \eqref{f-alpha-0:limit}, \eqref{limit-L1:H-lsc} and 
\eqref{limit-L1:D-lsc}, and then using Fatou's
lemma in expectation, yields for almost every $t\in[0,T]$,
\begin{align*}
\bar{\mathbb E}\big[\cH(\bar f_t)\big]
+\bar{\mathbb E}\left[\int_0^t\cD(\bar f_s)\,\dd s\right]
\le
\cH(f_0)+C .
\end{align*}

It remains to pass the maximal exponential entropy estimate to the limit.  Fix $\lambda>0$ and assume $0<\eps\le\eps_0(\lambda)$.  Since $(\bar f_\alpha,\bar B_\alpha)$ has the same law as $(f_\alpha,B_\alpha)$, the last estimate in \eqref{bdd:alpha-m-n} gives
\begin{align*}
&\bar{\mathbb E}\exp\left[
\sup_{s\in[0,t]}\left(\lambda\cH(\bar f_\alpha(s))+\frac{\lambda}{2}\int_0^s\cD(\bar f_\alpha(r))\,\dd r\right)\right]\\
&\le C_\lambda\exp\left[\lambda\cH(f_0^\alpha)+C\lambda\big(\|f_0^\alpha\|_{L^1}^3+1\big)\right].
\end{align*}
By \eqref{limit-L1:H-lsc} and \eqref{limit-L1:D-lsc}, using again a countable dense set of full-measure times for the entropy term, we have
\begin{align*}
&\sup_{s\in[0,t]}\left(\lambda\cH(\bar f_s)+\frac{\lambda}{2}\int_0^s\cD(\bar f_r)\,\dd r\right)\\
&\le \liminf_{\alpha\to0}
\sup_{s\in[0,t]}\left(\lambda\cH(\bar f_\alpha(s))+\frac{\lambda}{2}\int_0^s\cD(\bar f_\alpha(r))\,\dd r\right).
\end{align*}
Since the exponential function is increasing, Fatou's lemma yields
\begin{align*}
&\bar{\mathbb E}\exp\left[
\sup_{s\in[0,t]}\left(\lambda\cH(\bar f_s)+\frac{\lambda}{2}\int_0^s\cD(\bar f_r)\,\dd r\right)\right]\\
&\le \liminf_{\alpha\to0}\bar{\mathbb E}\exp\left[
\sup_{s\in[0,t]}\left(\lambda\cH(\bar f_\alpha(s))+\frac{\lambda}{2}\int_0^s\cD(\bar f_\alpha(r))\,\dd r\right)\right].
\end{align*}
Finally, \eqref{f-alpha-0:limit} and the uniform mass bound for $f_0^\alpha$ imply
\begin{align*}
&\bar{\mathbb E}\exp\left[
\sup_{s\in[0,t]}\left(\lambda\cH(\bar f_s)+\frac{\lambda}{2}\int_0^s\cD(\bar f_r)\,\dd r\right)\right]\\
&\le C_\lambda\exp\left[\lambda\cH(f_0)+C\lambda\big(\|f_0\|_{L^1}^3+1\big)\right].
\end{align*}

 This proves all assertions of Theorem~\ref{thm:L-1}.

\section{Refined entropy dissipation estimates}
\label{sec:refined-entropy}

In this section, we isolate the refinement that is available for a particular choice of the conservative noise. More precisely, we consider a structure-preserving regularisation of the fluctuating Landau equation:
\begin{equation}
\label{eq:appendix-1}
\begin{aligned}
\dd f
&=\frac12\tn\cdot\big(A\theta^2(f)\theta^2(f_*)\tn\log\theta^2(f)\big)\dd t
-\frac{\sqrt{\eps}}{2}\tn\cdot\big(A^{1/2}\theta(f)\theta(f_*)\dd\xi_K\big)\\
&\quad+\frac{\eps}{2}\sum_{k=1}^K\tn\cdot \Big(G_k(v,v_*)\theta'(f)\theta(f_*)
\tn\cdot\big(G_k(v,w)\theta(f)\theta(f_w)\big)\Big)\dd t .
\end{aligned}
\end{equation}

The coefficient $\theta$ regularizes the square-root amplitude in the stochastic conservative flux. Furthermore, we use the same regularisation to modify the Landau collision operator. The entropy estimate in Theorem~\ref{thm:L-1} contains an additive constant arising from the Stratonovich-to-It\^o correction. If the active collision fields are divergence-free in the Landau pair variables, this correction vanishes in the entropy balance. Consequently, the expected $\theta$-entropy is non-increasing.
\begin{assumption}[Stochastic coefficient]
  \label{ass:7}
Let $r_0\in(0,1)$ be fixed. Let $\theta\in C^1(\R_+;\R_+)$ satisfy
\begin{equation}
\begin{gathered}
\theta(r)=\sqrt r \quad r\in(r_0,\infty),\quad \theta(0)=0,\\
C_0\le \theta'(r)\le C_{r_0}\quad\text{and}\quad  0\le \theta(r)\le\sqrt{r}\quad r\in[0,r_0].    
\end{gathered}
\end{equation}
for some constants $0<C_0\le C_{r_0}<+\infty$.
\end{assumption}
 \begin{remark}[Compared with $\sigma$ in Assumption \ref{ass:sigma-R0-0}]
In this section, $\theta$ plays the role of the stochastic noise coefficient satisfying Assumption \ref{ass:sigma-R0-0} in Theorem \ref{main:thm}. Compared with $\sigma$, we assume here a positive lower bound on $\theta'$, namely
\begin{gather*}
0<C_0\le \theta'(r)\le C_{r_0}.
\end{gather*}
This ensures that $\theta$ behaves linearly near $0$. The reason is that we also use $\theta^2(f)\theta^2(f_*)$ here to approximate the Landau mobility $ff_*$. Instead of studying the evolution of the Boltzmann entropy $\cH(f)$ as in Theorem \ref{main:thm}, we study the approximate $\theta$-entropy \eqref{def:entropy-theta} here. The above positive lower bounds on $\theta'$ ensure that the $\theta$-entropy $\cH_\theta(f)$ is equivalent to the Boltzmann entropy $\cH(f)$ up to bounded mass and energy; see Remark \ref{rmk:boltzmann-entropy} below.

 \end{remark}

\begin{assumption}
\label{ass:G-k-2}
Let $K\in \N$ be fixed. Let $\{g_k\}_{1\le k\le K}\subset \cS(\G;\R^d)$ be the active finite family satisfying Assumption~\ref{ass:G-k:app}.  We assume that these active modes satisfy
\begin{equation}
\label{div-free}
   \big(\nabla_v-\nabla_{v_*}\big)\cdot \Pi_{(v-v_*)^\perp}g_k(v,v_*)=0,
   \qquad 1\le k\le K.
\end{equation}
\end{assumption}

Appendix~\ref{app-sec:ONB-2} gives a construction of such admissible active
modes.

We define the $\theta$-entropy and dissipation as follows
\begin{gather}
\cH_\theta(f)=\int_v h_\theta(f)\dd v,\qquad
h_\theta(s)=\int_0^s\log(\theta^2(r))\dd r,\label{def:entropy-theta}\\
\cD_{\theta}(f)
:=\frac12\int_{v,v_*}A\theta^2(f)\theta^2(f_*)
\big|\tn \log(\theta^2(f))\big|^2\dd v\dd v_*\ge 0.
\end{gather}

We show that the approximation $\theta$-entropy, up to energy, is equivalent to the Boltzmann entropy.
\begin{remark}[Compared to Boltzmann entropy]
\label{rmk:boltzmann-entropy}
We compare the $\theta$-entropy defined in \eqref{def:entropy-theta} with the Boltzmann entropy $\cH(f)=\int_v f\log f \dd v$.

Let $\theta$ satisfy Assumption~\ref{ass:7} and let $r_0\in(0,1)$ be small.
For $r\in(0,r_0)$, we have  $\theta^2(r)\le 1$ and  $C_0 r\le \theta(r)\le \sqrt{r}$, which implies that 
\begin{align}
\label{bdd-1:rmk}
2(s\log C_0+s\log s-s)=\int_{0}^s \log (C_0 r)^2 \dd r\le h_\theta(s)\le \int_{0}^s \log r \dd r=s\log s-s\le0.
\end{align}
For $r\in(r_0,\infty)$, we have 
\begin{align}
\label{bdd-2:rmk}
h_\theta(s)=
         \int_{r_0}^s \log r\dd r+h_\theta(r_0)=
         s\log s-s+C_{h_\theta,r_0},
\end{align}
where 
\begin{align*}
 C_{h_\theta,r_0}:=-r_0\log r_0+r_0+h_\theta(r_0)\le 0.
\end{align*}

Then for any $f\in L^1_2(\Do;\R_+)$, we have 
\begin{align*}
&\int_{\{0\le f\le  r_0\}}(2\log C_{0}-1)f+f\log f+\int_{\{f\ge r_0\}}C_{h_\theta,r_0}+\cH(f)-\|f\|_{L^1_v}\\
    &\le\cH_\theta(f)\le \cH(f).
\end{align*}
Concerning the left-hand side of the above inequality, the estimate \eqref{LlogL:minus} ensures that 
\begin{align*}
\int_{\{0\le f\le  r_0\}}f\log f\gtrsim-\|f\|_{L^1_2},
\end{align*}
and $f\in L^1_v$ and $C_{h_\theta,r_0}\le 0$ ensure that 
\begin{align*}
  \int_{\{f\ge r_0\}} C_{h_\theta,r_0}\ge C_{h_\theta,r_0}\frac{\|f\|_{L^1}}{r_0}. 
\end{align*}
Hence, we have 
\begin{align*}
\cH(f)-C \|f\|_{L^1_2}\le\cH_\theta(f)\le \cH(f) 
\end{align*}
for some constant $C=C(r_0)>0$.

For the particular model choice $\theta^2(r)=r^2/r_0$ on $[0,r_0]$ and $\theta^2(r)=r$ on $(r_0,\infty)$, the entropy density takes the explicit form
\begin{equation}
\label{h-sigma:log}
\begin{aligned}
    h_\theta(r)&=\left\{
    \begin{aligned}
        \int_0^r\log(s^2/r_0)\dd s&\quad r\in[0,r_0]\\
         \int_{r_0}^r \log s\dd s+r_0\log r_0&\quad r\in(r_0,\infty)
    \end{aligned}\right.\\
    &=\left\{
    \begin{aligned}
        2 (r\log r-r)-r\log r_0 &\quad r\in[0,r_0]\\
         r\log r-r_0&\quad r\in(r_0,\infty)
    \end{aligned}\right..
\end{aligned}
\end{equation}

\end{remark}
Let us first record the formal cancellation.  Applying It\^o's formula
\eqref{dS-00} to $\cH_\theta(f)$ gives
\begin{equation}
 \begin{aligned}
\frac{\dd}{\dd t} \cH_{\theta}(f)= 
&-\frac12\int_{v,v_*}\tn \log(\theta^2)\cdot\big( A\theta^2\theta_*^2\tn \log( \theta^2)\big)\\
&+\frac{\sqrt{\eps}}{2}\sum_{k= 1}^K\int_{v,v_*}\tn \log(\theta^2)\cdot\big(G_k(v,v_*)\theta(f)\theta(f_*)\big)\dd B_k\\
         &+\frac12\sum_{k= 1}^K\int_{v,v_*}G_k(v,v_*)\cdot \theta'(f)\nabla_{v_*} \theta(f_*) \tn\cdot\big(G_k(v,w)\theta(f)\theta(f_w)\big)\\
         &+\frac12 \sum_{k= 1}^K\int_{v,v_*}\big(\nabla_v\cdot G_k(v,v_*)\big)\theta'(f)\theta(f_*)\tn\cdot\big(G_k(v,w)\theta(f)\theta(f_w)\big).
\end{aligned}
\end{equation}
The last two terms can be combined by integration by parts in $v_*$ and by
using the antisymmetry of the Landau pair divergence.  They reduce to
\begin{align}
\label{error-0}
             \sum_{k= 1}^K\int_{v,v_*}\big(\nabla_v-\nabla_{v_*}\big)\cdot G_k(v,v_*)\theta'(f)\theta(f_*)\tn\cdot\big(G_k(v,w)\theta(f)\theta(f_w)\big)=0,
\end{align}
where we use the form $A=A(|v-v_*|)$, the divergence free condition \eqref{div-free}, and 
\begin{align*}
&\big(\nabla_v-\nabla_{v_*}\big)\cdot G_k(v,v_*)\\
=&{}A^{1/2} (|v-v_*|)   \big(\nabla_v-\nabla_{v_*}\big)\Pi_{(v-v_*)^\perp}g_k(v,v_*)\\
&+\big(\Pi_{(v-v_*)^\perp}(\nabla_v-\nabla_{v_*})A^{1/2} (|v-v_*|)\big)\cdot g_k(v,v_*)=0.
\end{align*}
Thus \eqref{div-free} removes the extra It\^o drift in the entropy balance,
and formally
\begin{align*}
    \frac{\dd}{\dd t}\cH_{\theta}(f)+\cD_\theta(f)=\frac{\sqrt{\eps}}{2}\sum_{k= 1}^K\int_{v,v_*}\tn \log(\theta^2)\cdot\big(G_k(v,v_*)\theta(f)\theta(f_*)\big)\dd B_k.
\end{align*}
After localisation, the stochastic term has zero expectation, as in the proof
of Proposition~\ref{prop:log-chi}.  The next lemma makes this computation
rigorous through the approximation scheme of Sections~5 and~6.

\begin{lemma}
\label{lem:sec-7}
Assume Assumption~\ref{ass:G-k-2}.  Let $f$ be the probabilistic weak
solution of \eqref{eq:appendix-1} constructed by the same approximation scheme as in Theorem~\ref{thm:L-1}.  Then,
for almost every $t\in[0,T]$,
\begin{equation}
\label{H:dissip}
\hE\big[\cH_{\theta}(f_t)\big]
+\hE\Big[\int_0^t\cD_{\theta}(f_s)\dd s\Big]
\le \hE\big[\cH_{\theta}(f_0)\big].
\end{equation}
\end{lemma}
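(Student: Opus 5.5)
The plan is to rerun the approximation scheme of Sections~\ref{sec:Galerkin}--\ref{sec:limit-L1} with the choices adapted to \eqref{eq:appendix-1}. The mobility is $\mu_n=\theta$; since $\theta$ is unbounded, we use bounded truncations $\theta_n$ with $\theta_n=\theta$ on $[0,n]$. The stochastic coefficient is $\sigma_n=\mu_n$, and the kernel is $A_n\uparrow A$. At the $n$-level we also take $L_n=\log$ on the range of $\theta_n^2$. This is admissible near $0$ because $\theta_n'\ge C_0>0$ gives $\theta_n^2(r)\ge C_0^2r^2$, so $\log\theta_n^2(f)$ is integrable against $f$ (cf. \eqref{bdd-1:rmk}). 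Where Assumption~\ref{ass:L-s0} is needed literally, we instead take $L_n$ as in Example~\ref{expl:sigma-L} with $s_0=n^{-1}$. The solutions $f_{n}^\alpha$ are obtained from Proposition~\ref{lem:app-1:ext} and Proposition~\ref{lem:l-1:app}; the mass and energy bounds and the tightness arguments carry over unchanged.

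\textbf{Entropy balance at the regularised level.} We apply the cut-off It\^o formula of Lemma~\ref{lem:cutoff-ito} with $S=h_{\theta_n,L_n}$ and cut-off $\chi_m$. The diffusion, Landau and $T_2$ terms are handled exactly as in Section~\ref{sec-4:entropy-1}: the $\alpha$-term is nonpositive up to $O(m^{-D})$, and the Landau term yields $\cD_{\theta_n,L_n,m}$ up to $O(m^{-D})$. The new point concerns the $T_1$ term. Previously it was only bounded by $C(\|f_0\|_{L^1}^3+1)$. Now we keep it as an identity and recombine it into the form \eqref{dS-00}: since $\sigma=\mu$, the last two lines of \eqref{dS-00} become
\[
\frac{\eps}{4}\sum_{k}\int_{v,v_*}\big[G_k\cdot S''(f_*)\nabla_{v_*}f_*\,\sigma'(f)\sigma_*+(\nabla_v\cdot G_k)S''(f)\sigma\sigma_*\big]\mathcal R_k(v).
\]
When $S''(r)=2L'(\theta^2)\theta\theta'$ and $L=\log$, we have $S''\sigma=2\theta'$ and $S''(f_*)\nabla f_*\sigma_*=2\nabla_{v_*}\theta(f_*)$. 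Integrating by parts in $v_*$ therefore moves the derivative onto $G_k$, and the two terms combine into \eqref{error-0}, which vanishes by Assumption~\ref{ass:G-k-2}. With the cut-off, this integration by parts produces an extra factor $\nabla\chi_m$, which is $O(m^{-D})$ by the same bounds as for $T_2$.

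With the cut-off $L_n$, the identity $S''\sigma=2\theta'$ holds only on $\{\theta_n^2(f)\ge n^{-1}\}$. The remainder is therefore supported where $f\lesssim n^{-1/2}$, and there it is bounded by $C\int\theta(f_*)\theta(f_w)\,\theta(f)\mathbb 1_{\{f\lesssim n^{-1/2}\}}$. By dominated convergence with the uniform $L^1_2$ bounds, this remainder tends to $0$ as $n\to\infty$. This replaces the additive constant $C(\|f_0\|_{L^1}^3+1)$ in \eqref{entropy-ineq:mu-L} by $o(1)$. Taking expectations, so that the martingale drops as in Section~\ref{sec-4:entropy-2}, and letting $m\to\infty$ gives
\[
\hE\cH_{n}(f_n^\alpha(t))+\hE\int_0^t\cD_n\le\hE\cH_n(f_0^\alpha)+\delta_n,\qquad \delta_n\to0 .
\]

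\textbf{Passage to the limit.} We pass $n\to\infty$ and then $\alpha\to0$ exactly as in Step~5 of Section~\ref{sec-5:limit} and Step~4 of Section~\ref{sec-6:limit}. The dissipation is lower semicontinuous by the weak-$L^2$ argument \eqref{doal:X}, with $q_n$ built from $\theta$ in place of $\sqrt{\cdot}$; the limit weak form is $\sqrt{a}(\theta(f_*)\nabla\, 2\theta(f)-\dots)$, identified as in \eqref{conv:6}. Lower semicontinuity of $\cH_\theta(f_t)$ at a.e.\ $t$ follows from Fatou's lemma, using the lower bound $\cH_\theta\ge\cH-C\|f\|_{L^1_2}$ of Remark~\ref{rmk:boltzmann-entropy} together with \eqref{LlogL:minus}. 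For the initial data, we need $\cH_\theta(f_0^\alpha)\to\cH_\theta(f_0)$; this follows by convexity of $h_\theta$ (since $h_\theta'=\log\theta^2$ is increasing), as in the proof of \eqref{f-alpha-0:limit}, plus convergence of the Maxwellian part. One caveat: the $\alpha\Delta f$ term contributes $-\alpha\int\nabla f\cdot\nabla\log\theta^2(f)\le0$ since $\theta$ is increasing, so it has the right sign and no constant is generated.

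\textbf{Main obstacle.} The hardest step is the rigorous cancellation at the regularised level. We must verify that the cut-off $L_n$ and the kernel truncation $A_n$ preserve \eqref{div-free}. The truncation $A_n$ is radial, so $(\nabla_v-\nabla_{v_*})A_n^{1/2}$ is parallel to $v-v_*$ and is killed by $\Pi$. This is the point that must be checked carefully, together with control of the small-$f$ remainder $\delta_n$ uniformly in $\alpha$.
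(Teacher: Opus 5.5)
Your core argument is the paper's. The hierarchy is the same: $\mu_n=\sigma_n=\theta_n$ with $\theta_n=\theta$ near the origin, the truncated $L_n$ with $s_0=n^{-1}$, and radial $A_n$. The key step is also the same: you keep $T_1$ in its $G_k$-form and integrate by parts in $v_*$, so the two corrections combine into $(\nabla_v-\nabla_{v_*})\cdot G_{k,n}$, which vanishes by \eqref{div-free} because $A_n$ is radial. The cut-off errors are of order $\|\nabla\chi_m\|_{L^\infty}$, and the $L_n$-truncation error disappears as $n\to\infty$ at fixed $\alpha$. Two small remarks. Your first option $L_n=\log$ conflicts with the boundedness of $L$ used in Sections~\ref{sec:Galerkin}--\ref{sec:L-1:functional}, but your fallback is the paper's choice. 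Uniformity of $\delta_n$ in $\alpha$ is not needed, since $n\to\infty$ is taken at fixed $\alpha$.

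\textbf{Gap.} You claim that $\alpha\to0$ goes through exactly as in Section~\ref{sec-6:limit}, with tightness unchanged; this is not true as stated.
\begin{itemize}
\item In Section~\ref{sec-6:limit}, strong $L^1$ compactness comes from Lemma~\ref{lem:gradient-es}. That lemma turns the averaged dissipation $\cD(f_\alpha)$ into a local $W^{1,1}$ bound via Lemma~\ref{lem:Des}, whose constant is exponential in the entropy and is controlled by the maximal exponential estimate.
\item For \eqref{eq:appendix-1} your balance controls only $\cD_\theta(f_\alpha)=\cD(\theta^2(f_\alpha))$, not $\cD(f_\alpha)$, so that lemma does not apply.
\end{itemize}
To close this you must:
\begin{itemize}
\item derive the maximal exponential estimate for $\cH_\theta$ from the refined balance, using the supermartingale argument of Section~\ref{sec-4:entropy-3} with $\eps\le\eps_0(\lambda)$;
\item apply the Desvillettes bound to $g=\theta^2(f_\alpha)$, using $\theta^2(f)\le f$ for $g\in L^1_2$ and Remark~\ref{rmk:boltzmann-entropy} to control $\cH(\theta^2(f_\alpha))$ by $\cH_\theta(f_\alpha)$ and $\|f_\alpha\|_{L^1_2}$, which bounds $\langle v\rangle^{\gamma}|\nabla\theta(f_\alpha)|^2$;
\item transfer this to $\nabla f_\alpha$ on $B_R$ using $\theta'\ge C_0$ on $[0,r_0]$ and $\theta=\sqrt{\cdot}$ on $[r_0,\infty)$.
\end{itemize}
This is the real role of $\theta'\ge C_0$ in Assumption~\ref{ass:7}; you used it only for integrability of $\log\theta^2$. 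The paper makes exactly this modification.

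\textbf{Smaller inaccuracy.} You say the $L_n$-remainder is supported on $\{f\lesssim n^{-1/2}\}$ and bounded by $C\int\theta(f)\theta(f_*)\theta(f_w)\mathbb{1}_{\{f\lesssim n^{-1/2}\}}$. That holds only for the $F_3$-type piece.
\begin{itemize}
\item The pieces coming from $\nabla_{v_*}\theta(f_*)$, and from $\nabla_v\theta(f)$ inside $\tn\cdot(G_k(v,w)\theta(f)\theta(f_w))$, have the form $(L_n'(\theta^2)\theta^2-1)\theta'\nabla f$ on the small set. They carry a gradient and no small factor of $\theta$, so your bound does not cover them.
\item Integrate them by parts, writing them as gradients of primitives such as $l_1(f)-\theta(f)$, as the paper does. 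The resulting factors are no longer supported on the small set but satisfy a bound $C\min(\theta(f),n^{-1/2})$, so these terms are $O(n^{-1/2})$ and still vanish.
\item Since $f_n$ varies with $n$, use such explicit bounds, or the paper's splitting into $|v|\le R$ and $|v|>R$, instead of dominated convergence.
\end{itemize}
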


\begin{proof}
The proof follows the same compactness scheme as Theorem~\ref{thm:L-1}; only
the entropy estimate for the smooth approximations is sharpened.  Fix
$\alpha\in(0,1)$ and $n\in\N$.  We approximate $A$ and the logarithm
by $A_n$ and $L_n$ as in the proof of Proposition~\ref{lem:l-12:log}, and we
replace the initial datum by $f_0^\alpha$.  Moreover, we approximate $\theta$ by a smooth bounded sequence $\theta_n$ as in the construction of $\sigma_n$ in \eqref{def:sigma-n}. Compared with the proof of Proposition~\ref{lem:l-12:log}, we approximate both the Landau mobility and the noise coefficient by $\theta_n$, that is, we take $\mu_n=\theta_n$. We choose $\theta_n$ so that $\theta_n=\theta$ near the origin and $\theta_n\to\theta$ locally uniformly on $\R_+$. In particular, we do not replace the fixed linear behaviour of $\theta$ near $0$ by the square-root coefficient.

The corresponding regularised
equation is
\begin{equation}
\label{ito-epsi-1:app}
    \begin{aligned}
        \dd f
        &=\alpha \Delta f\,\dd t+Q_{\theta_n,L_n}(f,f)\dd t\\
        &\quad-\frac{\sqrt{\eps}}{2}\tn\cdot\left(A_n^{1/2}\theta_n(f)\theta_n(f_*)\dd \xi_K\right)\\
        &\quad+\frac{\eps}{2}\tn\cdot \Big(\int_w\theta_n(f_*)\theta_n(f_w)
        \big(F_{4,n}\nabla_v\varsigma_n(f)+\theta_n(f)\theta_n'(f)F_{1,n}\big)\Big)\dd t ,
    \end{aligned}
\end{equation}
where $F_{1,n}$ and $F_{4,n}$ are associated with $A_n$ through
\eqref{def:G-k:n}.

Let $f_{n,\alpha}$ be the probabilistic weak solution given by
Proposition~\ref{lem:app-1:ext}.  For
\begin{equation*}
\mathcal{H}_{\theta,L}(g):=\int_v h_{\theta,L}(g(v))\,\dd v,\qquad
h_{\theta,L}(s):=\int_0^s L(\theta^2(r))\dd r,
\end{equation*}
the refined localised estimate proved in Subsection~\ref{sec-7:n-1} gives
\begin{equation}
         \label{app-goal-1}
     \begin{aligned}
&\hE\big[\cH_{\theta_n,L_n}(f_{n,\alpha}(t))\big]
-\hE\big[\cH_{\theta_n,L_n}(f^{\alpha}_0)\big]
+\hE\Big[\int_0^t\cD_{\theta_n,L_n}(f_{n,\alpha}(s))\dd s\Big]\\
&\qquad\le
C\hE\Big[\int_0^t
\Big(\int_{\{0\le \theta_n^2(f_{n,\alpha})\le n^{-1}\}}
\theta_n^2(f_{n,\alpha})\,\dd v\Big)^{1/2}\dd s\Big],
\end{aligned}
\end{equation}
where $C$ depends only on the fixed structural constants and on
$\|f_0^\alpha\|_{L^1}$.

We now let $n\to\infty$ with $\alpha$ fixed.  By the compactness and
identification argument of Proposition~\ref{lem:l-12:log}, a subsequence of
$f_{n,\alpha}$ converges to a probabilistic weak solution $f_\alpha$ of the
equation with coefficients $(A,\theta,\log)$ and viscosity $\alpha$. Passing to the limit as in the proof of Proposition~\ref{lem:l-12:log}, we have 
\begin{align}
\label{sec7:lsc-n}
&\hE\big[\cH_{\theta}(f_{\alpha}(t))\big]
-\hE\big[\cH_{\theta}(f^\alpha_0)\big]
+\hE\Big[\int_0^t\cD_{\theta}(f_\alpha(s))\dd s\Big]\notag\\
&\le
\liminf_{n\to\infty}
\Big\{
\hE\big[\cH_{\theta_n,L_n}(f_{n,\alpha}(t))\big]
-\hE\big[\cH_{\theta_n,L_n}(f^\alpha_0)\big]
+\hE\Big[\int_0^t\cD_{\theta_n,L_n}(f_{n,\alpha}(s))\dd s\Big]
\Big\}.
\end{align}

It remains to show that the right-hand side error in \eqref{app-goal-1}
vanishes.  Since $\theta_n$ is linear near the origin and
$\theta_n^2(r)\le r$, there is $c_0>0$, independent of $n$, such that
\begin{align*}
\{0\le \theta_n^2(f_{n,\alpha})\le n^{-1}\}
\subset \{0\le f_{n,\alpha}\le c_0 n^{-1/2}\}.
\end{align*}
By the energy bound \eqref{energy:h-eps},
\begin{align*}
        \sup_n\sup_{t\le T}\int_v|v|^2  f_{n,\alpha}(t,v)\dd v\le C_\alpha
        \quad \hbox{a.s.}
\end{align*}
Therefore, for every $R>0$,
\begin{align*}
\int_{\{0\le f_{n,\alpha}\le c_0n^{-1/2}\}} f_{n,\alpha}(t,v)\dd v
&\le
\int_{\{|v|>R\}} f_{n,\alpha}(t,v)\dd v
+\int_{\{|v|\le R,\ f_{n,\alpha}\le c_0n^{-1/2}\}}f_{n,\alpha}(t,v)\dd v\\
&\le C_\alpha R^{-2}+c_0|B_R|n^{-1/2}.
\end{align*}
Since $\theta_n^2(r)\le r$, taking square roots in the preceding bound gives
\begin{align*}
&\hE\Big[\int_0^t
\Big(\int_{\{0\le \theta_n^2(f_{n,\alpha})\le n^{-1}\}}
\theta_n^2(f_{n,\alpha})\,\dd v\Big)^{1/2}\dd s\Big]\\
&\qquad\le t\big(C_\alpha R^{-2}+c_0|B_R|n^{-1/2}\big)^{1/2}.
\end{align*}
Letting first $n\to\infty$ and then $R\to\infty$ proves
\begin{align}
\label{app-goal-2}
\limsup_{n\to\infty}
\hE\Big[\int_0^t
\Big(\int_{\{0\le \theta_n^2(f_{n,\alpha})\le n^{-1}\}}
\theta_n^2(f_{n,\alpha})\,\dd v\Big)^{1/2}\dd s\Big]=0.
\end{align}
Combining \eqref{app-goal-1}, \eqref{sec7:lsc-n} and \eqref{app-goal-2}, we
obtain
\begin{equation}
\label{app-goal-3}
\hE\big[\cH_{\theta}(f_\alpha(t))\big]
-\hE\big[\cH_{\theta}(f^\alpha_0)\big]
+\hE\Big[\int_0^t\cD_{\theta}(f_\alpha(s))\dd s\Big]\le0.
\end{equation}

 By applying the preceding argument in the same way as in Proposition~\ref{lem:l-12:log} for showing \eqref{H:limit-L2-exp}, we obtain the following refined maximal exponential estimate. For every $\lambda>0$, there exists $\eps_0(\lambda)>0$ such that, if $0<\eps\le\eps_0(\lambda)$, then
\begin{equation*}
\hE\exp\left[
\operatorname{esssup}_{s\in[0,t]}\left(\lambda\cH_\theta(f_\alpha(s))+\frac{\lambda}{2}\int_0^s\cD_\theta(f_\alpha(r))\dd r\right)\right]
\le C_\lambda\exp\left[\lambda\cH_\theta(f_0^\alpha)\right].
\end{equation*}

Finally, we let $\alpha\downarrow0$.  The compactness, identification of the
limit equation, and lower semicontinuity of the entropy and dissipation are
the same as in the proof of Theorem~\ref{thm:L-1}. However, we emphasize that establishing the uniform $L^1_tW^{1,1}_v$ estimate in the spirit of Lemma~\ref{lem:gradient-es} requires a more refined argument for a general choice of $\theta$. Precisely, by the assumptions on $\theta$, its derivative is bounded. Hence,
\begin{align*}
    &\mathbb{E}\int_0^T
    \Big(
    \int_{\{\{0\le f_\alpha\le r_0\}\cap B_R\}}
    |\nabla_v f_\alpha|^2
    +
    \int_{\{\{f_\alpha\ge r_0\}\cap B_R\}}
    \frac{|\nabla_v f_\alpha|^2}{f_\alpha}
    \Big)
    \\
    \le{}\,&
    C\,
    \mathbb{E}\int_0^T
    \Big(
    \int_{\{\{0\le f_\alpha\le r_0\}\cap B_R\}}
    \theta'(f_\alpha)^2
    |\nabla_v f_\alpha|^2
    +
    \int_{\{\{f_\alpha\ge r_0\}\cap B_R\}}
    \frac{|\nabla_v f_\alpha|^2}{f_\alpha}
    \Big)
    \\
    \le{}\,&
    C\,
    \mathbb{E}\int_0^T\int_{B_R}
    |\nabla_v \theta(f_\alpha)|^2
    \le
    C(f_0).
\end{align*}

Here, the last step follows directly from \cite[Theorem~1]{Des15} applied to $\theta^2$. Indeed, the bound $\theta(f)\le \sqrt{f}$ implies that $\theta^2(f)\in L^1_2$. It therefore remains to verify that $|\cH(\theta^2(f))|<+\infty$.

By Assumption~\ref{ass:7}, we have
\begin{align*}
&C_0^2s^2\le \theta^2(s)\le s\quad s\in(0,r_0),\quad \theta^2(s)=s\quad s\in[r_0,\infty). 
\end{align*}

By Remark~\ref{rmk:boltzmann-entropy}, we have 
\begin{align*}
&s\log (C_{r_0}s)^2\le\theta^2(s)\log \theta^2(s)\le0\quad s\in(0,r_0),\quad \theta^2(s)\log \theta^2(s)=s\log s\quad s\in[r_0,\infty). 
\end{align*}
Hence, we have 
\begin{align*}
2\log C_{r_0}\|f\|_{L^1}+2\int_{\{0\le f\le 1\}}f\log f \le \cH(\theta^2(f))\le \int_{\{f\ge 1\}}f\log f.
\end{align*}
By using of the estimate \eqref{LlogL:minus}, we have 
\begin{align*}
\int_{\{0\le f\le  1\}}f\log f\gtrsim-\|f\|_{L^1_2}.
\end{align*}
Hence, we have 
\begin{align*}
-C \|f\|_{L^1_2}\le \cH(\theta^2(f))\le \cH(f)+C\|f\|_{L^1_2},
\end{align*}
for some constant $C=C(r_0)>0$.

Using the equivalence between $\cH(f)$ and $\cH_\theta(f)$ in Remark~\ref{rmk:boltzmann-entropy}, we have 
\begin{align*}
 |\cH(\theta^2(f))|\le |\cH_\theta(f)|+C\|f\|_{L^1_2}.
\end{align*}

As a consequence, for every $R>1$,
\begin{align*}
    \|\nabla_v f_\alpha\|_{L^1_tL^1(B_R)}
    \le&
    \|\nabla_v f_\alpha \,\mathbb{1}_{\{0\le f_\alpha\le r_0\}}\|_{L^1_tL^2_v}
    |B_R|^{\frac12}
    \\
    &+
    \|f_\alpha\|_{L^1}^{\frac12}
    \|\nabla_v \sqrt{f_\alpha}\,\mathbb{1}_{\{f_\alpha\ge r_0\}}\|_{L^1_tL^2(B_R)}
    \le C(R,f_0).
\end{align*}

 We pass to the limit by letting $\alpha\to0$ in \eqref{app-goal-3} to obtain \eqref{H:dissip}. Although Theorem~\ref{thm:L-1} concerns the Boltzmann entropy $\cH(f)$, here we consider the $\theta$-entropy $\cH_\theta$; the term $\cH_\theta$ can be treated in the same way as in the entropy computation of Proposition~\ref{lem:l-1:app}. We recall
\begin{gather*}
\cH_\theta(f)=\int_v h_\theta(f)\dd v,\qquad
h_\theta(s)=\int_0^s\log(\theta^2(r))\dd r.
\end{gather*}
Since $h_\theta$ is continuous and convex, and satisfies $h_\theta(s)\ge -C(1+s)$ for all $s\ge0$, it follows, as for the Boltzmann entropy $\cH$, that $\cH_\theta(f)$ is weakly lower semicontinuous in $L^1(\R^d)$ under the uniform mass bound; see, for example, \cite[Theorem 5.14]{FL07}. Indeed, the convexity of $h_\theta$, together with $h_\theta(0)=0$ and $h_\theta(s)>0$ for $s\gg1$, ensures such an affine lower bound. Moreover, the linear behaviour of $\theta(r)$ near $0$ ensures the equivalence between $\cH_\theta$ and $\cH$, as stated in Remark \ref{rmk:boltzmann-entropy}.

The refined maximal exponential estimate also passes to the limit.  Indeed, the lower semicontinuity of $\cH_\theta$ and $\cD_\theta$, together with the same countable-dense-time argument used in the proof of Theorem~\ref{thm:L-1}, gives
\begin{align*}
&\operatorname{esssup}_{s\in[0,t]}\left(\lambda\cH_\theta(f_s)+\frac{\lambda}{2}\int_0^s\cD_\theta(f_r)\,\dd r\right)\\
&\le
\liminf_{\alpha\to0}
\operatorname{esssup}_{s\in[0,t]}\left(\lambda\cH_\theta(f_\alpha(s))+\frac{\lambda}{2}\int_0^s\cD_\theta(f_\alpha(r))\,\dd r\right).
\end{align*}
Applying Fatou's lemma to the refined maximal estimate for $f_\alpha$ and using $\cH_\theta(f_0^\alpha)\to\cH_\theta(f_0)$, we obtain
\begin{align*}
\hE\exp\left[
\operatorname{esssup}_{s\in[0,t]}\left(\lambda\cH_\theta(f_s)+\frac{\lambda}{2}\int_0^s\cD_\theta(f_r)\,\dd r\right)\right]
\le C_\lambda\exp\left[\lambda\cH_\theta(f_0)\right].
\end{align*}
This proves the refined maximal estimate in Theorem~\ref{main:thm-2}.

\end{proof}

\subsection{The localised refined entropy estimate}\label{sec-7:n-1}

We prove the estimate \eqref{app-goal-1}. 
The argument is the localised
entropy computation of Proposition~\ref{lem:l-1:app}; the only new point is
that the $G_k$-representation of the stochastic correction allows us to use
the divergence-free condition \eqref{div-free}.  

Let
$f\in L^\infty([0,T];L^1_2\cap L^2(\R^d;\R_+))$ be a regularised solution as
in Proposition~\ref{lem:l-1:app}.  Assume that the hypotheses of that
proposition hold, that $L$ is given by \eqref{exm:L-s1}, and that the active
modes satisfy Assumption~\ref{ass:G-k-2}.  We shall prove
\begin{equation}
         \label{app-goal-4}
\begin{aligned}
\hE\big[\cH_{\theta,L}(f(t))\big]
-\hE\big[\cH_{\theta,L}(f_0)\big]
+\hE\Big[\int_0^t\cD_{\theta,L}(f_s)\dd s\Big]
\le
C\hE\Big[\int_0^t
\Big(\int_{\{0\le\theta^2(f_s)\le s_0\}}\theta^2(f_s)\dd v\Big)^{1/2}\dd s\Big].
\end{aligned}
\end{equation}

We recall the regularised logarithm
\begin{equation}
\label{exm:L-s1}
        L(s)=
        \begin{cases}
        \log s, & s\in[s_0,\infty),\\
        s_0^{-1} s+\log s_0-1, & s\in(-\infty,s_0).
        \end{cases}
    \end{equation}
In the proof of Lemma~\ref{lem:sec-7}, the function $L_n$ is obtained from
\eqref{exm:L-s1} by taking $s_0=n^{-1}$.

Following Section~\ref{sec:4-3}, we first prove a localised entropy
inequality and then remove the cut-off.  Let $\chi_m$ be the cut-off functions
from Assumption~\ref{ass:chi}.  Set
    \begin{align*}
        \cH_{\theta,L,m}(f)=\int_v \chi_m(v)h_{\theta,L}(f)\dd v,\quad 
    \cD_{\theta,L,m}(f)=\frac12\int_{v,v_*}\chi_m(v)A\theta^2(f)\theta^2(f_*)\big|\tn L(\theta^2(f))\big|^2\dd v\dd v_*.
\end{align*}

Analogously to \eqref{H-theorem:1}, we show that 
\begin{equation}
    \label{H-theorem:1-app}
 \begin{aligned}
&\hE\big[\cH_{\theta,L,m}(f_t)\big]
+\hE\Big[\int_0^t\cD_{\theta,L,m}(f_s)\dd s\Big]\le \hE\big[\cH_{\theta,L,m}(f_0)\big]\\
&\quad+ C_1 m^{-d}
\hE\Big[\big(\|f\|_{L^\infty_tL^2_v}+1\big)^3
\big(\|f\|_{L^2_tH^1_v}+1\big)\Big]\\
&\quad+C_2\hE\Big[\int_0^t
\Big(\int_{\{0\le \theta^2(f_s)\le s_0\}}\theta^2(f_s)\dd v\Big)^{1/2}\dd s\Big]
\end{aligned}
\end{equation}
for some constants $C_1=C_1(C_{r_0},C_{R_0},C_K,d,A_0,z^*,T)>0$ and $C_2=C_2(C_{r_0},C_K,\|f_0\|_{L^1})>0$.
Letting $m\to\infty$ in \eqref{H-theorem:1-app}, the cut-off errors vanish by
the uniform estimates of Proposition~\ref{lem:l-1:app}; monotone convergence
for the nonnegative dissipation and lower semicontinuity of the entropy then
give \eqref{app-goal-4}.

It remains to prove \eqref{H-theorem:1-app}. Compared with
\eqref{H-theorem:1}, the only new contribution is the small-density term
containing $C_2$; it comes from the stochastic correction term $T_1$ in the
localised It\^o formula
\begin{equation}
 \begin{aligned}
\cH_{\theta,L,m}(f_t)= &\cH_{\theta,L,m}(f_0)- \alpha\int^t_0\int_{v}\nabla f\cdot \nabla_v\big(L (\theta^2)\chi_m\big) \\
&-\frac12 \int^t_0\int_{v,v_*}A\theta^2\theta_{*}^2\tn L (\theta^2)\cdot \tn \big(L (\theta^2)\chi_m\big)\\
&+\frac{\sqrt{\eps}}{2}\sum_{k=1}^K\int^t_0\int_{v,v_*}\tn \big(L (\theta^2)\chi_m\big)\cdot\big(G_k(v,v_*)\theta(f)\theta(f_*)\big)\dd B_k\\
           &+T_1(f,h_{\theta,L},\chi_m)+T_2(f,h_{\theta,L},\chi_m).
\end{aligned}
\end{equation}

We only need to estimate $T_1$ term by taking \eqref{div-free} into account. Showing \eqref{H-theorem:1}, we use the form of $T_1$ as given in \eqref{ito:cut-off} ($F_i$ representation). Here, the key is to keep the $G_k$ presentation to use the divergence-free condition \eqref{div-free}
\begin{align*}
   &T_1(f,h_{\theta,L},\chi_m)\\
   =&{}2\int_{v,v_*}\chi_m(v_*)\big(L'(\theta^2(f_*))\theta^2(f_*)\big)G_k(v,v_*)\cdot \theta'(f)\nabla_{v_*} \theta(f_*) \tn\cdot\big(G_k(v,w)\theta(f)\theta(f_w)\big)\\
&+2\int_{v,v_*}\chi_m(v)\big(L'(\theta^2(f))\theta^2(f)\big)\big(\nabla_v\cdot G_k(v,v_*)\big)\theta'(f)\theta(f_*)\tn\cdot\big(G_k(v,w)\theta(f)\theta(f_w)\big).
\end{align*}

Moreover, using \eqref{div-free}, we have 
    \begin{align*}
            &T_1(f,h_{\theta,L},\chi_m)\\
          =&{}2\int_{v,v_*}\chi_m(v)\big(\nabla_v\cdot G_k(v,v_*)-\nabla_{v_*}\cdot G_k(v,v_*)\big)\theta'(f)\theta(f_*)\tn\cdot\big(G_k(v,w)\theta(f)\theta(f_w)\big)\\
          &+2\int_{v,v_*}\big(\chi_m(v)-\chi_m(v_*)\big)\nabla_{v_*}\cdot G_k(v,v_*)\theta'(f)\theta(f_*)\tn\cdot\big(G_k(v,w)\theta(f)\theta(f_w)\big)\\
          &+2\int_{v,v_*}\chi_m(v_*)\big(L'(\theta^2(f_*))\theta^2(f_*)-1\big)G_k(v,v_*)\cdot \theta'(f)\nabla_{v_*} \theta(f_*) \tn\cdot\big(G_k(v,w)\theta(f)\theta(f_w)\big)\\
         &+2\int_{v,v_*}\chi_m(v)\big(L'(\theta^2(f))\theta^2(f)-1\big)\big(\nabla_v\cdot G_k(v,v_*)\big)\theta'(f)\theta(f_*)\tn\cdot\big(G_k(v,w)\theta(f)\theta(f_w)\big)\\
          =&{}4\int_{v,v_*,w}\big(\chi_m(v)-\chi_m(v_*)\big)\nabla_{v_*}\cdot G_k(v,v_*)\theta'(f)\theta(f_*)\big(\nabla_v\cdot G_k(v,w)\theta(f)\theta(f_w)+G_k(v,w)\nabla_v\theta(f)\theta(f_w)\big)\\
          &+4\int_{v,v_*}\chi_m(v_*)\big(L'(\theta^2(f_*))\theta^2(f_*)-1\big)G_k(v,v_*)\cdot \theta'(f)\nabla_{v_*} \theta(f_*) \tn\cdot\big(G_k(v,w)\theta(f)\theta(f_w)\big)\\
         &+4\int_{v,v_*}\chi_m(v)\big(L'(\theta^2(f))\theta^2(f)-1\big)\big(\nabla_v\cdot G_k(v,v_*)\big)\theta'(f)\theta(f_*)\tn\cdot\big(G_k(v,w)\theta(f)\theta(f_w)\big).
         \end{align*}
         

By the definition of $F_i$ in \eqref{def:F_i}, we have 
         \begin{align*}
          &T_1(f,h_{\theta,L},\chi_m)\\
          =&{}4\int_{v,v_*,w}\big(\chi_m(v)-\chi_m(v_*)\big)\big(F_3 \theta'(f)\theta(f_*)\theta(f)\theta(f_w)-F_2\cdot \nabla_vf [\theta'(f)]^2\theta(f_*)\theta(f_w)\big)\\
          &+4\int_{v,v_*,w}\chi_m(v_*)\big(L'(\theta^2(f_*))\theta^2(f_*)-1\big) \theta'(f)\nabla_{v_*} \theta(f_*) \big(F_4\nabla_v\theta(f)\theta(f_w)+F_1\theta(f)\theta(f_w)\big)\\
&+4\int_{v,v_*,w}\chi_m(v)\big(L'(\theta^2(f))\theta^2(f)-1\big)\theta'(f)\theta(f_*)\big(F_2\nabla_v\theta(f)\theta(f_w)+F_3\theta(f)\theta(f_w)\big).
\end{align*}
Concerning the first term on the right-hand side, by  using of $|\chi_m(v)-\chi_m(v_*)|\le\|\nabla\chi_m\|_{L^\infty}|v-v_*|$, the regularised kernel $\supp (A)\subset \{z\in\R^d\mid z_*\le |z|\le z^*\}$, and the boundedness of $\theta$ and $\theta'$, we have 
\begin{align*}
&\Big|\int_{v,v_*,w}\big(\chi_m(v)-\chi_m(v_*)\big)\big(F_3 \theta'(f)\theta(f_*)\theta(f)\theta(f_w)-F_2\cdot \nabla_vf [\theta'(f)]^2\theta(f_*)\theta(f_w)\big)\Big|\\
    &\lesssim_{C_K,C_{r_0},C_{R_0}}\|\nabla \chi_m\|_{L^\infty}\big(1+\|\nabla_v f\|_{L^2}\big).
\end{align*}

We recall the definition of $l_1$ in  \eqref{def:l-i}
\begin{align}
    l_{1}(s):=\int_0^sL' (\theta^2(r))\theta'(r)\theta^2(r)\dd r,\quad 0\le l_1(s)\le\theta(s).
\end{align}
Then the second term on the right-hand side is bounded by 
\begin{align*}
              &\Big|\int_{v,v_*,w}\chi_m(v_*)\big(L'(\theta^2(f_*))\theta^2(f_*)-1\big) \theta'(f)\nabla_{v_*} \theta(f_*) \big(F_4\nabla_v\theta(f)\theta(f_w)+F_1\theta(f)\theta(f_w)\big)\Big|\\
              =&{}\Big|\int_{v,v_*,w}\chi_m(v_*)\theta(f_w)\nabla_{v_*} \big(l_1(f_*)-\theta(f_*)\big)\big( \nabla_v\cdot F_4\varsigma(f)- \theta'(f)F_1\theta(f)\big)\Big|\\
              =&{}\Big|\int_{v,v_*,w}\nabla_{v_*}\chi_m(v_*)\theta(f_w) \big(l_1(f_*)-\theta(f_*)\big)\big( \nabla_v\cdot F_4\varsigma(f)- \theta'(f)F_1\theta(f)\big)\Big|\\
              \lesssim&{}_{C_{r_0},C_{R_0},C_K}\|\nabla\chi_m\|_{L^\infty}\to0
              \end{align*}
              as $m\to\infty$.

We recall the definition of $l_2$ in  \eqref{def:l-i}
\begin{align}
    l_{2}(s):=\int_0^sL' (\theta^2(r))\theta'(r)^2\theta^2(r)\dd r,\quad 0\le l_2(s)\le C_{r_0}\theta(s).
\end{align}
              Concerning the third term on the right-hand side, we have 
 \begin{align*}
&\Big|\int_{v,v_*,w}\chi_m(v)\big(L'(\theta^2(f))\theta^2(f)-1\big)\theta'(f)\theta(f_*)\big(F_2\nabla_v\theta(f)\theta(f_w)+F_3\theta(f)\theta(f_w)\big)\Big|\\
\le&{}\Big|\int_{v,v_*,w}\chi_m(v)\theta(f_*)F_2\nabla_v\big(l_2(f)-\theta(f)\big)\theta(f_w)\Big|\\
&+\Big|\int_{v,v_*,w}\chi_m(v)\big(L'(\theta^2(f))\theta^2(f)-1\big)\theta'(f)\theta(f_*)F_3\theta(f)\theta(f_w)\Big|\\
=:&{}A_{1}+A_2.
\end{align*}  
By integration by parts, we have 
\begin{align*}
0\le A_{1}&=\Big|\int_{v,v_*,w}\nabla_v\chi_m(v)\theta(f_*)F_2\big(l_2(f)-\theta(f)\big)\theta(f_w)\Big|\\
&\lesssim_{C_{r_0},C_{R_0},C_K} \|\nabla\chi_m\|_{L^\infty}\to0
              \end{align*}
              as $m\to\infty$.   
Concerning the $A_2$ term, we use $\big|L'(\theta^2(f))\theta^2(f)-1\big|\le \mathbb{1}_{\{0\le \theta^2(f)\le s_0\}} $ to show  
\begin{align*}
0\le A_2\le &\int_{\{0\le \theta^2(f)\le s_0\}}\theta'(f)\theta(f)
\Big(\int_{v_*,w}\theta(f_*)\theta(f_w)|F_3|\dd v_*\dd w\Big)\dd v\\
\le&{} C_{r_0}
\Big(\int_{\{0\le \theta^2(f)\le s_0\}}\theta^2(f)\dd v\Big)^{1/2}
\Big\|\int_{v_*,w}\theta(f_*)\theta(f_w)|F_3|\dd v_*\dd w\Big\|_{L^2_v}\\
\lesssim&{}_{C_{r_0},C_K}\|f\|_{L^1}
\Big(\int_{\{0\le \theta^2(f)\le s_0\}}\theta^2(f)\dd v\Big)^{1/2}.
\end{align*}
In the last inequality we used the compact support and boundedness of the
kernels defining $F_3$: for a smooth compactly supported kernel $K$,
$|F_3(v,v_*,w)|\le C_KK(v-v_*)K(v-w)$, and hence, by Young's inequality and
$\theta^2(f)\le f$,
\begin{align*}
\Big\|\int_{v_*,w}\theta(f_*)\theta(f_w)|F_3|\dd v_*\dd w\Big\|_{L^2_v}
\le C_K\|K*\theta(f)\|_{L^4_v}^2
\le C_K\|\theta(f)\|_{L^2_v}^2
\le C_K\|f\|_{L^1_v}.
\end{align*}
Combining the cut-off estimate, the bound for the second logarithmic
regularisation error, and the bounds for $A_1$ and $A_2$, we obtain
\begin{align*}
|T_1(f,h_{\theta,L},\chi_m)|
\le C\|\nabla\chi_m\|_{L^\infty}\big(1+\|\nabla_v f\|_{L^2}\big)
+C\Big(\int_{\{0\le\theta^2(f)\le s_0\}}\theta^2(f)\dd v\Big)^{1/2} .
\end{align*}
After integrating in time and taking expectations, this is precisely the
additional contribution displayed in \eqref{H-theorem:1-app}.  All other terms
in \eqref{ito:sec-7} are estimated exactly as in the proof of
Proposition~\ref{lem:l-1:app}.  This proves \eqref{H-theorem:1-app}, and hence
\eqref{app-goal-4}.  Taking $s_0=n^{-1}$ gives \eqref{app-goal-1}.

\appendix

\section{A formal particle derivation of the fluctuating Landau noise}
\label{app:formal-particle-derivation}

We record the formal computation behind the noise appearing in
\eqref{eq:ideal}.  The point is not to construct the particle system
rigorously, but to identify the covariance of the fluctuation martingale.
This is the Landau analogue of the Dean--Kawasaki derivation: the noise in the
fluctuating hydrodynamic equation is chosen so that its second moment agrees
with the second moment of the martingale part of the empirical measure.

Set
\begin{equation*}
    a(z):=A(|z|)\Pi_{z^\perp},\qquad
    b(z):=\nabla_z\cdot a(z)=(1-d)|z|^\gamma z
    \quad\text{if } A(|z|)=|z|^{\gamma+2}.
\end{equation*}
In dimension $d=3$ one may write a square root of $a$ explicitly as
\begin{equation*}
\delta(z)=|z|^{\frac{\gamma}{2}}
\begin{bmatrix}
	z_2 & 0 & z_3\\
	-z_1 & z_3 & 0\\
	0 & -z_2 & -z_1
\end{bmatrix},
\qquad
\delta(z)\delta(z)^T=a(z).
\end{equation*}
Equivalently, since $\Pi_{z^\perp}$ is an orthogonal projection,
$a(z)^{1/2}=\sqrt{A(|z|)}\Pi_{z^\perp}$.

Let $(B_{ij})_{1\le i<j\le N}$ be independent $\R^d$-valued Brownian
motions and define the antisymmetric family $Z_{ij}$ by
$Z_{ij}=B_{ij}$ for $i<j$, $Z_{ji}=-B_{ij}$, and $Z_{ii}=0$.  Formally
consider the conservative Landau particle system
\begin{equation}
\label{app:particle-landau}
    \dd V_i
    =\frac{2}{N}\sum_{j=1}^N b(V_i-V_j)\,\dd t
    +\sqrt{\frac{2}{N}}\sum_{j=1}^N a(V_i-V_j)^{1/2}\,\dd Z_{ij},
    \qquad 1\le i\le N.
\end{equation}
The antisymmetry of $Z_{ij}$ is the relevant structural point: the noise acts
pairwise and conservatively.  For indices $i,j,k,l$,
\begin{equation}
\label{app:anti-brownian-cov}
    \dd [Z_{ij}^\alpha,Z_{kl}^\beta]_t
    =\delta_{\alpha\beta}
    \big(\delta_{ik}\delta_{jl}-\delta_{il}\delta_{jk}\big)\,\dd t.
\end{equation}

Let
\begin{equation*}
    \mu_t^N:=\frac1N\sum_{i=1}^N\delta_{V_i(t)}.
\end{equation*}
For $\phi\in C_c^\infty(\R^d)$, It\^o's formula gives
\begin{equation}
\label{app:empirical-ito}
    \dd \langle \mu_t^N,\phi\rangle
    =\mathcal L_N\phi(V(t))\,\dd t+\dd M_t^{N,\phi},
\end{equation}
where $\mathcal L_N\phi$ denotes the drift part and
\begin{equation}
\label{app:particle-martingale}
    \dd M_t^{N,\phi}
    =\frac{1}{N}\sqrt{\frac{2}{N}}
    \sum_{i,j=1}^N
    \nabla\phi(V_i)\cdot a(V_i-V_j)^{1/2}\,\dd Z_{ij}.
\end{equation}
By the antisymmetry of $Z_{ij}$, this martingale can be symmetrised as
\begin{equation}
\label{app:particle-martingale-sym}
    \dd M_t^{N,\phi}
    =\frac{1}{N}\sqrt{\frac{2}{N}}
    \sum_{1\le i<j\le N}
    \big(\nabla\phi(V_i)-\nabla\phi(V_j)\big)
    \cdot a(V_i-V_j)^{1/2}\,\dd B_{ij}.
\end{equation}
Consequently,
\begin{align}
\label{app:particle-bracket}
    \dd [M^{N,\phi}]_t
    &=\frac{2}{N^3}
    \sum_{1\le i<j\le N}
    \big|a(V_i-V_j)^{1/2}
    \big(\nabla\phi(V_i)-\nabla\phi(V_j)\big)\big|^2\,\dd t \notag\\
    &=\frac{1}{N^3}
    \sum_{i,j=1}^N
    \big(\nabla\phi(V_i)-\nabla\phi(V_j)\big)^T
    a(V_i-V_j)
    \big(\nabla\phi(V_i)-\nabla\phi(V_j)\big)\,\dd t \notag\\
    &=\frac1N
    \int_{\R^d\times\R^d}
    A(|v-v_*|)\mu_t^N(\dd v)\mu_t^N(\dd v_*)\,
    \big|\Pi_{(v-v_*)^\perp}(\nabla\phi(v)-\nabla\phi(v_*))\big|^2
    \,\dd t.
\end{align}
Thus the empirical measure has noise of order $N^{-1/2}$, while the
fluctuation field $\sqrt N(\mu^N-f)$ has a non-trivial Gaussian covariance
given by the last integral with $\mu_t^N$ replaced by the limiting density
$f_t(v)\dd v$.

We now compare this with the formal fluctuating Landau noise.  Let $\dot{W}$ be a
velocity--velocity--time white noise. Then for every smooth vector fields $H$ and $\widetilde H$,
\begin{align*}
    \hE&\left[
    \int_0^t\!\!\int_{v,v_*} H_s(v,v_*)\cdot \dot{W}(\dd s,\dd v,\dd v_*)
    \int_0^t\!\!\int_{v,v_*} \widetilde H_s(v,v_*)\cdot \dot{W}(\dd s,\dd v,\dd v_*)
    \right]\\
    =&\hE\int_0^t\!\!\int_{v,v_*}H_s\cdot \widetilde H_s\,\dd v\dd v_*\dd s.
\end{align*}
The density-level fluctuating equation with the particle scaling would contain
the martingale term
\begin{equation}
\label{app:formal-spde-noise}
    \frac1{\sqrt N}\int_0^t\!\!\int_{v,v_*}
    \sqrt{A(|v-v_*|)f_s(v)f_s(v_*)}\,
    \widetilde\nabla\phi(v,v_*)\cdot \dot{W}(\dd s,\dd v,\dd v_*),
\end{equation}
where
\begin{equation*}
    \widetilde\nabla\phi(v,v_*)
    =\Pi_{(v-v_*)^\perp}\big(\nabla\phi(v)-\nabla\phi(v_*)\big).
\end{equation*}
Its second moment is
\begin{align}
\label{app:spde-bracket}
    &\hE\left[
    \frac1{\sqrt N}\int_0^t\!\!\int_{v,v_*}
    \sqrt{A f_s f_{s,*}}\,
    \widetilde\nabla\phi\cdot \dot{W}(\dd s,\dd v,\dd v_*)
    \right]^2 \notag\\
    &\qquad
    =\frac1N\hE\int_0^t\!\!\int_{\R^d\times\R^d}
    A(|v-v_*|)f_s(v)f_s(v_*)
    \big|\Pi_{(v-v_*)^\perp}(\nabla\phi(v)-\nabla\phi(v_*))\big|^2
    \dd v\dd v_*\dd s.
\end{align}
This is exactly the formal limit of the particle covariance
\eqref{app:particle-bracket}.  Hence the multiplicative conservative noise
in \eqref{eq:ideal} has the same covariance as the empirical-measure
martingale.  Equivalently, after multiplying the density fluctuation
$\mu^N-f$ by $\sqrt N$, both the particle system and the fluctuating Landau
equation produce the same Gaussian covariance form.

In the body of the paper we use two regularisations of this formal structure.
First, the space-time noise is projected onto finitely many antisymmetric
tangential modes $g_k$, which gives the finite-dimensional noise
$\xi_K=\sum_{k=1}^K \dot\beta_k g_k$.  Second, the singular factor
$\sqrt f$ is replaced by the mobility $\sigma(f)$, with
$\sigma(r)=\sqrt r$ away from the origin and $\sigma(r)\sim r$ near zero.
These changes preserve the covariance structure at the formal level while
making the stochastic equation analytically tractable.

\section{Stratonovich-to-It\^o conversion}
\label{app:stratonovich-ito}

We give the formal computation which converts the Stratonovich equation used
in the introduction into the It\^o equation \eqref{SDE-1}.  The calculation is
purely algebraic; in the body of the paper the corresponding identities are
used only after testing against smooth functions and applying the localised
It\^o formula of Lemma~\ref{lem:cutoff-ito}.

Throughout this appendix we use the abbreviations
\begin{equation*}
    \sigma=\sigma(f(v)),\qquad
    \sigma_*=\sigma(f(v_*)),\qquad
    \sigma_w=\sigma(f(w)),
\end{equation*}
and similarly for $\sigma'$ and $f_*$.  Recall that
\begin{equation*}
    G_k(v,v_*)=\sqrt{A(|v-v_*|)}\Pi_{(v-v_*)^\perp}g_k(v,v_*),
    \qquad G_k(v,v_*)=-G_k(v_*,v).
\end{equation*}
For a function $h=h(v)$ we write
\begin{equation*}
    \tn_{(v,w)}\cdot\big(G_k(v,w)h(v,w)\big)
\end{equation*}
for the Landau divergence in the pair of variables $(v,w)$.  Define
\begin{equation}
\label{app:Rk-def}
    R_k(f)(v):=
    \tn_{(v,w)}\cdot\big(G_k(v,w)\sigma(f(v))\sigma(f(w))\big).
\end{equation}
With these conventions the regularised Stratonovich equation is formally
\begin{equation}
\label{app:strato-main}
    \dd f
    =Q(f,f)\,\dd t
    -\frac{\sqrt{\eps}}{2}\sum_{k=1}^K
    \tn\cdot\big(G_k(v,v_*)\sigma\sigma_*\circ \dd B_t^k\big).
\end{equation}

Let
\begin{equation*}
    \mathcal B_k(f):=
    -\frac{\sqrt{\eps}}{2}\tn\cdot\big(G_k(v,v_*)\sigma(f)\sigma(f_*)\big).
\end{equation*}
Then \eqref{app:strato-main} has the abstract form
\begin{equation*}
    \dd f=Q(f,f)\,\dd t+\sum_{k=1}^K\mathcal B_k(f)\circ \dd B_t^k .
\end{equation*}
The Stratonovich-to-It\^o rule gives
\begin{equation}
\label{app:strato-ito-rule}
    \mathcal B_k(f)\circ \dd B_t^k
    =
    \mathcal B_k(f)\,\dd B_t^k
    +\frac12 D\mathcal B_k(f)[\mathcal B_k(f)]\,\dd t .
\end{equation}
The Fr\'echet derivative of $\mathcal B_k$ in the direction $h$ is
\begin{equation}
\label{app:DBk}
    D\mathcal B_k(f)[h]
    =
    -\frac{\sqrt{\eps}}{2}\tn\cdot\Big(
    G_k(v,v_*)\big(\sigma'(f)h(v)\sigma_*
    +\sigma\sigma'_*
    h(v_*)\big)\Big).
\end{equation}
Since $\mathcal B_k(f)(v)=-\frac{\sqrt{\eps}}{2}R_k(f)(v)$, substituting
$h=\mathcal B_k(f)$ into \eqref{app:DBk} yields
\begin{equation}
\label{app:first-correction}
    \frac12D\mathcal B_k(f)[\mathcal B_k(f)]
    =
    \frac{\eps}{8}\tn\cdot\Big(
    G_k(v,v_*)\sigma'\sigma_*R_k(f)(v)
    +G_k(v,v_*)\sigma\sigma'_*R_k(f)(v_*)\Big).
\end{equation}

The two terms in \eqref{app:first-correction} are the same after applying the
Landau divergence.  Indeed, using the antisymmetry of $G_k$ and changing
$v$ and $v_*$ in the second contribution, one obtains, in the sense of
distributions,
\begin{equation}
\label{app:two-corrections-equal}
\begin{aligned}
&\tn\cdot\Big(
G_k(v,v_*)\sigma\sigma'_*R_k(f)(v_*)\Big)\\
&\qquad =
\tn\cdot\Big(
G_k(v,v_*)\sigma'\sigma_*R_k(f)(v)\Big).
\end{aligned}
\end{equation}
Equivalently, testing both sides against $\phi\in C_c^\infty(\R^d)$ and using
the integration-by-parts formula for $\tn\cdot$ gives
\begin{align*}
&-\int_{v,v_*}\tn\phi(v,v_*)\cdot
G_k(v,v_*)\sigma\sigma'_*R_k(f)(v_*)\,\dd v\dd v_*\\
&\quad
=-\int_{v,v_*}\tn\phi(v_*,v)\cdot
G_k(v_*,v)\sigma_*\sigma'R_k(f)(v)\,\dd v\dd v_*\\
&\quad
=-\int_{v,v_*}\tn\phi(v,v_*)\cdot
G_k(v,v_*)\sigma'\sigma_*R_k(f)(v)\,\dd v\dd v_*,
\end{align*}
where we used
$\tn\phi(v_*,v)=-\tn\phi(v,v_*)$ and
$G_k(v_*,v)=-G_k(v,v_*)$.  Therefore the two half contributions in
\eqref{app:first-correction} combine into
\begin{equation}
\label{app:ito-correction-G}
    \frac12D\mathcal B_k(f)[\mathcal B_k(f)]
    =
    \frac{\eps}{4}\tn\cdot\Big(
    G_k(v,v_*)\sigma'(f)\sigma(f_*)R_k(f)(v)\Big).
\end{equation}
With the convention used in the main text for the finite-dimensional noise
$\xi_K=\sum_{k=1}^K\dot B^k g_k$, this is equivalently written as
\begin{equation}
\label{app:ito-correction-main-normalisation}
    \frac{\eps}{2}\sum_{k=1}^K
    \tn\cdot \Big(G_k(v,v_*) \sigma'(f) \sigma(f_{*})
    \tn\cdot\big(G_k(v,w) \sigma(f) \sigma(f_{w})\big) \Big),
\end{equation}
which is the correction term appearing in \eqref{SDE-1}.  The difference
between \eqref{app:ito-correction-G} and
\eqref{app:ito-correction-main-normalisation} is only the normalisation of
the antisymmetric pair divergence: the operator $\tn\cdot$ counts both
ordered pairs $(v,v_*)$ and $(v_*,v)$.

Combining \eqref{app:strato-ito-rule}--\eqref{app:ito-correction-main-normalisation}
we obtain the It\^o form
\begin{equation}
\label{app:ito-main}
\begin{aligned}
\dd f
&=Q(f,f)\,\dd t
-\frac{\sqrt{\eps}}{2}\sum_{k=1}^K
\tn\cdot\big(G_k(v,v_*)\sigma(f)\sigma(f_*)\big)\,\dd B_t^k\\
&\quad
+\frac{\eps}{2}\sum_{k=1}^K
\tn\cdot \Big(G_k(v,v_*) \sigma'(f) \sigma(f_{*})
\tn\cdot\big(G_k(v,w) \sigma(f) \sigma(f_{w})\big) \Big)\,\dd t .
\end{aligned}
\end{equation}
This is precisely \eqref{SDE-1}.

Finally we record the form used in the weak formulation. Since $G_k(v,w)\sigma(f)\sigma(f_w)$ is antisymmetry,
\begin{equation}
\label{app:Rk-expanded}
\begin{aligned}
R_k(f)(v)
&=
2\int_w\Big[
(\nabla_v\cdot G_k(v,w))\sigma(f)\sigma(f_w)
 +G_k(v,w)\cdot\nabla_v\sigma(f)\,\sigma(f_w)
\Big]\dd w .
\end{aligned}
\end{equation}
After summing in $k$ and using the definitions of $F_1$ and $F_4$ in
\eqref{def:F_i}, the correction tested against a smooth $\phi$ becomes
\begin{align*}
&-\frac{\eps}{2}\int_{v,v_*,w}
\tn\phi(v,v_*)\cdot
\sigma(f_*)\sigma(f_w)
\Big(F_4(v,v_*,w)\nabla_v\varsigma(f)
\,+\,
\sigma(f)\sigma'(f)F_1(v,v_*,w)\Big)
\dd v\dd v_*\dd w,
\end{align*}
where $\varsigma'(r)=[\sigma'(r)]^2$.  This is the deterministic correction
term in Definition~\ref{def:weak-sol:L1} and in the approximation equations
used throughout the proof.

\section{Construction of anti-symmetry bases}\label{app-sec:ONB}

In this appendix, we present a construction of an ONB of
\begin{align*}
 L^2_{\anti}(\G;\R^d):=\{f\in L^2(\G;\R^d)\mid f(v,v_*)=-f(v_*,v)\}.   
\end{align*}

Let $(e_1,\dots,e_d)$ denote the standard orthonormal basis of $\R^d$. Then we only need to construct an ONB $\{\psi_n\}_{n\ge 1}$ for $L^2_{\anti}(\G;\R)$. A desired ONB $\{g_k\}_{k\ge 1}$ of $L^2_{\anti}(\G;\R^d)$ is then given by
\begin{align*}
    g_{(n-1)d+i}(v,v_*)=\psi_n(v,v_*)e_i,
    \qquad n\ge1,\quad 1\le i\le d.
\end{align*}

The rest of the section is devoted to the construction of $\{\psi_n\}_{n\ge 1}$.
We use the following mass-centred coordinate
\begin{equation*}
    z=\frac{v+v_*}{2},\qquad x=v-v_*,\qquad
    r=|x|,\qquad \omega=\frac{x}{|x|}\in \sd .
\end{equation*}
We construct $\psi_n$ of the following form
\begin{equation}
    \label{ONB:L-2-anti}
    \begin{aligned}
&\psi_n(v,v_*):=\zeta_{p(n)}(z)\eta_{q(n)}(r)Y_{l(n),m(n)}(\omega),
\end{aligned}
\end{equation}
where $\{\zeta_p\}_{p\ge1}$ and $\{\eta_q\}_{q\ge1}$ are ONBs of
$L^2(\R^d;\dd z)$ and $L^2(\R_+;r^{d-1}\dd r)$, respectively, and
$\{Y_{l,m}:l\ge0,\ 1\le m\le N(d,l)\}$ is the usual ONB of spherical
harmonics in $L^2(\sd)$.  Here $n\mapsto (p(n),q(n),l(n),m(n))$ is any enumeration of all indices with $l(n)$ odd and $1\le m(n)\le N(d,l(n))$.

Notice that $\zeta_{p(n)}(z)$ and $\eta_{q(n)}(r)$ are symmetric in $(v,v_*)$. It remains to choose spherical harmonics satisfying
$$Y_{l(n),m(n)}(\omega)=-Y_{l(n),m(n)}(-\omega)\quad\text{for all $n\in\N$ and $\omega\in\sd$}.$$ 
Let $l\in \N$. Let $Y_{l,m}$ be the $l$-eigenfunction of the Laplace--Beltrami operator such that
\begin{align*}
    -\Delta_{\sd} Y_{l,m}=l(l+d-2) Y_{l,m},
\end{align*}
where $m=1,\dots,N(d,l)$ labels the $N(d,l)$ spherical harmonics with degree $l$.
Equivalently, $Y_{l,m}$ is the restriction to $\sd$ of an $l$-homogeneous harmonic polynomial. 
As a consequence, we have  
$$Y_{l,m}(- \omega)=(-1)^lY_{l,m}(\omega),$$
and hence $Y_{l,m}(-\omega)=-Y_{l,m}(\omega)$ whenever $l$ is odd. 

Hence, \eqref{ONB:L-2-anti}, with all odd degrees $l$ included in the enumeration of the indices, gives the desired scalar basis functions. 

The above construction gives an ONB of the antisymmetric subspace.  A full ONB of $L^2(\G;\R^d)=L^2_{\anti}(\G;\R^d)\oplus L^2_{\sym}(\G;\R^d)$ is obtained by adjoining any ONB of the symmetric orthogonal complement.

\section{A family of divergence free bases}\label{app-sec:ONB-2}
In this appendix, we present a construction of an ONB $\{g_k\}_{k\in \N}$ of 
\begin{align*}
L^2_{\anti,\tan}(\G;\R^d):=\big\{ f\in L^2_{\anti}(\G;\R^d)\mid (\nabla_v-\nabla_{v_*})\cdot \Pi_{(v-v_*)^\perp}f=0\text{ weakly} \big\}.
\end{align*}

We use the following mass-centred coordinate
\begin{equation*}
    z=\frac{v+v_*}{2},\qquad x=v-v_*,\qquad
    r=|x|,\qquad \omega=\frac{x}{|x|}\in \sd .
\end{equation*}
We note that
\begin{equation}
\label{app:onb-grad-rel}
    \nabla_v-\nabla_{v_*}=2\nabla_x \quad\text{and}\quad |x| \Pi_{x^\perp}\nabla_x =\nabla_{\sd}.
\end{equation}
This implies that 
\begin{equation}
    \label{eq-ONB}
\begin{aligned}
   &(\nabla_v-\nabla_{v_*}) \cdot \Pi_{(v-v_*)^\perp}g_k(v,v_*)
   =\frac{2}{r}\nabla_{\sd}\cdot g_k(z,r,\omega).
\end{aligned}
\end{equation}
We shall construct $g_k$ so that the right-hand side of \eqref{eq-ONB}
vanishes.

Similarly to Section \ref{app-sec:ONB}, we take $g_k$ of the following form 
\begin{equation}
    \label{ONB:L-2-anti-div}
    \begin{aligned}
&g_k(v,v_*):=\zeta_{p(k)}(z)\eta_{q(k)}(r)T_{l(k),m(k)}(\omega),
\end{aligned}
\end{equation}
where $\zeta_{p(n)}(z)$ and $\eta_{q(n)}(r)$ are ONBs of $L^2(\R^d;\dd z)$ and  $L^2(\R_+;r^{d-1}\dd r)$, respectively. In particular, we take $T_{l(k),m(k)}(\omega)\in\R^d$ as an ONB of the tangential divergence-free subspace
\begin{align*}
    L^2_{\tan,\div}(\sd;T\sd):=\big\{\hg\in L^2(\sd;\R^d)\mid
    \hg\cdot \omega=0,\ \nabla_{\sd}\cdot \hg=0\big\}.
\end{align*}

In dimension $d=2$, we  parametrise $\omega\in\mathbb S^1$ by
\begin{equation*}
\omega=\omega(\theta)=(\cos\theta,\sin\theta)^T,
\qquad \theta\in[0,2\pi).
\end{equation*}
Let ${Y_{l,m}}$ denote the real normalised scalar spherical harmonics on $\mathbb S^1$. More explicitly, $Y_0(\theta)=\frac{1}{\sqrt{2\pi}}$ and, for $l\ge 1$,
\begin{align*}
Y_{l,1}(\theta)
=
\frac{1}{\sqrt{\pi}}\cos(l\theta),\quad 
Y_{l,2}(\theta)
=
\frac{1}{\sqrt{\pi}}\sin(l\theta).
\end{align*}
For $l\ge 1$, define
\begin{equation*}
\label{app:toroidal-harmonics-2}
T_{l,m}(\omega)
:=
\frac{\mathsf R\nabla_{\mathbb S^1}Y_{l,m}(\omega)}{l},\quad \mathsf{R}=\begin{pmatrix}
        0&1\\-1&0
    \end{pmatrix},
\qquad m=1,2.
\end{equation*}
More precisely, for $l\ge 1$, we have 
\begin{align*}
T_{l,1}(\omega)
=
-\frac{1}{\sqrt{\pi}}\sin(l\theta),\omega,\quad 
T_{l,2}(\omega)
=
\frac{1}{\sqrt{\pi}}\cos(l\theta),\omega.
\end{align*}
Moreover, these modes are normalised in $L^2(\mathbb S^1;\mathbb R^2)$:
\begin{equation*}
\int_{\mathbb S^1}
T_{l,m}(\omega)\cdot T_{l',m'}(\omega)\dd \omega=\delta_{ll'}\delta_{mm'}.
\end{equation*}

 In dimension $d=3$, we recall $Y_{l,m}$ denoting the $l$-eigenfunction of $-\Delta_{\mathbb S^2}$, with $l\ge1$. Define the toroidal vector spherical harmonics
\begin{equation}
\label{app:toroidal-harmonics}
    T_{l,m}(\omega)
    :=\frac{\omega\times\nabla_{\mathbb S^2}Y_{l,m}(\omega)}
    {\sqrt{l(l+1)}} .
\end{equation}
Then $T_{l,m}$ is tangential, orthonormal in $L^2(\mathbb S^2;\R^3)$,
and divergence-free on the sphere.  Indeed,
\begin{equation*}
    \omega\cdot T_{l,m}=0\quad\text{and}\quad
    \nabla_{\mathbb S^2}\cdot T_{l,m}=0,
\end{equation*}
where the second identity being the standard fact that the surface curl of a scalar
field is surface-divergence free.  Orthogonality follows from integration by
parts:
\begin{align*}
\int_{\mathbb S^2}T_{l,m}\cdot T_{\ell' m'}\,\dd\omega
&=
\frac1{\sqrt{l(l+1)\ell'(\ell'+1)}}
\int_{\mathbb S^2}
\nabla_{\mathbb S^2}Y_{l,m}\cdot
\nabla_{\mathbb S^2}Y_{\ell' m'}\,\dd\omega\\
&=\delta_{\ell\ell'}\delta_{mm'}.
\end{align*}
We take $l$ odd to ensure the antisymmetry as in Section \ref{app-sec:ONB}.
Indeed, the toroidal harmonics have the same antipodal parity as the scalar
harmonics, namely $T_{l,m}(-\omega)=(-1)^lT_{l,m}(\omega)$.

 In dimensions $d\ge 2$ one uses
the analogous co-closed vector spherical harmonics supplied by the Hodge
decomposition on $\sd$. Since the Hodge Laplacian and the antipodal map
commute, these co-closed eigenspaces can be decomposed into even and odd
parts; selecting the odd part gives tangential, spherical-divergence-free
modes with the required antisymmetry.


\printbibliography

@article{rezakhanlou1998large,
  title={Large deviations from a kinetic limit},
  author={Rezakhanlou, Fraydoun},
  journal={Annals of probability},
  pages={1259--1340},
  year={1998},
  publisher={JSTOR}
}

@article{bodineau2023statistical,
  title={Statistical dynamics of a hard sphere gas: fluctuating Boltzmann equation and large deviations},
  author={Bodineau, Thierry and Gallagher, Isabelle and Saint-Raymond, Laure and Sergio, Simonella},
  journal={Annals of Mathematics},
  volume={198},
  number={3},
  pages={1047--1201},
  year={2023},
  publisher={Department of Mathematics, Princeton University Princeton, New Jersey, USA}
}

@article{CDDW24,
 author = {Carrillo, Jos{\'e} A. and Delgadino, Matias G. and Desvillettes, Laurent and Wu, Jeremy S.-H.},
 title = {The {Landau} equation as a gradient flow},
 fjournal = {Analysis \& PDE},
 journal = {Anal. PDE},
 issn = {2157-5045},
 volume = {17},
 number = {4},
 pages = {1331--1375},
 year = {2024},
 language = {English},
 doi = {10.2140/apde.2024.17.1331},
 zbMATH = {7852592},
 Zbl = {1542.35382}
}

@article{PrigogineBalescu1957,
  author  = {I. Prigogine and R. Balescu},
  title   = {Irreversible Processes in Gases. I. The Diagram Technique},
  journal = {Physica},
  volume  = {23},
  number  = {1--5},
  pages   = {28--42},
  year    = {1957},
  doi     = {10.1016/S0031-8914(57)94080-8}
}

@article{PrigogineBalescu1957b,
  author  = {I. Prigogine and R. Balescu},
  title   = {Irreversible Processes in Gases. II. The Equations of Evolution},
  journal = {Physica},
  volume  = {23},
  number  = {1--5},
  pages   = {225--242},
  year    = {1957},
  doi     = {10.1016/S0031-8914(57)94095-X}
}

@article{KiesslingLancellotti2004,
  author  = {Michael K.-H. Kiessling and Carlo Lancellotti},
  title   = {On the Master-Equation Approach to Kinetic Theory: Linear and Nonlinear Fokker--Planck Equations},
  journal = {Transport Theory and Statistical Physics},
  volume  = {33},
  number  = {5--7},
  pages   = {379--401},
  year    = {2004},
  doi     = {10.1081/TT-200053929}
}

@misc{feng2025kacsprogramlandauequation,
      title={Kac's Program for the Landau Equation}, 
      author={Xuanrui Feng and Zhenfu Wang},
      year={2025},
      eprint={2506.14309},
      archivePrefix={arXiv},
      primaryClass={math.AP},
      url={https://arxiv.org/abs/2506.14309}, 
}

@book {LL87,
    AUTHOR = {Landau, Lew D. and Lifshitz, Evgeny M.},
     TITLE = {Course of theoretical physics. {V}ol. 6},
   EDITION = {Second},
      NOTE = {Fluid mechanics,
              Translated from the third Russian edition by J. B. Sykes and
              W. H. Reid},
 PUBLISHER = {Pergamon Press, Oxford},
      YEAR = {1987},
     PAGES = {xiv+539},
      ISBN = {0-08-033933-6},
   MRCLASS = {00A05 (00-01 76-01)},
  MRNUMBER = {961259},
}

@misc{GHW23,
      title={Landau--Lifshitz--Navier--Stokes Equations: Large Deviations and Relationship to the Energy Equality},
      author={Benjamin Gess and Daniel Heydecker and Zhengyan Wu},
      year={2024},
      eprint={2311.02223},
      archivePrefix={arXiv},
      primaryClass={math.PR}
}

@article {FG23,
    AUTHOR = {Fehrman, Benjamin and Gess, Benjamin},
     TITLE = {Non-equilibrium large deviations and parabolic-hyperbolic
              {PDE} with irregular drift},
   JOURNAL = {Invent. Math.},
  FJOURNAL = {Inventiones Mathematicae},
    VOLUME = {234},
      YEAR = {2023},
    NUMBER = {2},
     PAGES = {573--636},
      ISSN = {0020-9910},
   MRCLASS = {35Q84 (37H05 60F10 60H15 60K35 82B21)},
  MRNUMBER = {4651008},
       DOI = {10.1007/s00222-023-01207-3},
       URL = {https://doi.org/10.1007/s00222-023-01207-3},
}

@article {FG24,
    AUTHOR = {Fehrman, Benjamin and Gess, Benjamin},
     TITLE = {Well-{P}osedness of the {D}ean--{K}awasaki and the {N}onlinear
              {D}awson--{W}atanabe {E}quation with {C}orrelated {N}oise},
   JOURNAL = {Arch. Ration. Mech. Anal.},
  FJOURNAL = {Archive for Rational Mechanics and Analysis},
    VOLUME = {248},
      YEAR = {2024},
    NUMBER = {2},
     PAGES = {Paper No. 20},
      ISSN = {0003-9527},
   MRCLASS = {35R60 (35B09 35K40 60H15)},
  MRNUMBER = {4716244},
       DOI = {10.1007/s00205-024-01963-3},
       URL = {https://doi.org/10.1007/s00205-024-01963-3},
}

@article{DareiotisGessGnannGruen2021,
  author  = {Konstantinos Dareiotis and Benjamin Gess and
             Manuel V. Gnann and G{\"u}nther Gr{\"u}n},
  title   = {Non-negative Martingale Solutions to the Stochastic Thin-Film Equation with Nonlinear Gradient Noise},
  journal = {Archive for Rational Mechanics and Analysis},
  volume  = {242},
  number  = {1},
  pages   = {179--234},
  year    = {2021},
  doi     = {10.1007/s00205-021-01682-z}
}

@article{DareiotisGessGnannSauerbrey2026,
  author  = {Konstantinos Dareiotis and Benjamin Gess and
             Manuel V. Gnann and Max Sauerbrey},
  title   = {Solutions to the stochastic thin-film equation for initial values with non-full support},
  journal = {Transactions of the American Mathematical Society},
  volume  = {379},
  pages   = {2343--2383},
  year    = {2026}
}

@article{KawasakiOhta1982,
  author  = {Kyozi Kawasaki and Takao Ohta},
  title   = {Kinetic Drumhead Model of Interface. I},
  journal = {Progress of Theoretical Physics},
  volume  = {67},
  number  = {1},
  pages   = {147--163},
  year    = {1982},
  month   = jan,
  doi     = {10.1143/PTP.67.147}
}

@article{KatsoulakisKho2001,
  author  = {Markos A. Katsoulakis and Alvin T. Kho},
  title   = {Stochastic Curvature Flows: Asymptotic Derivation, Level Set Formulation and Numerical Experiments},
  journal = {Interfaces and Free Boundaries},
  volume  = {3},
  number  = {3},
  pages   = {265--290},
  year    = {2001},
  doi     = {10.4171/IFB/41}
}

@misc{ayala2025reversibilitycovariancecoarsegraininglangevin,
      title={Reversibility, covariance and coarse-graining for Langevin dynamics: On the choice of multiplicative noise}, 
      author={Mario Ayala and Nicolas Dirr and Grigorios A. Pavliotis and Johannes Zimmer},
      year={2025},
      eprint={2511.03347},
      archivePrefix={arXiv},
      primaryClass={math.PR},
      url={https://arxiv.org/abs/2511.03347}, 
}

@book{Ottinger, place={}, edition={}, series={}, title={Beyond equilibrium thermodynamics}, DOI={}, publisher={John Wiley and Sons}, author={Hans Christian \"{O}ttinger}, year={2005}, collection={}}

@article{Des15,
 author = {Desvillettes, L.},
 title = {Entropy dissipation estimates for the {Landau} equation in the {Coulomb} case and applications},
 fjournal = {Journal of Functional Analysis},
 journal = {J. Funct. Anal.},
 issn = {0022-1236},
 volume = {269},
 number = {5},
 pages = {1359--1403},
 year = {2015},
 language = {English},
 doi = {10.1016/j.jfa.2015.05.009},

}

@article{BH77,
	author = {Braun, W. and Hepp, K.},
	fjournal = {Communications in Mathematical Physics},
	issn = {0010-3616},
	journal = {Comm. Math. Phys.},
	mrclass = {82.60},
	mrnumber = {475547},
	mrreviewer = {H. Wakita},
	number = {2},
	pages = {101--113},
	title = {The {V}lasov dynamics and its fluctuations in the {$1/N$} limit of interacting classical particles},
	url = {http://projecteuclid.org/euclid.cmp/1103901139},
	volume = {56},
	year = {1977}}

@article{Des92,
 author = {Desvillettes, L.},
 title = {On asymptotics of the {Boltzmann} equation when the collisions become grazing},
 fjournal = {Transport Theory and Statistical Physics},
 journal = {Transp. Theory Stat. Phys.},
 issn = {0041-1450},
 volume = {21},
 number = {3},
 pages = {259--276},
 year = {1992},
 language = {English},
 doi = {10.1080/00411459208203923},

}

@Article{Vil98,
 Author = {Villani, C.},
 Title = {On the spatially homogeneous {Landau} equation for {Maxwellian} molecules},
 FJournal = {$M^3$AS. Mathematical Models \& Methods in Applied Sciences},
 Journal = {Math. Models Methods Appl. Sci.},
 ISSN = {0218-2025},
 Volume = {8},
 Number = {6},
 Pages = {957--983},
 Year = {1998},

}

@article{AV04,
 author = {Alexandre, R. and Villani, C.},
 title = {On the {Landau} approximation in plasma physics.},
 fjournal = {Annales de l'Institut Henri Poincar{\'e}. Analyse Non Lin{\'e}aire},
 journal = {Ann. Inst. Henri Poincar{\'e}, Anal. Non Lin{\'e}aire},
 issn = {0294-1449},
 volume = {21},
 number = {1},
 pages = {61--95},
 year = {2004},
 language = {English},
 doi = {10.1016/j.anihpc.2002.12.001},

}

@article{carrillo2022boltzmann,
  title={Boltzmann to Landau from the gradient flow perspective},
  author={Carrillo, Jose A and Delgadino, Matias G and Wu, Jeremy},
  journal={Nonlinear Analysis},
  volume={219},
  pages={112824},
  year={2022},
  publisher={Elsevier}
}

@article {BDGJL,
    AUTHOR = {Bertini, Lorenzo and De Sole, Alberto and Gabrielli, Davide
              and Jona-Lasinio, Giovanni and Landim, Claudio},
     TITLE = {Macroscopic fluctuation theory},
   JOURNAL = {Rev. Modern Phys.},
  FJOURNAL = {Reviews of Modern Physics},
    VOLUME = {87},
      YEAR = {2015},
    NUMBER = {2},
     PAGES = {593--636},
      ISSN = {0034-6861},
   MRCLASS = {82B05 (80A10)},
  MRNUMBER = {3403268},
       DOI = {10.1103/RevModPhys.87.593},
       URL = {https://doi.org/10.1103/RevModPhys.87.593},
}

@article {Derrida,
    AUTHOR = {Derrida, Bernard},
     TITLE = {Non-equilibrium steady states: fluctuations and large
              deviations of the density and of the current},
   JOURNAL = {J. Stat. Mech. Theory Exp.},
  FJOURNAL = {Journal of Statistical Mechanics: Theory and Experiment},
      YEAR = {2007},
    NUMBER = {7},
     PAGES = {P07023, 45},
   MRCLASS = {82C05 (82-02 82C22)},
  MRNUMBER = {2335699},
MRREVIEWER = {Davide Gabrielli},
       DOI = {10.1088/1742-5468/2007/07/p07023},
       URL = {https://doi.org/10.1088/1742-5468/2007/07/p07023},
}

@book {HS,
    AUTHOR = {Herbert Spohn},
     TITLE = {Large scale dynamics of interacting particles},
 PUBLISHER = {Springer Science \&  Business Media},
      YEAR = {2012},
}

@article {D96,
    AUTHOR = {Dean, David S.},
     TITLE = {Langevin equation for the density of a system of interacting
              {L}angevin processes},
   JOURNAL = {J. Phys. A},
  FJOURNAL = {Journal of Physics. A. Mathematical and General},
    VOLUME = {29},
      YEAR = {1996},
    NUMBER = {24},
     PAGES = {L613--L617},
      ISSN = {0305-4470},
   MRCLASS = {82C31},
  MRNUMBER = {1446882},
       DOI = {10.1088/0305-4470/29/24/001},
       URL = {https://doi.org/10.1088/0305-4470/29/24/001},
}

@article{K98,
	author = {Kawasaki, Kyozi},
	doi = {10.1023/B:JOSS.0000033240.66359.6c},
	fjournal = {Journal of Statistical Physics},
	issn = {0022-4715},
	journal = {J. Statist. Phys.},
	mrclass = {82C05 (82D15)},
	mrnumber = {1666565},
	mrreviewer = {Richard K. Jordan},
	number = {3-4},
	pages = {527--546},
	title = {Microscopic analyses of the dynamical density functional equation of dense fluids},
	url = {https://doi.org/10.1023/B:JOSS.0000033240.66359.6c},
	volume = {93},
	year = {1998}}

@Article{Lio94c,
 Author = {Lions, P. L.},
 Title = {On {Boltzmann} and {Landau} equations},
 FJournal = {Philosophical Transactions of the Royal Society of London. Series A},
 Journal = {Philos. Trans. R. Soc. Lond., Ser. A},
 ISSN = {0962-8428},
 Volume = {346},
 Number = {1679},
 Pages = {191--204},
 Year = {1994},

}

@Article{Vil96,
 Author = {Villani, C{\'e}dric},
 Title = {On the {Cauchy} problem for {Landau} equations: {Sequential} stability, global existence},
 FJournal = {Advances in Differential Equations},
 Journal = {Adv. Differ. Equ.},
 ISSN = {1079-9389},
 Volume = {1},
 Number = {5},
 Pages = {793--816},
 Year = {1996},

}

@article{ADVW00,
 author = {Alexandre, R. and Desvillettes, L. and Villani, C. and Wennberg, B.},
 title = {Entropy dissipation and long-range interactions},
 fjournal = {Archive for Rational Mechanics and Analysis},
 journal = {Arch. Ration. Mech. Anal.},
 issn = {0003-9527},
 volume = {152},
 number = {4},
 pages = {327--355},
 year = {2000},

}

@Article{Wu14,
 Author = {Wu, Kung-Chien},
 Title = {Global in time estimates for the spatially homogeneous {Landau} equation with soft potentials},
 FJournal = {Journal of Functional Analysis},
 Journal = {J. Funct. Anal.},
 ISSN = {0022-1236},
 Volume = {266},
 Number = {5},
 Pages = {3134--3155},
 Year = {2014},
}

@misc{GS25,
 author = {Guillen, Nestor and Silvestre, Luis E.},
 title = {The {Landau} equation and {Fisher} information},
 year = {2025},
 howpublished = {Preprint, {arXiv}:2507.05167 [math.{AP}] (2025)},
 url = {https://arxiv.org/abs/2507.05167},
 arXiv = {arXiv:2507.05167}
}

@article{GS25b,
 author = {Guillen, Nestor and Silvestre, Luis},
 title = {The {Landau} equation does not blow up},
 fjournal = {Acta Mathematica},
 journal = {Acta Math.},
 issn = {0001-5962},
 volume = {234},
 number = {2},
 pages = {315--375},
 year = {2025},
 language = {English},
 doi = {10.4310/ACTA.2025.v234.n2.a2},
 zbMATH = {8061615}
}

@misc{Vil25,
      title={Fisher Information in Kinetic Theory}, 
      author={C{\'e}dric Villani},
      year={2025},
      eprint={2501.00925},
      archivePrefix={arXiv},
      primaryClass={math.AP},
      url={https://arxiv.org/abs/2501.00925}, 
}

@misc{DH25a,
    title={On a fuzzy Landau Equation: Part I. A variational approach},
    author={Manh Hong Duong and Zihui He},
    year={2025},
    eprint={2504.07666},
    archivePrefix={arXiv},
    primaryClass={math.AP}
}

@misc{DH25b,
 author = {Duong, Manh Hong and He, Zihui},
 title = {On a fuzzy {Landau} {Equation}: {Part} {II}. {Solvability} results},
 year = {2025},
 howpublished = {Preprint, {arXiv}:2507.10288 [math.{AP}] (2025)},
 url = {https://arxiv.org/abs/2507.10288},
 arXiv = {arXiv:2507.10288}
}

@misc{DGH25,
      title={On a fuzzy Landau Equation: Part III. The grazing collision limit}, 
      author={Manh Hong Duong and Boris Golubkov and Zihui He},
      year={2025},
      eprint={2512.04713},
      archivePrefix={arXiv},
      primaryClass={math.AP},
      url={https://arxiv.org/abs/2512.04713}, 
}

@article {DV10,
    AUTHOR = {Debussche, Arnaud and Vovelle, Julien},
     TITLE = {Scalar conservation laws with stochastic forcing},
   JOURNAL = {J. Funct. Anal.},
  FJOURNAL = {Journal of Functional Analysis},
    VOLUME = {259},
      YEAR = {2010},
    NUMBER = {4},
     PAGES = {1014--1042},
      ISSN = {0022-1236},
   MRCLASS = {60H15 (35L65 35R60)},
  MRNUMBER = {2652180},
MRREVIEWER = {Marko Nedeljkov},
       DOI = {10.1016/j.jfa.2010.02.016},
       URL = {https://doi.org/10.1016/j.jfa.2010.02.016},
}

@article {GS17,
    AUTHOR = {Gess, Benjamin and Souganidis, Panagiotis E.},
     TITLE = {Long-time behavior, invariant measures, and regularizing
              effects for stochastic scalar conservation laws},
   JOURNAL = {Comm. Pure Appl. Math.},
  FJOURNAL = {Communications on Pure and Applied Mathematics},
    VOLUME = {70},
      YEAR = {2017},
    NUMBER = {8},
     PAGES = {1562--1597},
      ISSN = {0010-3640,1097-0312},
   MRCLASS = {35R60 (35B40 35L65 60H15 76M35)},
  MRNUMBER = {3666564},
MRREVIEWER = {Mark\ C.\ Veraar},
       DOI = {10.1002/cpa.21646},
       URL = {https://doi.org/10.1002/cpa.21646},
}

@article {FG19,
    AUTHOR = {Fehrman, Benjamin and Gess, Benjamin},
     TITLE = {Well-posedness of nonlinear diffusion equations with
              nonlinear, conservative noise},
   JOURNAL = {Arch. Ration. Mech. Anal.},
  FJOURNAL = {Archive for Rational Mechanics and Analysis},
    VOLUME = {233},
      YEAR = {2019},
    NUMBER = {1},
     PAGES = {249--322},
      ISSN = {0003-9527},
   MRCLASS = {35R60 (35B30 35K59 76S05)},
  MRNUMBER = {3974641},
MRREVIEWER = {Alp O. Eden},
       DOI = {10.1007/s00205-019-01357-w},
       URL = {https://doi.org/10.1007/s00205-019-01357-w},
}

@article{DG20,
	author = {Dareiotis, Konstantinos and Gess, Benjamin},
	doi = {10.1214/20-ejp436},
	fjournal = {Electronic Journal of Probability},
	journal = {Electron. J. Probab.},
	mrclass = {60H15 (35K59 35K65 35R60)},
	mrnumber = {4089785},
	mrreviewer = {Dominic Breit},
	pages = {Paper No. 35, 43},
	title = {Nonlinear diffusion equations with nonlinear gradient noise},
	url = {https://doi.org/10.1214/20-ejp436},
	volume = {25},
	year = {2020}}

@article{FG25,
  author  = {Benjamin J. Fehrman and Benjamin Gess},
  title   = {Conservative stochastic PDEs on the whole space},
  journal = {Stochastics and Partial Differential Equations: Analysis and Computations},
  year    = {2025},
  publisher = {Springer},
  doi     = {10.1007/s40072-025-00369-w},
  url     = {https://doi.org/10.1007/s40072-025-00369-w}
}

@misc{fehrman2025stochastic,
  title={Stochastic PDEs with Correlated, Non-stationary Stratonovich Noise of Dean--Kawasaki Type},
  author={Benjamin Fehrman},
  year={2025},
  eprint={2504.18370},
  archivePrefix={arXiv},
  primaryClass={math.PR}
}

@article{CF23,
      title={A Central Limit Theorem for Nonlinear Conservative {SPDE}s},
      author={Andrea Clini and Benjamin Fehrman},
      journal={Stochastics and Partial Differential Equations: Analysis and Computations},
      volume={13},
      pages={1407--1450},
      year={2025},
      doi={10.1007/s40072-025-00359-y},
      url={https://doi.org/10.1007/s40072-025-00359-y}
}

@article{GWZ24,
  title   = {Higher order fluctuation expansions for nonlinear stochastic heat equations in singular limits},
  author  = {Gess, Benjamin and Wu, Zhengyan and Zhang, Rangrang},
  journal = {Stochastic Processes and their Applications},
  year    = {2025},
  volume  = {193},
  pages   = {104847},
  doi     = {10.1016/j.spa.2025.104847},
  url     = {https://www.sciencedirect.com/science/article/pii/S0304414925002911}
}

@article{WWZ22,
  author  = {Likun Wang and Zhengyan Wu and Rangrang Zhang},
  title   = {Well-Posedness of Dean--Kawasaki Equation with Singular Interactions},
  journal = {SIAM Journal on Mathematical Analysis},
  volume  = {58},
  number  = {3},
  pages   = {2738--2785},
  year    = {2026},
  doi     = {10.1137/25M1743818},
  publisher = {Society for Industrial and Applied Mathematics (SIAM)}
}

@misc{WZ24,
      title={McKean--Vlasov {PDE} with Irregular Drift and Applications to Large Deviations for Conservative {SPDE}s},
      author={Zhengyan Wu and Rangrang Zhang},
      year={2024},
      eprint={2208.13142},
      archivePrefix={arXiv},
      primaryClass={math.PR}
}

@article {FMJ25,
    AUTHOR = {Fenna M{\"u}ller and Max von Renesse and Johannes Zimmer},
     TITLE = {Well-Posedness for Dean-Kawasaki Models of Vlasov-Fokker-Planck Type},
   JOURNAL = {Proc. R. Soc. A},
    VOLUME = {481},
      YEAR = {2025},
     PAGES = {20250089},
       URL = {https://doi.org/10.1098/rspa.2025.0089},
}

@misc{HWZ25,
      title={Kinetic Theory with Fluctuations: Strong Well-Posedness of the Vlasov--Fokker--Planck--Dean--Kawasaki System},
      author={Zimo Hao and Zhengyan Wu and Johannes Zimmer},
      year={2025},
      eprint={2511.10194},
      archivePrefix={arXiv},
      primaryClass={math.PR}
}

@article {Krylov,
    AUTHOR = {Krylov, N. V.},
     TITLE = {A relatively short proof of {I}t\^o's formula for {SPDE}s and
              its applications},
   JOURNAL = {Stoch. Partial Differ. Equ. Anal. Comput.},
  FJOURNAL = {Stochastic Partial Differential Equations. Analysis and
              Computations},
    VOLUME = {1},
      YEAR = {2013},
    NUMBER = {1},
     PAGES = {152--174},
      ISSN = {2194-0401,2194-041X},
   MRCLASS = {60H15 (35B50 35R60)},
  MRNUMBER = {3327504},
MRREVIEWER = {Guangying\ Lv},
       DOI = {10.1007/s40072-013-0003-5},
       URL = {https://doi.org/10.1007/s40072-013-0003-5},
}

@article {JKO98,
    AUTHOR = {Jordan, Richard and Kinderlehrer, David and Otto, Felix},
     TITLE = {The variational formulation of the {F}okker-{P}lanck equation},
   JOURNAL = {SIAM J. Math. Anal.},
  FJOURNAL = {SIAM Journal on Mathematical Analysis},
    VOLUME = {29},
      YEAR = {1998},
    NUMBER = {1},
     PAGES = {1--17},
      ISSN = {0036-1410},
   MRCLASS = {35Q99 (35A15 49J99 60J60 82C31)},
  MRNUMBER = {1617171},
MRREVIEWER = {Thierry Goudon},
       DOI = {10.1137/S0036141096303359},
       URL = {https://doi.org/10.1137/S0036141096303359},
}

@article{Bec08,
 author = {Beckner, William},
 title = {Pitt's inequality with sharp convolution estimates},
 fjournal = {Proceedings of the American Mathematical Society},
 journal = {Proc. Am. Math. Soc.},
 issn = {0002-9939},
 volume = {136},
 number = {5},
 pages = {1871--1885},
 year = {2008},
 
}

@article{Bec08b,
 author = {Beckner, William},
 title = {Weighted inequalities and {Stein}-{Weiss} potentials},
 fjournal = {Forum Mathematicum},
 journal = {Forum Math.},
 issn = {0933-7741},
 volume = {20},
 number = {4},
 pages = {587--606},
 year = {2008},
 
 
}

@article{Bec12,
 author = {Beckner, William},
 title = {Pitt's inequality and the fractional {Laplacian}: {Sharp} error estimates},
 fjournal = {Forum Mathematicum},
 journal = {Forum Math.},
 issn = {0933-7741},
 volume = {24},
 number = {1},
 pages = {177--209},
 year = {2012},

}

@book{FL07,
 author = {Fonseca, Irene and Leoni, Giovanni},
 title = {Modern methods in the calculus of variations. $L^p$ spaces},
 fseries = {Springer Monographs in Mathematics},
 series = {Springer Monogr. Math.},
 issn = {1439-7382},
 isbn = {978-0-387-35784-3},
 year = {2007},
 publisher = {New York, NY: Springer},
 language = {English},
 zbMATH = {5114899},
 Zbl = {1153.49001}
}

@article{Fournier2009,
  author  = {Fournier, Nicolas},
  title   = {Particle approximation of some {Landau} equations},
  journal = {Kinetic and Related Models},
  volume  = {2},
  number  = {3},
  pages   = {451--464},
  year    = {2009},
  doi     = {10.3934/krm.2009.2.451}
}

@article{FournierHauray2016,
  author  = {Fournier, Nicolas and Hauray, Maxime},
  title   = {Propagation of chaos for the {Landau} equation with moderately soft potentials},
  journal = {The Annals of Probability},
  volume  = {44},
  number  = {6},
  pages   = {3581--3660},
  year    = {2016},
  doi     = {10.1214/15-AOP1056}
}

@article{FournierGuillin2017,
  author  = {Fournier, Nicolas and Guillin, Arnaud},
  title   = {From a {Kac-like} particle system to the {Landau} equation for hard potentials and {Maxwell} molecules},
  journal = {Annales scientifiques de l'\'{E}cole Normale Sup\'{e}rieure},
  series  = {4},
  volume  = {50},
  number  = {1},
  pages   = {157--199},
  year    = {2017},
  doi     = {10.24033/asens.2318}
}

@article{MischlerMouhot2013,
  author  = {Mischler, St\'{e}phane and Mouhot, Cl\'{e}ment},
  title   = {{Kac}'s program in kinetic theory},
  journal = {Inventiones Mathematicae},
  volume  = {193},
  number  = {1},
  pages   = {1--147},
  year    = {2013},
  doi     = {10.1007/s00222-012-0422-3}
}

@incollection{Villani2002,
  author    = {Villani, C\'{e}dric},
  title     = {A review of mathematical topics in collisional kinetic theory},
  booktitle = {Handbook of Mathematical Fluid Dynamics},
  volume    = {1},
  pages     = {71--305},
  publisher = {North-Holland},
  address   = {Amsterdam},
  year      = {2002},
  doi       = {10.1016/S1874-5792(02)80004-0}
}

@article{Hairer2014,
  author  = {Hairer, Martin},
  title   = {A theory of regularity structures},
  journal = {Inventiones Mathematicae},
  volume  = {198},
  number  = {2},
  pages   = {269--504},
  year    = {2014},
  doi     = {10.1007/s00222-014-0505-4}
}

\end{document}